\documentclass[11pt, twoside, leqno]{article}

\usepackage{mathrsfs}
\usepackage{amssymb}
\usepackage{amsmath}
\usepackage{mathrsfs}
\usepackage{amsthm}
\usepackage{amsfonts}
\usepackage{color}
\usepackage{latexsym}
\usepackage{txfonts}
\usepackage{indentfirst}
\usepackage{soul}

\allowdisplaybreaks

\pagestyle{myheadings}
\markboth{\footnotesize\rm\sc Juha Kinnunen,
Kim Myyryl\"{a}inen, Dachun Yang and Chenfeng Zhu}
{\footnotesize\rm\sc Parabolic Muckenhoupt Weights
on Spaces of Homogeneous Type}

\textwidth=15cm
\textheight=21cm
\oddsidemargin 0.46cm
\evensidemargin 0.46cm

\parindent=13pt

\def\rr{{\mathbb R}}
\def\rn{{\mathbb{R}^n}}

\def\nn{{\mathbb N}}
\def\zz{{\mathbb Z}}

\def\CM{{\mathcal M}}

\def\fz{\infty }
\def\az{\alpha}

\def\ls{\lesssim}
\def\noz{\nonumber}

\def\loc{{\mathrm{loc}}}

\def \essinf {\mathop\mathrm{\,ess\,inf\,}}
\def \esssup {\mathop\mathrm{\,ess\,sup\,}}

\def\XXint#1#2#3{{\setbox0=\hbox{$#1{#2#3}{\int}$ }
\vcenter{\hbox{$#2#3$ }}\kern-.6\wd0}}

\def\lz{{\lambda}}

\def\PBMO{{\mathrm{PBMO}}}

\def\pr{{\mathrm{pr}}}

\DeclareMathOperator{\diam}{diam}

\newcommand{\BMO}{\mathrm{BMO\,}}

\newtheorem{theorem}{Theorem}[section]
\newtheorem{lemma}[theorem]{Lemma}
\newtheorem{corollary}[theorem]{Corollary}
\newtheorem{proposition}[theorem]{Proposition}

\theoremstyle{definition}
\newtheorem{remark}[theorem]{Remark}
\newtheorem{definition}[theorem]{Definition}

\renewcommand{\appendix}{\par
\setcounter{section}{0}
\setcounter{subsection}{0}
\setcounter{subsubsection}{0}
\gdef\thesection{\@Alph\c@section}
\gdef\thesubsection{\@Alph\c@section.\@arabic\c@subsection}
\gdef\theHsection{\@Alph\c@section.}
\gdef\theHsubsection{\@Alph\c@section.\@arabic\c@subsection}
\csname appendixmore\endcsname
}

\numberwithin{equation}{section}

\begin{document}

\title{\bf\Large Parabolic Muckenhoupt Weights
on Spaces of Homogeneous Type
\footnotetext{\hspace{-0.35cm} 2020 {\it Mathematics Subject Classification}.
Primary 46E36; Secondary 42B35, 42B25, 30L99.\endgraf
{\it Key words and phrases.} space of homogeneous type,
parabolic Muckenhoupt weight,
parabolic maximal operator,
weighted norm inequality,
reverse H\"older inequality, parabolic BMO,
John--Nirenberg inequality.\endgraf
This project is supported by the Academy of Finland,
the National Natural Science Foundation of China
(Grant Nos.\ 11971058 and 12071197) and
the National Key Research and Development Program of China
(Grant No.\ 2020YFA0712900).}}
\date{}
\author{Juha Kinnunen, Kim Myyryl\"{a}inen,
Dachun Yang\footnote{Corresponding author,
E-mail: \texttt{dcyang@bnu.edu.cn}/{\color{red} August 17, 2022}/Final version.}\
\ and Chenfeng Zhu}

\maketitle

\vspace{-0.7cm}

\begin{center}
\begin{minipage}{13cm}
{\small {\bf Abstract}\quad
This work discusses parabolic Muckenhoupt weights on spaces
of homogeneous type, i.e.\  quasi-metric spaces with both a doubling measure and 
an additional monotone geodesic property.
The main results include a characterization in terms of weighted norm inequalities
for parabolic maximal operators, a reverse H\"older inequality, and a
Jones-type factorization result for this class of weights.
The connection between the space of parabolic bounded mean oscillation and parabolic
Muckenhoupt weights is studied by applying a parabolic John--Nirenberg lemma.
A  Coifman--Rochberg-type characterization of the space of parabolic bounded mean oscillation
in terms of parabolic maximal functions is also given.
The main challenges in the parabolic theory are related to the time lag in the estimates.
The results are motivated by the corresponding Euclidean theory and the regularity theory for
parabolic variational problems on metric measure spaces.
}
\end{minipage}
\end{center}
	
\vspace{0.2cm}
	
\tableofcontents

\vspace{0.2cm}

\section{Introduction}

It is well known that both the real-variable theory of function
spaces and the boundedness of Calder\'on--Zygmund operators
play a fundamental role in harmonic analysis and partial
differential equations.
The class $A_q(\rn)$ of Muckenhoupt weights  (see, for instance, Muckenhoupt \cite{m1972},
Hunt et al. \cite{hmw1973},
Coifman and Fefferman \cite{cf1974},
Garc\'ia-Cuerva and Rubio de Francia \cite{cf1985}) gives an appropriate condition for
a nonnegative locally integrable weight function
$\omega$ on $\rn$, such that some well-known operators
(for instance,
the Hardy--Littlewood maximal operator,
the Hilbert transform,
and the Calder\'on--Zygmund operators)
are bounded on the weighted Lebesgue space
$L^q(\rn,\omega)$ for any given $q\in(1,\infty)$.
Recall that, for any $q\in(1,\infty)$,
a nonnegative locally integrable function $\omega$
on $\rn$ is said to belong to the \emph{Muckenhoupt class $A_q(\rn)$} if
\begin{align*}
\sup_{Q}\left[\frac{1}{|Q|}\int_Q\omega(x)\,dx\right]
\left[\frac{1}{|Q|}\int_Q
\left[\omega(x)\right]^{1-q'}\,dx\right]^{q-1}<\infty,
\end{align*}
where the supremum is taken over all cubes
$Q\subset\mathbb{R}^n$ with their edges parallel to the coordinate 
axes and where $1/q+1/q'=1$.
For any given $q\in(1,\infty)$,
the \emph{weighted Lebesgue space $L^q(\rn,\omega)$}
is defined to be the set of all the Lebesgue
measurable functions $f$  on $\rn$ such that
$$\|f\|_{L^q(\rn,\omega)}:=\left[\int_{\rn}
|f(x)|^q\omega(x)\,dx\right]^{1/q}<\fz.$$
There exists a well-established theory related to the Muckenhoupt weights
with applications in elliptic partial differential equations on $\rn$
(see, for instance, \cite{bb2021,bbd2021,bbd20211,bbd2020,
Bbd2020,bll2021,fks1982,h2012,ll2022,lw2020}).
There has also been a lot of interest in extending both harmonic analysis
and the calculus of variations to nonsmooth spaces such as spaces of homogeneous type,
i.e.\ a quasi-metric spaces with a doubling measure; see Coifman and Weiss \cite{CW1,Cowe77}.
For more about the Muckenhoupt weights on spaces of homogeneous type
and their applications, we refer the reader to \cite{abi2005,act2014,aht2017,hpr2012,ms1981}
for more properties of these weights,
to \cite{bd2020,c1976,c1990,dlsvwy2021,dgklwy2021,ggkk1998,jlpv2020} for some applications
of these weights in the weighted boundedness of operators,
and to \cite{fmy2020,st1989,YHYY1,YHYY2} for some applications of these weights 
in the related weighted
Hardy spaces.

For the case of parabolic Muckenhoupt weights,
let us first recall the boundedness of
the one-sided maximal operator on $\rr$.
For any $q\in(1,\infty)$,
Sawyer \cite{s1986} introduced
the \emph{one-sided $A_q(\rr)$ condition}
\begin{align}\label{1707}
\sup_{x\in\mathbb{R},\,h\in(0,\infty)}
\left[\frac{1}{h}\int_{x-h}^x\omega(t)\,dt\right]
\left[\frac{1}{h}\int_x^{x+h}
\left[\omega(t)\right]^{1-q'}\,dt\right]^{q-1}<\infty
\end{align}
and showed that the \emph{one-sided maximal operator}
$M^+_{\rm{os}}$ defined by
\begin{align*}
M^+_{\rm{os}}f(x):=\sup_{h\in(0,\infty)}\frac 1h\int_x^{x+h}|f(t)|\,dt,
\end{align*}
with $x\in\rr$,
is bounded on $L^q(\rr,\omega)$
if and only if $\omega$ satisfies \eqref{1707}.
These weights and the corresponding
one-sided maximal operators
have attracted a lot of attention
(see, for instance, \cite{ac1998,a1997,cu1995,m1993,
mt1993,mt1994,mot1990,mpt1993}).
There exist also many studies related
to higher-dimensional versions of these one-sided ones
(see, for instance, \cite{b2011,f2011,ks2016,lo2010,o2005}).
Kinnunen and Saari \cite{ks2016} introduced, for any $q\in(1,\infty)$, the
\emph{parabolic Muckenhoupt $A_q^+(\mathbb{R}^{n+1})$ condition}
\begin{align}\label{paq}
\sup_{R}\left[\frac{1}{|R^-(\gamma)|}
\int_{R^-(\gamma)}\omega(x)\,dx\right]
\left[\frac{1}{|R^+(\gamma)|}\int_{R^+(\gamma)}
\left[\omega(x)\right]^{1-q'}\,dx\right]^{q-1}<\infty,
\end{align}
where $R^\pm(\gamma)\subset\mathbb{R}^{n+1}$
are space-time rectangles
with a time lag $\gamma\in[0,1)$,
and developed a theory of parabolic Muckenhoupt weights.
This approach is motivated by the regularity theory for nonlinear parabolic partial differential equations
and has a very close connection to the space of functions with parabolic bounded mean oscillation; see
\cite{ks2016,ks2016',s2016,s2018,kmy2022}.

From the point of view of partial
differential equations, an important observation in Moser's proof \cite{m1961} of
a Harnack inequality for nonnegative solutions of
\begin{align}\label{e}
\text{div}(|\nabla u|^{p-2}\nabla u)=0
\end{align}
on $\rn$ when $p\in(1,\infty)$,
is that the logarithm of a nonnegative solution of \eqref{e} belongs
to $\BMO(\rn)$, the space of functions with bounded mean oscillation,
introduced by John and Nirenberg \cite{jn1961}.
The motivation to study the parabolic $\BMO$ came from the works of Moser
\cite{m1964,m1967,m1971} for $p=2$ and Trudinger \cite{t1968} for $p\in(1,\infty)$ on the parabolic
Harnack inequality for nonnegative solutions of
the doubly nonlinear parabolic equation
\begin{align}\label{2211}
\frac{\partial(|u|^{p-2}u)}{\partial t}-
\text{div}(|\nabla u|^{p-2}\nabla u)=0
\end{align}
on $\rn\times (-\fz,\fz)$.
It is also possible to consider a more general class of equations of type \eqref{e} and \eqref{2211}.
For the behavior of solutions to \eqref{2211} see, for instance,
\cite{gv2006,kk2007,kmy2022,k2012,t1968,v1992}.
The space of parabolic bounded mean oscillation and a parabolic John--Nirenberg lemma are essential in \cite{m1967,t1968}.
Fabes and Garofalo \cite{fg1985} gave a simplified proof of the parabolic John--Nirenberg lemma originally proved in
\cite{m1964,m1967}.
The parabolic $\BMO$ and a John--Nirenberg lemma related to \eqref{2211} has been recently discussed in \cite{kmy2022}.
A very general John--Nirenberg-type lemma on spaces of homogeneous type was proved by Aimar \cite{a1988}.
This approach includes both elliptic and parabolic cases.
Parabolic $\BMO$ is instrumental in the Harnack estimates for nonnegative solutions to \eqref{2211} and, on the other hand,
it follows for the Harnack estimates that nonnegative solutions to \eqref{2211} belong to the parabolic Muckenhoupt class in \eqref{paq}
for any $q\in(1,\infty)$.

Sometimes it is advantageous to consider a variational problem instead of a partial differential equation.
The parabolic variational inequality related to \eqref{2211} is
\begin{align}\label{pvi}
p\int|u|^{p-2}u\frac{\partial\varphi}{\partial t}\,dx\,dt
+\int|\nabla u|^p\,dx\,dt
\le\int|\nabla u+\nabla\varphi|^p\,dx\,dt
\end{align}
for every compactly supported smooth function $\varphi$.
This approach goes back to Lichnewsky and Temam \cite{lt1978}, 
who employed an analogous concept in the case
of the time-dependent minimal surface equation.
Wieser \cite{w1987} showed that the variational approach gives the 
same class of weak solutions as the standard definition of a weak solution.
In particular, a function is a weak solution to \eqref{2211} if and only 
if it satisfies \eqref{pvi} for every compactly supported smooth function $\varphi$.
The main advantage of the variational approach is that, instead of partial derivatives, 
it is based on the modulus of the gradient.
Thus, it is possible to consider the corresponding variational problems on 
metric measure spaces by applying the concept of an upper gradient, see \cite{H2001}.
The theory of upper gradients and Sobolev spaces in  \cite{H2001} is developed 
in the context of a metric space with a doubling measure, but in harmonic 
analysis it is more common to
consider a slightly more general space of homogeneous type.
Harnack estimates for nonlinear parabolic problems of type \eqref{pvi} on 
metric measure spaces have been investigated in \cite{Iv2014,mm2013}.
These results imply that the negative logarithm of a nonnegative variational 
solution belongs to the parabolic BMO.
In addition, a nonnegative variational solution satisfies a parabolic Harnack 
inequality and, consequently, belongs to the parabolic Muckenhoupt class as in \eqref{paq}.

The goal of this article is to create a theory for parabolic Muckenhoupt 
weights on spaces of homogeneous type having an additional monotone geodesic property.
This is motivated by the corresponding Euclidean theory in \cite{ks2016} 
and connections to parabolic variational problems of type \eqref{pvi} on metric measure spaces.
We discuss the parabolic Muckenhoupt weights on spaces of homogeneous type, give a characterization in terms of the
weak-type and the strong-type weighted norm inequalities for parabolic maximal operators,
and prove a reverse H\"older inequality and a Jones-type factorization result.
Moreover, we discuss the parabolic space BMO
and establish its relation with the parabolic Muckenhoupt weights
by using a parabolic John--Nirenberg lemma.
As an application, we obtain a Coifman--Rochberg-type characterization of the parabolic
$\BMO$ in terms of the parabolic maximal functions.

The organization of the remainder of this article is as follows.

In Section~\ref{section2},
we recall the concepts of both a space $(X,\rho,\mu)$ of homogeneous type
and the system of dyadic cubes of $(X,\rho,\mu)$.
Some necessary assumptions,
including the monotone geodesic property are imposed on $(X,\rho,\mu)$
(see Definition~\ref{mgp} and Remark~\ref{remarkmgp}
below for more details and for some examples of spaces of homogeneous type
having this additional property).
In addition, by applying the monotone geodesic property,
we obtain a chaining ball property of $(X,\rho,\mu)$
(see Lemma~\ref{assum} below). We also prove
the Borel regularity of the product space $X\times \rr$
(see Corollary~\ref{2014} below).

In Section~\ref{section3}, we first introduce the concept
of parabolic cylinders of $(X,\rho,\mu)$ and then introduce
the class of parabolic Muckenhoupt $A_q^+(\gamma)$ weights.
Some elementary properties of parabolic Muckenhoupt weights,
such as the nestedness, dual properties, and forward-in-time doubling properties,
are presented (see Proposition~\ref{1627} below).

In Section~\ref{section4}, we
show that, for any $q\in(1,\infty)$, $A_q^+(\gamma)$ is independent of
the choice of the time lag $\gamma$ whenever $\gamma\in(0,1)$ and,
moreover, $A_q^+(0)$ is a subset of $A_q^+(\gamma)$
(see Theorem~\ref{Idp} below).
In this proof, we use the Vitali covering lemma
on a given space $(X,\rho,\mu)$ of homogeneous type
to divide a given ball of $X$
into overlapping balls with smaller radius
and then to transport the new parabolic cylinders
along both the time and the space axes
by using both the chaining ball property and the
forward-in-time doubling properties.

In Section~\ref{section5}, we introduce the concept of
parabolic Muckenhoupt weights on parabolic rectangles,
which is equivalent to that class defined on parabolic cylinders.
Then we introduce the parabolic maximal operators related to parabolic rectangles and parabolic cylinders
and prove that they are pointwise equivalent.
To prove the necessity and the
sufficiency of the parabolic Muckenhoupt
condition for the weak-type weighted boundedness of
parabolic maximal operators,
on the one hand, we use a similar proof of the
classical Muckenhoupt theory on $\rn$ to prove the necessity of
the $A_q^+(\gamma)$ condition (see Theorem~\ref{2132-} below);
on the other hand, we make use of a covering argument
similar to that used in the proof of \cite[Lemma~4.5]{ks2016}
(see also \cite[Theorem~1.2]{f2011})
and geometric properties of the system of dyadic
cubes to show the sufficiency of
the $A_q^+(\gamma)$ condition (see Theorem~\ref{weakineq} below).
These results are discussed for parabolic rectangles and parabolic cylinders.

In Section~\ref{section6},
we first use the dyadic structure of the
system of dyadic cubes of $X$
to construct a Calder\'on--Zygmund-type cover and
establish an estimate for the distribution sets
(see Lemma~\ref{1546} below).
Then we prove the reverse H\"older inequalities for
parabolic Muckenhoupt weights (see Theorems~\ref{RHI}
and~\ref{RHIball} below)
and further obtain a self-improving
property for them (see Theorem~\ref{2135} below).
Using this, the weak-type weighted
boundedness of parabolic maximal operators,
and the Marcinkiewicz interpolation theorem,
we further prove the strong-type weighted
boundedness of parabolic maximal operators
(see Theorem~\ref{opeboun} below).
Moreover, the backward-in-time reverse doubling properties
of parabolic Muckenhoupt weights are showed to be equivalent to the
reverse H\"older inequalities
(see Theorem~\ref{rdp} below).

In Section~\ref{section7}, we first introduce
the $A_1^+(\gamma)$ condition via parabolic maximal functions,
and establish the relation between
$A_1^+(\gamma)$ and $A_q^+(\gamma)$ with $q\in(1,\infty)$
(see Proposition~\ref{73} below).
Then we give a factorization of $A_q^+(\gamma)$ (see Theorem~\ref{74} below)
and use the reverse H\"older inequalities to obtain
a characterization of $A_1^+(\gamma)$ as small powers of maximal functions
multiplied by bounded functions (see Theorem~\ref{76} below).

In Section~\ref{section8}, we introduce the parabolic
space $\PBMO^+(X\times\mathbb{R})$.
Using a John--Nirenberg-type lemma in \cite{a1988},
we establish the relation between
the class $A_q^+(\gamma)$ with $q\in(1,\infty)$
and $\PBMO^+(X\times\mathbb{R})$ (see Theorem~\ref{85} below).
By this, the factorization of $A_q^+(\gamma)$
in Theorem~\ref{74},
and the characterization of $A_1^+(\gamma)$ in Theorem~\ref{76},
we give a characterization
of $\PBMO^+(X\times\mathbb{R})$ in terms of parabolic maximal functions
in the sense of Coifman and Rochberg (see Theorem~\ref{86} below).

\section{Spaces of Homogeneous Type
and Monotone Geodesic Property }\label{section2}

In this section,
we recall the concept of spaces of homogeneous type,
a necessary geometric assumption,
the system of dyadic cubes,
and the adjacent system of dyadic cubes
on spaces of homogeneous type. In addition,
from the monotone geodesic property,
we deduce the chaining ball property of
a given space $X$ of homogeneous type under consideration.
We also establish the Borel regularity of the product
space $X\times \rr$.

The following notation will be used throughout.
We always let $\nn:=\{1,2,\ldots\}$
and $\zz_+:=\nn\cup\{0\}$.
We denote by $C$ a \emph{positive constant} which is independent
of the main parameters, but may vary from line to line.
We use $C_{\az,\dots}$ to denote a positive constant depending
on the indicated parameters $\az,\, \dots$.
The symbol $f\ls g$ means $f\le Cg$
and, if $f\ls g\ls f$, then we write $f\approx g$.
If $f\le Cg$ and $g=h$ or $g\le h$,
we then write $f\ls g\approx h$ or $f\ls g\ls h$,
rather than $f\ls g=h$ or $f\ls g\le h$.
For any $r\in\mathbb{R}$, let $r_+:=\max\{0,\,r\}$.
For any $\az\in\rr$,
we denote by $\lfloor\az\rfloor$ the largest integer
not greater than $\az$
and $\lceil\az\rceil$ the smallest integer
not smaller than $\az$. For any index $q\in[1,\fz]$,
we denote by $q'$ its \emph{conjugate index}, namely, $1/q+1/q'=1$.
For any $x\in X$ and $r\in(0,\fz)$, we denote by $B(x,r)$ the \emph{ball}
of $X$ centered at $x$ with the radius $r$, namely,
$$B(x,r):=\{y\in X:\ \rho(x,y)<r\}.$$
For any ball $B$, we use $x_B$ to denote its center and $r_B$ its radius,
and denote by $\lz B$ for any $\lz\in(0,\fz)$ the ball concentric with
$B$ having the radius $\lz r_B$.
The set $E\subset X$ is said to be
\emph{bounded}
if $E$ is contained in a $\rho$-ball.
The \emph{diameter} of $E$, $\diam\,(E)$,
is defined by setting
\begin{align*}
\text{diam}\,(E):=
\sup\left\{\rho(x,y):\ x,y\in E\right\}.
\end{align*}
If $R$ is a subset of $X\times\mathbb{R}$, we denote by ${\mathbf{1}}_R$ its
\emph{characteristic function},
by $R^\complement$ the set $(X\times\mathbb{R})\setminus R$,
and by $\#R$ the \emph{cardinality} of $R$.
Also, when we prove a lemma,
proposition, theorem, or corollary, we always
use the same symbols in the wanted proved lemma,
proposition, theorem, or corollary.

We recall the concept of a quasi-metric space.

\begin{definition}\label{2057}
A \emph{quasi-metric space} $(X,\rho)$ is
a non-empty set $X$ equipped with a
\emph{quasi-metric} $\rho$, namely, a nonnegative function
defined on $X\times X$ satisfying that, for any $x,y,z\in X$,
\begin{enumerate}
\item[\textup{(i)}]
$\rho(x,y)=0$\text{ if and only of }$x=y$;
\item[\textup{(ii)}]
$\rho(x,y)=\rho(y,x);$
\item[\textup{(iii)}]
there exists a constant $K_0\in[1,\infty)$,
independent of $x$, $y$, and $z$,
such that
\begin{align*}
\rho(x,y)\leq K_0\left[\rho(x,z)+\rho(z,y)\right].
\end{align*}
\end{enumerate}
\end{definition}

\begin{definition}
A triplet $(X,\rho,\mu)$\index{$(X,\rho,\mu)$}
 is called a
\emph{space of homogeneous type}
if $(X,\rho)$ is a quasi-metric space
and $\mu$ is a nonnegative measure, defined on a $\sigma$-algebra
of subsets of $X$ which contains the $\rho$-balls,
satisfying the following
\emph{doubling condition}:
there exists a constant
$C_{\mu}\in[1,\fz)$\index{$C_{\mu}$} such that,
for any $x\in X$ and $r\in(0,\infty)$,
$$
\mu\left(B(x,2r)\right)\le C_{\mu}\mu\left(B(x,r)\right).
$$
\end{definition}

Throughout the article,
we \emph{always make} the following basic assumptions on both $(X,\rho,\mu)$
and $(X\times\mathbb{R},d,\lambda)$,
where, for any $x,y\in X$ and $t,s\in\mathbb{R}$,
\begin{align*}
d\left((x,t),(y,s)\right):=\max\left\{\rho(x,y),\,|t-s|^{1/p}\right\},
\end{align*}
and $\lambda:=\mu\times m$
with $m$ being the one-dimensional Lebesgue measure on $\mathbb{R}$:
\begin{enumerate}
\item[\textup{(i)}]
for any point $x\in X$,
the balls $\{B(x,r)\}_{r\in(0,\infty)}$ form a
basis of open neighborhoods of $x$;
\item[\textup{(ii)}]
$\mu$ is \emph{Borel regular} which means that all open sets of $X$
are measurable and every set $A\subset X$
is contained in a Borel set $E$ such that $\mu(A)=\mu(E)$;
\item[\textup{(iii)}]
for any $x\in X$ and $r\in(0,\infty)$,
$\mu(B(x,r))\in(0,\infty)$;
\item[\textup{(iv)}]
$\diam\,(X)=\infty$ and
$(X,\rho,\mu)$ is non-atomic, which means $\mu(\{x\})=0$
for any $x\in X$.
\end{enumerate}

\begin{remark}
It follows from the doubling condition that,
for any
$\lz\in\left[1,\fz\right)$, $x\in X$, and $r\in(0,\infty)$,
\begin{equation}
\label{upperdoub}
\mu\left(B(x,\lz r)\right)\le
C_{\mu}\lz^n\mu\left(B(x,r)\right),
\end{equation}
where $n:=\log_2C_{\mu}$\index{$n$} is called
the \emph{upper dimension} of $X$.
By (iii) of the above assumptions,
we conclude that $C_{\mu}\in(1,\infty)$
and hence $n\in(0,\infty)$
(see also \cite[p.\,72]{AlMi2015}).
\end{remark}

Next, we introduce the definition
of the monotone geodesic property
on quasi-metric spaces.

\begin{definition}\label{mgp}
Let $(X,\rho)$ be a quasi-metric space.
The space $(X,\rho)$ is said to have the \emph{monotone geodesic property}
if there exists a positive constant $\widetilde{K}$ such that,
for any $s\in(0,\infty)$ and $x,y\in X$ with $\rho(x,y)\ge s$,
there exists a positive integer $m\in\mathbb{N}$
and a finite set $\{x_i\}_{i=0}^m$ of points
with both $x_0:=x$ and $x_m:=y$ such that, for any $i\in\{0,...,m-1\}$,
\begin{enumerate}
\item[\textup{(i)}]
$\rho(x_i,x_{i+1})\leq \widetilde{K}s$;
\item[\textup{(ii)}]
$\rho(x_{i+1},x_m)\leq\rho(x_i,x_m)-s$.
\end{enumerate}
\end{definition}

\begin{remark}\label{remarkmgp}
\begin{enumerate}
\item[\textup{(i)}]
To study the volume of spheres
on doubling metric measure spaces
and on groups of polynomial growth,
the monotone geodesic property on metric spaces
was originally introduced by Tessera in \cite{t2007},
where Tessera in \cite[Theirem 4]{t2007} proved
that this property implies
the F\o lner property for balls (see, for instance,
\cite[p.\,48]{t2007} for the definition of the F\o lner property).
To establish the boundedness
of the Littlewood--Paley operators on
localized $\BMO$ spaces on
doubling metric measure spaces,
a slightly stronger scale invariant version of
this monotone geodesic property was introduced
by Lin et al. \cite[Definition 4.1(II)]{lny2011},
which, in the same article \cite[Theorem 4.1]{lny2011}, was proved
to be equivalent to the chain ball property
introduced by Buckley in \cite{b1999}.
Moreover, this monotone geodesic property
was also used by Auscher and Routin \cite{ar2013}
to ensure the Hardy property (see \cite[Definition~3.4]{ar2013}
for its definition) satisfied by metric measure
spaces of homogeneous type under consideration.
For more equivalent properties and details on
the monotone geodesic property
over doubling metric measure spaces,
we refer the reader to \cite{ar2013,lny2011,t2007}.

\item[\textup{(ii)}]
If $(X,\rho)$
is a metric space, then
Definition~\ref{mgp} coincides with
\cite[Definition 4.1(II)]{lny2011}.
If $(X,\rho)$
is a metric space and $s=1$, then
Definition~\ref{mgp} coincides with the second properties of
\cite[Proposition 2]{t2007}.
\item[\textup{(iii)}]
Obviously, the Euclidean space $\mathbb{R}^n$ equipped
with the standard Euclidean distance has the
monotone geodesic property.
As was pointed out in \cite[p.\,890]{r2013},
this property in Definition~\ref{mgp} is satisfied by complete
doubling Riemannian manifolds and
also satisfied by both geodesics spaces and length
spaces.
\end{enumerate}
\end{remark}

For any quasi-metric space $(X,\rho)$ having
the monotone geodesic property in Definition~\ref{mgp},
we have the following \emph{chaining ball property}.

\begin{lemma}\label{assum}
Let $(X,\rho)$ be a quasi-metric space
satisfying the monotone geodesic property in Definition~\ref{mgp}.
Then
there exists a constant $\Lambda\in(1,\infty)$
such that, for any $x,y\in X$ and $r\in(0,\infty)$,
there exists a positive integer $N(x,y,r)\in\mathbb{N}$,
only depending on $x$, $y$, and $r$,
and a sequence $\{x_i\}_{i=0}^{N(x,y,r)}$ of points
with both $x_0:=x$ and $x_{N(x,y,r)}:=y$ such that
\begin{enumerate}
\item[\textup{(i)}]
for any $i\in\{0,...,N(x,y,r)-1\}$,
there exists a ball $D_i\subset B(x_i,r)\cap B(x_{i+1},r)$
such that
$$B(x_i,r)\cup B(x_{i+1},r)\subset\Lambda D_i;$$
\item[\textup{(ii)}]
for any $x,y\in X$ and $p\in(1,\infty)$,
\begin{align*}
\lim_{r\to0}N(x,y,r)r^p=0.
\end{align*}
\end{enumerate}
\end{lemma}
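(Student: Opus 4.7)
The plan is to apply the monotone geodesic property of Definition~\ref{mgp} at a scale proportional to~$r$. Without loss of generality I assume $\widetilde{K}\ge 1$ (otherwise replace $\widetilde{K}$ by $\max\{\widetilde{K},1\}$), and set $s:=r/(2K_0\widetilde{K})$. If $\rho(x,y)\ge s$, Definition~\ref{mgp} furnishes an integer $m\in\mathbb{N}$ and points $x_0=x,x_1,\dots,x_m=y$ with $\rho(x_i,x_{i+1})\le\widetilde{K}s=r/(2K_0)$ and $\rho(x_{i+1},x_m)\le\rho(x_i,x_m)-s$; if instead $\rho(x,y)<s$, I take $m:=1$ and $x_1:=y$, which still satisfies $\rho(x_0,x_1)\le r/(2K_0)$. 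In either case define $N(x,y,r):=m$.

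With the chain fixed, I take the intermediate balls to be centered at the $x_i$: set $D_i:=B(x_i,r/(4K_0))$ for $i\in\{0,\dots,m-1\}$ and $\Lambda:=4K_0^2+2K_0$. Part~(i) then reduces to four routine quasi-triangle verifications. The inclusion $D_i\subset B(x_i,r)$ is immediate from $K_0\ge 1$, and for any $w\in D_i$,
\[
\rho(w,x_{i+1})\le K_0\bigl[\rho(w,x_i)+\rho(x_i,x_{i+1})\bigr]\le K_0\left(\frac{r}{4K_0}+\frac{r}{2K_0}\right)=\frac{3r}{4},
\]
so $D_i\subset B(x_{i+1},r)$. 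Since $\Lambda r/(4K_0)\ge r$ we have $B(x_i,r)\subset\Lambda D_i$, and for any $w\in B(x_{i+1},r)$,
\[
\rho(w,x_i)\le K_0\bigl[\rho(w,x_{i+1})+\rho(x_{i+1},x_i)\bigr]\le K_0 r+\frac{r}{2}\le\frac{\Lambda r}{4K_0},
\]
showing $B(x_{i+1},r)\subset\Lambda D_i$; this gives the desired $B(x_i,r)\cup B(x_{i+1},r)\subset \Lambda D_i$.

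For (ii), iterating the monotonicity condition yields $\rho(x_i,x_m)\le\rho(x,y)-is$ for every $i\in\{0,\dots,m\}$. Taking $i=m$ and using $\rho(x_m,x_m)=0$ gives $ms\le\rho(x,y)$, so $N(x,y,r)\le 2K_0\widetilde{K}\rho(x,y)/r$ (the case $\rho(x,y)<s$ trivially satisfies $N=1$, which only improves the bound). For any $p\in(1,\infty)$ this implies
\[
N(x,y,r)\,r^p\le 2K_0\widetilde{K}\,\rho(x,y)\,r^{p-1}+r^p\longrightarrow 0\quad\text{as }r\to 0^+,
\]
which is (ii).

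I do not anticipate a substantive obstacle: the content of the argument is concentrated in the calibration $s=r/(2K_0\widetilde{K})$, which forces consecutive balls $B(x_i,r)$ and $B(x_{i+1},r)$ to overlap in a common sub-ball of radius comparable to~$r$. The only subtlety is tracking the quasi-triangle constant $K_0$ carefully when comparing $\rho(w,x_i)$ with $\rho(w,x_{i+1})$, since otherwise one risks a $\Lambda$ that depends on the chain length rather than only on $K_0$.
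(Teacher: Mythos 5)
Your proof is correct and follows essentially the same route as the paper's: the same calibration $s=r/(2K_0\widetilde K)$, the same two cases depending on whether $\rho(x,y)\ge s$, the same chain from the monotone geodesic property, and the same telescoping of condition (ii) of Definition~\ref{mgp} to bound $m$ by $\rho(x,y)/s$ (the paper obtains $\rho(x,y)/s+1$, yours is marginally sharper). The only cosmetic difference is your choice of $D_i=B(x_i,r/(4K_0))$ with $\Lambda=4K_0^2+2K_0$ versus the paper's $D_i=B(x_{i+1},r/(2K_0))$ with $\Lambda=4K_0^2$; both verifications are sound.
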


\begin{proof}
Let $\widetilde{K}\in[1,\infty)$ be
the positive constant in Definition~\ref{mgp}.
Let $x,y\in X$, $r\in(0,\infty)$, and $s:=r/(2K_0\widetilde{K})$.
We consider the following two cases on $s$.

\emph{Case 1)} $\rho(x,y)\ge s$. In this case, by
Definition~\ref{mgp}, we know that
there exists a positive integer $m$ and a finite set $\{x_i\}_{i=0}^m$ of points
with both $x_0:=x$ and $x_m:=y$ such that, for any $i\in\{0,...,m-1\}$,
both (i) and (ii) of Definition~\ref{mgp} hold true.
Let $N(x,y,r):=m$.
Now, for any $i\in\{0,...,m\}$, define $B_i:=B(x_i,r)$.
For any $i\in\{0,...,m-1\}$, define $D_i:=B(x_{i+1},r/(2K_0))$
and $\Lambda:=4K_0^2$. From this and both Definitions~\ref{2057}(iii)
and~\ref{mgp}(i),
we deduce that, for any $z\in D_i$ with $i\in\{0,...,m-1\}$,
\begin{align*}
\rho(x_i,z)\leq K_0\left[\rho(x_i,x_{i+1})+
\rho(x_{i+1},z)\right]<K_0\left[\widetilde{K}s+r/(2K_0)\right]=r,
\end{align*}
which further implies that $D_i\subset B_i$
and hence
\begin{align}\label{i1}
D_i\subset B_i\cap B_{i+1}.
\end{align}
Notice that both Definitions~\ref{2057}(iii) and~\ref{mgp}(i)
imply that,
for any $z\in B_i$ with $i\in\{0,...,m-1\}$,
\begin{align*}
\rho(x_{i+1},z)\leq K_0\left[\rho(x_{i+1},x_i)+\rho(x_i,z)\right]
<K_0(\widetilde{K}s+r)<2K_0r,
\end{align*}
which further implies that $B_i\subset\Lambda D_i$.
By both this and an obvious fact that $B_{i+1}\subset\Lambda D_i$
which is deduced from their definitions, we find that,
for any given $i\in\{0,...,m-1\}$,
\begin{align*}
B_i\cup B_{i+1}\subset\Lambda D_i,
\end{align*}
which, together with \eqref{i1}, then completes the proof of (i).

Next, we show (ii).
To this end, from Definition~\ref{mgp}(ii),
we deduce that
\begin{align*}
0&\leq\rho(x_{m-1},x_m)\leq\rho(x_{m-2},x_m)-s
\leq\cdots\leq\rho(x_0,x_m)-(m-1)s\\
&=\rho(x,y)-ms+s,
\end{align*}
which implies that
\begin{align*}
m\leq\frac{\rho(x,y)}{s}+1.
\end{align*}
By this, we further conclude that, for
any $p\in(1,\infty)$,
\begin{align*}
\lim_{r\to0}N(x,y,r)r^p
&=\lim_{r\to0}mr^p
\leq\lim_{r\to0}\left[\frac{\rho(x,y)}{s}+1\right]r^p\\
&=\lim_{r\to0}\left[\frac{2K_0\widetilde{K}\rho(x,y)}{r}+1\right]r^p
=0,
\end{align*}
which completes the proof of (ii) and hence Lemma~\ref{assum}
in the  case that $\rho(x,y)\ge s$.

\emph{Case 2)} $\rho(x,y)<s$. In this case, let $N(x,y,r):=1$,
$D_0:=B(x,r/(2K_0))$,
and $\Lambda:=4K_0^2$.
Since $N(x,y,r)=1$, it is easy to deduce that (ii) holds true
automatically in this case.
By an argument similar to that used in the
proof of Case 1), we find that (i) also holds true in this case.
This finishes the proof of the case that $\rho(x,y)<s$
and hence Lemma~\ref{assum}.
\end{proof}

In what follows, we \emph{always assume} that
$(X,\rho)$ is a quasi-metric space
having the monotone geodesic property in Definition~\ref{mgp},
which ensures that Lemma~\ref{assum} holds true.

The following system of dyadic cubes of $(X,\rho,\mu)$
was constructed by Hyt\"onen and Kairema in \cite[Theorem 2.2]{hk2012}.

\begin{lemma}\label{1516}
Let $(X,\rho,\mu)$ be a space of homogeneous type.
Suppose that constants $0<c_0\leq C_0<\infty$ and $\delta\in(0,1)$
satisfy $12K_0^3C_0\delta\leq c_0$.
Given a set of points,
$\{z_\alpha^k:\ k\in\mathbb{Z},\,\alpha\in I_k\}\subset X$
with $I_k$, for any $k\in\mathbb{Z}$, being a set of indices,
which has the following properties: for any $k\in\mathbb{Z}$,
\begin{align*}
\rho(z_\alpha^k,z_\beta^k)\ge c_0\delta^k
\ \text{if}\ \alpha\neq\beta,
\ \text{and}\
\min_{\alpha\in I_k}\rho(x,z_\alpha^k)<C_0\delta^k\ \text{for any}
\  x\in X,
\end{align*}
then there exists a family of sets,
$\{Q_\alpha^k:\ k\in\mathbb{Z},\,\alpha\in I_k\}$,
satisfying
\begin{enumerate}
\item[\textup{(i)}]
if $l,k\in\mathbb{Z}$ with $l\ge k$,
then, for any $\alpha\in I_k$ and $\beta\in I_l$,
either $Q_\beta^l\subset Q_\alpha^k$
or $Q_\beta^l\cap Q_\alpha^k=\emptyset$;
\item[\textup{(ii)}]
for any $k\in\mathbb{Z}$,
$X=\bigcup_{\alpha\in I_k}Q_\alpha^k$ is a disjoint union;
\item[\textup{(iii)}]
for any $k\in\mathbb{Z}$ and $\alpha\in I_k$,
\begin{align*}
B(z_\alpha^k,c_1\delta^k)\subset Q_\alpha^k
\subset B(z_\alpha^k,C_1\delta^k)=:B(Q_\alpha^k),
\end{align*}
where $c_1:=(3K_0^2)^{-1}c_0$ and $C_1:=2K_0C_0$;
\item[\textup{(iv)}]
if $l,k\in\mathbb{Z}$ with $l\ge k$, and both
$\alpha\in I_k$ and $\beta\in I_l$ satisfy
$Q_\beta^l\subset Q_\alpha^k$,
then $B(Q_\beta^l)\subset B(Q_\alpha^k)$.
\end{enumerate}
\end{lemma}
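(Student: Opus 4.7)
The plan is to follow a Christ-type construction on the reference points $\{z_\alpha^k\}$, along the lines of Hyt\"onen--Kairema. First I would fix, for each $k\in\mathbb{Z}$, an arbitrary well-ordering on the countable index set $I_k$, to be used only for tie-breaking. For every $k\in\mathbb{Z}$ and every $\beta\in I_{k+1}$, I would then define a unique parent $p_k(\beta)\in I_k$ by choosing the minimal $\alpha$ (in the fixed order) minimizing $\rho(z_\beta^{k+1},z_\alpha^k)$. Iterating this produces ancestor maps $p_k^l\colon I_l\to I_k$ for every $l\ge k$, and hence a tree structure on the collection of reference points. The cubes would then be defined by
\begin{align*}
Q_\alpha^k:=\bigcup_{l\ge k}\ \bigcup_{\substack{\beta\in I_l\\ p_k^l(\beta)=\alpha}}B\left(z_\beta^l,c_1\delta^l\right),
\end{align*}
with a disjointness adjustment that assigns each $x\in X$ to the unique $Q_\alpha^k$ whose index $\alpha$ is the eventually stable scale-$k$ ancestor of any sufficiently fine scale at which $x$ sits inside an inner ball.

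Properties (i), (ii) and (iv) would then follow directly from the tree structure: (i) and (iv) because the ancestor relation is transitive and monotone under $B(\cdot,c_1\delta^l)\subset B(\cdot,C_1\delta^l)$, while (ii) reflects the fact that every $x\in X$ admits a unique ancestor chain. For the outer inclusion $Q_\alpha^k\subset B(z_\alpha^k,C_1\delta^k)$ in (iii), I would iterate the quasi-triangle inequality along the ancestor chain joining any $z_\beta^l$ to its scale-$k$ ancestor $z_\alpha^k$: summing telescoping bounds of the form $\rho(z_{p^{j-1}(\beta)}^{l-j+1},z_{p^j(\beta)}^{l-j})\le 2K_0C_0\delta^{l-j}$ geometrically gives a quantity comparable to $K_0C_0\delta^k/(1-\delta)$, which absorbs into $C_1=2K_0C_0$ once $\delta$ is small enough.

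The main obstacle, and the reason for the quantitative hypothesis $12K_0^3C_0\delta\le c_0$, is the inner inclusion $B(z_\alpha^k,c_1\delta^k)\subset Q_\alpha^k$. This requires ruling out that a competing scale-$k$ reference point $z_\gamma^k$ with $\gamma\ne\alpha$ can be the ancestor of any $z_\beta^l$ whose inner ball intersects $B(z_\alpha^k,c_1\delta^k)$. The \emph{wobble} produced by descending along finer scales is controlled by a geometric series of the type $\sum_{j\ge 1}K_0^j C_0\delta^{k+j}$, and the hypothesis forces this wobble, combined with the inner radius $c_1\delta^k=c_0\delta^k/(3K_0^2)$, to stay strictly below the separation $c_0\delta^k$ between distinct scale-$k$ centers. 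Keeping careful track of the $K_0$ factors coming from each application of the quasi-triangle inequality is what yields the cubic $K_0^3$ in the hypothesis, and this bookkeeping is the most delicate part of the argument.
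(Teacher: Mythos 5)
The paper offers no proof of this lemma: it is imported verbatim from Hyt\"onen and Kairema \cite[Theorem~2.2]{hk2012}, so there is no internal argument to compare yours against, and I can only judge your sketch on its own merits. Your overall strategy (a Christ-type parent/ancestor tree on the reference points, outer inclusion from a geometric series of parent distances, inner inclusion from the separation hypothesis together with control of the ``wobble'') is the right one, and the two quantitative heuristics at the end are essentially the standard ones. The genuine gap is at the very first step, in the definition of the cubes.

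The set $Q_\alpha^k:=\bigcup_{l\ge k}\bigcup_{\beta:\,p_k^l(\beta)=\alpha}B(z_\beta^l,c_1\delta^l)$ cannot satisfy (ii), because even the union of these sets over all $\alpha\in I_k$ fails to cover $X$: the inner radius $c_1\delta^l=(3K_0^2)^{-1}c_0\delta^l$ is much smaller than the covering radius $C_0\delta^l$, and there exist points lying outside every inner ball at every scale. For instance, on $\mathbb{R}$ with $z_\beta^l=\beta\delta^l$, $K_0=1$, $c_0=C_0=1$, and $b:=\delta^{-1}\in\mathbb{N}$, the point $x$ whose base-$b$ digits all equal $\lfloor b/2\rfloor$ satisfies $\dist(x,\delta^l\mathbb{Z})>\delta^l/3=c_1\delta^l$ for every $l$. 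Your ``disjointness adjustment'' does not repair this, since it only assigns points that do lie in some inner ball at some sufficiently fine scale; and even for such points, the assertion that the scale-$k$ ancestors ``eventually stabilize'' is precisely the hard lemma (proximity of $x$ to both $z_\beta^l$ and $z_{\beta'}^{l'}$ must force $\beta'$ into the subtree of $\beta$), so it cannot be smuggled into the definition. The standard repair, and what Hyt\"onen--Kairema actually do, is to build the cubes from the descendant \emph{reference points} themselves --- closed precursors $\overline{\{z_\beta^l:(l,\beta)\preceq(k,\alpha)\}}$, their open complements, and a selection in between producing a genuine partition compatible across scales --- so that (i) and (ii) hold by construction, while the inner-ball half of (iii) becomes a theorem proved by exactly the induction-on-scales bookkeeping you allude to. A smaller but real point: (iv) is not ``direct from the tree structure''; it is the quantitative assertion $K_0[\rho(z_\beta^l,z_\alpha^k)+C_1\delta^l]\le C_1\delta^k$ and requires the same chain estimate as the outer inclusion, with room to spare.
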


Recall that a quasi-metric space $(X,\rho)$ is said to be \emph{geometric
doubling} if there exists a positive integer $A_0\in\mathbb{N}$
such that, for any $x\in X$ and $r\in(0,\infty)$,
the ball $B(x,r)$ can be covered by at most $A_0$ balls
$B(x_i,r/2)$. It is well known that any space of homogeneous type
is also a geometric doubling quasi-metric space (see also \cite[p.\,53]{AlMi2015}).
The following adjacent system of dyadic cubes of $(X,\rho,\mu)$
was established by Hyt\"onen and Kairema in \cite[Theorem 4.1]{hk2012}.

\begin{lemma}\label{1517}
Let $(X,\rho,\mu)$ be a space of homogeneous type.
Suppose that a constant $\delta\in(0,1)$ satisfies $96K_0^6\delta\leq1$.
Then there exists a positive integer $K:=K(K_0,A_0,\delta)$,
a countable set of points,
$$\left\{z_\alpha^{k,\tau}:\ \tau\in\{1,...,K\},\,k\in\mathbb{Z},\,
\alpha\in I_{k,\tau}\right\}$$
with $I_{k,\tau}$, for any $\tau\in\{1,...,K\}$ and $k\in\mathbb{Z}$,
being a set of indices,
and a finite number of dyadic grids,
$$\left\{\mathfrak{D}^\tau:=\left\{Q_\alpha^{k,\tau}:\ k\in\mathbb{Z},\,
\alpha\in I_{k,\tau}\right\}:\ \tau\in\{1,...,K\}\right\},$$
where, for any $\tau\in\{1,...,K\}$,
$\mathfrak{D}^\tau$ is a collection of dyadic cubes,
associated to dyadic points
$\{z_\alpha^{k,\tau}:\ k\in\mathbb{Z},\,\alpha\in I_{k,\tau}\}$,
satisfying Lemma~\ref{1516}.
In addition, there exists a positive constant $C$,
only depending on both $K_0$ and $\delta$,
such that, for any $x\in X$ and $r\in(0,\infty)$,
there exists a $\tau\in\{1,...,K\}$ and a dyadic cube
$Q_\alpha^{k,\tau}\in\mathfrak{D}^\tau$ with both $k\in\mathbb{Z}$
and $\alpha\in I_{k,\tau}$
such that
\begin{align}\label{1721}
B(x,r)\subset Q_\alpha^{k,\tau}
\quad\text{and}\quad
\diam\,(Q_\alpha^{k,\tau})\leq Cr.
\end{align}
\end{lemma}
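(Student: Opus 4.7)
The plan is to construct the $K$ dyadic grids in concert so that the combined set of center points at every scale forms a sufficiently dense net of $X$, at which point every small ball will be captured inside a single cube of at least one grid. First I would produce a single grid $\mathfrak{D}^1$ by selecting, at each level $k \in \mathbb{Z}$, a maximal $c_0\delta^k$-separated subset $\{z_\alpha^{k,1}\}_{\alpha \in I_{k,1}}$ via Zorn's lemma; by maximality these points are automatically a $C_0\delta^k$-net, so (once $c_0, C_0$ are tuned so that $12K_0^3 C_0 \delta \leq c_0$) the hypotheses of Lemma~\ref{1516} are met and that lemma yields the associated cubes $\{Q_\alpha^{k,1}\}$.

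Next I would construct further grids $\mathfrak{D}^2, \ldots, \mathfrak{D}^K$ iteratively, adding at each level additional maximal separated sets of ``gap'' points not yet within a prescribed small distance of the previously chosen centers. The choices must be coordinated between levels so that Lemma~\ref{1516}(i) remains satisfied for each individual grid $\mathfrak{D}^\tau$ (parent--child nesting within one grid). The process terminates after at most $K = K(K_0, A_0, \delta)$ steps, once the union $\bigcup_{\tau=1}^K \{z_\alpha^{k,\tau}\}_{\alpha \in I_{k,\tau}}$ is dense enough at each level $k$ to contain, within every ball of radius comparable to $\delta^k$, a point lying deep inside one of the cubes at scale $k$. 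The uniform bound on $K$ comes from the geometric doubling property of $X$: the number of $c_0\delta^k$-separated points inside any ball of radius proportional to $\delta^k$ is bounded by $A_0^{M}$ for some $M = M(K_0, \delta)$.

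For the adjacency claim \eqref{1721}, given a ball $B(x,r)$, I would choose $k \in \mathbb{Z}$ with $\delta^{k+1} < r \leq \delta^k$ and pass to a coarser level $k - L$, where $L = L(K_0, \delta) \in \mathbb{N}$ is fixed so that $c_1\delta^{k-L} \geq 2K_0^2 r$. By the density achieved in the previous step, some $\tau \in \{1,\ldots, K\}$ and $\alpha \in I_{k-L,\tau}$ yield $\rho(x, z_\alpha^{k-L,\tau})$ so small that the inner-ball inclusion $B(z_\alpha^{k-L,\tau}, c_1\delta^{k-L}) \subset Q_\alpha^{k-L,\tau}$ from Lemma~\ref{1516}(iii), combined with the quasi-triangle inequality with constant $K_0$, gives $B(x,r) \subset Q_\alpha^{k-L,\tau}$. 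The same item of Lemma~\ref{1516} immediately bounds $\diam(Q_\alpha^{k-L,\tau}) \leq 2C_1 \delta^{k-L} \leq C\, r$ with $C := 2C_1/\delta^L$ depending only on $K_0$ and $\delta$.

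The hard part is the iterative construction described in the second paragraph. The selection of additional maximal separated sets must simultaneously preserve within each $\mathfrak{D}^\tau$ the separation/covering hypothesis of Lemma~\ref{1516} at every level and the cross-level nesting, while achieving enough combined density across the $K$ grids in a uniformly bounded number of iterations. Reconciling these three demands is where geometric doubling is used crucially to cap $K$, and where the hypothesis $96K_0^6\delta \leq 1$ is needed to leave room in the separation and covering constants when passing between adjacent scales.
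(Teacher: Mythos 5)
First, a point of reference: the paper does not prove this lemma at all --- it is quoted from Hyt\"onen and Kairema \cite[Theorem 4.1]{hk2012}, just as Lemma~\ref{1516} is quoted from their Theorem 2.2. So your attempt has to be measured against the construction in that reference rather than against anything in the present paper.

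Your outline follows the right strategy: make the union over $\tau$ of the level-$k$ centers a sufficiently fine net of $X$, then capture $B(x,r)$ inside the inner ball $B(z_\alpha^{k-L,\tau},c_1\delta^{k-L})\subset Q_\alpha^{k-L,\tau}$ of a single cube; your third paragraph correctly reduces \eqref{1721} to that density statement. The genuine gap is that the step you yourself label ``the hard part'' --- producing the $K$ grids with a uniform bound $K=K(K_0,A_0,\delta)$ --- is described but never executed, so as written the proposal asserts rather than proves that the iteration terminates. The missing argument is short and you should supply it: if a point $x$ still belongs to the gap set at stage $\tau$, then for each $\sigma<\tau$ the stage-$\sigma$ selection, being maximal among gap points, placed a center $y_\sigma$ with $\rho(x,y_\sigma)<c_0\delta^k$, and since each $y_\sigma$ was itself a gap point at its stage, the points $y_1,\dots,y_{\tau-1}$ are pairwise $\varepsilon\delta^k$-separated inside $B(x,c_0\delta^k)$; geometric doubling then caps $\tau$, hence $K$. (Hyt\"onen--Kairema instead partition a fixed fine net into boundedly many separated families, which is the dual of your greedy scheme.) Note also that one of the three demands you say must be ``reconciled'' --- cross-level nesting of the chosen points --- is vacuous: the hypothesis of Lemma~\ref{1516} imposes only separation and covering at each level separately, with no compatibility between levels, so each level can be handled independently, completing each stage's gap points to a maximal $c_0\delta^k$-separated subset of all of $X$ to secure the covering condition. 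Two minor slips in the last paragraph: Lemma~\ref{1516}(iii) gives $\diam(Q_\alpha^{k-L,\tau})\leq 2K_0C_1\delta^{k-L}$ (you dropped the $K_0$), and since $\delta^{k+1}<r$ the resulting constant is $C=2K_0C_1\delta^{-L-1}$ rather than $2C_1\delta^{-L}$.
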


The following proposition is very useful, which might be well known;
we present the details here for the convenience of the reader.

\begin{proposition}\label{borelregular}
Let $(X,\rho_1,\mu)$ and $(Y,\rho_2,\nu)$ be two spaces of homogeneous type,
$\tau_X$ and $\tau_Y$ be, respectively,
the topology on $X$  induced by $\rho_1$
and the topology on $Y$ induced by $\rho_2$,
and $\mathscr{A}$ and $\mathscr{B}$,
respectively, the $\sigma$-algebra of $\mu$-measurable sets
and the $\sigma$-algebra of $\nu$-measurable sets.
If both $\mu$ and $\nu$ are Borel regular,
then the product measure $\lambda:=\mu\times\nu$ of the
product space $(X\times Y,\tau_{X\times Y},\lambda)$ is also Borel regular,
where $\tau_{X\times Y}$ denotes the product topology generated by
the topology basis $\tau_X\times\tau_Y$.
\end{proposition}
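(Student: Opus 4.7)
The plan is to reduce Borel regularity of $\lambda$ on $X\times Y$ to Borel regularity of the two factors. The measure $\lambda=\mu\times\nu$ naturally lives on the product $\sigma$-algebra $\mathscr{A}\otimes\mathscr{B}$ (via the standard Carath\'eodory extension from measurable rectangles), so the crux of the argument is to transfer the conclusions back to the Borel $\sigma$-algebra of the product topology $\tau_{X\times Y}$; this will require second countability of both factors.

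First I would verify second countability of $(X,\tau_X)$ and $(Y,\tau_Y)$. Fix $x_0\in X$; the doubling property of $\mu$ makes $(X,\rho_1)$ geometrically doubling, so each ball $B(x_0,k)$ is totally bounded and hence contains a countable dense subset. Uniting over $k\in\nn$ (using that $\rho_1(x_0,y)<\fz$ for every $y$) yields a countable dense subset $D\subset X$, and assumption (i) that $\rho_1$-balls form a basis of open neighborhoods then gives a countable topology basis. An analogous argument works for $Y$. Consequently $\tau_{X\times Y}$ admits a countable basis of rectangles $U_i\times V_j$ with $U_i\in\tau_X$ and $V_j\in\tau_Y$, so every open set in $X\times Y$ is a countable union of such rectangles. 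Since $U_i\in\mathscr{A}$ and $V_j\in\mathscr{B}$ by Borel regularity of $\mu$ and $\nu$, each $U_i\times V_j$ lies in $\mathscr{A}\otimes\mathscr{B}$, hence is $\lambda$-measurable; this gives the first clause of Borel regularity.

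For the approximation from above, I would use the outer-measure definition of $\lambda$. Given $E\subset X\times Y$ with $\lambda(E)<\fz$ and $k\in\nn$, there is a countable rectangular cover $\{A_{k,i}\times B_{k,i}\}_i$ of $E$ with $A_{k,i}\in\mathscr{A}$, $B_{k,i}\in\mathscr{B}$, and
\begin{align*}
\sum_{i}\mu(A_{k,i})\,\nu(B_{k,i})\le\lambda(E)+\frac{1}{k}.
\end{align*}
Applying Borel regularity of $\mu$ and $\nu$ inflates each $A_{k,i}$ and $B_{k,i}$ to Borel supersets $A_{k,i}'$ and $B_{k,i}'$ of the same measure. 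Each $A_{k,i}'\times B_{k,i}'$ is Borel in $\tau_{X\times Y}$ as a finite intersection of preimages under the continuous coordinate projections, so $F_k:=\bigcup_{i}A_{k,i}'\times B_{k,i}'$ is Borel, contains $E$, and obeys $\lambda(F_k)\le\lambda(E)+1/k$. The Borel set $F:=\bigcap_{k}F_k$ then contains $E$ and satisfies $\lambda(F)=\lambda(E)$. For the residual case $\lambda(E)=\fz$, $\sigma$-finiteness of $\lambda$ coming from the doubling property via $X\times Y=\bigcup_{n\in\nn}B(x_0,n)\times B(y_0,n)$ reduces the situation to the finite case piece by piece.

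The main obstacle is the second-countability step, since in general the Borel $\sigma$-algebra of the product topology need not coincide with $\mathscr{A}\otimes\mathscr{B}$; in a quasi-metric setting, one has to combine the geometric doubling consequence of $\mu$ and $\nu$ being doubling with assumption (i) on the neighborhood basis, and the extraction of a rational-radius countable basis needs some care because the quasi-triangle constant $K_0$ is allowed to exceed one. Once this topological input is secured, the rest of the proof is a routine measure-theoretic manipulation built on the outer-measure description of $\mu\times\nu$.
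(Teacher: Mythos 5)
Your proposal is correct and follows essentially the same route as the paper: establish second countability of each factor, conclude that product-open sets lie in $\mathscr{A}\otimes\mathscr{B}$ and are hence $\lambda$-measurable, and then approximate an arbitrary set from above by intersecting countably many Borel rectangular covers obtained by inflating each factor of a near-optimal cover. The only cosmetic difference is in the second-countability step, where the paper proves separability via the Hyt\"onen--Kairema dyadic points and invokes the Mac\'ias--Segovia metrization to get a countable basis, while you argue directly from geometric doubling and rational-radius balls (correctly flagging the $K_0>1$ issue); your $\sigma$-finiteness detour for the case $\lambda(E)=\infty$ is unnecessary, since $X\times Y$ itself is then a Borel superset of the same (infinite) measure.
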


To prove Proposition~\ref{borelregular},
we need the following lemma.

\begin{lemma}\label{1949}
Any space of homogeneous type is separable and
has a countable topological basis.
\end{lemma}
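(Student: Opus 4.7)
The plan is to extract a countable dense set directly from the dyadic reference points of Lemma~\ref{1516}, and then build a countable basis by taking quasi-balls centered at points of this dense set with geometrically shrinking radii.

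First, fix parameters $0<c_0\le C_0<\infty$ and $\delta\in(0,1)$ with $12K_0^3C_0\delta\le c_0$, and consider the family $\{z_\alpha^k:k\in\mathbb{Z},\,\alpha\in I_k\}$ produced by Lemma~\ref{1516}. The property $\min_{\alpha\in I_k}\rho(x,z_\alpha^k)<C_0\delta^k$, together with $\delta^k\to 0$ as $k\to\infty$, immediately forces $D:=\bigcup_{k\in\mathbb{Z}}\{z_\alpha^k:\alpha\in I_k\}$ to be dense in $X$, so it remains to show $D$ is countable. For this I would invoke the geometric doubling property of $X$, a standard consequence of the doubling of $\mu$ which the paper recalls just before Lemma~\ref{1517}. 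Since the family $\{z_\alpha^k\}_{\alpha\in I_k}$ is $c_0\delta^k$-separated, a geometric doubling count shows that any ball $B(x_0,R)$ with fixed center $x_0\in X$ contains only finitely many of them; writing $X=\bigcup_{m\in\mathbb{N}}B(x_0,m)$ then forces each $I_k$ to be countable, and so is $D$ as a countable union of countable sets. This already gives separability.

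For the countable basis I would set $\mathcal{B}:=\{B(z,\delta^k):z\in D,\,k\in\mathbb{Z}\}$, which is clearly countable. Given any open $U\subset X$ and any $x\in U$, assumption (i) on $(X,\rho,\mu)$ furnishes $r\in(0,\infty)$ with $B(x,r)\subset U$. Choose $k\in\mathbb{Z}$ so large that $2K_0\delta^k<r$, and then by density select $z\in D$ with $\rho(x,z)<\delta^k$. The quasi-metric triangle inequality gives, for every $y\in B(z,\delta^k)$,
\begin{align*}
\rho(x,y)\le K_0\left[\rho(x,z)+\rho(z,y)\right]<2K_0\delta^k<r,
\end{align*}
so $x\in B(z,\delta^k)\subset B(x,r)\subset U$, and the basis property is verified.

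The main (minor) subtlety is handling the quasi-metric constant $K_0$ when transferring a neighborhood of $x$ into one centered at a nearby dyadic point; this costs exactly a factor of $K_0$ in the radius and is absorbed by sending $k\to\infty$. Substantively, the only place the geometry of $X$ enters is in the countability of each $I_k$, which rests on the standard passage from doubling of the measure to geometric doubling of the underlying quasi-metric.
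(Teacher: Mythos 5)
Your proof is correct. The separability half is essentially the paper's argument: both of you take the dyadic reference points of Lemma~\ref{1516} as the candidate dense set, get countability from geometric doubling applied to the $c_0\delta^k$-separated families inside balls $B(x_0,m)$, and get density from $\min_{\alpha}\rho(x,z_\alpha^k)<C_0\delta^k\to0$ (the paper routes this through the cubes and the radius $C_1\delta^k$, an immaterial difference). For the countable basis, however, you take a genuinely different and more elementary route. The paper invokes the Mac\'ias--Segovia metrization theorem to produce a metric $\varrho\approx\rho^\theta$ inducing the same topology and then cites the fact that a separable metric space is second countable; you instead directly exhibit the countable family $\mathcal{B}=\{B(z,\delta^k):z\in D,\ k\in\mathbb{Z}\}$ and verify the basis property by hand, absorbing the quasi-metric constant $K_0$ by shrinking $\delta^k$. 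Your approach avoids the black box of metrization and is shorter; its only cost is that it leans on the paper's standing assumption (i) in two places --- once, as you note, to find $r$ with $B(x,r)\subset U$, and once more (which you should state explicitly) to know that the balls $B(z,\delta^k)$ in $\mathcal{B}$ are themselves open sets, since in a bare quasi-metric space balls need not be open and a basis must consist of open sets. Under the paper's assumptions this is harmless, so the argument stands.
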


\begin{proof}
Let $(X,\rho,\mu)$ be a space of homogeneous type.
We first prove that $(X,\rho,\mu)$ is separable.
To this end, it is suffices to show that the set
$Z:=\{z_\alpha^k\}_{k\in\mathbb{Z},\,\alpha\in I_k}$
of dyadic points in Lemma~\ref{1516} is a countable dense subset of $X$.
Fix an $x_0\in X$. On the one hand,
by both (i) and (iii) of Lemma~\ref{1516}
and the fact that
$(X,\rho)$ is geometric doubling, we find that,
for any $k\in\mathbb{Z}$ and $N\in\mathbb{N}$,
the cardinality of the set
\begin{align*}
Z_{k,N}:=\left\{z\in Z:\ B(z,c_1\delta^k)\subset B(x_0,N)\right\}
\end{align*}
is finite, where both $c_1$ and $\delta$ are the same as in Lemma~\ref{1516}.
From this and both (ii) and (iii) of Lemma~\ref{1516}, we further deduce that
\begin{align*}
Z=\bigcup_{k\in\mathbb{Z}}\bigcup_{N\in\mathbb{N}}Z_{k,N}
\end{align*}
is a countable subset of $X$.
On the other hand,
by Lemma~\ref{1516}(iii), we find that,
for any $x\in X$ and $\varepsilon\in(0,\infty)$,
there exists a $k\in\mathbb{Z}$ satisfying that $C_1\delta^k<\varepsilon$
with $C_1$ in Lemma~\ref{1516},
and an $\alpha\in I_k$ such that
$x\in Q_\alpha^k\subset B(z_\alpha^k,C_1\delta^k)$
and hence $\rho(x,z_\alpha^k)<C_1\delta^k<\varepsilon$.
This further implies that $Z$ is dense in $X$.
Thus, $Z$ is a countable dense subset of $X$.

Next, we show that $(X,\rho,\mu)$, with its topology
induced by $\rho$, has a countable topological
basis. Notice that, from an argument similar to that
used in the proof of
\cite[Theorem 2]{MaSe79i}, we deduce that
there exists a $\theta\in(0,1)$ and a metric $\varrho$
such that $\varrho\approx\rho^\theta$
and hence the topologies induced, respectively, by
$\varrho$ and $\rho$ coincide.
By this, $X$ is separable,
and \cite[p.\,204, Proposition 25]{r2010},
we conclude that the metric space $(X,\varrho)$
has a countable topological
basis and hence so does $(X,\rho)$.
This finishes the proof of Lemma~\ref{1949}.
\end{proof}

\begin{proof}[Proof of Proposition~\ref{borelregular}]
To show that $\lambda$ is Borel regular, we first prove that,
for any open subset $R$ of $X\times Y$,
$R$ is $\lambda$-measurable.
Let $\mathscr{A}\otimes\mathscr{B}$ be the $\sigma$-algebra
generated by the Cartesian product $\mathscr{A}\times\mathscr{B}$.
Since both $\mu$ and $\nu$ are Borel regular, it follows that $\tau_X\subset\mathscr{A}$ and $\tau_Y\subset\mathscr{B}$.
By this, the definitions of both $\tau_{X\times Y}$
and $\mathscr{A}\otimes\mathscr{B}$,
and Lemma~\ref{1949},
it is easy to show that,
for any open subset $R$ of $X\times Y$,
$$
R\in\tau_{X\times Y}\subset
\left(\mathscr{A}\otimes\mathscr{B}\right).
$$
From this and the fact that
all the elements of $\mathscr{A}\otimes\mathscr{B}$
are $\lambda$-measurable,
we deduce that, for any $R\in\tau_{X\times Y}$,
$R$ is $\lambda$-measurable.

Next, we show that, for any set $A\subset X\times Y$,
there exists a Borel subset $F$ of $X\times Y$
such that $A\subset F$
and $\lambda(A)=\lambda(F)$.
Recall that the product measure $\lambda$ on
the product space $X\times Y$ is defined by setting,
for any subset $A$ of $X\times Y$,
\begin{align*}
\lambda(A):=\inf\left\{\sum_{j\in\mathbb{N}}\lambda(P_j):\
P_j\in\mathscr{R},\ A\subset\bigcup_{j\in\mathbb{N}} P_j\right\},
\end{align*}
where $\mathscr{R}$ is defined to be the set of all the
subsets $P$ of $X\times Y$ satisfying that there exist
$P^X\in\mathscr{A}$ and $P^{Y}\in\mathscr{B}$ such that
$P=P^X\times P^{Y}$.
Let $A\subset X\times Y$.
We consider the following two cases on $\lambda(A)$.

\emph{Case 1)} $\lambda(A)=\infty$. In this case,
it is easy to show that both $A\subset X\times Y$
and $\infty=\lambda(A)\leq\lambda(X\times Y)$,
which further implies that $\lambda(A)=\lambda(X\times Y)$.
Notice that $X\times Y$ is obviously a Borel set.
This finishes the proof of Case 1).

\emph{Case 2)} $\lambda(A)<\infty$. In this case,
by the definition of
$\lambda(A)$, we know that, for any $n\in\mathbb{N}$,
there exist $\{A_{n,j}^X:\ A_{n,j}^X\subset X\}_{j\in\mathbb{N}}$ and
$\{A^{Y}_{n,j}:\ A^{Y}_{n,j}
\subset Y\}_{j\in\mathbb{N}}$ such that
\begin{align}\label{1939}
A\subset\bigcup_{j\in\mathbb{N}}\left(A^X_{n,j}
\times A^{Y}_{n,j}\right)=:F_n
\end{align}
and
\begin{align}\label{1907}
0\leq\lambda(F_n)-\lambda(A)<\frac{1}{n}.
\end{align}
Since $\mu$ is Borel regular on $X$, it follows that,
for any $n,j\in\mathbb{N}$,
there exists a Borel subset $\widetilde{A}^X_{n,j}$ of $X$ such that
\begin{align}\label{1906}
A^X_{n,j}\subset\widetilde{A}^X_{n,j}
\quad\text{and}\quad
\mu\left(A^X_{n,j}\right)=\mu\left(\widetilde{A}^X_{n,j}\right).
\end{align}
Similarly, for any $n,j\in\mathbb{N}$,
we can find a Borel subset
$\widetilde{A}^{Y}_{n,j}$
of $Y$ such that
\begin{align}\label{2015}
A^Y_{n,j}\subset\widetilde{A}^Y_{n,j}
\quad\text{and}\quad
\nu\left(A^Y_{n,j}\right)=\nu\left(\widetilde{A}^Y_{n,j}\right).
\end{align}
For any $n\in\mathbb{N}$, we let
\begin{align*}
\widetilde{F}_n:=\bigcup_{j\in\mathbb{N}}
\left(\widetilde{A}^X_{n,j}
\times\widetilde{A}^{Y}_{n,j}\right)
\quad\text{and}\quad
F:=\lim_{k\to\infty}\bigcap_{n\ge k}\widetilde{F}_n.
\end{align*}
On the one hand, by the definitions of both $F_n$ and $\widetilde{F}_n$,
\eqref{1906}, \eqref{2015}, and \eqref{1907}, we conclude that,
for any $n\in\mathbb{N}$,
\begin{align*}
0&\leq\lambda(\widetilde{F}_n)-\lambda(A)
=\sum_{j\in\mathbb{N}}\mu\left(\widetilde{A}^X_{n,j}\right)
\nu\left(\widetilde{A}^{Y}_{n,j}\right)-\lambda(A)
\\
&=\sum_{j\in\mathbb{N}}\mu\left(A^X_{n,j}\right)
\nu\left(A^{Y}_{n,j}\right)-\lambda(A)
=\lambda(F_n)-\lambda(A)<\frac{1}{n},
\end{align*}
which, together with both the definition of $F$ and \eqref{1939},
further implies that $\lambda(F)=\lambda(A)$
and $A\subset F$.
On the other hand, since $\widetilde{A}^X_{n,j}$
is a Borel subset of $X$
and $\widetilde{A}^{Y}_{n,j}$ is a Borel subset of $Y$,
it follows that $\widetilde{A}^X_{n,j}\times \widetilde{A}^{Y}_{n,j}$
is a Borel subset of $X\times Y$ and hence so is $\widetilde{F}_n$.
This, combined with the definition of $F$, further implies that
$F$ is a Borel subset of $X\times Y$.
This finishes the proof of Case 2)
and hence Proposition~\ref{borelregular}.
\end{proof}

Using Proposition~\ref{borelregular},
we immediately obtain the following conclusion
which is used to prove Lemma \ref{1546} below;
we omit the details here.

\begin{corollary}\label{2014}
Let $(X,\rho,\mu)$ be a
space of homogeneous type and
$m$ the one-dimensional Lebesgue measure on $\mathbb{R}$.
Then the product measure $\lambda:=\mu\times m$
is Borel regular on the product measure space
$(X\times\mathbb{R},\lambda)$.
\end{corollary}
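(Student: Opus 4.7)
The corollary is a direct specialization of Proposition~\ref{borelregular}, so the plan is simply to check that the hypotheses of that proposition are met when one takes $(Y,\rho_2,\nu)$ to be $(\mathbb{R},|\cdot|,m)$, and then invoke it.

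First I would verify that $(\mathbb{R},|\cdot|,m)$ is a space of homogeneous type in the sense of the article. The Euclidean absolute value $|\cdot|$ is a genuine metric on $\mathbb{R}$, hence trivially a quasi-metric with constant $K_0=1$, and the Lebesgue measure of any open interval $(a,b)$ is $b-a$, so $m(B(x,2r))=4r=2\cdot m(B(x,r))$, which gives the doubling condition with constant $C_m=2$. The standing basic assumptions on the space are also immediate for $(\mathbb{R},|\cdot|,m)$: the open intervals $\{B(x,r)\}_{r>0}$ form a neighborhood basis at each $x\in\mathbb{R}$, every ball has finite positive measure, $\diam(\mathbb{R})=\infty$, and $m(\{x\})=0$ for each $x\in\mathbb{R}$.

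Next I would recall that the one-dimensional Lebesgue measure is Borel regular: every open set of $\mathbb{R}$ is Lebesgue measurable, and for any Lebesgue measurable set $A\subset\mathbb{R}$ (in particular, any set, after the usual outer-measure extension) there is a $G_\delta$ (hence Borel) set $E\supset A$ with $m(E)=m(A)$. This is a standard fact from real analysis, so no detailed verification is needed.

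Having checked both inputs, I would apply Proposition~\ref{borelregular} with $Y:=\mathbb{R}$, $\rho_2:=|\cdot|$, and $\nu:=m$. The conclusion of that proposition states precisely that the product measure $\lambda=\mu\times m$ is Borel regular on the product space $(X\times\mathbb{R},\tau_{X\times\mathbb{R}},\lambda)$, which is exactly the statement of Corollary~\ref{2014}. There is no real obstacle here; the only minor point to be explicit about, if one wished, is that the topology on $X\times\mathbb{R}$ used throughout the paper (induced by the quasi-metric $d((x,t),(y,s))=\max\{\rho(x,y),|t-s|^{1/p}\}$ introduced earlier) coincides with the product topology $\tau_{X\times\mathbb{R}}$ generated by $\tau_X\times\tau_{\mathbb{R}}$, since $d$-balls and products of $\rho$- and $|\cdot|$-balls generate the same open sets. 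This reconciles the topological setting of Proposition~\ref{borelregular} with the parabolic quasi-metric framework in which Lemma~\ref{1546} will later be applied.
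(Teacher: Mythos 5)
Your proposal is correct and matches the paper's intent exactly: the paper derives Corollary~\ref{2014} as an immediate application of Proposition~\ref{borelregular} with $(Y,\rho_2,\nu)=(\mathbb{R},|\cdot|,m)$ and omits the details, which are precisely the routine verifications you supply. The remark about reconciling the product topology with the quasi-metric $d$ is a sensible extra touch but not a substantive deviation.
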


\section{Parabolic Muckenhoupt
Weights\\ on Spaces of Homogeneous Type}\label{section3}

Let $(X,\rho,\mu)$ be a space of homogeneous type
having the monotone geodesic property.
In this section, we introduce the
parabolic Muckenhoupt weight class on $X\times\mathbb{R}$
and present some related basic properties.
We say that $X$ is \emph{spatial} and $\mathbb{R}$
\emph{temporal} related to the product space $X\times\mathbb{R}$.
For any given set $E\subset X\times\mathbb{R}$
and any $a\in\mathbb{R}$,
the \emph{temporal translation $E+a$} of $E$
is defined by setting
\begin{align*}
E+a:=\left\{(x,t+a):\ (x,t)\in E\right\}.
\end{align*}
To introduce the definition of the parabolic Muckenhoupt weight class,
we first introduce the following concept of parabolic space-time
cylinders. Throughout this article, we \emph{always fix}
a parameter $p\in(1,\infty)$.

\begin{definition}\label{3.1}
Let $(X,\rho,\mu)$ be a space of homogeneous type
and $\gamma\in[0,1)$.
For any $x\in X$, $t\in\mathbb{R}$, and $l\in(0,\infty)$,
let
$$R(x,t,l):=B(x,l)\times(t-l^p,t+l^p),$$
$$R^-(x,t,l,\gamma):=B(x,l)\times(t-l^p,t-\gamma l^p),$$
and
$$R^+(x,t,l,\gamma):
=R^-(x,t,l,\gamma)+(1+\gamma)l^p
=B(x,l)\times(t+\gamma l^p,t+l^p).$$
The set $R(x,t,l)$ is called an \emph{$(x,t)$-centered parabolic
cylinder} with edge length $l$ of $X\times\mathbb{R}$.

Simply write $R:=R(x,t,l)$,
$R^-(\gamma):=R^-(x,t,l,\gamma)$,
$R^+(\gamma):=R^+(x,t,l,\gamma)$,
$$R^-:=R^-(x,t,l,0),$$
and $R^+:=R^+(x,t,l,0)$ if there exists no confusion.
Moreover, $R^{\pm}(\gamma)$ are called, respectively,
the \emph{upper} and the \emph{lower parts} of $R$, and $\gamma$
is called the \emph{time lag}.
\end{definition}

Recall that, for any $q\in(0,\infty)$,
the \emph{Lebesgue space}
$L^q(X\times\mathbb{R})$
is defined to be the set of
all the $\lambda$-measurable functions $f$
on $X\times\mathbb{R}$ such that
\begin{align*}
\|f\|_{L^q(X\times\mathbb{R})}:=
\left[\int_{X\times\mathbb{R}}|f(x,t)|^q\,d\mu(x)\,dt
\right]^{1/q}<\fz.
\end{align*}
For any $q\in(0,\fz)$,
the set of all the locally $q$-integrable
functions on $X\times\mathbb{R}$,
$L_\loc^q(X\times\mathbb{R})$,
is defined to be the set of all the
$\lambda$-measurable functions
$f:\ X\times\mathbb{R}\to\mathbb{C}$ satisfying that,
for any $(x,t)\in X\times\mathbb{R}$,
there exists an $r\in(0,\infty)$ such that
\begin{align*}
\left\|f\mathbf{1}_{R(x,t,r)}\right\|_{L^q(X\times\mathbb{R})}
<\infty.
\end{align*}
A \emph{weight} $\omega$ always means a real-valued
$\lambda$-almost everywhere positive locally integrable function
on $X\times\mathbb{R}$. For any
measurable set $E\subset X\times\mathbb{R}$,
we let
\begin{align*}
\omega(E):=\int_E\omega
\end{align*}
and, for any $f\in L^1_\loc(X\times\mathbb{R})$,
\begin{align*}
f_E:=\fint_Ef
:=\frac{1}{\lambda(E)}\int_Ef.
\end{align*}
Here and thereafter, we \emph{always omit} the differential $d\mu(x)\,dt$
in all integral representations to simplify the
presentation if there exists no confusion.

We now introduce the parabolic Muckenhoupt weight
class via parabolic cylinders as follows.

\begin{definition}\label{defballpmw}
Let $(X,\rho,\mu)$ be a space of homogeneous type,
$q\in(1,\infty)$, and $\gamma\in[0,1)$.
\begin{enumerate}
\item[\textup{(i)}]
A weight $\omega$ is said to belong
to the \emph{parabolic Muckenhoupt weight class via parabolic cylinders},
$A^+_q(\gamma)$, if
\begin{align}\label{A+}
[\omega]_{A^+_q(\gamma)}:=
\sup_{\{R(x,t,l):\ (x,t)\in X\times\mathbb{R},\,
l\in(0,\infty)\}}\left[\fint_{R^-(\gamma)}\omega\right]
\left[\fint_{R^+(\gamma)}\omega^{1-q'}\right]^{q-1}<\infty.
\end{align}
\item[\textup{(ii)}]
A weight $\omega$ is said to belong
to the \emph{parabolic Muckenhoupt weight class via parabolic cylinders},
$A^-_q(\gamma)$, if
\begin{align}\label{A-}
[\omega]_{A^-_q(\gamma)}:=
\sup_{\{R(x,t,l):\ (x,t)\in X\times\mathbb{R},\,
l\in(0,\infty)\}}\left[\fint_{R^+(\gamma)}\omega\right]
\left[\fint_{R^-(\gamma)}\omega^{1-q'}\right]^{q-1}<\infty,
\end{align}
\end{enumerate}
where $q'\in(1,\infty)$ denotes the conjugate index of $q$, namely,
$1/q+1/q'=1$, and the suprema in both \eqref{A+} and \eqref{A-}
are taken over all the parabolic cylinders $R(x,t,l)$ of $X\times\mathbb{R}$ with
both $(x,t)\in X\times\mathbb{R}$ and $l\in(0,\infty)$.
\end{definition}

\begin{remark}
If $X=\mathbb{R}^n$, $$\rho(x,y):=
\max\left\{2|x_i-y_i|:\ i\in\{1,...,n\}\right\}$$
for any $x:=(x_1,...,x_n),y:=(y_1,...,y_n)\in\mathbb{R}^n$,
and
$\mu=\mathfrak{L}^n$ is the Lebesgue measure on $\mathbb{R}^n$, then,
in this case,
Definition~\ref{defballpmw} coincides with \cite[Definition 3.2]{ks2016}.
\end{remark}

Next, we present some elementary and useful properties
of parabolic Muckenhoupt weights;
since their proofs are similar to those of
\cite[Propositions 3.3 and 3.4]{ks2016}, we omit the details here.

\begin{proposition}\label{1627}
Let $(X,\rho,\mu)$ be a space of homogeneous type,
$q\in(1,\infty)$, and $\gamma\in[0,1)$.
\begin{enumerate}
\item[\textup{(i)}]
Assume that $u,v\in A^+_q(\gamma)$.
Then both $f:=\min\{u,\,v\}$ and $g:=\max\{u,\,v\}$
belong to $A^+_q(\gamma)$.
Moreover, there exists a positive constant $C_q$ such that
\begin{align*}
[f]_{A^+_q(\gamma)}\leq C_q
\left([u]_{A^+_q(\gamma)}+[v]_{A^+_q(\gamma)}\right)
\ \text{and}\
[g]_{A^+_q(\gamma)}\leq C_q
\left([u]_{A^+_q(\gamma)}+[v]_{A^+_q(\gamma)}\right).
\end{align*}
\item[\textup{(ii)}]
For any $r\in[q,\infty)$, $A^+_q(\gamma)\subset A^+_r(\gamma)$.
\item[\textup{(iii)}]
$\omega\in A^+_q(\gamma)$
if and only if $\omega^{1-q'}\in A^-_{q'}(\gamma)$.
\item[\textup{(iv)}]
Let $\omega\in A^+_q(\gamma)$. Then,
for any parabolic cylinder $R(x,t,l)$
with both $(x,t)\in X\times\mathbb{R}$
and $l\in(0,\infty)$,
any $S\subset R^+(\gamma)$,
and any $P\subset R^-(\gamma)$,
\begin{align}\label{1826}
\omega(R^-(\gamma))\leq
[\omega]_{A^+_q(\gamma)}
\left[\frac{\lambda(R^-(\gamma))}{\lambda(S)}\right]^q\omega(S)
\end{align}
and
\begin{align}\label{1827}
\omega^{1-q'}(R^+(\gamma))\leq
[\omega]^{q'-1}_{A^+_q(\gamma)}
\left[\frac{\lambda(R^+(\gamma))}
{\lambda(P)}\right]^{q'}\omega^{1-q'}(P).
\end{align}
\end{enumerate}
\end{proposition}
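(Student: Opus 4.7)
The proposition consists of four formal claims about parabolic Muckenhoupt weights; none of them uses the monotone geodesic property, and each mirrors a standard fact in the Euclidean theory of \cite{ks2016}. For (i), I would combine the pointwise bounds $f\le u$, $f\le v$ with the identity $f^{1-q'}=\max\{u^{1-q'},v^{1-q'}\}\le u^{1-q'}+v^{1-q'}$ (valid because $1-q'<0$) and use $(a+b)^{q-1}\le C_q(a^{q-1}+b^{q-1})$ to split the $A^+_q(\gamma)$-quantity for $f$ into two pieces each dominated by $[u]_{A^+_q(\gamma)}$ or $[v]_{A^+_q(\gamma)}$. The case of $g:=\max\{u,v\}$ is symmetric, using $g^{1-q'}=\min\{u^{1-q'},v^{1-q'}\}\le u^{1-q'}$ (resp.\ $v^{1-q'}$) and $\fint_{R^-(\gamma)}g\le\fint_{R^-(\gamma)}u+\fint_{R^-(\gamma)}v$.

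For (ii), set $\alpha:=(r-1)/(q-1)\ge 1$ so that $\omega^{1-r'}=(\omega^{1-q'})^{1/\alpha}$, and apply Jensen's inequality to the concave map $x\mapsto x^{1/\alpha}$; this gives $[\fint_{R^+(\gamma)}\omega^{1-r'}]^{r-1}\le[\fint_{R^+(\gamma)}\omega^{1-q'}]^{q-1}$, whence $[\omega]_{A^+_r(\gamma)}\le[\omega]_{A^+_q(\gamma)}$. Claim (iii) is pure algebra: writing $\eta:=\omega^{1-q'}$ and using $qq'=q+q'$, the identity $(1-q')(1-q)=1$ gives $\eta^{1-q}=\omega$; raising the $A^+_q(\gamma)$-quantity to the power $q'-1$ and invoking $(q-1)(q'-1)=1$ then shows $[\omega^{1-q'}]_{A^-_{q'}(\gamma)}=[\omega]_{A^+_q(\gamma)}^{q'-1}$, which proves both implications simultaneously.

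For (iv), the key input for \eqref{1826} is a lower bound on $\fint_{R^+(\gamma)}\omega^{1-q'}$ in terms of $\omega(S)$. Applying Hölder's inequality with exponents $q$ and $q'$ to $\lambda(S)=\int_S\omega^{1/q}\omega^{-1/q}$, together with the identity $-q'/q=1-q'$, yields $\lambda(S)^{q'}\le\omega(S)^{q'-1}\omega^{1-q'}(S)\le\omega(S)^{q'-1}\omega^{1-q'}(R^+(\gamma))$. Raising to the power $q-1$ and using $q'(q-1)=q$ and $(q'-1)(q-1)=1$ produces the desired lower bound; combining it with the defining $A^+_q(\gamma)$ inequality and with $\lambda(R^-(\gamma))=\lambda(R^+(\gamma))$ gives \eqref{1826}. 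For \eqref{1827} I would dualize: by (iii), $\omega^{1-q'}\in A^-_{q'}(\gamma)$ with $[\omega^{1-q'}]_{A^-_{q'}(\gamma)}=[\omega]_{A^+_q(\gamma)}^{q'-1}$, and the analogue of \eqref{1826} for the class $A^-_{q'}(\gamma)$ (with the roles of $R^+(\gamma)$ and $R^-(\gamma)$ exchanged, derived by the same Hölder argument) applied to $\omega^{1-q'}$ yields \eqref{1827}. The only delicate point in the whole proof is keeping track of the conjugate-exponent identities $q/q'=q-1$, $-q'/q=1-q'$, $q'(q-1)=q$, and $(q-1)(q'-1)=1$ at the correct moments; all follow from $qq'=q+q'$, and once set up the estimates are entirely routine.
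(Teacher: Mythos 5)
Your proposal is correct and follows exactly the standard route that the paper itself invokes (it omits the proof, referring to the Euclidean analogues in Kinnunen--Saari): pointwise comparison of $f^{1-q'}$ and $g^{1-q'}$ with $u^{1-q'},v^{1-q'}$ for (i), Jensen for (ii), the conjugate-exponent algebra for (iii), and the Hölder splitting $\lambda(S)=\int_S\omega^{1/q}\omega^{-1/q}$ combined with $\lambda(R^-(\gamma))=\lambda(R^+(\gamma))$ and duality for (iv). All the exponent identities you use check out, so nothing further is needed.
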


\section{Independent of the Choice of the Time Lag}
\label{section4}

Let $(X,\rho,\mu)$ be a space of
homogeneous type
having the monotone geodesic property.
In this section, we establish the relation between the
parabolic Muckenhoupt weight class with different time lag.
More precisely, we aim to prove the following theorem.

\begin{theorem}\label{Idp}
Let $(X,\rho,\mu)$ be a space of homogeneous type,
$q\in(1,\infty)$, and $\gamma,\gamma'\in[0,1)$.
\begin{enumerate}
\item[\textup{(i)}]
If $0\leq\gamma\leq\gamma'<1$,
then $A^+_q(\gamma)\subset A^+_q(\gamma')$.
\item[\textup{(ii)}]
If $0<\gamma'<\gamma<1$, then $A^+_q(\gamma)\subset A^+_q(\gamma')$.
\end{enumerate}
\end{theorem}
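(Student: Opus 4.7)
The two cases have quite different characters: part (i), where $\gamma\le\gamma'$, is an elementary consequence of the pointwise inclusions of the temporal slabs and the comparability of their measures, while part (ii), where $0<\gamma'<\gamma<1$, requires a non-trivial covering and chaining argument invoking the whole geometric machinery of Section~\ref{section2}.

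For part (i), fix any parabolic cylinder $R(x,t,l)$. Increasing the lag shrinks both slabs proportionally: $R^{\pm}(\gamma')\subset R^{\pm}(\gamma)$ and $\lambda(R^{\pm}(\gamma))=\frac{1-\gamma}{1-\gamma'}\,\lambda(R^{\pm}(\gamma'))$. Replacing $R^{\pm}(\gamma')$ by $R^{\pm}(\gamma)$ in the numerators of the averages and rebalancing by this measure ratio gives
\[
\left[\fint_{R^-(\gamma')}\omega\right]\left[\fint_{R^+(\gamma')}\omega^{1-q'}\right]^{q-1}
\le\left[\frac{1-\gamma}{1-\gamma'}\right]^{q}\left[\fint_{R^-(\gamma)}\omega\right]\left[\fint_{R^+(\gamma)}\omega^{1-q'}\right]^{q-1},
\]
whence $[\omega]_{A^+_q(\gamma')}\le [(1-\gamma)/(1-\gamma')]^{q}\,[\omega]_{A^+_q(\gamma)}$. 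No further geometry is required.

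For part (ii), the opposite inclusion $R^{\pm}(\gamma)\subset R^{\pm}(\gamma')$ now holds, so the naive argument fails: the extra temporal slabs $E^-:=B(x,l)\times(t-\gamma l^p,\,t-\gamma'l^p)$ and $E^+:=B(x,l)\times(t+\gamma'l^p,\,t+\gamma l^p)$ lie in the middle gap of $R$, where the lag-$\gamma$ condition is silent. The plan is to cover $E^-$ (respectively $E^+$), up to $\lambda$-null sets, by the lag-$\gamma$ lower halves (respectively upper halves) of smaller sub-cylinders $\widetilde R=R(y,s,l')$ chosen so that $\widetilde R^-(\gamma)\subset R^-(\gamma')$ and $\widetilde R^+(\gamma)\subset R^+(\gamma')$ simultaneously. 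An elementary computation shows that a single such sub-cylinder can only handle a fraction of the slab, so many must be assembled. This is done in three coordinated steps: (a) apply the Vitali covering lemma on $(X,\rho,\mu)$, or equivalently the dyadic decompositions of Lemmas~\ref{1516} and~\ref{1517}, to split the spatial ball $B(x,l)$ into overlapping sub-balls $B(y_j,l')$ of an appropriate smaller radius; (b) for each spatial centre $y_j$, stack finitely many sub-cylinders $R(y_j,s_{j,k},l')$ in time so that their lag-$\gamma$ lower halves exhaust the column of $E^-$ above $y_j$ while their upper halves remain in $R^+(\gamma')$, closing gaps by the forward-in-time doubling estimates \eqref{1826} and \eqref{1827} of Proposition~\ref{1627}(iv); (c) to pass between different spatial centres, the chaining ball property of Lemma~\ref{assum}(i) supplies intermediate balls $D_i\subset B(y_j,l')\cap B(y_{j+1},l')$ whose dilates $\Lambda D_i$ cover both sub-balls, transporting weight across space with a uniform constant. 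The length bound of Lemma~\ref{assum}(ii) together with $\mu$-doubling \eqref{upperdoub} keep the accumulated constants controlled, and Corollary~\ref{2014} guarantees that the set-theoretic manipulations are valid at the level of $\lambda$-measurable Borel sets.

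The principal obstacle is precisely this coordination: the Vitali radii, the number of time-stacked sub-cylinders per column, and the length of each spatial chain all depend on $\gamma,\gamma',K_0,\widetilde K,C_\mu,q$, but must be arranged so that the accumulated multiplicative constant does not depend on the scale $l$ or on the weight $\omega$. Once the covering is in place, summing the individual applications of the $A^+_q(\gamma)$ condition over the sub-cylinders and applying H\"older's inequality to reassemble the product appearing in the definition \eqref{A+} of $[\omega]_{A^+_q(\gamma')}$ yields the desired inequality $[\omega]_{A^+_q(\gamma')}\le C\,[\omega]_{A^+_q(\gamma)}$ with $C$ of the stated form.
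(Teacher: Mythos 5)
Your proof of (i) is exactly the paper's: the inclusions $R^{\pm}(\gamma')\subset R^{\pm}(\gamma)$ together with the measure ratio $(1-\gamma)/(1-\gamma')$ give the constant $[(1-\gamma)/(1-\gamma')]^{q}$. Your plan for (ii) is also essentially the paper's argument --- a Vitali covering in space, a subdivision in time, chaining via Lemma~\ref{assum} combined with the forward-in-time estimates \eqref{1826} and \eqref{1827} to transport all sub-boxes to a common spatial centre, and then the time-shifted form of the $A^+_q(\gamma)$ condition (Proposition~\ref{1633} and Corollary~\ref{1451}) to reassemble the product --- the only structural difference being that the paper first establishes the halving step $A^+_q(\gamma)\subset A^+_q(\gamma/2)$ and iterates, rather than treating general $\gamma'<\gamma$ in one pass.
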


To prove Theorem~\ref{Idp}, we need the following proposition.

\begin{proposition}\label{1633}
Let $(X,\rho,\mu)$ be a space of homogeneous type,
$q\in(1,\infty)$, and $\gamma\in[0,1)$.
Assume that $\omega\in A^+_q(\gamma)$ and $\theta\in(0,\infty)$. Then,
for any parabolic cylinder $R(x,t,l)$
with both $(x,t)\in X\times\mathbb{R}$
and $l\in(0,\infty)$,
\begin{align}\label{312-1}
\fint_{R^-(\gamma)}\omega\leq
[\omega]_{A^+_q(\gamma)}^{\lceil\theta\rceil}
\fint_{R^-(\gamma)+\theta(1+\gamma)l^p}\omega
\end{align}
and
\begin{align}\label{312-2}
\fint_{R^+(\gamma)}\omega^{1-q'}\leq
[\omega]_{A^+_q(\gamma)}^{\lceil\theta\rceil}
\fint_{R^+(\gamma)-\theta(1+\gamma)l^p}\omega^{1-q'}.
\end{align}
\end{proposition}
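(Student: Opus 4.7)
The two conclusions \eqref{312-1} and \eqref{312-2} are dual: by Proposition~\ref{1627}(iii), $\omega\in A_q^+(\gamma)$ is equivalent to $\omega^{1-q'}\in A_{q'}^-(\gamma)$, with the roles of $R^-(\gamma)$ and $R^+(\gamma)$ interchanged. So I plan to prove \eqref{312-1} in detail and then deduce \eqref{312-2} by running the same argument for $\omega^{1-q'}$ in the backward-in-time direction.

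The backbone is the one-step forward doubling
\begin{align*}
\fint_{R^-(\gamma)}\omega\le [\omega]_{A_q^+(\gamma)}\fint_{R^+(\gamma)}\omega,
\end{align*}
which I would derive by first using the $A_q^+(\gamma)$ condition to write $\fint_{R^-(\gamma)}\omega \le [\omega]_{A_q^+(\gamma)}\bigl[\fint_{R^+(\gamma)}\omega^{1-q'}\bigr]^{1-q}$, and then applying H\"older's inequality in the form $1 = \fint_{R^+(\gamma)}\omega^{1/q}\omega^{-1/q} \le \bigl[\fint_{R^+(\gamma)}\omega\bigr]^{1/q}\bigl[\fint_{R^+(\gamma)}\omega^{1-q'}\bigr]^{1/q'}$, which rearranges to $\bigl[\fint_{R^+(\gamma)}\omega^{1-q'}\bigr]^{1-q} \le \fint_{R^+(\gamma)}\omega$. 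Since $R^+(\gamma)=R^-(\gamma)+(1+\gamma)l^p$, this settles the case $\theta=1$.

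For integer $\theta = m$, I would iterate along the chain of cylinders $R_j := R(x,\,t+j(1+\gamma)l^p,\,l)$ for $j=0,\dots,m-1$, noting that $R_j^+(\gamma) = R_{j+1}^-(\gamma)$; telescoping the one-step estimate produces the claim with constant $[\omega]_{A_q^+(\gamma)}^m=[\omega]_{A_q^+(\gamma)}^{\lceil\theta\rceil}$. For a general $\theta\in(0,\infty)$ with $k=\lceil\theta\rceil$, my plan is to perform $k-1$ full-size steps (reaching $R^-(\gamma)+(k-1)(1+\gamma)l^p$) and one fractional step covering a translation of $(\theta-k+1)(1+\gamma)l^p\in(0,(1+\gamma)l^p]$. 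For the fractional step I would apply the one-step argument to an auxiliary cylinder of temporal scale $l^{*}=(\theta-k+1)^{1/p}\,l$ centered at a suitably chosen $(x,t^{*})$, so that its lower and upper parts sit inside the intended source and target sets; the mismatch in spatial volumes is then absorbed using the volume-ratio inequalities \eqref{1826} and \eqref{1827} of Proposition~\ref{1627}(iv). I expect the main obstacle to be exactly this geometric bookkeeping: choosing $(x,t^{*})$ so that both containments hold simultaneously, and arranging the volume ratios so that the fractional step contributes no more than one additional factor $[\omega]_{A_q^+(\gamma)}$, giving a total of $[\omega]_{A_q^+(\gamma)}^k$ as required.
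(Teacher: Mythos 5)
Your case $\theta=1$ (H\"older in place of the paper's Jensen step, a cosmetic difference) and the iteration for integer $\theta$ both match the paper's argument. The gap is in the fractional step, and it is not mere bookkeeping: your proposed mechanism cannot work as described. The source set of the fractional step is $B(x,l)\times I$ with $|I|=(1-\gamma)l^p$, while any single auxiliary cylinder whose lower-to-upper shift equals the fractional amount $(\theta-k+1)(1+\gamma)l^p$ must have edge length $l^{*}=(\theta-k+1)^{1/p}l<l$, hence spatial radius $l^{*}<l$ and temporal length $(1-\gamma)(l^{*})^p<(1-\gamma)l^p$. Its lower part is therefore a \emph{proper subset} of the source, and knowing $\fint_{\mathrm{aux}^-}\omega\le C\fint_{\mathrm{aux}^+}\omega$ says nothing about the average over the whole source, since $\omega$ may concentrate outside the auxiliary lower part. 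The volume-ratio inequalities \eqref{1826} and \eqref{1827} do not repair this: \eqref{1826} bounds $\omega(R^-(\gamma))$ by $\omega(S)$ only for $S\subset R^+(\gamma)$ of the \emph{same} cylinder, and for a genuinely fractional shift the target set $B(x,l)\times(I+\epsilon)$ with $\epsilon<(1+\gamma)l^p$ is not contained in the upper part of the full-size cylinder, so there is no single cylinder to which either inequality applies. The tension is structural: the time shift of a parabolic cylinder is locked to its edge length, so a fractional shift forces a smaller spatial scale, which in turn forces a covering of $B(x,l)$ by \emph{many} balls of radius $\approx\theta^{1/p}l$ and a partition of $I$ into $\approx\lceil\theta^{-1}\rceil$ subintervals.

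This covering-and-reassembly is exactly what the paper's proof does (via the Vitali lemma, Lemma~\ref{winner}), and it requires two further ideas you have not supplied: a disjointification $\widetilde{R}^+_{i,j}$ of the overlapping images so that the translated averages can be summed without overcounting, and a Jensen/convexity step applied sub-cylinder by sub-cylinder so that the total constant is exactly one factor of $[\omega]_{A^+_q(\gamma)}$ for the whole fractional step. Without that care, a covering argument with volume-ratio losses would produce an extra multiplicative constant depending on the doubling data, which would not prove the stated clean bound $[\omega]_{A^+_q(\gamma)}^{\lceil\theta\rceil}$.
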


To prove Proposition~\ref{1633},
we need the following
Vitali covering theorem which is just \cite[Lemma~2.1]{a1988}
and can also be found in \cite{CW1} with
a slightly different statement.

\begin{lemma}\label{winner}
Let $(X,\rho,\mu)$ be a space of homogeneous type.
Let
$$\mathfrak{B}:=\left\{B_\alpha:=B(x_\alpha,r_\alpha):\
\alpha\in\Gamma\right\}$$
be a family of balls in $X$
such that $\bigcup_{\alpha\in\Gamma}B_\alpha$
is bounded.
Then there exists a sequence
$\{B_i\}_i\subset\mathfrak{B}$
of disjoint balls
such that, for any $\alpha\in\Gamma$,
there exists an $i$ satisfying that $r_\alpha\leq 2r_i$
and
$$B_\alpha\subset B(x_i,5K_0^2r_i).$$
\end{lemma}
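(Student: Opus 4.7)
The plan is to adapt the classical Vitali covering argument to the quasi-metric setting via a greedy selection organized by dyadic radius layers. I first reduce to the case $R_0:=\sup_{\alpha\in\Gamma}r_\alpha<\infty$: by hypothesis $\bigcup_\alpha B_\alpha$ lies in some ball $B(x_0,D)$, and if some $r_{\alpha_0}>2K_0D$, then the quasi-triangle inequality together with $x_{\alpha_0}\in B(x_0,D)$ gives $\rho(y,x_{\alpha_0})<2K_0D<r_{\alpha_0}$ for every $y\in B(x_0,D)$, so $B_{\alpha_0}$ swallows the entire union and the single-element family $\{B_{\alpha_0}\}$ trivially satisfies the conclusion. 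Thus I may assume $R_0<\infty$.

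Next, I partition $\mathfrak{B}$ into radius layers $\mathfrak{B}_k:=\{B_\alpha\in\mathfrak{B}:r_\alpha\in(R_02^{-k},R_02^{-k+1}]\}$ for $k\in\{1,2,\ldots\}$, together with $\mathfrak{B}_0:=\{B_\alpha:r_\alpha=R_0\}$, so that each $B_\alpha$ lies in exactly one layer. Using Zorn's lemma, choose a maximal pairwise-disjoint $\mathcal{F}_0\subset\mathfrak{B}_0$, and inductively choose $\mathcal{F}_k\subset\mathfrak{B}_k$ maximal among subfamilies whose members are pairwise disjoint and disjoint from every ball in $\mathcal{F}_0\cup\cdots\cup\mathcal{F}_{k-1}$. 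Set $\{B_i\}_i:=\bigcup_{k\geq0}\mathcal{F}_k$; these balls are pairwise disjoint by construction, and separability of $X$ (Lemma~\ref{1949}) forces the family to be at most countable.

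To verify the covering property, fix $\alpha\in\Gamma$ and suppose $B_\alpha\in\mathfrak{B}_k$. By maximality of $\mathcal{F}_k$ relative to $\bigcup_{j<k}\mathcal{F}_j$, there exists $B_i\in\bigcup_{j\leq k}\mathcal{F}_j$ with $B_\alpha\cap B_i\neq\emptyset$ (including the case $B_i=B_\alpha$). Then $r_i>R_02^{-k}\geq r_\alpha/2$, hence $r_\alpha\leq 2r_i$. Picking $z\in B_\alpha\cap B_i$ and applying the quasi-triangle inequality twice, for every $y\in B_\alpha$,
\begin{align*}
\rho(y,x_i)
&\leq K_0\left[\rho(y,z)+\rho(z,x_i)\right]
<K_0\left[K_0\left(\rho(y,x_\alpha)+\rho(x_\alpha,z)\right)+r_i\right]\\
&<K_0\left[2K_0r_\alpha+r_i\right]
\leq\left(4K_0^2+K_0\right)r_i
\leq 5K_0^2\,r_i,
\end{align*}
where the last step uses $K_0\geq 1$. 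Therefore $B_\alpha\subset B(x_i,5K_0^2r_i)$, as required.

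The only genuinely delicate point is the initial reduction: in a quasi-metric space a ball of large nominal radius may still be contained in a small set, so boundedness of $\bigcup_\alpha B_\alpha$ does not automatically bound the radii. The observation that any such oversized ball must contain the entire union cleanly disposes of this case. Beyond that, the argument only tracks constants through two applications of the quasi-triangle inequality to land at exactly $5K_0^2$; neither the doubling condition nor the monotone geodesic property on $(X,\rho,\mu)$ is used.
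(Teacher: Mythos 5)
The paper offers no proof of this lemma at all (it is imported verbatim from \cite{a1988}), so a self-contained argument is welcome, and the core of yours is the standard Vitali selection: dyadic radius layers, a maximal disjoint choice in each layer consistent with the earlier layers, and two applications of the quasi-triangle inequality yielding $r_\alpha\leq 2r_i$ and $B_\alpha\subset B(x_i,5K_0^2r_i)$. That part is correct (the overlap of your layers $\mathfrak{B}_0$ and $\mathfrak{B}_1$ at $r_\alpha=R_0$ is harmless, and countability can be had either from separability or from disjointness plus finite measure of $B(x_0,D)$).

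The genuine gap is exactly the step you flag as delicate: the reduction to $R_0<\infty$. If some $r_{\alpha_0}>2K_0D$, then $B_{\alpha_0}$ does swallow the union, but the conclusion also requires $r_\alpha\leq 2r_{\alpha_0}$ for \emph{every} $\alpha$, and nothing prevents another ball from having radius $100K_0D$ while $r_{\alpha_0}=3K_0D$; so the singleton $\{B_{\alpha_0}\}$ does not ``trivially satisfy the conclusion.'' Moreover, this cannot be patched by a cleverer choice: if $\sup_\alpha r_\alpha=\infty$ were possible, the lemma itself would fail, because any two balls of radius greater than $2K_0D$ each contain all of $B(x_0,D)$ and hence each other's centers, so a disjoint subfamily contains at most one such ball, its radii are bounded, and the comparison $r_\alpha\leq 2r_i$ is then unattainable for all $\alpha$. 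The correct move is to rule out unbounded radii using the standing assumptions: since $\diam(X)=\infty$, the space is not contained in $B(x_0,D)$, so there exists $w\notin B(x_0,D)$; as $B_\alpha\subset B(x_0,D)$ we have $w\notin B_\alpha$, hence
\begin{align*}
r_\alpha\leq\rho(x_\alpha,w)\leq K_0\left[\rho(x_\alpha,x_0)+\rho(x_0,w)\right]
\leq K_0\left[D+\rho(x_0,w)\right]=:R_0<\infty,
\end{align*}
uniformly in $\alpha$. With this substitute for your reduction, the rest of your argument goes through as written.
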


\begin{remark}\label{winner1}
In Lemma~\ref{winner}, since $\mu$ is doubling,
we find that, if, for any $\alpha,\beta\in\Gamma$,
$r_{\alpha}\approx r_{\beta}$
uniformly, then $\{B_i\}_i$ is a finite set.
Moreover, if there exists a positive constant $r$
such that, for any $\alpha\in\Gamma$,
$r_{\alpha}\approx r\approx\diam\,(\bigcup_{\alpha\in\Gamma}B_\alpha)$
uniformly, then $\{B_i\}_i$ is a finite set with its cardinality
independent of $r$.
\end{remark}

\begin{proof}[Proof of Proposition~\ref{1633}]
Let $R:=R(x,t,l)$, with both
$(x,t)\in X\times\mathbb{R}$
and $l\in(0,\infty)$, be a parabolic cylinder.
We first prove \eqref{312-1}. To this end,
we consider the following three cases on $\theta$.

\emph{Case 1)} $\theta=1$. In this case,
it is easy to show that
$R^-(\gamma)+(1+\gamma)l^p=R^+(\gamma)$.
From \eqref{A+}, the fact that $\varphi(s):=s^{1-q}$ for any $s\in(0,\infty)$
with $q\in(1,\infty)$ is a convex function,
and the Jensen inequality, we deduce that
\begin{align*}
\fint_{R^-(\gamma)}\omega
&\leq[\omega]_{A^+_q(\gamma)}
\left[\fint_{R^+(\gamma)}\omega^{1-q'}\right]^{1-q}
\leq[\omega]_{A^+_q(\gamma)}
\fint_{R^+(\gamma)}\omega\\
&=[\omega]_{A^+_q(\gamma)}
\fint_{R^-(\gamma)+(1+\gamma)l^p}\omega.
\end{align*}
This finishes the proof of \eqref{312-1}
in the case that $\theta=1$.

\emph{Case 2)} $\theta\in(0,1)$. In this case,
$R^-(\gamma)+\theta(1+\gamma)l^p\subset R(x,t,l)$.
Let
$$
\mathfrak{B}:=\left\{
B\left(y,(5K_0^2)^{-1}\theta^{1/p}l\right):\ y\in B(x,l)\right\}.
$$
By Definition~\ref{2057}(iii), we find that,
for any given $y\in B(x,l)$ and for any
$z\in B(y,(5K_0^2)^{-1}\theta^{1/p}l)$,
\begin{align*}
\rho(x,z)\leq K_0\left[\rho(x,y)+\rho(y,z)\right]
<K_0l+(5K_0)^{-1}\theta^{1/p}l,
\end{align*}
which further implies that
$B(y,(5K_0^2)^{-1}\theta^{1/p}l)\subset
B(x,K_0l+(5K_0)^{-1}\theta^{1/p}l)$
and hence
\begin{align}\label{1526}
\bigcup_{B\in\mathfrak{B}}B\subset
B\left(x,K_0l+(5K_0)^{-1}\theta^{1/p}l\right)
\end{align}
is a bounded set. From this, Lemma~\ref{winner}, and Remark~\ref{winner1},
it follows that there exists a positive integer $N$
and a sequence
$\{B_i:=B(y_i,(5K_0^2)^{-1}\theta^{1/p}l)\}_{i=1}^N\subset\mathfrak{B}$
of disjoint balls such that
\begin{align}\label{1603}
B(x,l)\subset\bigcup_{i=1}^N 5K_0^2B_i
\end{align}
and, for any $\tau\in\{1,...,N\}$,
\begin{align}\label{1124}
\mu\left(B(x,l)\setminus\left[\bigcup_{i=1}^N 5K_0^2B_i\setminus
5K_0^2B_\tau\right]\right)>0.
\end{align}
Now, we let
$M:=\lceil\theta^{-1}\rceil\in[2,\infty)$
and, for any $j\in\{1,...,M\}$,
\begin{align*}
T_j:=\left(t-l^p+(j-1)\theta(1-\gamma)l^p,
t-l^p+j\theta(1-\gamma)l^p\right).
\end{align*}
From this, it follows that
\begin{align}\label{1604}
\bigcup_{j=1}^MT_j
=(t-l^p,t-l^p+M\theta(1-\gamma)l^p)
\supset(t-l^p,t-\gamma l^p)
\end{align}
in the sense that it is allowed to differ
by a set of zero measure of $\mathbb{R}$.
For any $i\in\{1,...,N\}$ and $j\in\{1,...,M\}$,
let
\begin{align*}
R^-_{i,j}(\gamma):=(5K_0^2B_i)\times T_j
\quad\text{and}\quad
R^+_{i,j}(\gamma):=(5K_0^2B_i)\times T_j+\theta(1+\gamma)l^p,
\end{align*}
which, combined with both \eqref{1603} and \eqref{1604}, implies that
\begin{align}\label{1035}
R^-(\gamma)\subset\bigcup_{i=1}^N\bigcup_{j=1}^MR^-_{i,j}(\gamma)
\quad\text{and}\quad
R^-(\gamma)+\theta(1+\gamma)l^p\subset
\bigcup_{i=1}^N\bigcup_{j=1}^MR^+_{i,j}(\gamma)
\end{align}
in the sense that it is allowed to differ
by a set of zero measure of $X\times\mathbb{R}$.

Next, we reconstruct $\{R^+_{i,j}(\gamma)\}_{i\in\{1,...,N\}
,\,j\in\{1,...,M\}}$ such
that the union in the second formula of \eqref{1035} is a disjoint union.
To this end, for any $j\in\{1,...,M\}$, let
\begin{align*}
\widetilde{R}^+_{1,j}(\gamma):=R^+_{1,j}(\gamma)
\end{align*}
and, for any $i\in\{2,...,N\}$, let
\begin{align*}
\widetilde{R}^+_{i,j}(\gamma):=R^+_{i,j}(\gamma)\setminus
\bigcup_{\tau=1}^{i-1}R^+_{\tau,j}(\gamma).
\end{align*}
This implies that, for any $i,\tau\in\{1,...,N\}$ and $j,k\in\{1,...,M\}$
with $i\neq\tau$ or $j\neq k$,
\begin{align}\label{1109}
\widetilde{R}^+_{i,j}(\gamma)\cap\widetilde{R}^+_{\tau,k}(\gamma)=\emptyset
\quad\text{and}\quad
\bigcup_{i=1}^N\bigcup_{j=1}^M\widetilde{R}^+_{i,j}(\gamma)
=\bigcup_{i=1}^N\bigcup_{j=1}^MR^+_{i,j}(\gamma).
\end{align}
Moreover, \eqref{1124} and the definitions of both
$\widetilde{R}^+_{i,j}(\gamma)$
and ${R}^+_{i,j}(\gamma)$ imply that, for any
$i\in\{1,...,N\}$ and $j\in\{1,...,M\}$,
\begin{align}\label{1129}
\lambda\left(\widetilde{R}^+_{i,j}(\gamma)
\cap[R^-(\gamma)+\theta(1+\gamma)l^p]\right)>0.
\end{align}
Notice that, for any $i\in\{1,...,N\}$ and $j\in\{1,...,M\}$,
\begin{align*}
\lambda\left(R^-_{i,j}(\gamma)\right)
=\mu(B_i)\times\theta(1-\gamma)l^p
=\lambda\left(R^+_{i,j}(\gamma)\right)
\end{align*}
and
\begin{align*}
\lambda\left(R^-(\gamma)\right)
=\mu(B(x,l))\times(1-\gamma)l^p
=\lambda\left(R^-(\gamma)+\theta(1+\gamma)l^p\right).
\end{align*}
From this, \eqref{1035}, $\omega\in A^+_q(\gamma)$,
the fact that $f(s):=s^{1-q}$ for any $s\in(0,\infty)$
is a decreasing convex function with
$q\in(1,\infty)$, \eqref{1129}, the Jensen inequality, and \eqref{1109},
we further deduce that
\begin{align*}
\fint_{R^-(\gamma)}\omega
&\leq\sum_{i=1}^N\sum_{j=1}^M
\frac{\lambda(R^-_{i,j}(\gamma))}{\lambda(R^-(\gamma))}
\fint_{R^-_{i,j}(\gamma)}\omega\\
&\leq[\omega]_{A^+_q(\gamma)}\sum_{i=1}^N\sum_{j=1}^M
\frac{\lambda(R^-_{i,j}(\gamma))}{\lambda(R^-(\gamma))}
\left[\fint_{R^+_{i,j}(\gamma)}\omega^{1-q'}\right]^{1-q}\\
&\leq[\omega]_{A^+_q(\gamma)}\sum_{i=1}^N\sum_{j=1}^M
\frac{\lambda(R^-_{i,j}(\gamma))}{\lambda(R^-(\gamma))}
\left[\frac{1}{\lambda(R^+_{i,j}(\gamma))}
\int_{\widetilde{R}^+_{i,j}(\gamma)
\cap[R^-(\gamma)+\theta(1+\gamma)l^p]}\omega^{1-q'}\right]^{1-q}\\
&\leq[\omega]_{A^+_q(\gamma)}\sum_{i=1}^N\sum_{j=1}^M
\frac{\lambda(R^-_{i,j}(\gamma))}{\lambda(R^-(\gamma))}
\frac{1}{\lambda(R^+_{i,j}(\gamma))}
\int_{\widetilde{R}^+_{i,j}(\gamma)
\cap[R^-(\gamma)+\theta(1+\gamma)l^p]}\omega\\
&=[\omega]_{A^+_q(\gamma)}
\frac{1}{\lambda(R^-(\gamma)+\theta(1+\gamma)l^p)}
\sum_{i=1}^N\sum_{j=1}^M
\int_{\widetilde{R}^+_{i,j}(\gamma)
\cap[R^-(\gamma)+\theta(1+\gamma)l^p]}\omega\\
&=[\omega]_{A^+_q(\gamma)}
\fint_{R^-(\gamma)+\theta(1+\gamma)l^p}\omega.
\end{align*}
This finishes the proof of \eqref{312-1}
in the case that $\theta\in(0,1)$.

\emph{Case 3)} $\theta\in(1,\infty)$. In this case,
let $a:=\lfloor\theta\rfloor$. If $\theta=a$, then,
from case 1), we deduce that
\begin{align}\label{2100}
\fint_{R^-(\gamma)}\omega
&\leq[\omega]_{A^+_q(\gamma)}
\fint_{R^-(\gamma)+(1+\gamma)l^p}\omega
\leq\cdots
\leq[\omega]_{A^+_q(\gamma)}^a
\fint_{R^-(\gamma)+a(1+\gamma)l^p}\omega.
\end{align}
If $\theta>a$, then,
by both cases 1) and 2), we conclude that
\begin{align*}
\fint_{R^-(\gamma)}\omega
&\leq[\omega]_{A^+_q(\gamma)}
\fint_{R^-(\gamma)+(1+\gamma)l^p}\omega
\leq\cdots
\leq[\omega]_{A^+_q(\gamma)}^a
\fint_{R^-(\gamma)+a(1+\gamma)l^p}\omega\\
&\leq[\omega]_{A^+_q(\gamma)}^{a+1}
\fint_{R^-(\gamma)+\theta(1+\gamma)l^p}\omega,
\end{align*}
which, together with \eqref{2100},
then completes the proof of \eqref{312-1} in the case
that $\theta\in(1,\infty)$.

In summary, we finish the proof of \eqref{312-1}.
By Proposition~\ref{1627}(iii), \eqref{A-},
and an argument similar to that used in the above proof
of \eqref{312-1}, we conclude that \eqref{312-2} also
holds true.
This finishes the proof of Proposition~\ref{1633}.
\end{proof}

Using Proposition~\ref{1633}, we immediately obtain the following
conclusion; we omit the details here.

\begin{corollary}\label{1451}
Let $(X,\rho,\mu)$ be a space of homogeneous type,
$q\in(1,\infty)$, and $\gamma\in[0,1)$.
Assume that $\omega\in A^+_q(\gamma)$ and
$\theta_1,\theta_2\in(0,\infty)$.
Then
\begin{align*}
&\sup_{\{R(x,t,l):\ (x,t)\in X\times\mathbb{R},\,l\in(0,\infty)\}}
\left[\fint_{R^-(\gamma)-\theta_1(1+\gamma)l^p}\omega\right]
\left[\fint_{R^+(\gamma)+\theta_2(1+\gamma)l^p}\omega^{1-q'}\right]^{q-1}\\
&\quad\leq
[\omega]_{A^+_q(\gamma)}^{\lceil\theta_1\rceil+\lceil\theta_2\rceil+1}.
\end{align*}
\end{corollary}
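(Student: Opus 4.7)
The plan is to reduce the claim to one application of the defining $A^+_q(\gamma)$ inequality \eqref{A+} after two applications of Proposition~\ref{1633}, one for each direction of the time translation.

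Fix a parabolic cylinder $R:=R(x,t,l)$. The key observation is that re-centering in time transports $R^{\mp}(\gamma)$ precisely onto the shifted sets in the statement. More concretely, put
\begin{align*}
R':=R\left(x,t-\theta_1(1+\gamma)l^p,l\right)\ \text{and}\ R'':=R\left(x,t+\theta_2(1+\gamma)l^p,l\right),
\end{align*}
and verify directly from Definition~\ref{3.1} that $R'^-(\gamma)=R^-(\gamma)-\theta_1(1+\gamma)l^p$ with $R'^-(\gamma)+\theta_1(1+\gamma)l^p=R^-(\gamma)$, and symmetrically $R''^+(\gamma)=R^+(\gamma)+\theta_2(1+\gamma)l^p$ with $R''^+(\gamma)-\theta_2(1+\gamma)l^p=R^+(\gamma)$. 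Since $R'$ and $R''$ share the spatial ball $B(x,l)$ and the edge length $l$ with $R$, Proposition~\ref{1633} applies to each of them without modification.

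First I would apply \eqref{312-1} to the cylinder $R'$ with parameter $\theta_1$ and then \eqref{312-2} to $R''$ with parameter $\theta_2$, to obtain, respectively,
\begin{align*}
\fint_{R^-(\gamma)-\theta_1(1+\gamma)l^p}\omega\leq[\omega]_{A^+_q(\gamma)}^{\lceil\theta_1\rceil}\fint_{R^-(\gamma)}\omega
\end{align*}
and
\begin{align*}
\fint_{R^+(\gamma)+\theta_2(1+\gamma)l^p}\omega^{1-q'}\leq[\omega]_{A^+_q(\gamma)}^{\lceil\theta_2\rceil}\fint_{R^+(\gamma)}\omega^{1-q'}.
\end{align*}
Next I would raise the second inequality to the $(q-1)$-th power, multiply it by the first, and invoke the $A^+_q(\gamma)$ condition \eqref{A+} on the original cylinder $R$ to bound
$\left[\fint_{R^-(\gamma)}\omega\right]\left[\fint_{R^+(\gamma)}\omega^{1-q'}\right]^{q-1}\leq[\omega]_{A^+_q(\gamma)}$.
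This produces the bound with total exponent $\lceil\theta_1\rceil+\lceil\theta_2\rceil+1$, and taking the supremum over all parabolic cylinders $R(x,t,l)$ with $(x,t)\in X\times\mathbb{R}$ and $l\in(0,\infty)$ finishes the proof.

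There is no substantive obstacle beyond Proposition~\ref{1633} itself; the argument is essentially bookkeeping of time shifts. The only point requiring some care is checking that the re-centered cylinders $R'$ and $R''$ convert the desired backward and forward translations of $R^{\mp}(\gamma)$ into the genuine lower and upper parts (with lag $\gamma$) of cylinders of the form $R(\cdot,\cdot,l)$, so that Proposition~\ref{1633} can indeed be applied verbatim with parameters $\theta_1$ and $\theta_2$, respectively.
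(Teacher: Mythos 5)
Your reduction is the one the paper intends (it states that the corollary follows ``immediately'' from Proposition~\ref{1633} and omits the details), and your set identifications $R'^-(\gamma)=R^-(\gamma)-\theta_1(1+\gamma)l^p$ and $R''^+(\gamma)=R^+(\gamma)+\theta_2(1+\gamma)l^p$ are correct, so the two displayed inequalities you derive from \eqref{312-1} and \eqref{312-2} are valid. The one slip is in the final bookkeeping: raising your second inequality to the power $q-1$ yields
\begin{align*}
\left[\fint_{R^+(\gamma)+\theta_2(1+\gamma)l^p}\omega^{1-q'}\right]^{q-1}
\leq[\omega]_{A^+_q(\gamma)}^{\lceil\theta_2\rceil(q-1)}
\left[\fint_{R^+(\gamma)}\omega^{1-q'}\right]^{q-1},
\end{align*}
so your chain actually produces the exponent $\lceil\theta_1\rceil+\lceil\theta_2\rceil(q-1)+1$ rather than $\lceil\theta_1\rceil+\lceil\theta_2\rceil+1$. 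Since $[\omega]_{A^+_q(\gamma)}\geq1$ (by the Lebesgue differentiation theorem the $A^+_q(\gamma)$ product tends to $1$ as $l\to0$ at almost every point), this is strictly weaker than the stated bound whenever $q>2$.

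There are two easy repairs. Either perform the entire temporal transport on the $\omega$-factor: apply \eqref{312-1} to $R'$ with $\theta=\theta_1+\theta_2$, noting that $R'^-(\gamma)+(\theta_1+\theta_2)(1+\gamma)l^p=R''^-(\gamma)$, and then invoke \eqref{A+} on the single cylinder $R''$; this gives the exponent $\lceil\theta_1+\theta_2\rceil+1\leq\lceil\theta_1\rceil+\lceil\theta_2\rceil+1$. Or keep your symmetric scheme but use the constant that the proof of \eqref{312-2} actually delivers: running the argument for $\omega^{1-q'}\in A^-_{q'}(\gamma)$, whose constant is $[\omega]_{A^+_q(\gamma)}^{q'-1}$ by Proposition~\ref{1627}(iii), gives $[\omega]_{A^+_q(\gamma)}^{(q'-1)\lceil\theta_2\rceil}$ on the un-powered average, and raising this to the power $q-1$ yields exactly $[\omega]_{A^+_q(\gamma)}^{\lceil\theta_2\rceil}$ because $(q'-1)(q-1)=1$. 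With either correction your argument proves the corollary as stated; as written it proves it only up to a constant depending on $q$, which is all the paper ever uses but is not what the student's final sentence claims.
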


Now, we prove Theorem~\ref{Idp}.

\begin{proof}[Proof of Theorem~\ref{Idp}]
We first prove (i). Let $\omega\in A^+_q(\gamma)$
and $R:=R(x,t,l)$, with both
$(x,t)\in X\times\mathbb{R}$ and $l\in(0,\infty)$,
be a parabolic cylinder.
Since $\gamma'>\gamma$,
it follows that $R^-(\gamma')\subset R^-(\gamma)$
and $R^+(\gamma')\subset R^+(\gamma)$. By this,
$q\in(1,\infty)$, and \eqref{A+}, we find that
\begin{align*}
\left[\fint_{R^-(\gamma')}\omega\right]
\left[\fint_{R^+(\gamma')}\omega^{1-q'}\right]^{q-1}
&\leq\left(\frac{1-\gamma}{1-\gamma'}\right)^q
\left[\fint_{R^-(\gamma)}\omega\right]
\left[\fint_{R^+(\gamma)}\omega^{1-q'}\right]^{q-1}\\
&\leq\left(\frac{1-\gamma}{1-\gamma'}\right)^q
[\omega]_{A^+_q(\gamma)}<\infty
\end{align*}
and hence $\omega\in A^+_q(\gamma')$.
This finishes the proof of (i).

Next, we prove (ii). To this end, we first claim that,
for any $\gamma\in(0,1)$, $A^+_q(\gamma)\subset A^+_q(\gamma/2)$.
Assume for the moment that the claim holds true.
For any $\gamma'\in(0,\gamma)$, we find that
there exists a $K\in\mathbb{N}$
such that $\gamma/2^K\leq\gamma'$,
which, combined with both the claim and (i) of the present theorem,
further implies that
\begin{align*}
A^+_q(\gamma)\subset A^+_q(\gamma/2)\subset\cdots\subset A^+_q(\gamma/2^K)
\subset A^+_q(\gamma').
\end{align*}
To complete the proof of (ii) of the present theorem,
it remains to prove the above claim. Assume that $\omega\in A^+_q(\gamma)$.
Let $R:=R(x,t,l)$, with both
$(x,t)\in X\times\mathbb{R}$ and $l\in(0,\infty)$,
be a parabolic cylinder.
We divide this proof into the following three steps.

\emph{Step 1}. Division process.
Fix a sufficiently small positive constant $k\in(0,1)$
which is determined later.
Let
$$
\mathfrak{B}:=\left\{B\left(y,(5K_0^2)^{-1}kl\right):\ y\in B(x,l)\right\}.
$$
Using an argument similar to that used in the proof of
\eqref{1526}, we find that
$\bigcup_{B\in\mathfrak{B}}B$
is bounded. From this, Lemma~\ref{winner}, and Remark~\ref{winner1},
it follows that there exists a positive integer $N$, depending on $k$,
and a sequence
$\{B(y_i,(5K_0^2)^{-1}kl)\}_{i=1}^N\subset\mathfrak{B}$
of disjoint balls such that
\begin{align}\label{15593}
\bigcup_{i=1}^NB(y_i,kl)\supset B(x,l).
\end{align}
Let $M:=\lceil(2-\gamma)/[2(1-\gamma)k^p]\rceil$.
For any $j\in\{1,...,M\}$, we define
\begin{align*}
T^-_j:=\left(t-\gamma l^p/2-j(1-\gamma)k^pl^p,
t-\gamma l^p/2-(j-1)(1-\gamma)k^pl^p\right)
\end{align*}
and
\begin{align*}
T^+_j:=\left(t+\gamma l^p/2+(j-1)(1-\gamma)k^pl^p,
t+\gamma l^p/2+j(1-\gamma)k^pl^p\right).
\end{align*}
By this, it is easy to show that
\begin{align}\label{15591}
\bigcup_{j=1}^MT^-_j
=\left(t-\gamma l^p/2-M(1-\gamma)k^pl^p,t-\gamma l^p/2\right)
\supset(t-l^p,t-\gamma l^p/2)
\end{align}
and
\begin{align}\label{15592}
\bigcup_{j=1}^MT^+_j
=\left(t+\gamma l^p/2,
t+\gamma l^p/2+M(1-\gamma)k^pl^p\right)
\supset(t+\gamma l^p/2,t+l^p)
\end{align}
in the sense that it is allowed to differ
by a set of zero measure of $\mathbb{R}$.
For any $i\in\{1,...,N\}$ and $j\in\{1,...,M\}$,
let
\begin{align*}
R^-_{i,j}(\gamma):=B(y_i,kl)\times T^-_j
\quad\text{and}\quad
R^+_{i,j}(\gamma):=B(y_i,kl)\times T^+_j.
\end{align*}
From this, \eqref{15593}, \eqref{15591}, and \eqref{15592},
we deduce that
\begin{align*}
R^-(\gamma/2)\subset
\bigcup_{i=1}^N\bigcup_{j=1}^MR^-_{i,j}(\gamma)
\quad\text{and}\quad
R^+(\gamma/2)\subset
\bigcup_{i=1}^N\bigcup_{j=1}^MR^+_{i,j}(\gamma)
\end{align*}
in the sense that it is allowed to differ
by a set of zero measure of $X\times\mathbb{R}$.

\emph{Step 2}. Translation process.
In this step, we show how each element of
$\{R^\pm_{i,j}(\gamma)\}_{i\in\{1,...,N\},\,j\in\{1,...,M\}}$
can be transported to the same spatially central position
$B(x,kl)$ without losing too much information about their measures.
By Lemma~\ref{assum}, we
find that,
for any given $i\in\{1,...,N\}$,
there exist two constants $K_i\in\mathbb{N}$
and $\Lambda\in(1,\infty)$, $\{x_v\}_{v=0}^{K_i}\subset X$,
and $\{D_v\}_{v=0}^{K_i-1}$
satisfying both (i) and (ii) of Lemma~\ref{assum}
with $x_0:=y_i$, $x_{K_i}:=x$, and $r:=kl$.
From this, \eqref{1826} with $R^-(\gamma)$ and
$S$ replaced, respectively, by $B(x_0,kl)\times T^-_j$
and $D_0\times T^-_j+(1+\gamma)k^pl^p$,
and \eqref{upperdoub},
we deduce that, for any $j\in\{1,...,M\}$,
\begin{align*}
\omega\left(R^-_{i,j}(\gamma)\right)
&=\omega\left(B(x_0,kl)\times T^-_j\right)\\
&\lesssim\left[\frac{\lambda(B(x_0,kl)\times T^-_j)}
{\lambda(D_0\times T^-_j+(1+\gamma)k^pl^p)}\right]^q
\omega\left(D_0\times T^-_j+(1+\gamma)k^pl^p\right)\\
&\approx\left[\frac{\mu(B(x_0,kl))}
{\mu(D_0)}\right]^q
\omega\left(D_0\times T^-_j+(1+\gamma)k^pl^p\right)\\
&\lesssim\left[\frac{\mu(\Lambda D_0)}
{\mu(D_0)}\right]^q
\omega\left(D_0\times T^-_j+(1+\gamma)k^pl^p\right)\\
&\lesssim\omega\left(B(x_1,kl)\times T^-_j+(1+\gamma)k^pl^p\right),
\end{align*}
where the implicit positive constants
only depend on $n$, $q$, and $\Lambda$.
Repeating this argument for
$\{B(x_v,kl)\times T^-_j+v(1+\gamma)k^pl^p\}_{v=1}^{K_i-1}$,
we further obtain,
for any $i\in\{1,...,N\}$ and $j\in\{1,...,M\}$,
\begin{align}\label{1737}
\omega\left(R^-_{i,j}(\gamma)\right)
&\lesssim\omega\left(B(x_1,kl)\times T^-_j+(1+\gamma)k^pl^p\right)\\
&\lesssim\cdots
\lesssim
\omega\left(B(x_{K_i},kl)\times T^-_j+(1+\gamma)K_ik^pl^p\right)
\noz\\
&\approx\omega\left(B(x,kl)\times T^-_j+(1+\gamma)K_ik^pl^p\right).\noz
\end{align}
By both \eqref{1827}
and an argument similar to that used in the estimation
of \eqref{1737} with $\omega$ and $R^-_{i,j}(\gamma)$ replaced, respectively,
by $\omega^{1-q'}$ and $R^+_{i,j}(\gamma)$, we find that,
for any $i\in\{1,...,N\}$ and $j\in\{1,...,M\}$,
\begin{align}\label{1738}
\omega^{1-q'}\left(R^+_{i,j}(\gamma)\right)
\lesssim
\omega^{1-q'}\left(B(x,kl)\times T^+_j-(1+\gamma)K_ik^pl^p\right).
\end{align}
Now, for any $i\in\{1,...,N\}$ and $j\in\{1,...,M\}$,
we let
$$
\widetilde{R}^-_{i,j}(\gamma):=
B(x,kl)\times T^-_j+(1+\gamma)K_ik^pl^p
$$
and
$$
\widetilde{R}^+_{i,j}(\gamma):=
B(x,kl)\times T^+_j-(1+\gamma)K_ik^pl^p.
$$
From this and \eqref{upperdoub}, we deduce that,
for any $i\in\{1,...,N\}$ and $j\in\{1,...,M\}$,
\begin{align}\label{1751}
\frac{\lambda(\widetilde{R}^-_{i,j}(\gamma))}
{\lambda(R^-_{i,j}(\gamma))}
=\frac{\mu(B(x,kl))}{\mu(B(y_i,kl))}
\leq\frac{\mu(B(y_i,K_0(kl+l)))}{\mu(B(y_i,kl))}
\lesssim1,
\end{align}
where, in the penultimate inequality,
we used Definition~\ref{2057}(iii) and the fact that $\rho(x,y_i)<l$.
Similarly, we have
\begin{align}\label{1752}
\frac{\lambda(\widetilde{R}^+_{i,j}(\gamma))}
{\lambda(R^+_{i,j}(\gamma))}\lesssim1,
\end{align}
where the implicit positive constants are independent of
$x$, $t$, and $l$.
By Lemma~\ref{assum}(ii), we choose $k$ sufficiently small such that
\begin{align*}
\min_{i\in\{1,...,N\}}\left\{\gamma l^p-2(1+\gamma)K_ik^pl^p\right\}
>2\gamma k^pl^p,
\end{align*}
which, together with the definitions of
both $\widetilde{R}^-_{i,j}(\gamma)$
and $\widetilde{R}^+_{\tau,\sigma}(\gamma)$,
further implies that, for any $i,\tau\in\{1,...,N\}$
and $j,\sigma\in\{1,...,M\}$,
\begin{align}\label{1739}
&\min_{(x,t_1)\in\widetilde{R}^-_{i,j}(\gamma),\,
(x,t_2)\in\widetilde{R}^+_{\tau,\sigma}(\gamma)}
\left\{t_2-t_1\right\}\\
&\quad>t+\gamma l^p/2
-\max_{i\in\{1,...,N\}}\left\{(1+\gamma)K_ik^pl^p\right\}
\noz\\
&\qquad-\left(t-\gamma l^p/2+
\max_{i\in\{1,...,N\}}\left\{(1+\gamma)K_ik^pl^p\right\}\right)
\noz\\
&\quad>2\gamma k^pl^p.
\noz
\end{align}
Moreover,
from the definitions of both $\widetilde{R}^-_{i,j}(\gamma)$ and
$\widetilde{R}^+_{\tau,\sigma}(\gamma)$,
the choice of $k$,
and the fact that $M<(2-\gamma)/[(1-\gamma)k^p]$,
it follows that,
for any $i,\tau\in\{1,...,N\}$
and $j,\sigma\in\{1,...,M\}$,
\begin{align}
\label{1740}
&\max_{(x,t_1)\in\widetilde{R}^-_{i,j}(\gamma), \,
(x,t_2)\in\widetilde{R}^+_{\tau,\sigma}(\gamma)}
\left\{t_2-t_1\right\}\\
&\quad<t+\gamma l^p/2+M(1-\gamma)k^pl^p
-\left[t-\gamma l^p/2-M(1-\gamma)k^pl^p\right]
\noz\\
&\quad=\gamma l^p+2M(1-\gamma)k^pl^p<(4-\gamma)l^p.
\noz
\end{align}

\emph{Step 3}. In this step, we aim to
prove that $\omega\in A^+_q(\gamma/2)$.
By Step 1 and \eqref{upperdoub}, we find that
\begin{align*}
\fint_{R^-(\gamma/2)}\omega
&\leq\sum_{i=1}^N\sum_{j=1}^M
\frac{\lambda(R^-_{i,j}(\gamma))}{\lambda(R^-(\gamma/2))}
\fint_{R^-_{i,j}(\gamma)}\omega\\
&\leq\sum_{i=1}^N\sum_{j=1}^M
\frac{(1-\gamma)k^pl^p\mu(B(y_i,kl))}{(1-\gamma/2)l^p\mu(B(x,l))}
\fint_{R^-_{i,j}(\gamma)}\omega\\
&\leq\frac{(1-\gamma)k^p}{1-\gamma/2}
\sum_{i=1}^N\sum_{j=1}^M
\frac{\mu(B(x,(k+1)K_0l))}{\mu(B(x,l))}
\fint_{R^-_{i,j}(\gamma)}\omega\\
&\lesssim\sum_{i=1}^N\sum_{j=1}^M
\fint_{R^-_{i,j}(\gamma)}\omega
\end{align*}
and, similarly,
\begin{align*}
\left[\fint_{R^+(\gamma/2)}\omega^{1-q'}\right]^{q-1}
&\leq\left[\sum_{i=1}^N\sum_{j=1}^M
\frac{\lambda(R^+_{i,j}(\gamma))}{\lambda(R^+(\gamma/2))}
\fint_{R^+_{i,j}(\gamma)}\omega^{1-q'}\right]^{q-1}\\
&\lesssim\sum_{i=1}^N\sum_{j=1}^M\left[
\fint_{R^+_{i,j}(\gamma)}\omega^{1-q'}\right]^{q-1},
\end{align*}
where the implicit positive constants are independent of
$x$, $t$, and $l$.
From this, \eqref{1751}, \eqref{1752}, \eqref{1737}, \eqref{1738},
and Corollary~\ref{1451} combined with both
\eqref{1739} and \eqref{1740},
we deduce that
\begin{align*}
&\left[\fint_{R^-(\gamma/2)}\omega\right]
\left[\fint_{R^+(\gamma/2)}\omega^{1-q'}\right]^{q-1}\\
&\quad\lesssim\left[\sum_{i=1}^N\sum_{j=1}^M
\fint_{R^-_{i,j}(\gamma)}\omega\right]
\sum_{\tau=1}^N\sum_{\sigma=1}^M\left[
\fint_{R^+_{\tau,\sigma}(\gamma)}\omega^{1-q'}\right]^{q-1}\\
&\quad\approx\sum_{i=1}^N\sum_{j=1}^M
\sum_{\tau=1}^N\sum_{\sigma=1}^M
\left[\fint_{R^-_{i,j}(\gamma)}\omega\right]\left[
\fint_{R^+_{\tau,\sigma}(\gamma)}\omega^{1-q'}\right]^{q-1}\\
&\quad\lesssim\sum_{i=1}^N\sum_{j=1}^M
\sum_{\tau=1}^N\sum_{\sigma=1}^M
\left[\frac{\lambda(\widetilde{R}^-_{i,j}(\gamma))}
{\lambda(R^-_{i,j}(\gamma))}
\fint_{\widetilde{R}^-_{i,j}(\gamma)}\omega\right]\left[
\frac{\lambda(\widetilde{R}^+_{\tau,\sigma}(\gamma))}
{\lambda(R^+_{\tau,\sigma}(\gamma))}
\fint_{\widetilde{R}^+_{\tau,\sigma}(\gamma)}\omega^{1-q'}\right]^{q-1}\\
&\quad\lesssim\sum_{i=1}^N\sum_{j=1}^M
\sum_{\tau=1}^N\sum_{\sigma=1}^M
\left[\fint_{\widetilde{R}^-_{i,j}(\gamma)}\omega\right]\left[
\fint_{\widetilde{R}^+_{\tau,\sigma}(\gamma)}\omega^{1-q'}\right]^{q-1}
\lesssim1,
\end{align*}
where the implicit positive constants are independent of
$x$, $t$, and $l$.
By this and the arbitrariness of the parabolic cylinder $R(x,t,l)$,
we further conclude that $\omega\in A^+_q(\gamma/2)$.

Combining these three steps, we finish the proof of the above claim,
which then completes the proof of (ii) and hence Theorem~\ref{Idp}.
\end{proof}

\begin{remark}\label{1511}
\begin{enumerate}
\item[\textup{(i)}]
By Theorem~\ref{Idp}, we find that,
for any $q\in(1,\infty)$, $A^+_q(\gamma)$
is independent of the choice of $\gamma$ whenever $\gamma\in(0,1)$.
\item[\textup{(ii)}]
Notice that Lemma~\ref{assum}, which follows from
the monotone geodesic property,
is only used to prove Theorem~\ref{Idp}.
If $(X,\rho,\mu)$ is a
space of homogeneous type satisfying (i)
and (ii) of Lemma~\ref{assum} instead of the
monotone geodesic property, but with Lemma~\ref{assum}(ii)
replaced by
\begin{enumerate}
\item[\textup{(a)}]
there exists a constant $p_0\in(0,\infty)$,
only depending on $(X,\rho)$, such that,
for any $x,y\in X$ and $p\in(p_0,\infty)$,
\begin{align*}
\lim_{r\to0}N(x,y,r)r^p=0,
\end{align*}
\end{enumerate}
and define $p_{X}$ to be the smallest $p_0$
satisfied by $(X,\rho)$ in (a),
then we can choose any $p\in(p_X,\infty)$ to define
parabolic cylinders in Definition~\ref{3.1}
and all the results of this article still hold true.
It can be seen that whether (i) of the present remark
holds true depends on both
the parameter $p$ in Definition~\ref{3.1}
and the geometric properties of $(X,\rho)$ reflected by $p_X$.
For instance, let $(X,\rho)=(\mathbb{R}^n,|\cdot-\cdot|)$,
where $|\cdot-\cdot|$ is the standard Euclidean distance.
Then, in this case,
$p_X=1$ and we can use any $p\in(1,\infty)$ to define the
parabolic cylinders to develop a corresponding
theory. This interesting phenomenon has already been
mentioned in \cite[p.\,1723]{ks2016}.

\item[\textup{(iii)}]
Let $(X,\rho,\mu)$ be a space of homogeneous type,
$q\in(1,\infty)$, and $\gamma\in[0,1)$.
Assume that $\omega\in A^+_q(\gamma)$ and $\theta_1,\theta_2\in\mathbb{R}$
satisfy $\theta_1+\theta_2>-\frac{2\gamma}{1+\gamma}$.
By both Corollary~\ref{1451} and
an argument similar to that used in the proof of Theorem~\ref{Idp}(ii),
we find that there exists a positive constant $C$
such that
\begin{align}\label{1510}
&\sup_{\{R(x,t,l):\ (x,t)\in X\times\mathbb{R},\,l\in(0,\infty)\}}
\left[\fint_{R^-(\gamma)-\theta_1(1+\gamma)l^p}\omega\right]
\left[\fint_{R^+(\gamma)+\theta_2(1+\gamma)l^p}\omega^{1-q'}\right]^{q-1}
\leq C.
\end{align}
Moreover, assuming that \eqref{1510} holds true,
it is possible to show that $\omega\in A^+_q(\gamma)$ by
an argument similar to that used in the proof of \eqref{1510}.
This implies that we can replace \eqref{A+} by \eqref{1510}
in the definition of the parabolic Muckenhoupt weight class.
The new condition is satisfied with a different
constant which depends on $q$, $\gamma$,
$[\omega]_{A^+_q(\gamma)}$, $\theta_1$, and $\theta_2$.
\end{enumerate}
\end{remark}

\section{Parabolic Maximal Operators\\ and Parabolic
Weighted Norm Inequalities}\label{section5}

In this section, we introduce the
parabolic maximal functions
and show the weak-type parabolic
weighted norm inequalities.
To this end, we first introduce the parabolic space-time
rectangles as follows.

\begin{definition}\label{defrectpmw}
Let $(X,\rho,\mu)$ be a space of homogeneous type,
$p\in(1,\infty)$, and $\gamma\in[0,1)$.
Assume that
$$
\left\{\mathfrak{D}^\tau:=\left\{Q_\alpha^{k,\tau}:\
k\in\mathbb{Z},\,\alpha\in I_{k,\tau}\right\}
\right\}_{\tau=1}^K
$$
is the adjacent system of dyadic cubes and $\delta\in(0,1)$
in Lemma~\ref{1517}.
For any $\tau\in\{1,...,K\}$, $k\in\mathbb{Z}$, $\alpha\in I_{k,\tau}$,
and $t\in\mathbb{R}$,
let
$$
P(\tau,k,\alpha,t):=
Q_\alpha^{k,\tau}\times(t-\delta^{kp},t+\delta^{kp}),
$$
$$
P^-(\tau,k,\alpha,t,\gamma):=
Q_\alpha^{k,\tau}\times(t-\delta^{kp},t-\gamma\delta^{kp}),
$$
and
$$
P^+(\tau,k,\alpha,t,\gamma):=P^-(\tau,k,\alpha,t,\gamma)
+(1+\gamma)\delta^{kp}=
Q_\alpha^{k,\tau}\times(t+\gamma\delta^{kp},t+\delta^{kp}).
$$
The set $P(\tau,k,\alpha,t)$ is called
a \emph{$(z_\alpha^{k,\tau},t)$-centered parabolic
rectangle} of $X\times\mathbb{R}$
with edge length $l(P(\tau,k,\alpha,t)):=\delta^k$,
where $z_\alpha^{k,\tau}$ is the same as
in Lemma~\ref{1516} related to the system $\mathfrak{D}^\tau$
of dyadic cubes.

Simply write $P:=P(\tau,k,\alpha,t)$,
$P^-(\gamma):=P^-(\tau,k,\alpha,t,\gamma)$,
$P^+(\gamma):=P^+(\tau,k,\alpha,t,\gamma)$,
$$P^-:=P^-(\tau,k,\alpha,t,0),$$
and $P^+:=P^+(\tau,k,\alpha,t,0)$ if there exists no confusion.
Then $P^{\pm}(\gamma)$ are called, respectively,
the \emph{upper} and the \emph{lower parts} of $P$.
\end{definition}

We now introduce the parabolic Muckenhoupt weight
class via parabolic rectangles as follows.

\begin{definition}\label{1618}
Let $(X,\rho,\mu)$ be a space of homogeneous type,
$q\in(1,\infty)$, and $\gamma\in[0,1)$.
\begin{enumerate}
\item[\textup{(i)}]
A weight $\omega$ is said to belong
to the \emph{parabolic Muckenhoupt weight class via parabolic rectangles},
$\widetilde{A}^+_q(\gamma)$, if
\begin{align}\label{Ar+}
[\omega]_{\widetilde{A}^+_q(\gamma)}:=
\sup_{P(\tau,k,\alpha,t)}\left[\fint_{P^-(\gamma)}\omega\right]
\left[\fint_{P^+(\gamma)}\omega^{1-q'}\right]^{q-1}<\infty.
\end{align}
\item[\textup{(ii)}]
A weight $\omega$ is said to belong
to the \emph{parabolic Muckenhoupt weight class via parabolic rectangles},
 $\widetilde{A}^-_q(\gamma)$, if
\begin{align}\label{Ar-}
[\omega]_{\widetilde{A}^-_q(\gamma)}:=
\sup_{P(\tau,k,\alpha,t)}\left[\fint_{P^+(\gamma)}\omega\right]
\left[\fint_{P^-(\gamma)}\omega^{1-q'}\right]^{q-1}<\infty,
\end{align}
\end{enumerate}
where $q'\in(1,\infty)$ is the conjugate index of $q$, namely,
$1/q+1/q'=1$, and the suprema in both \eqref{Ar+} and \eqref{Ar-}
are taken over all the parabolic rectangles
$P(\tau,k,\alpha,t)$ of $X\times\mathbb{R}$ with
$\tau\in\{1,...,K\}$, $k\in\mathbb{Z}$, $\alpha\in I_{k,\tau}$,
and $t\in\mathbb{R}$.
\end{definition}

\begin{remark}\label{5.3}
\begin{enumerate}
\item[\textup{(i)}]
Let $q\in(1,\infty)$ and $\gamma\in[0,1$).
Then (i), (ii), and (iii) of Proposition~\ref{1627} still hold true
for $\widetilde{A}^+_q(\gamma)$.
Proposition~\ref{1627}(iv) also holds true
for any $\omega\in\widetilde{A}^+_q(\gamma)$
with parabolic cylinders replaced by parabolic rectangles.
\item[\textup{(ii)}]
From both an argument similar to that
used in the proof of Proposition~\ref{1633}
and the forward-in-time doubling
property of $\omega\in\widetilde{A}^+_q(\gamma)$
[see Proposition~\ref{1627}(iv) with parabolic
cylinders replaced by parabolic rectangles],
we deduce that Proposition~\ref{1633}
also holds true
with parabolic cylinders replaced by parabolic rectangles.
By this and an argument similar to that used in the proof
of Theorem~\ref{Idp}, we find that
Theorem~\ref{Idp} also holds true for
$\widetilde{A}^+_q(\gamma)$.
In addition, we conclude that $\widetilde{A}^+_q(\gamma)$
is independent of the choice of $\gamma$ whenever $\gamma\in(0,1)$.
\item[\textup{(iii)}]
Part (ii) of the present remark implies that
Remark~\ref{1511}(iii) also holds true
for any $\omega\in\widetilde{A}^+_q(\gamma)$
with parabolic cylinders replaced by parabolic rectangles.
\item[\textup{(iv)}]
By Definitions~\ref{defballpmw} and~\ref{defrectpmw} and
Lemmas~\ref{1516} and~\ref{1517},
it is easy to show that, for any $q\in(1,\infty)$ and
$\gamma_1,\gamma_2,\gamma\in[0,1)$ satisfying that
\begin{align*}
0\leq\gamma_1\leq\min\left\{1,\,\frac{1}{(2K_0C_1)^p}\right\}\gamma_2\leq
\min\left\{\left(\frac{c_1}{C}\right)^p,
\left(\frac{c_1\delta}{2K_0C_1C}\right)^p\right\}\gamma<1
\end{align*}
with both $c_1$ and $C_1$ in Lemma~\ref{1516}
and both $\delta$ and $C$ in Lemma~\ref{1517},
\begin{align*}
A^+_q(\gamma_1)\subset\widetilde{A}^+_q(\gamma_2)\subset A^+_q(\gamma),
\end{align*}
which, together with both Remark~\ref{1511}(i) and (ii)
of the present remark,
further implies that,
for any $q\in(1,\infty)$ and $\gamma,\gamma'\in(0,1)$,
\begin{align*}
\widetilde{A}^+_q(0)=A^+_q(0)\subset
A^+_q(\gamma)=\widetilde{A}^+_q(\gamma').
\end{align*}
Thus, the definitions, respectively,
via parabolic cylinders and parabolic rectangles
of $A^+_q(\gamma)$ and $\widetilde{A}^+_q(\gamma)$ coincide.
\end{enumerate}
\end{remark}

Next, we introduce the definition
of parabolic maximal operators
related to parabolic rectangles
as follows.

\begin{definition}\label{M^gamma}
Let $(X,\rho,\mu)$ be a space of homogeneous type
and $\gamma\in[0,1)$. Assume that $f\in L^1_{\loc}(X\times\mathbb{R})$.
The \emph{parabolic maximal functions},
related to parabolic rectangles, $\mathcal{M}^{\gamma+}f$
and $\mathcal{M}^{\gamma-}f$ of $f$ are defined, respectively,
by setting, for any $(x,t)\in X\times\mathbb{R}$,
\begin{align*}
\mathcal{M}^{\gamma+}f(x,t):=\sup_{\substack{P(\tau,k,\alpha,t)\ni (x,t)\\
\tau\in\{1,...,K\},\,k\in\mathbb{Z},\,\alpha\in I_{k,\tau}}}
\fint_{P^+(\gamma)}\left|f\right|
\end{align*}
and
\begin{align*}
\mathcal{M}^{\gamma-}f(x,t):=\sup_{\substack{P(\tau,k,\alpha,t)\ni (x,t)\\
\tau\in\{1,...,K\},\,k\in\mathbb{Z},\,\alpha\in I_{k,\tau}}}
\fint_{P^-(\gamma)}\left|f\right|.
\end{align*}
Moreover, simply write $\mathcal{M}^{+}:=\mathcal{M}^{0+}$
and $\mathcal{M}^{-}:=\mathcal{M}^{0-}$.
\end{definition}

Let $q\in(0,\infty)$ and $\omega$ be a weight.
Recall that the \emph{weighted Lebesgue space}
$L^q(X\times\mathbb{R},\omega)$
is defined to be the set of
all the $\lambda$-measurable functions $f$
on $X\times\mathbb{R}$ such that
\begin{align*}
\|f\|_{L^q(X\times\mathbb{R},\omega)}:=
\left(\int_{X\times\mathbb{R}}|f|^q\omega
\right)^{1/q}<\fz.
\end{align*}
The \emph{weighted weak Lebesgue space}
$L^{q,\infty}(X\times\mathbb{R},\omega)$
is defined to be the set of
all the $\lambda$-measurable functions $f$
on $X\times\mathbb{R}$ such that
\begin{align*}
\|f\|_{L^{q,\infty}(X\times\mathbb{R},\omega)}:=
\sup_{\xi\in(0,\infty)}\xi
\left[\omega\left(\left\{(x,t)\in X\times\mathbb{R}:\
|f(x,t)|>\xi\right\}\right)\right]^{1/q}<\fz.
\end{align*}
Especially, when $\omega\equiv1$, $L^{q,\infty}(X\times\mathbb{R},1)$ is just
the \emph{weak Lebesgue space} and we simply write
$$L^{q,\infty}(X\times\mathbb{R}):=L^{q,\infty}(X\times\mathbb{R},1).$$

Now, we show the necessity of the $A_q^+(\gamma)$ condition.

\begin{theorem}\label{2132-}
Let $(X,\rho,\mu)$ be a space of homogeneous type,
$q\in(1,\infty)$, and $\gamma\in[0,1)$.
Assume that $\omega$ is a weight such that
the parabolic maximal operator
$$\mathcal{M}^{\gamma+}:\ L^q(X\times\mathbb{R},\omega)
\to L^{q,\infty}(X\times\mathbb{R},\omega)$$
is bounded. Then
$\omega\in A^+_q(\gamma)$.
\end{theorem}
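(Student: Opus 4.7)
The plan is to adapt the classical Muckenhoupt test-function argument to the dyadic parabolic rectangle setting. Since $\mathcal{M}^{\gamma+}$ is defined via parabolic rectangles while $A_q^+(\gamma)$ is defined via parabolic cylinders, my first move would be to invoke Remark~\ref{5.3}(iv): it suffices to prove the rectangle-based Muckenhoupt condition $\widetilde{A}_q^+(\gamma')$ for a suitably chosen auxiliary time lag $\gamma' \in (0,1)$, after which the cylinder version $A_q^+(\gamma)$ follows automatically. This reduction avoids the geometric mismatch between rectangles and cylinders and keeps the entire argument within the dyadic framework.

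Fix a parabolic rectangle $P = P(\tau_0, k_0, \alpha_0, t)$ and set $\sigma := \omega^{1-q'}$. For each $N \in \mathbb{N}$, I would introduce the truncated test function $f_N := \min(\sigma, N)\mathbf{1}_{P^+(\gamma')}$, which is bounded and supported on a set of finite $\lambda$-measure, hence lies in $L^q(X \times \mathbb{R}, \omega)$. Using the algebraic identity $(1-q')(q-1) = -1$, one finds $\sigma^{q-1}\omega \equiv 1$, yielding the convenient bound $\|f_N\|_{L^q(X\times\mathbb{R},\omega)}^q \le \int_{P^+(\gamma')} f_N$. The heart of the argument is then the pointwise lower estimate: for $\lambda$-a.e.\ $(y,s) \in P^-(\gamma')$, one should have $\mathcal{M}^{\gamma+}f_N(y,s) \ge c \fint_{P^+(\gamma')} f_N$ for some geometric constant $c > 0$. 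Once this is in place, the inclusion $P^-(\gamma') \subset \{(y,s):\ \mathcal{M}^{\gamma+}f_N(y,s) \ge (c/2)\fint_{P^+(\gamma')} f_N\}$ together with the weak-type hypothesis gives $\omega(P^-(\gamma'))[\fint_{P^+(\gamma')} f_N]^q \ls \int_{P^+(\gamma')} f_N$. Dividing through by $\lambda(P^+(\gamma')) = \lambda(P^-(\gamma'))$ and letting $N \to \infty$ via monotone convergence yields $\omega \in \widetilde{A}_q^+(\gamma')$ uniformly in $P$.

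To construct the auxiliary rectangle realizing the pointwise lower bound, for each $(y,s) \in P^-(\gamma')$ I would select the ancestor cube $Q_{\alpha_1}^{k_1,\tau_0} \supset Q_{\alpha_0}^{k_0,\tau_0}$ at a level $k_1 = k_0 - j$, where the integer $j \ge 1$ is chosen so that $\delta^{-jp}$ falls inside the two-sided window dictated by the time-covering conditions $s + \delta^{k_1 p} \ge t + \delta^{k_0 p}$ and $s + \gamma\delta^{k_1 p} \le t + \gamma'\delta^{k_0 p}$. The rectangle $P' := P(\tau_0, k_1, \alpha_1, s)$, crucially centered in time at the evaluation time $s$ rather than at $t$, then contains $(y,s)$, and its future part $(P')^+(\gamma)$ contains $P^+(\gamma')$ with $\lambda((P')^+(\gamma)) \approx \lambda(P^+(\gamma'))$ by the doubling condition together with Lemma~\ref{1517}.

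The main obstacle is this time-matching step: the rigid dyadic scales $\{\delta^k\}_{k \in \mathbb{Z}}$ force $\delta^{-jp}$ to take only discrete values, whereas the two time-covering inequalities restrict it to an interval whose width depends on how much $\gamma'$ exceeds $\gamma$. The resolution is to choose $\gamma' \in (\gamma, 1)$ large enough that the admissible window always contains at least one valid integer $j$; the independence of the parabolic Muckenhoupt class on the choice of time lag, established in Theorem~\ref{Idp} and Remark~\ref{5.3}(ii), ensures that working with the enlarged $\gamma'$ entails no loss of generality.
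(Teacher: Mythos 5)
Your overall strategy --- testing the weak-type inequality against (a truncation of) $\omega^{1-q'}$ supported on the future part of $P$, and obtaining the pointwise lower bound on $P^{-}$ by passing to an ancestor dyadic cube and re-centering the rectangle in time at the evaluation point $s$ --- is exactly the strategy of the paper's proof, and your truncation bookkeeping (the pointwise bound $f_N^q\omega\le f_N$ on $P^{+}$) is correct. The gap is in the time-matching step, and it is not repairable in the form you propose. Your containment $(P')^{+}(\gamma)\supset P^{+}(\gamma')$ requires, at the top of the time range $s=t-\gamma'\delta^{k_0p}$, that $\gamma\,\delta^{-jp}\le 2\gamma'$, while the bottom of the range forces $\delta^{-jp}\ge 2$ and hence $j\ge1$. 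But Lemma~\ref{1517} imposes $\delta\le(96K_0^6)^{-1}$, so the smallest available scale jump is $\delta^{-p}\ge 96^{p}$, and the first inequality becomes $\gamma'\ge \gamma\,\delta^{-jp}/2\ge \gamma\cdot 96^{p}/2$, which exceeds $1$ for every $\gamma\ge 2\cdot 96^{-p}$. Thus no admissible $\gamma'\in(\gamma,1)$ exists: the window you hope to open by enlarging $\gamma'$ is empty for essentially all time lags. (For $\gamma=0$ your argument does go through, since then only $\delta^{-jp}\ge2$ is needed.)

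The paper resolves exactly this obstruction by \emph{not} insisting that the future part of the ancestor rectangle contain $P^{+}(\gamma)$ itself. It instead contains the forward temporal translate $S^{+}(\gamma):=P^{+}(\gamma)+\gamma(\delta^{\widetilde k p}-2\delta^{kp})$ with $\widetilde k:=k+\lfloor\log_\delta 2^{1/p}\rfloor$, for which one checks $S^{+}(\gamma)\subset P^{+}(\tau,\widetilde k,\beta,s,\gamma)$ for every $(y,s)\in P^{-}(\gamma)$. Testing against $\omega^{1-q'}\mathbf{1}_{S^{+}(\gamma)}$ then yields $\left[\fint_{P^{-}(\gamma)}\omega\right]\left[\fint_{S^{+}(\gamma)}\omega^{1-q'}\right]^{q-1}\lesssim1$, a Muckenhoupt condition whose upper part is shifted \emph{forward} in time; Remark~\ref{5.3}(iii) (the rectangle analogue of Remark~\ref{1511}(iii), resting on Corollary~\ref{1451} and the translation machinery behind Theorem~\ref{Idp}) converts this shifted condition into the genuine $A_q^{+}(\gamma)$ condition. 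The missing ingredient in your proposal is precisely this device of shifting the test set forward and undoing the shift a posteriori via the translation-invariance of the class; without it, the discreteness of the dyadic scales makes the direct containment you want impossible.
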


\begin{proof}
Without loss of generality, we may assume that
$f\in L^q(X\times\mathbb{R},\omega)$
is a nonnegative function and $P:=P(\tau,k,\alpha,t)$
a parabolic rectangle with
$\tau\in\{1,...,K\}$, $k\in\mathbb{Z}$, $\alpha\in I_{k,\tau}$,
and $t\in\mathbb{R}$.
Let $\widetilde{k}:=k+\lfloor\log_{\delta}2^{1/p}\rfloor<k$,
where $\delta\in(0,1)$ is the same as in Lemma~\ref{1517}.
By this and Lemma~\ref{1516}, we find that there exists
a unique $\beta\in I_{\widetilde{k},\tau}$ such that
$Q_\alpha^{k,\tau}\subset Q_\beta^{\widetilde{k},\tau}$,
which implies that,
for any $(y,s)\in P^-(\tau,k,\alpha,t,\gamma)=:P^-(\gamma)$,
$(y,s)\in P(\tau,\widetilde{k},\beta,s)$
and
\begin{align*}
S^+(\gamma):=P^+(\gamma)+
\gamma\left(\delta^{\widetilde{k}p}-2\delta^{kp}\right)
\subset P^+\left(\tau,\widetilde{k},\beta,s,\gamma\right),
\end{align*}
where $P^+(\gamma):=P^+(\tau,k,\alpha,t,\gamma)$.
From this, both (iii) and (iv) of Lemma~\ref{1516}, Definition~\ref{2057}(iii),
and \eqref{upperdoub},
we deduce that, for any $(y,s)\in P^-(\gamma)$,
\begin{align*}
\mathcal{M}^{\gamma+}f(y,s)
&\ge \fint_{P^+(\tau,\widetilde{k},\beta,s,\gamma)}f\ge
\frac{\lambda(S^+(\gamma))}{\lambda(P^+(\tau,\widetilde{k},\beta,s,\gamma))}
f_{S^+(\gamma)}\\
&=\frac{\mu(Q_\alpha^{k,\tau})\delta^{kp}}
{\mu(Q_\beta^{\widetilde{k},\tau})\delta^{\widetilde{k}p}}f_{S^+(\gamma)}
\ge\frac{\delta^p\mu(B(z_\alpha^{k,\tau},c_1\delta^k))}
{2\mu(B(z_\beta^{\widetilde{k},\tau},C_1\delta^{\widetilde{k}}))}f_{S^+(\gamma)}\\
&\ge\frac{\delta^p\mu(B(z_\alpha^{k,\tau},c_1\delta^k))}
{2\mu(B(z_\alpha^{k,\tau},2K_0C_1\delta^{\widetilde{k}}))}f_{S^+(\gamma)}\\
&\ge\frac{\delta^{p}}{2C_{\mu}}
\left(\frac{2K_0C_12^{1/p}}{c_1\delta}\right)^{-n}f_{S^+(\gamma)},
\end{align*}
and hence
\begin{align*}
P^-(\gamma)\subset
\left\{(x,t)\in X\times\mathbb{R}:\
\mathcal{M}^{\gamma+}f(x,t)>\frac{\delta^{p}}{2C_{\mu}}
\left(\frac{2K_0C_12^{1/p}}{c_1\delta}\right)^{-n}f_{S^+(\gamma)}\right\}.
\end{align*}
By this and the boundedness of $\mathcal{M}^{\gamma+}$
from $L^q(X\times\mathbb{R},\omega)$ to
$L^{q,\infty}(X\times\mathbb{R},\omega)$,
we find that, for any $\xi\in(0,\frac{\delta^{p}}{2C_{\mu}}
[\frac{2K_0C_12^{1/p}}{c_1\delta}]^{-n}f_{S^+(\gamma)})$,
\begin{align}\label{2029}
\omega(P^-(\gamma))=\int_{P^-(\gamma)}\omega\leq
\int_{\{(x,t)\in X\times\mathbb{R}:\
\mathcal{M}^{\gamma+}f(x,t)>\xi\}}\omega
\lesssim\frac{1}{\xi^q}\int_{X\times\mathbb{R}}
f^q\omega.
\end{align}
Let $f:=\omega^{1-q'}\mathbf{1}_{S^+(\gamma)}$ and
$\xi\to\frac{\delta^{p}}{2C_{\mu}}
(\frac{2K_0C_12^{1/p}}{c_1\delta})^{-n}f_{S^+(\gamma)}$.
Then, by \eqref{2029} and the fact that
$\lambda(S^+(\gamma))=\lambda(P^+(\gamma))
=\lambda(P^-(\gamma))$,
we conclude that
\begin{align}\label{1046}
\fint_{P^-(\gamma)}\omega
&\lesssim\frac{1}{\lambda(S^+(\gamma))}
\left[\fint_{S^+(\gamma)}\omega^{1-q'}\right]^{-q}
\left[\int_{S^+(\gamma)}
\omega^{(1-q')q+1}\right]\\
&\approx
\left[\fint_{S^+(\gamma)}\omega^{1-q'}\right]^{1-q}.\noz
\end{align}
On the one hand, if $\gamma=0$, then $S^+(0)=P^+$.
This, combined with \eqref{1046}, implies that
\begin{align*}
\left(\fint_{P^-}\omega\right)\left(\fint_{P^+}\omega^{1-q'}\right)^{q-1}
\lesssim1,
\end{align*}
which, together with the arbitrariness of $P$,
further implies that $\omega\in A_q^+(0)$.
On the other hand, if $\gamma\in(0,1)$, then,
from \eqref{1046}, the arbitrariness of $P$,
and Remark~\ref{5.3}(iii),
we deduce that, for any parabolic rectangle $U$,
\begin{align*}
\left[\fint_{U^-(\gamma)}\omega\right]
\left[\fint_{U^+(\gamma)}\omega^{1-q'}\right]^{q-1}
\lesssim\sup_{P}
\left[\fint_{P^-(\gamma)}\omega\right]
\left[\fint_{S^+(\gamma)}\omega^{1-q'}\right]^{q-1}
\lesssim1
\end{align*}
and hence $\omega\in A_q^+(\gamma)$.
This finishes the proof of
Theorem~\ref{2132-}.
\end{proof}

Next, we show the reverse conclusion of Theorem~\ref{2132-}.
To this end, we need two technical lemmas.

\begin{lemma}\label{924}
Let $\tau\in\{1,...,K\}$ with $K$ in Lemma~\ref{1517},
and let $F:=\{P_i=Q_i\times I_i:\ i\in\Lambda\}$,
with $\Lambda$ being a set of indices,
be a countable collection of
parabolic rectangles satisfying that
\begin{enumerate}
\item[\textup{(i)}]
there exists a positive constant
$C$ such that, for any $k\in\mathbb{Z}$,
\begin{align*}
\sum_{P\in F,\,l(P)=\delta^k}
\mathbf{1}_{P^-}\leq C;
\end{align*}
\item[\textup{(ii)}]
for any $P,U\in F$ with $P\neq U$, $P^-\nsubseteq U^-$;
\item[\textup{(iii)}]
for any $i\in\Lambda$,
$Q_i\in\mathfrak{D}^\tau$.
\end{enumerate}
Then there exists another positive constant $C'$ such that,
for any parabolic rectangle $P_0:=Q_0\times I_0$
with $Q_0\in\mathfrak{D}^\tau$,
\begin{align}\label{F}
\sum_{P\in\mathcal{F}}
\lambda(P)\leq C'\lambda(P_0),
\end{align}
where $\mathcal{F}$, related to $P_0$, is defined by setting
\begin{align*}
\mathcal{F}:=
\left\{P\in F:\ P^+\cap P_0^+\neq\emptyset,\,l(P)<l(P_0)\right\}.
\end{align*}
\end{lemma}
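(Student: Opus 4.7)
The plan is to reduce the desired estimate to a Carleson-type bound on the lower halves $\{P^-\}_{P\in\mathcal{F}}$ and to exploit conditions (i), (ii), (iii) through a scale-by-scale analysis along the dyadic tree of $\mathfrak{D}^\tau$. Since $\lambda(P)=2\lambda(P^-)$ for every parabolic rectangle $P$, it suffices to prove $\sum_{P\in\mathcal{F}}\lambda(P^-)\lesssim\lambda(P_0)$.

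The first step is to localize all $P^-$ inside a fixed enlargement of $P_0$. Write $l(P_0)=\delta^{k_0}$ and $l(P)=\delta^{k_P}$, so that $k_P>k_0$. Because $Q_P,Q_0\in\mathfrak{D}^\tau$ and $P^+\cap P_0^+\neq\emptyset$ forces the two spatial cubes to meet, the dichotomy in Lemma~\ref{1516}(i) together with $l(P)<l(P_0)$ gives $Q_P\subsetneq Q_0$; the temporal part of $P^+\cap P_0^+\neq\emptyset$ gives $t_P\in(t_0-\delta^{k_Pp},t_0+\delta^{k_0p})$. Hence
\[
P^-\subset \widetilde{P_0}:=Q_0\times\bigl(t_0-2\delta^{k_0p},\,t_0+\delta^{k_0p}\bigr),
\qquad \lambda(\widetilde{P_0})=\tfrac{3}{2}\lambda(P_0).
\]
Grouping $\mathcal{F}$ by scale through $\mathcal{F}_k:=\{P\in\mathcal{F}:l(P)=\delta^k\}$ and setting $E_k:=\bigcup_{P\in\mathcal{F}_k}P^-$, condition (i) yields
\[
\sum_{P\in\mathcal{F}_k}\lambda(P^-)=\int\sum_{P\in\mathcal{F}_k}\mathbf{1}_{P^-}\,d\lambda\le C\,\lambda(E_k),
\]
so the remaining task is to prove $\sum_{k>k_0}\lambda(E_k)\lesssim\lambda(P_0)$.

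The crux is to promote this per-scale bound to a summable one via conditions (ii) and (iii). For any $(x,t)\in E_j\cap E_k$ with $j<k$, pick witnesses $P\in\mathcal{F}_j$ and $U\in\mathcal{F}_k$ with $(x,t)\in P^-\cap U^-$; the common point $x$ and the dyadic structure of $\mathfrak{D}^\tau$ force $Q_U\subsetneq Q_P$, and condition (ii) then prevents the time interval of $U^-$ from being contained in that of $P^-$. A direct computation shows that the center $t_U$ is confined to one of two intervals of total length at most $2\delta^{kp}$ located near the endpoints of the time interval of $P^-$. Combined with the per-scale overlap bound from (i), this should translate into a Carleson-type decay of the form $\lambda(E_j\cap E_k)\lesssim \delta^{(k-j)p}\lambda(E_j)$ (or a suitable variant amenable to summation), which on summation over $k>j>k_0$ telescopes to $\sum_{k>k_0}\lambda(E_k)\lesssim\lambda(\widetilde{P_0})\lesssim\lambda(P_0)$, yielding the claim.

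The main obstacle is precisely this last across-scale step: conditions (i) and (ii) alone provide only per-scale overlap control and an antichain property, so a summable geometric-decay estimate must be extracted by genuinely combining them with the dyadic structure of (iii). A clean execution will most likely proceed through a hierarchical stopping-time argument on the dyadic tree of $\mathfrak{D}^\tau$ rooted at $Q_0$: at each generation, (i) handles the same-generation overlap of $P^-$'s sharing a spatial cube, while (ii) forces the finer-scale time intervals to escape those of their spatial ancestors, producing the geometric factor needed for summability.
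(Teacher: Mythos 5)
Your plan correctly isolates the geometric mechanism that also drives the paper's proof: for two rectangles $P,U$ in the family with $l(U)<l(P)$ and $U^-\cap P^-\neq\emptyset$, the dyadic structure in (iii) forces $Q_U\subset Q_P$, and the antichain condition (ii) then forces the time interval of $U^-$ to straddle an endpoint of that of $P^-$, so $P^-\cap U^-$ lies in two time strips of width $l(U)^p$. The paper organizes this differently (splitting $\mathcal{F}$ into the rectangles sticking out of $P_0$ and those inside $P_0$, handling the first by counting per dyadic cube and per scale, and running a greedy selection on the second), whereas you propose an almost-orthogonality estimate for the level sets $E_k$. Your localization $P^-\subset\widetilde{P_0}$ and the per-scale reduction via (i) are fine. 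But the write-up stops short of a proof at exactly the point you yourself flag as ``the main obstacle'', and two things are missing there.

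First, the intersection estimate $\lambda(E_j\cap E_k)\lesssim\delta^{(k-j)p}\lambda(E_j)$ is asserted rather than derived. It does follow from your endpoint-pinning observation, but only after a step you omit: one must cover $E_j\cap E_k$ by $\bigcup_{P\in\mathcal{F}_j}\bigl(Q_P\times(\text{two endpoint strips of the time interval of }P^-)\bigr)$, note that each piece has measure $2\delta^{(k-j)p}\lambda(P^-)$, and then invoke hypothesis (i) a second time to bound $\sum_{P\in\mathcal{F}_j}\lambda(P^-)$ by $C\lambda(E_j)$. Second, and more seriously, the claimed ``telescoping'' does not work as stated: the Bonferroni inequality only yields $\lambda(\bigcup_k E_k)\ge\bigl(1-\tfrac{2C\delta^{p}}{1-\delta^{p}}\bigr)\sum_k\lambda(E_k)$, which is vacuous unless $2C\delta^{p}<1-\delta^{p}$; since $C$ is the arbitrary overlap constant from hypothesis (i) while $\delta$ is fixed by Lemma~\ref{1517}, this can fail. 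The repair is routine but must be supplied --- for instance, choose $m$ with $2C\delta^{mp}/(1-\delta^{mp})\le 1/2$, split the scales $k>k_0$ into $m$ arithmetic progressions, apply the almost-orthogonality argument within each class, and add the $m$ resulting bounds. With these two repairs your route closes and is a genuine, arguably cleaner, alternative to the paper's decomposition-plus-greedy-selection argument; as written, however, the decisive summation step is only a conjecture.
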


\begin{proof}
Assume that
\begin{align}\label{P0}
P_0:=Q_{\alpha_0}^{k_0,\tau}\times(t_0-\delta^{k_0p},t_0+\delta^{k_0p})
\end{align}
with $k_0\in\mathbb{Z}$, $\alpha_0\in I_{k_0,\tau}$,
and $t_0\in\mathbb{R}$.
We divide $\mathcal{F}$ into
\begin{align*}
\mathcal{F}=\left\{P\in \mathcal{F}:\
P\nsubseteq P_0\right\}
\cup\left\{P\in \mathcal{F}:\
P\subset P_0\right\}=:\mathcal{F}_1\cup\mathcal{F}_2.
\end{align*}
Thus, to prove \eqref{F},
it suffices to show
\begin{align}\label{1058}
\sum_{P\in\mathcal{F}_1}
\lambda(P)\lesssim\lambda(P_0)
\quad\text{and}\quad
\sum_{P\in\mathcal{F}_2}
\lambda(P)\lesssim\lambda(P_0).
\end{align}

We first prove the first inequality of \eqref{1058}.
By both (i) and (ii) of Lemma~\ref{1516}, we find that,
for any $k\in\mathbb{Z}\cap(k_0,\infty)$, there exists
a finite set $\widetilde{I}_{k,\tau}\subset I_{k,\tau}$ of indices such that,
for any $\alpha\in\widetilde{I}_{k,\tau}$,
\begin{align}\label{2020}
Q_\alpha^{k,\tau}\subset Q_{\alpha_0}^{k_0,\tau}.
\end{align}
Assume that $I$ is an interval of $\mathbb{R}$
such that $P=Q_\alpha^{k,\tau}\times I\in\mathcal{F}_1$.
Then \eqref{2020}, the definition of $\mathcal{F}_1$, (ii)
of the present lemma, and \eqref{P0} imply
that $I=(t_I-\delta^{kp},t_I+\delta^{kp})$
with
\begin{align*}
t_I\in(t_0+\delta^{k_0p}-\delta^{kp},t_0+\delta^{k_0p}),
\end{align*}
which, combined with (i) of the present lemma,
further implies that
\begin{align*}
&\#\left\{P:\ P=Q_\alpha^{k,\tau}\times I\in\mathcal{F}_1\right\}\\
&\quad\leq\sum_{P\in\mathcal{F}_1,\,l(P)=\delta^k}
\mathbf{1}_{P^-}\left(z_\alpha^{k,\tau},
t_0+\delta^{k_0p}-\frac{2\delta^{kp}}{3}\right)\\
&\qquad+\sum_{P\in\mathcal{F}_1,\,l(P)=\delta^k}
\mathbf{1}_{P^-}\left(z_\alpha^{k,\tau},
t_0+\delta^{k_0p}-\frac{4\delta^{kp}}{3}\right)\\
&\quad\lesssim1,
\end{align*}
where $z_\alpha^{k,\tau}$ is any point in $Q_\alpha^{k,\tau}$.
From this, \eqref{2020}, and both (i) and
(ii) of the present lemma, we deduce that
\begin{align}\label{F1}
\sum_{P\in\mathcal{F}_1}
\lambda(P)&=\sum_{k=k_0+1}^\infty\sum_{\alpha\in\widetilde{I}_{k,\tau}}
\sum_{\substack{
P=Q_\alpha^{k,\tau}\times I\in\mathcal{F}_1\\ I
\text{ is any interval}}}
\lambda(P)
\lesssim\sum_{k=k_0+1}^\infty\sum_{\alpha\in\widetilde{I}_{k,\tau}}
\mu(Q_\alpha^{k,\tau})\delta^{kp}\\
&\approx\sum_{k=k_0+1}^\infty
\mu(Q_{\alpha_0}^{k_0,\tau})\delta^{kp}
\approx\mu(Q_{\alpha_0}^{k_0,\tau})\delta^{k_0p}
\approx\lambda(P_0).\noz
\end{align}

Next, we prove the second inequality of \eqref{1058}.
Notice that, for any $P\in\mathcal{F}_2$,
we have $l(P)<l(P_0)<\infty$.
We now let
\begin{align*}
\Delta_1:=\left\{P\in\mathcal{F}_2:\
l(P)=\max_{U\in\mathcal{F}_2}\{l(U)\}\right\}
\end{align*}
and, for any $k\in\mathbb{N}\setminus\{1\}$,
\begin{align*}
\widetilde{\Delta}_k:=\left\{P\in\mathcal{F}_2:\
P^-\cap \left(\bigcup_{U\in\bigcup_{j=1}^{k-1}
\Delta_j}U^-\right)=\emptyset\right\}
\end{align*}
and
\begin{align*}
\Delta_k:=\left\{P\in\widetilde{\Delta}_k:\
l(P)=\max_{U\in\widetilde{\Delta}_k}\{l(U)\}\right\}.
\end{align*}
Let $\Delta:=\bigcup_{k\in\mathbb{N}}\Delta_k$
and it is easy to show that, for any
$U\in\mathcal{F}_2\setminus\Delta$,
there exists a parabolic rectangle $P\in\Delta$
such that
$P^-\cap U^-\neq\emptyset$.
This implies that
\begin{align}\label{2125}
\sum_{P\in\mathcal{F}_2}\lambda(P)
\leq\sum_{P\in\Delta}\left[\lambda(P)+
\sum_{U\in\mathcal{F}_2\setminus\Delta,\,
U^-\cap P^-\neq\emptyset,\,l(U)<l(P)}
\lambda(U)\right].
\end{align}
Notice that, for any $P,U\in\mathcal{F}_2$ with $P\neq U$,
$U^-\nsubseteq P^-$.
By this and an argument similar to that used in the estimation of
\eqref{F1}, we find that, for any $P\in\Delta$,
\begin{align*}
\sum_{U\in\mathcal{F}_2\setminus\Delta,\,
U^-\cap P^-\neq\emptyset,\,l(U)<l(P)}
\lambda(U)\lesssim\lambda(P).
\end{align*}
From this, \eqref{2125}, the definition of $\Delta$,
(i) of the present lemma, and
the definitions of both $\widetilde{\Delta}_k$ and $\mathcal{F}_2$,
we deduce that
\begin{align*}
\sum_{P\in\mathcal{F}_2}\lambda(P)&\lesssim
\sum_{P\in\Delta}\lambda(P)
\approx\sum_{k\in\mathbb{N}}\sum_{P\in\Delta_k}\lambda(P)
\approx\sum_{k\in\mathbb{N}}\sum_{P\in\Delta_k}\lambda(P^-)\\
&\approx\sum_{k\in\mathbb{N}}\lambda\left(
\bigcup_{P\in\Delta_k}P^-\right)
\approx\lambda\left(
\bigcup_{k\in\mathbb{N}}
\bigcup_{P\in\Delta_k}P^-\right)\\
&\lesssim\lambda\left(
\bigcup_{P\in\mathcal{F}_2}P\right)\lesssim\lambda(P_0),
\end{align*}
which, together with both \eqref{1058}
and \eqref{F1}, further implies \eqref{F}.
This then finishes the proof of Lemma~\ref{924}.
\end{proof}

\begin{lemma}\label{2133}
Let $\gamma\in[0,1)$
and $\mathfrak{D}^\tau$ be the system of dyadic cubes
in Lemma~\ref{1516},
where $\tau\in\{1,...,K\}$ and $K\in\mathbb{N}$
are the same as in Lemma~\ref{1517}.
Let $j\in\mathbb{N}$,
$\xi\in(0,\infty)$, and $f\in L^1_{\loc}(X\times\mathbb{R})$
be a nonnegative function.
For any given $x_0\in X$ and $r_0\in(0,\infty)$,
consider the set $A:=R\cap E$,
where $R:=B(x_0,r_0)\times[-r_0,r_0]$ and
$$E:=\left\{(x,t)\in X\times\mathbb{R}:\ \xi\leq
\mathcal{M}^{\gamma+}_{\tau,j} f(x,t)\leq2\xi\right\}$$
with
\begin{align}\label{958}
\mathcal{M}^{\gamma+}_{\tau,j}f(x,t)
:=\sup_{\substack{P(\tau,k,\alpha,t)\ni (x,t)\\
k\in\mathbb{Z},\,|k|\leq j,\,\alpha\in I_{k,\tau}}}
\fint_{P^+(\gamma)}f.
\end{align}
For any $(x,t)\in E$,
let $P_{(x,t)}:=Q_\alpha^{k,\tau}\times(t-\delta^{kp},
t+\delta^{kp})$, for some $|k|\leq j$ and $\alpha\in I_{k,\tau}$,
be a parabolic rectangle satisfying that
\begin{align}\label{1817}
\xi\leq\fint_{P_{(x,t)}^+(\gamma)}f\leq2\xi.
\end{align}
Then there exists a set $\Omega\subset A$ such that, for any $(x,t)\in\Omega$,
there exists a set $F_{(x,t)}\subset P_{(x,t)}^+(\gamma)$
such that
\begin{enumerate}
\item[\textup{(i)}]
$A\subset\bigcup_{(x,t)\in\Omega}\overline{P_{(x,t)}^-}$,
where, for any $(x,t)\in\Omega$,
$\overline{P_{(x,t)}^-}$ denotes the \emph{closure} of ${P_{(x,t)}^-}$;
\item[\textup{(ii)}]
$\frac{1}{\lambda(P_{(x,t)}^+(\gamma))}
\int_{F_{(x,t)}}f\gtrsim\xi$;
\item[\textup{(iii)}]
$\sum_{(x,t)\in\Omega}\mathbf{1}_{F_{(x,t)}}\lesssim1$,
\end{enumerate}
where the implicit positive constants
are independent of $\xi$, $f$, $x$, $t$, $x_0$, and $r_0$.
\end{lemma}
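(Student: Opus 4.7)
The plan is a Vitali-type selection from the collection $\{P_{(x,t)} : (x,t)\in A\}$ followed by a Calder\'on--Zygmund-style trimming of each $P^+_{(x,t)}(\gamma)$ to obtain $F_{(x,t)}$. Since $A\subset R$ is bounded and $|k|\leq j$, the scales $l(P_{(x,t)})=\delta^k$ lie in the finite range $[\delta^j,\delta^{-j}]$. I would order the rectangles by decreasing scale, with ties broken by decreasing temporal coordinate of the center, and run the greedy algorithm: include $(x,t)$ in $\Omega$ if and only if $(x,t)\notin\overline{P^-_{(y,s)}}$ for every already-chosen $(y,s)\in\Omega$. By construction (i) holds, and distinct $(y,s),(x,t)\in\Omega$ satisfy $P^-_{(y,s)}\not\subset P^-_{(x,t)}$, giving hypothesis (ii) of Lemma~\ref{924}. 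Moreover, at each fixed scale $\delta^k$ the greedy choice combined with the decreasing-time ordering forces selected times within a common spatial cube of $\mathfrak{D}^\tau$ to differ by more than $\delta^{kp}$, so the $P^-$ parts of $\Omega$ at scale $\delta^k$ are pairwise disjoint, verifying hypothesis (i) of Lemma~\ref{924} with $C=1$.

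For each $(x,t)\in\Omega$ I would set
$$\mathcal{F}(x,t):=\bigl\{(y,s)\in\Omega:\ P^+_{(y,s)}(\gamma)\cap P^+_{(x,t)}(\gamma)\neq\emptyset,\ l(P_{(y,s)})<l(P_{(x,t)})\bigr\}$$
and
$$F_{(x,t)}:=P^+_{(x,t)}(\gamma)\setminus\bigcup_{(y,s)\in\mathcal{F}(x,t)}P^+_{(y,s)}(\gamma).$$
Property (iii) is then nearly automatic: if $z\in F_{(x_1,t_1)}\cap F_{(x_2,t_2)}$ for distinct centers, the definition of $F$ excludes strictly smaller overlapping selected rectangles, forcing both scales to coincide, but same-scale selected $P^+(\gamma)$'s are pairwise disjoint (by the same spatial/temporal disjointness used above for $P^-$), a contradiction. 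Hence the $F_{(x,t)}$'s are pairwise disjoint and (iii) holds with constant $1$.

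For (ii), combining \eqref{1817} with $\fint_{P^+_{(y,s)}(\gamma)}f\leq 2\xi$ for every $(y,s)\in\Omega\subset E$ yields
$$\int_{F_{(x,t)}}f\geq\xi\,\lambda\bigl(P^+_{(x,t)}(\gamma)\bigr)-2\xi\sum_{(y,s)\in\mathcal{F}(x,t)}\lambda\bigl(P^+_{(y,s)}(\gamma)\bigr).$$
The main technical task, and where Lemma~\ref{924} enters in full force, is to bound the sum on the right by a small definite fraction $c\,\lambda(P^+_{(x,t)}(\gamma))$ with $c<\tfrac12$. Applying Lemma~\ref{924} to $\mathcal{F}(x,t)$ after a temporal translation that makes $P^+(\gamma)$ play the role of $P$ in that lemma controls the sum by an absolute multiple $C'\lambda(P^+_{(x,t)}(\gamma))$; to reduce $C'$ to an admissible fraction I would refine $\mathcal{F}(x,t)$ via a secondary scale-truncation, restricting to $l(P_{(y,s)})\leq\delta^N l(P_{(x,t)})$ for $N$ large and handling the finitely many intermediate scales by a direct count of $P^+(\gamma)$'s overlapping $P^+_{(x,t)}(\gamma)$, whose number at each scale is uniformly bounded thanks to the per-scale separation established in the first paragraph. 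The resulting geometric-series estimate closes the argument, yielding $\int_{F_{(x,t)}}f\gtrsim\xi\,\lambda(P^+_{(x,t)}(\gamma))$ and hence (ii). This delicate multi-scale balancing is the technical heart of the proof.
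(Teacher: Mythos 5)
Your selection of $\Omega$ follows the paper's construction closely (greedy choice by scale and time, the non-inclusion condition feeding hypothesis (ii) of Lemma~\ref{924}, and per-scale bounded overlap feeding hypothesis (i)), and that part is sound. The genuine gap is in your definition of $F_{(x,t)}$ and the resulting proof of (ii). You set $F_{(x,t)}:=P^+_{(x,t)}(\gamma)\setminus\bigcup_{(y,s)\in\mathcal{F}(x,t)}P^+_{(y,s)}(\gamma)$, i.e.\ you delete the \emph{entire} union of smaller overlapping upper halves. Nothing in the construction prevents the selected smaller rectangles from tiling $P^+_{(x,t)}(\gamma)$ almost completely: Lemma~\ref{924} only bounds $\sum_{(y,s)\in\mathcal{F}(x,t)}\lambda(P_{(y,s)})$ by $C'\lambda(P_{(x,t)})$ with a constant $C'$ that is an absolute constant but by no means less than $1$, and this bound is perfectly consistent with the union covering all of $P^+_{(x,t)}(\gamma)$, in which case your $F_{(x,t)}$ is empty and (ii) fails. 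Your proposed repair does not close this: truncating to scales $l(P_{(y,s)})\le\delta^N l(P_{(x,t)})$ gives no smallness, because the per-scale contribution to $\sum\lambda(P_{(y,s)})$ does not decay in the scale (at a single tiny scale the selected, pairwise disjoint $P^+(\gamma)$'s can already have total measure comparable to $\lambda(P^+_{(x,t)}(\gamma))$), so there is no geometric series to sum.

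The missing idea is to remove not the union but only the \emph{high-multiplicity} part of it. The paper first upgrades Lemma~\ref{924} to the weighted bound $\sum_{(y,s)\in\Gamma_{(x,t)}}\int_{P^+_{(y,s)}(\gamma)}f\le\widetilde{C}\int_{P^+_{(x,t)}(\gamma)}f$ using \eqref{1817} on both sides, then defines $E^{(i)}_{(x,t)}$ as the set of points of $P^+_{(x,t)}(\gamma)$ covered by at least $i$ of the smaller selected upper halves, and sets $F_{(x,t)}:=P^+_{(x,t)}(\gamma)\setminus E^{(2\widetilde{C})}_{(x,t)}$. The layer-cake identity $\sum_i\mathbf{1}_{E^{(i)}_{(x,t)}}=\sum_{(y,s)\in\Gamma_{(x,t)}}\mathbf{1}_{P^+_{(y,s)}(\gamma)}$ together with the nestedness $E^{(j)}_{(x,t)}\subset E^{(i)}_{(x,t)}$ for $j>i$ yields $2\widetilde{C}\int_{E^{(2\widetilde{C})}_{(x,t)}}f\le\widetilde{C}\int_{P^+_{(x,t)}(\gamma)}f$, hence $\int_{F_{(x,t)}}f\ge\tfrac12\int_{P^+_{(x,t)}(\gamma)}f$, which gives (ii); and (iii) then follows because any point of $F_{(x,t)}$ lies in fewer than $2\widetilde{C}$ smaller selected upper halves, so the overlap of the $F$'s is at most $2+4\widetilde{C}$ rather than the exact disjointness you aimed for. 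Without this multiplicity truncation, (ii) cannot be obtained by your route.
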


\begin{proof}
We first construct $\Omega$. To this end,
by the definitions of both $E$ and $\mathcal{M}^{\gamma+}_{\tau,j}$,
we know that, for any given $(x,t)\in E$, there
exists a $|k|\leq j$, an $\alpha\in I_{k,\tau}$,
and a parabolic rectangle $P_{(x,t)}$ such that
$$P_{(x,t)}=Q_\alpha^{k,\tau}\times
\left(t-\delta^{kp},t+\delta^{kp}\right)$$
and that \eqref{1817} holds true.
Notice that the edge length of parabolic
rectangles taken in \eqref{958}
is not smaller than $\delta^j$. By both (i) and (ii) of Lemma~\ref{1516},
we find that there exists a unique $\beta\in I_{j,\tau}$
such that $x\in Q_\beta^{j,\tau}\subset Q_\alpha^{k,\tau}$.
Thus, for any $y\in Q_\beta^{j,\tau}$,
\eqref{958} implies that
$$\mathcal{M}^{\gamma+}_{\tau,j}f(x,t)
=\mathcal{M}^{\gamma+}_{\tau,j}f(y,t)$$
and hence
$(y,t)\in E$.
This, combined with the fact that
$\fint_{P^+(\tau,k,\alpha,t,\gamma)}f$
is a continuous function of $t$, further implies that,
for any dyadic cube $Q\in\mathfrak{D}^\tau$ at level $j$,
$\{t:\ Q\times\{t\}\subset E\}$
is a closed set.
By this and the assumption that
$A\subset R$ is bounded,
we find that there exists a closed subset $I_\alpha^{j,\tau}\subset[-r_0,r_0]$,
for any $\alpha\in I_{j,\tau}$, such that
\begin{align*}
A=\bigcup_{\alpha\in I_{j,\tau}}\left[\left(Q_\alpha^{j,\tau}\times
I_\alpha^{j,\tau}\right)\cap R\right]
\end{align*}
becomes a union of a finite number of disjoint subsets of $X\times\mathbb{R}$.
Thus, we can choose an $(x_1,t_1)\in A$
such that $t_1=\max_{(x,t)\in A}\{t\}$.
For any $k\in\mathbb{N}\setminus\{1\}$,
choose an $(x_k,t_k)\in A$ such that
\begin{align}\label{1859}
(x_k,t_k)\notin\bigcup_{i=1}^{k-1}
Q_{\alpha_i}^{k_i,\tau}\times(t_i-\delta^{k_ip},t_i]
\end{align}
with $P_{(x_i,t_i)}=Q_{\alpha_i}^{k_i,\tau}
\times(t_i-\delta^{k_ip},t_i+\delta^{k_ip})$ for any $i\in\{1,...,k-1\}$,
and that
\begin{align*}
t_k=\max\left\{t:\ (x,t)\in A\text{ satisfies }\eqref{1859}\right\}.
\end{align*}
Define $\widetilde{\Omega}:=\{(x_i,t_i)\}_{i\in\mathbb{N}}$.
Since, for any $(x,t)\in A$,
the edge length of the parabolic rectangle $P_{(x,t)}$
is not greater than $\delta^{-j}$,
we can choose a $(y_1,s_1)\in\widetilde{\Omega}$
such that
$$l(P_{(y_1,s_1)})=\max_{(y,s)\in\widetilde{\Omega}}\{l(P_{(y,s)})\},$$
where, for any $(y,s)\in\widetilde{\Omega}$,
$l(P_{(y,s)})$ denotes the \emph{edge length}
of the parabolic rectangle $P_{(y,s)}$.
For any $k\in\mathbb{N}\setminus\{1\}$,
choose a $(y_k,s_k)\in\widetilde{\Omega}$
such that, for any $i\in\{1,...,k-1\}$,
\begin{align}\label{1}
P^-_{(y_k,s_k)}\nsubseteq P^-_{(y_i,s_i)}
\end{align}
and
$$l\left(P^-_{(y_k,s_k)}\right)
=\max\left\{l\left(P_{(y,s)}\right):\ P^-_{(y,s)}
\text{ satisfies }\eqref{1}\right\}.$$
Thus, we obtain a sequence $\Omega:=\{(y_i,s_i)\}_{i\in\mathbb{N}}$.
By the above construction of $\Omega$, it is easy to show that,
for any $(x,t),(y,s)\in\Omega$,
\begin{align}\label{1607}
P^-_{(x,t)}\nsubseteq P^-_{(y,s)}
\end{align}
and
\begin{align*}
A\subset\bigcup_{(x,t)\in\Omega}
\overline{P^-_{(x,t)}}.
\end{align*}
Thus, $\Omega$ satisfies (i).

Next, we show that $\Omega$ satisfies (ii). To this end,
we first claim that Lemma~\ref{924}(i) holds true for $\Omega$.
Indeed,
by both (i) and (ii) of Lemma~\ref{1516}, we find that,
for any $(x,t),(y,s)\in\Omega$ with both
$l(P_{(x,t)})=l(P_{(y,s)})$
and $P_{(x,t)}\cap P_{(y,s)}\neq\emptyset$,
\begin{align*}
\pr_X(P_{(x,t)})=\pr_X(P_{(y,s)}),
\end{align*}
where $\pr_X$ denotes the \emph{projection} to $X$.
For any given $(x,t)\in\Omega$,
we consider the set
$$
\Omega_{(x,t)}:=\left\{(y,s)\in\Omega:\ P_{(y,s)}=
\pr_X(P_{(x,t)})\times
\left(s-\left[l(P_{(x,t)})\right]^{p},s+
\left[l(P_{(x,t)})\right]^{p}\right)
\right\}.
$$
Hence, we may identify $\Omega_{(x,t)}$
with a collection of intervals.
This, together with \eqref{1859}, implies that
\begin{align*}
\sum_{(y,s)\in\Omega_{(x,t)}}\mathbf{1}_{P_{(y,s)}}\leq2,
\end{align*}
which, combined with both (i) and (ii) of Lemma~\ref{1516},
further implies that, for any $k\in\mathbb{Z}$,
\begin{align}\label{1640}
\sum_{(x,t)\in\Omega,\,
l(P_{(x,t)})=\delta^k}\mathbf{1}_{P^-_{(x,t)}}\leq
\sum_{(x,t)\in\Omega,\,
l(P_{(x,t)})=\delta^k}\mathbf{1}_{P_{(x,t)}}\leq2.
\end{align}
Thus, $\Omega$ satisfies Lemma~\ref{924}(i). This finishes the
proof of the above claim.
Combining this claim, \eqref{1607}, and Lemma~\ref{924},
we find that, for any given $(x,t)\in\Omega$,
\begin{align*}
\sum_{(y,s)\in\Gamma_{(x,t)}}\lambda(P_{(y,s)})\lesssim\lambda(P_{(x,t)}),
\end{align*}
where
\begin{align*}
\Gamma_{(x,t)}:=\left\{(y,s)\in\Omega:\ P^+_{(y,s)}\cap
P^+_{(x,t)}\neq\emptyset
,\,l(P_{(y,s)})<l(P_{(x,t)})\right\}.
\end{align*}
From this, we deduce that,
for any $(x,t)\in\Omega$,
\begin{align*}
\sum_{(y,s)\in\Gamma_{(x,t)}}\lambda(P^+_{(y,s)}(\gamma))
\approx
\sum_{(y,s)\in\Gamma_{(x,t)}}\lambda(P_{(y,s)})
\lesssim
\lambda(P_{(x,t)})
\approx
\lambda(P^+_{(x,t)}(\gamma)),
\end{align*}
which, together with \eqref{1817}, further implies that
\begin{align*}
\sum_{(y,s)\in\Gamma_{(x,t)}}\int_{P^+_{(y,s)}(\gamma)}f
\lesssim\xi\sum_{(y,s)\in\Gamma_{(x,t)}}
\lambda(P^+_{(y,s)}(\gamma))
\lesssim\xi\lambda(P^+_{(x,t)}(\gamma))
\lesssim\int_{P^+_{(x,t)}(\gamma)}f.
\end{align*}
By this, we find that there exists a positive
constant $\widetilde{C}$ such that
\begin{align}\label{1614}
\sum_{(y,s)\in\Gamma_{(x,t)}}\int_{P^+_{(y,s)}(\gamma)}f
\leq\widetilde{C}\int_{P^+_{(x,t)}(\gamma)}f.
\end{align}
Observe that, for any $(y,s)\in\Omega$,
$\delta^{j}\leq l(P_{(y,s)})\leq\delta^{-j}$.
From this, Lemma~\ref{1516}(iii), $\text{pr}_XA\subset B(x_0,r_0)$,
and the fact that
$(X,\rho,\mu)$ is geometric doubling,
it follows that $\{\text{pr}_X(P_{(y,s)})\}_{(y,s)\in\Omega}$
is a finite set,
which, combined with both \eqref{1640} and the
assumption that $A\subset R$ is bounded,
further implies that, for any given $(x,t)\in\Omega$,
$\#\Gamma_{(x,t)}$
is finite.
Now, we
construct $F_{(x,t)}$ by considering the
following two cases on $\#\Gamma_{(x,t)}$.

\emph{Case 1)} $\#\Gamma_{(x,t)}\leq 2\widetilde{C}$.
In this case, let $F_{(x,t)}:=P^+_{(x,t)}(\gamma)$.
By this and \eqref{1817}, it is easy to show that
(ii) holds true.

\emph{Case 2)} $\#\Gamma_{(x,t)}>2\widetilde{C}$. In this case,
for any $i\in\mathbb{N}\cap[1,\#\Gamma_{(x,t)}]$, let
\begin{align}\label{5.20x}
E_{(x,t)}^{(i)}:=\left\{(z,\tau)\in P^+_{(x,t)}(\gamma):\
\sum_{(y,s)\in\Omega,\,l_{(y,s)}<l_{(x,t)}}
\mathbf{1}_{P^+_{(y,s)}(\gamma)}(z,\tau)\ge i\right\}.
\end{align}
From this and the definition of $\Gamma_{(x,t)}$, it follows that,
for any $(z,\tau)\in P^+_{(x,t)}(\gamma)$,
\begin{align}\label{1606}
\sum_{i=1}^{\#\Gamma_{(x,t)}}
\mathbf{1}_{E_{(x,t)}^{(i)}}(z,\tau)
=\sum_{(y,s)\in\Gamma_{(x,t)}}
\mathbf{1}_{P^+_{(y,s)}(\gamma)}(z,\tau).
\end{align}
By the definition of $E_{(x,t)}^{(i)}$,
we know that, for any $i,j\in\mathbb{N}$
with $i<j$, $E_{(x,t)}^{(j)}\subset E_{(x,t)}^{(i)}$,
which, together with both \eqref{1606} and \eqref{1614},
further implies that
\begin{align}\label{1040}
2\widetilde{C}\int_{E_{(x,t)}^{(2\widetilde{C})}}
f(z,\tau)\,d\mu(z)\,d\tau
&\leq\sum_{i=1}^{\#\Gamma_{(x,t)}}\int_{E_{(x,t)}^{(i)}}
f(z,\tau)\,d\mu(z)\,d\tau\\
&=\int_{P^+_{(x,t)}(\gamma)}f(z,\tau)\sum_{i=1}^{\#\Gamma_{(x,t)}}
\mathbf{1}_{E_{(x,t)}^{(i)}}(z,\tau)\,d\mu(z)\,d\tau
\noz\\
&=\int_{P^+_{(x,t)}(\gamma)}f(z,\tau)\sum_{(y,s)\in\Gamma_{(x,t)}}
\mathbf{1}_{P^+_{(y,s)}(\gamma)}(z,\tau)\,d\mu(z)\,d\tau
\noz\\
&\leq\sum_{(y,s)\in\Gamma_{(x,t)}}\int_{P^+_{(y,s)}(\gamma)}f
(z,\tau)\,d\mu(z)\,d\tau
\noz\\
&\leq\widetilde{C}\int_{P^+_{(x,t)}(\gamma)}f(z,\tau)\,d\mu(z)\,d\tau,\noz
\end{align}
where $E_{(x,t)}^{(2\widetilde{C})}$ is as in \eqref{5.20x} with
$i$ replaced by $2\widetilde{C}$.
Let $F_{(x,t)}:=P^+_{(x,t)}(\gamma)\setminus E_{(x,t)}^{(2\widetilde{C})}$.
From this, \eqref{1040}, and \eqref{1817},
we deduce that
\begin{align*}
\int_{F_{(x,t)}}f=\int_{P^+_{(x,t)}(\gamma)}f-\int_{E_{(x,t)}^{(2C)}}f
\ge\frac{1}{2}\int_{P^+_{(x,t)}(\gamma)}f
\gtrsim\xi\lambda(P^+_{(x,t)}(\gamma)),
\end{align*}
which further implies that (ii) holds true also in this case.
Thus, $\Omega$ satisfies (ii).

Finally, we show that $\Omega$ satisfies (iii). Let $k\in\mathbb{N}$.
Assume that $\widetilde{\Omega}\subset\Omega$
satisfies
$$\#\widetilde{\Omega}=k
\quad\text{and}\quad
\bigcap_{(x,t)\in\widetilde{\Omega}}F_{(x,t)}\neq\emptyset.$$
Let $(z,\tau)\in\bigcap_{(x,t)\in\widetilde{\Omega}}F_{(x,t)}$
and
$l_0:=\max_{(x,t)\in\widetilde{\Omega}}\{l(P_{(x,t)})\}$.
By \eqref{1640}, we find that
there exist at most two parabolic rectangles $\{P_{(x_i,t_i)}\}_{i=1}^{2}$,
with edge length $l_0$, that contain $(z,\tau)$.
Moreover, from the definition of $F_{(x,t)}$, we
deduce that, for any $P_{(x_i,t_i)}$ with $i\in\{1,2\}$,
there exist at most $\lceil2\widetilde{C}\rceil$ upper halves of smaller
parabolic rectangles that might contain $(z,\tau)$.
Thus,
\begin{align*}
k&=\sum_{(x,t)\in\widetilde{\Omega}}
\mathbf{1}_{F_{(x,t)}}(z,\tau)
=\sum_{(x,t)\in\widetilde{\Omega},\,l(P_{(x,t)})=l_0}
\mathbf{1}_{F_{(x,t)}}(z,\tau)
+\sum_{(x,t)\in\widetilde{\Omega},\,l(P_{(x,t)})<l_0}
\mathbf{1}_{F_{(x,t)}}(z,\tau)\\
&\leq2+4\widetilde{C},
\end{align*}
which completes the proof of (iii) and hence Lemma~\ref{2133}.
\end{proof}

Next, by Lemma~\ref{2133},
we show the weak-type
weighted norm
inequality.

\begin{theorem}\label{weakineq}
Let $(X,\rho,\mu)$ be a space of homogeneous type,
$q\in(1,\infty)$, and $\gamma\in[0,1)$.
Assume that $\omega\in A^+_q(\gamma)$.
Then there exists a positive constant $C$
such that, for any $f\in L^q(X\times\mathbb{R},\omega)$ and $\xi\in(0,\infty)$,
\begin{align}\label{1126}
\omega\left(\left\{(x,t)\in X\times\mathbb{R}:\
\mathcal{M}^{\gamma+}f(x,t)>\xi\right\}\right)
\leq C\xi^{-q}\int_{X\times\mathbb{R}}
\left|f\right|^q\omega.
\end{align}
\end{theorem}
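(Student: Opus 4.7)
The plan is to combine the Calder\'on--Zygmund-type covering in Lemma~\ref{2133} with the $A_q^+(\gamma)$ condition and H\"older's inequality, after a sequence of routine reductions. Assume $f\ge 0$. Since $\mathcal{M}^{\gamma+}f\le\sum_{\tau=1}^K\mathcal{M}^{\gamma+}_\tau f$, where $\mathcal{M}^{\gamma+}_\tau$ denotes the variant of $\mathcal{M}^{\gamma+}$ taking the supremum only over rectangles in $\mathfrak{D}^\tau$, a union bound reduces the estimate to a single dyadic system $\mathfrak{D}^\tau$. By monotone convergence in $j$ and $r_0$, it then suffices to bound $\omega(R\cap\{\mathcal{M}^{\gamma+}_{\tau,j}f>\xi\})$ uniformly in $j,r_0\in(0,\infty)$ with $R:=B(x_0,r_0)\times[-r_0,r_0]$ and $\mathcal{M}^{\gamma+}_{\tau,j}$ as in \eqref{958}, and then let $j\to\infty$ and $r_0\to\infty$.

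Next, I partition the localized level set dyadically by
\[
R\cap\left\{\mathcal{M}^{\gamma+}_{\tau,j}f>\xi\right\}=\bigcup_{k=0}^{\infty}A_k,\qquad A_k:=R\cap\left\{2^k\xi<\mathcal{M}^{\gamma+}_{\tau,j}f\le 2^{k+1}\xi\right\},
\]
and apply Lemma~\ref{2133} to each $A_k$ with threshold $2^k\xi$. For every $(x,t)\in A_k$, since $\mathcal{M}^{\gamma+}_{\tau,j}f(x,t)\le 2^{k+1}\xi$, any parabolic rectangle realizing the supremum up to a factor of two automatically satisfies the two-sided bound demanded by Lemma~\ref{2133}. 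This produces a selection $\Omega_k\subset A_k$, rectangles $\{P_{(x,t)}\}_{(x,t)\in\Omega_k}$, and sets $F_{(x,t)}\subset P^+_{(x,t)}(\gamma)$ with
\[
A_k\subset\bigcup_{(x,t)\in\Omega_k}\overline{P^-_{(x,t)}},\quad\frac{1}{\lambda(P^+_{(x,t)}(\gamma))}\int_{F_{(x,t)}}f\gtrsim 2^k\xi,\quad\sum_{(x,t)\in\Omega_k}\mathbf{1}_{F_{(x,t)}}\ls 1.
\]

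For each $(x,t)\in\Omega_k$, H\"older's inequality gives
\[
\int_{F_{(x,t)}}f=\int_{F_{(x,t)}}f\omega^{1/q}\omega^{-1/q}\le\left(\int_{F_{(x,t)}}f^q\omega\right)^{1/q}\left(\int_{F_{(x,t)}}\omega^{1-q'}\right)^{1/q'}.
\]
Combining this with the lower bound from Lemma~\ref{2133}(ii), raising to the $q$-th power, multiplying by $\omega(P^-_{(x,t)}(\gamma))$, and using $\lambda(P^-(\gamma))=\lambda(P^+(\gamma))$, $F_{(x,t)}\subset P^+_{(x,t)}(\gamma)$, together with the $A_q^+(\gamma)$ condition
\[
\left[\fint_{P^-_{(x,t)}(\gamma)}\omega\right]\left[\fint_{P^+_{(x,t)}(\gamma)}\omega^{1-q'}\right]^{q-1}\le [\omega]_{A^+_q(\gamma)},
\]
yield
\[
(2^k\xi)^q\,\omega(P^-_{(x,t)}(\gamma))\ls[\omega]_{A^+_q(\gamma)}\int_{F_{(x,t)}}f^q\omega.
\]
Summing over $(x,t)\in\Omega_k$ and using Lemma~\ref{2133}(iii) to absorb the overlap of $\{F_{(x,t)}\}$, I obtain $(2^k\xi)^q\omega(A_k)\ls[\omega]_{A^+_q(\gamma)}\|f\|^q_{L^q(X\times\mathbb{R},\omega)}$. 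Summing over $k\ge 0$ against the convergent series $\sum_{k\ge 0}2^{-kq}$ and taking $j,r_0\to\infty$ give \eqref{1126}.

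The main obstacle has already been absorbed into Lemma~\ref{2133}: the delicate ``sunrise''-style selection that simultaneously covers the level set by the \emph{lower} halves $P^-_{(x,t)}$ while producing subsets of the \emph{upper} halves $P^+_{(x,t)}(\gamma)$ with bounded overlap. Without this temporal separation between where one measures $f$ (on $P^+(\gamma)$) and where one sums up the weight $\omega$ (on $P^-(\gamma)$), the $A_q^+(\gamma)$ condition could not be invoked; once the covering is in place, what remains is a standard H\"older computation combined with the dyadic level-set summation.
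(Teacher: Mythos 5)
Your argument follows the paper's proof almost verbatim: the same reduction to a single dyadic grid, the same truncation and localization, the same dyadic level-set decomposition, and the same use of Lemma~\ref{2133} followed by H\"older's inequality and the Muckenhoupt condition. There is, however, one genuine (though repairable) gap in the final step when $\gamma>0$.

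Lemma~\ref{2133}(i) covers $A_k$ by the closures of the \emph{full} lower halves $\overline{P^-_{(x,t)}}$, so what you must sum is $\omega(P^-_{(x,t)})$, where $P^-=Q\times(t-l^p,t)$. Your H\"older computation, combined with the $A^+_q(\gamma)$ condition in its plain form, only yields
\begin{align*}
(2^k\xi)^q\,\omega\bigl(P^-_{(x,t)}(\gamma)\bigr)\lesssim\int_{F_{(x,t)}}f^q\omega ,
\end{align*}
and for $\gamma>0$ the set $P^-\setminus P^-(\gamma)=Q\times(t-\gamma l^p,t)$ has positive measure, so $\sum_{(x,t)\in\Omega_k}\omega(P^-_{(x,t)}(\gamma))$ does not control $\omega(A_k)$. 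To close the gap you need the pairing of the \emph{whole} lower half $P^-$ with $P^+(\gamma)$, i.e.
\begin{align*}
\left[\fint_{P^-}\omega\right]\left[\fint_{P^+(\gamma)}\omega^{1-q'}\right]^{q-1}\lesssim 1 ,
\end{align*}
which is not the definition of $A^+_q(\gamma)$ but is exactly what Remark~\ref{5.3}(iii) (equivalently, the forward-in-time doubling machinery of Proposition~\ref{1633} and Corollary~\ref{1451}, transferred to rectangles) provides; this is the form the paper invokes in its last display. With that citation inserted, your proof is complete and coincides with the paper's.
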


\begin{proof}
Without loss of generality, we may
assume that $f\in L^q(X\times\mathbb{R},\omega)$ is a
nonnegative function.
For any given $\tau\in\{1,...,K\}$ with $K$ in Lemma~\ref{1517},
and for any $(x,t)\in X\times\mathbb{R}$,
let
\begin{align*}
\mathcal{M}^{\gamma+}_{\tau} f(x,t)
:=\sup_{\substack{P(\tau,k,\alpha,t)\ni (x,t)\\
k\in\mathbb{Z},\,\alpha\in I_{k,\tau}}}
\fint_{P^+(\gamma)}f.
\end{align*}
From this and the definition of $\mathcal{M}^{\gamma+}$,
it follows that
\begin{align*}
\left\{(x,t)\in X\times\mathbb{R}:\
\mathcal{M}^{\gamma+}f(x,t)>\xi\right\}
\subset\bigcup_{\tau=1}^K
\left\{(x,t)\in X\times\mathbb{R}:\
\mathcal{M}^{\gamma+}_\tau f(x,t)>\xi\right\}.
\end{align*}
Thus, to prove \eqref{1126}, it suffices to show
that, for any $\tau\in\{1,...,K\}$,
\begin{align*}
\omega\left(\left\{(x,t)\in X\times\mathbb{R}:\
\mathcal{M}^{\gamma+}_\tau f(x,t)>\xi\right\}\right)
\lesssim{\xi^{-q}}\int_{X\times\mathbb{R}}
f^q\omega,
\end{align*}
where the implicit positive constant is independent of $\tau$, $f$,
and $\xi$.
To this end, by the monotone convergence theorem,
it suffices to prove
that, for any $\tau\in\{1,...,K\}$ and
$j\in\mathbb{N}$,
\begin{align}\label{1704}
\omega\left(\left\{(x,t)\in X\times\mathbb{R}:\
\mathcal{M}^{\gamma+}_{\tau,j} f(x,t)>\xi\right\}\right)
\lesssim{\xi^{-q}}\int_{X\times\mathbb{R}}
f^q\omega,
\end{align}
where, for any $(x,t)\in X\times\mathbb{R}$,
$\mathcal{M}^{\gamma+}_{\tau,j}f(x,t)$ is the same as in \eqref{958}
and where the implicit positive constant
is independent of $\tau$, $j$, $f$,
and $\xi$.
To prove \eqref{1704}, it suffices to show
that, for any $\tau\in\{1,...,K\}$,
$j\in\mathbb{N}$, and $i\in\mathbb{Z}_+$,
\begin{align}\label{1706}
&\omega\left(\left\{(x,t)\in X\times\mathbb{R}:\ 2^i\xi\leq
\mathcal{M}^{\gamma+}_{\tau,j} f(x,t)\leq 2^{i+1}\xi\right\}\right)\\
&\quad\lesssim2^{-iq}{\xi^{-q}}\int_{X\times\mathbb{R}}
f^q\omega.\noz
\end{align}
Assume this for the moment.
Then
\begin{align*}
&\omega\left(\left\{(x,t)\in X\times\mathbb{R}:\
\mathcal{M}^{\gamma+}_{\tau,j} f(x,t)>\xi\right\}\right)\\
&\quad\leq\sum_{i=0}^\infty
\omega\left(\left\{(x,t)\in X\times\mathbb{R}:\ 2^i\xi\leq
\mathcal{M}^{\gamma+}_{\tau,j} f(x,t)\leq 2^{i+1}\xi\right\}\right)\\
&\quad\lesssim\sum_{i=0}^\infty
2^{-iq}{\xi^{-q}}\int_{X\times\mathbb{R}}
f^q\omega
\approx{\xi^{-q}}\int_{X\times\mathbb{R}}
f^q\omega,
\end{align*}
which completes the proof of \eqref{1704} and hence 
Theorem~\ref{weakineq} under the assumption \eqref{1706}.

It remains to show \eqref{1706}.
Since $\omega\in A_q^+(\gamma)$,
it then follows that \eqref{A+}
implies that the weighted measure $\omega(x,t)\,d\mu(x)\,dt$
is finite on any bounded subsets of $X\times\mathbb{R}$.
Therefore, it suffices to show that, for any set
$$
A:=\left[{B(x,r)}\times[-r,r]\right]\cap
\left\{(x,t)\in X\times\mathbb{R}:\ 2^i\xi\leq
\mathcal{M}^{\gamma+}_{\tau,j} f(x,t)\leq 2^{i+1}\xi\right\}
$$
with both $x\in X$ and $r\in(0,\infty)$,
\begin{align}\label{1741}
\omega(A)
\lesssim2^{-iq}{\xi^{-q}}\int_{X\times\mathbb{R}}
f^q\omega.
\end{align}
Without loss of generality, we may assume that $i=0$.
By Lemma~\ref{2133}, we find that there exists a set
$\Omega\subset A$ such that (i), (ii), and (iii) of Lemma~\ref{2133}
hold true.
From this, the H\"older inequality,
and Remark~\ref{5.3}(iii), we deduce that
\begin{align*}
\omega(A)&\leq\omega\left(\bigcup_{(x,t)\in\Omega}P^-_{(x,t)}\right)
\leq\sum_{(x,t)\in\Omega}\omega(P^-_{(x,t)})\\
&\lesssim\xi^{-q}\sum_{(x,t)\in\Omega}\omega(P^-_{(x,t)})
\left[\frac{1}{\lambda(P_{(x,t)}^+(\gamma))}
\int_{F_{(x,t)}}f\omega^{1/q}\omega^{-1/q}\right]^q\\
&\lesssim\xi^{-q}\sum_{(x,t)\in\Omega}\omega(P^-_{(x,t)})
\left[\frac{1}{\lambda(P_{(x,t)}^+(\gamma))}
\int_{P_{(x,t)}^+(\gamma)}\omega^{1-q'}\right]^{q-1}
\frac{1}{\lambda(P_{(x,t)}^+(\gamma))}
\int_{F_{(x,t)}}f^q\omega\\
&\lesssim\xi^{-q}\sum_{(x,t)\in\Omega}
\int_{F_{(x,t)}}f^q\omega
\lesssim\xi^{-q}\int_{X\times\mathbb{R}}f^q\omega,
\end{align*}
which completes the proof of \eqref{1741} and
hence \eqref{1706}.
This finishes the proof of Theorem~\ref{weakineq}.
\end{proof}

Combining Theorems~\ref{2132-} and~\ref{weakineq}
and Remark~\ref{5.3}(ii),
we immediately obtain the following
Theorems~\ref{5.6} and~\ref{5.7},
where Theorem~\ref{5.6} is on the case that $\gamma=0$
and Theorem~\ref{5.7} is on the case that $\gamma\in(0,1)$;
we omit the details here.

\begin{theorem}\label{5.6}
Let $(X,\rho,\mu)$ be a space of homogeneous type,
$q\in(1,\infty)$, and $\omega$ be a weight.
Then $\omega\in A^+_q(0)$ if and only if
$\mathcal{M}^+$
is bounded from $L^{q}(X\times\mathbb{R},\omega)$ to
$L^{q,\infty}(X\times\mathbb{R},\omega)$.
\end{theorem}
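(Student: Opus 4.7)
The plan is to establish Theorem~\ref{5.6} as an immediate corollary of Theorem~\ref{2132-} and Theorem~\ref{weakineq}, both specialized to the time lag $\gamma=0$. Since $\mathcal{M}^+:=\mathcal{M}^{0+}$ by the convention fixed right after Definition~\ref{M^gamma}, no translation between maximal operators is required, and since the parameter range $\gamma\in[0,1)$ in each of the two cited theorems admits $\gamma=0$, no separate verification is needed there either.

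For the necessity direction, I would invoke Theorem~\ref{2132-} with $\gamma=0$: if $\mathcal{M}^+=\mathcal{M}^{0+}$ is bounded from $L^{q}(X\times\mathbb{R},\omega)$ to $L^{q,\infty}(X\times\mathbb{R},\omega)$, then that theorem yields at once $\omega\in A_q^+(0)$. For the sufficiency direction, I would apply Theorem~\ref{weakineq} with $\gamma=0$: the hypothesis $\omega\in A_q^+(0)$ delivers a positive constant $C$ such that the weak-type $(q,q)$ inequality
\begin{align*}
\omega\left(\left\{(x,t)\in X\times\mathbb{R}:\ \mathcal{M}^{+}f(x,t)>\xi\right\}\right)\leq C\xi^{-q}\int_{X\times\mathbb{R}}|f|^{q}\omega
\end{align*}
holds for every $f\in L^{q}(X\times\mathbb{R},\omega)$ and every $\xi\in(0,\infty)$. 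By the definition of the norm of $L^{q,\infty}(X\times\mathbb{R},\omega)$, this is precisely the asserted weak-type boundedness of $\mathcal{M}^+$.

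There is essentially no substantive obstacle in this argument: both halves are already furnished by the preceding two theorems with the identical parameter choice $\gamma=0$. The only bookkeeping is to match the notation, namely to record that $\mathcal{M}^+$ in the statement of Theorem~\ref{5.6} coincides with the operator $\mathcal{M}^{\gamma+}$ appearing in Theorems~\ref{2132-} and~\ref{weakineq} when $\gamma=0$. Observe that, in contrast with the companion statement for $\gamma\in(0,1)$ (Theorem~\ref{5.7}), no appeal to the equivalence between the definitions via parabolic cylinders and via parabolic rectangles from Remark~\ref{5.3}(iv), nor to the independence from the choice of $\gamma$ from Remark~\ref{5.3}(ii), is required for the $\gamma=0$ case; a single direct citation in each direction suffices.
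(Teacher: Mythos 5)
Your proposal is correct and coincides with the paper's own derivation: the authors state that Theorem~\ref{5.6} follows immediately by combining Theorem~\ref{2132-} (necessity) and Theorem~\ref{weakineq} (sufficiency), both applied with $\gamma=0$, exactly as you do. Your observation that Remark~\ref{5.3}(ii) is only needed for the companion statement with $\gamma\in(0,1)$ is also consistent with the paper.
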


\begin{theorem}\label{5.7}
Let $(X,\rho,\mu)$ be a space of homogeneous type,
$q\in(1,\infty)$, and $\omega$ be a weight.
Then the following statements are equivalent.
\begin{enumerate}
\item[\textup{(i)}]
There exists a $\gamma\in(0,1)$ such that
$\omega\in A^+_q(\gamma)$.
\item[\textup{(ii)}]
For any $\gamma\in(0,1)$,
$\omega\in A^+_q(\gamma)$.
\item[\textup{(iii)}]
There exists a $\gamma\in(0,1)$ such that $\mathcal{M}^{\gamma+}$
is bounded from $L^{q}(X\times\mathbb{R},\omega)$ to
$L^{q,\infty}(X\times\mathbb{R},\omega)$.
\item[\textup{(iv)}]
For any $\gamma\in(0,1)$,
$\mathcal{M}^{\gamma+}$
is bounded from $L^{q}(X\times\mathbb{R},\omega)$ to
$L^{q,\infty}(X\times\mathbb{R},\omega)$.
\end{enumerate}
\end{theorem}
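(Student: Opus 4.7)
The plan is to reduce Theorem~\ref{5.7} to a short cycle of implications by invoking the two directional results already proved, namely the necessity of the parabolic Muckenhoupt condition in Theorem~\ref{2132-} and the weak-type bound of Theorem~\ref{weakineq}, together with the time-lag-independence observation in Remark~\ref{5.3}(ii). The idea is to close the loop \textup{(iii)} $\Rightarrow$ \textup{(i)} $\Rightarrow$ \textup{(ii)} $\Rightarrow$ \textup{(iv)} $\Rightarrow$ \textup{(iii)}, where only the second implication carries content beyond direct citation.

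First, I would prove \textup{(iii)} $\Rightarrow$ \textup{(i)}. Suppose there exists some $\gamma_0\in(0,1)$ such that $\mathcal{M}^{\gamma_0+}$ maps $L^q(X\times\mathbb{R},\omega)$ to $L^{q,\infty}(X\times\mathbb{R},\omega)$ boundedly. Then Theorem~\ref{2132-}, applied with this particular $\gamma_0$, yields $\omega\in A^+_q(\gamma_0)$, which is exactly \textup{(i)}. Next, I would deduce \textup{(i)} $\Rightarrow$ \textup{(ii)}: pick the $\gamma_0\in(0,1)$ provided by \textup{(i)}, and invoke Remark~\ref{5.3}(ii), which asserts that Theorem~\ref{Idp} carries over to the rectangle version and that $\widetilde{A}^+_q(\gamma)=A^+_q(\gamma)$ is independent of $\gamma\in(0,1)$. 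Therefore $\omega\in A^+_q(\gamma)$ for every $\gamma\in(0,1)$.

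The step \textup{(ii)} $\Rightarrow$ \textup{(iv)} is then an application of Theorem~\ref{weakineq}: for any fixed $\gamma\in(0,1)$, the hypothesis $\omega\in A^+_q(\gamma)$ supplies the desired weak-type inequality
\begin{align*}
\omega\left(\left\{(x,t)\in X\times\mathbb{R}:\ \mathcal{M}^{\gamma+}f(x,t)>\xi\right\}\right)
\leq C\xi^{-q}\int_{X\times\mathbb{R}}|f|^q\omega
\end{align*}
for all $f\in L^q(X\times\mathbb{R},\omega)$ and $\xi\in(0,\infty)$, with the constant $C$ depending on $\gamma$ through $[\omega]_{A^+_q(\gamma)}$ but not on $f$ or $\xi$. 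Finally, the implication \textup{(iv)} $\Rightarrow$ \textup{(iii)} is immediate by picking any single $\gamma\in(0,1)$.

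There is no genuine obstacle, since the heavy lifting is already inside Theorems~\ref{2132-} and~\ref{weakineq} and inside the chain of lemmas (Proposition~\ref{1633}, Corollary~\ref{1451}, Theorem~\ref{Idp}) that establish Remark~\ref{5.3}(ii). The only point that deserves a brief verbal check is that the independence statement in Remark~\ref{5.3}(ii) really concerns the \emph{open} interval $\gamma\in(0,1)$ and not the endpoint $\gamma=0$; this is why Theorem~\ref{5.7} is stated separately from Theorem~\ref{5.6} and why, in the step \textup{(i)} $\Rightarrow$ \textup{(ii)}, one must appeal to the time-lag independence rather than to Proposition~\ref{1627}(ii) or to any monotonicity in $\gamma$.
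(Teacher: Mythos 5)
Your proposal is correct and follows essentially the same route as the paper, which obtains Theorem~\ref{5.7} precisely by combining the necessity result (Theorem~\ref{2132-}), the weak-type sufficiency (Theorem~\ref{weakineq}), and the time-lag independence of $A^+_q(\gamma)$ for $\gamma\in(0,1)$ coming from Theorem~\ref{Idp}. The only cosmetic remark is that for the step \textup{(i)} $\Rightarrow$ \textup{(ii)} the cylinder-version independence is already Theorem~\ref{Idp} together with Remark~\ref{1511}(i), so the detour through the rectangle class in Remark~\ref{5.3}(ii) is not strictly needed.
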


Next, we show that Theorems~\ref{5.6} and~\ref{5.7} hold true
also for the parabolic maximal function, related
to parabolic cylinders, defined as follows.

\begin{definition}
Let $(X,\rho,\mu)$ be a space of homogeneous type
and $\gamma\in[0,1)$. Assume that $f\in L^1_{\loc}(X\times\mathbb{R})$.
The \emph{parabolic maximal functions},
related to parabolic cylinders, $M^{\gamma+}f$
and $M^{\gamma-}f$ of $f$ are defined, respectively,
by setting, for any $(x,t)\in X\times\mathbb{R}$,
\begin{align*}
M^{\gamma+}f(x,t):=\sup_{\{R(x,t,l):\ l\in(0,\infty)\}}
\fint_{R^+(\gamma)}\left|f\right|
\end{align*}
and
\begin{align*}
M^{\gamma-}f(x,t):=\sup_{\{R(x,t,l):\ l\in(0,\infty)\}}
\fint_{R^-(\gamma)}\left|f\right|.
\end{align*}
Moreover, simply write $M^{+}:=M^{0+}$
and $M^{-}:=M^{0-}$.
\end{definition}

Now, we prove the following pointwise
estimates for parabolic maximal operators.

\begin{proposition}\label{Mfequiv}
Let $(X,\rho,\mu)$ be a space of homogeneous type
and $\gamma\in[0,1)$.
Then there exists a positive constant $c$
such that, for any $f\in L^1_{\loc}(X\times\mathbb{R})$
and $(x,t)\in X\times\mathbb{R}$,
\begin{align}\label{1135}
M^{\gamma+}f(x,t) \leq c \mathcal{M}^{\gamma_1+}f(x,t)
\end{align}
and
\begin{align}\label{1136}
\mathcal{M}^{\gamma+}f(x,t) &\leq c M^{\gamma_2+}f(x,t) ,
\end{align}
where $\gamma_1 := \gamma
\min\{(c_1/C)^p,\,c_1^p \delta^p / (2K_0C_1C)^p\}$
and $\gamma_2 := \gamma \min\{1,\,1 / (2K_0C_1)^p\}$
with both $c_1$ and $C_1$ in Lemma~\ref{1516}
and with both $\delta$ and $C$ in Lemma~\ref{1517}.
Moreover, if $\gamma=0$, then $\gamma_1=0=\gamma_2$.
\end{proposition}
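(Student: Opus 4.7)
The strategy is to compare parabolic cylinders $R(x,t,l) = B(x,l) \times (t - l^p, t + l^p)$ with parabolic rectangles $P(\tau,k,\alpha,t) = Q_\alpha^{k,\tau} \times (t - \delta^{kp}, t + \delta^{kp})$ by sandwiching the spatial ball between the inner and outer balls $B(z_\alpha^{k,\tau}, c_1\delta^k) \subset Q_\alpha^{k,\tau} \subset B(z_\alpha^{k,\tau}, C_1\delta^k)$ from Lemma~\ref{1516}(iii) and matching sizes. Once the spatial sides are aligned, the temporal intervals must be aligned too, which is the role of the adjusted lags $\gamma_1$ and $\gamma_2$; the final factor $c$ then comes from comparing $\lambda(P^+)$ with $\lambda(R^+)$ via the doubling condition. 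Only the $+$ case needs be written out, since the $-$ case is symmetric in time.

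For \eqref{1136}, I would fix $(x,t)$ and any parabolic rectangle $P = P(\tau,k,\alpha,t) \ni (x,t)$. Since $x \in Q_\alpha^{k,\tau} \subset B(z_\alpha^{k,\tau}, C_1\delta^k)$, the quasi-triangle inequality yields $Q_\alpha^{k,\tau} \subset B(x, 2K_0 C_1 \delta^k)$. Setting $L := 2K_0 C_1 \delta^k$ and $\gamma_2 = \gamma/(2K_0C_1)^p$, I would verify the temporal inclusion $(t+\gamma\delta^{kp}, t+\delta^{kp}) \subset (t + \gamma_2 L^p, t + L^p)$ directly from $\gamma_2 L^p \leq \gamma \delta^{kp}$ and $\delta^k \leq L$, so $P^+(\gamma) \subset R(x,t,L)^+(\gamma_2)$. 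The ratio $\lambda(R(x,t,L)^+(\gamma_2))/\lambda(P^+(\gamma))$ is then uniformly bounded via the doubling condition applied along the chain $B(z_\alpha^{k,\tau}, c_1\delta^k) \subset Q_\alpha^{k,\tau} \subset B(x,L)$, giving $\fint_{P^+(\gamma)}|f| \le c\fint_{R(x,t,L)^+(\gamma_2)}|f| \le c\, M^{\gamma_2+}f(x,t)$; taking the supremum over all $P \ni (x,t)$ yields \eqref{1136}.

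For \eqref{1135}, given a cylinder $R(x,t,l)$ I would apply Lemma~\ref{1517} to $B(x,l)$ to produce $\tau,k,\alpha$ with $B(x,l) \subset Q_\alpha^{k,\tau}$ and $\diam(Q_\alpha^{k,\tau}) \leq Cl$. Combining this with $Q_\alpha^{k,\tau} \supset B(z_\alpha^{k,\tau}, c_1\delta^k)$ (and using the non-atomic doubling hypothesis to control the diameter of the inner ball from below by a constant multiple of $c_1\delta^k$) produces the comparability $\delta^k \leq (C/c_1)\,l$ on one side. With $P := P(\tau,k,\alpha,t)$, the spatial inclusion is automatic; for the temporal one, the endpoints $t + \gamma l^p$ and $t + l^p$ must both fall inside $(t + \gamma_1\delta^{kp}, t + \delta^{kp})$. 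The inclusion at the upper endpoint requires a bound of the form $l^p \le \delta^{kp}/\delta^p$ (which is why $\delta$ appears inside the second term of the minimum), while the inclusion at the lower endpoint requires $\gamma_1 \delta^{kp} \le \gamma l^p$. Choosing $\gamma_1 = \gamma \min\{(c_1/C)^p, (c_1\delta/(2K_0C_1C))^p\}$ is exactly tuned to handle both endpoints simultaneously. As before, the measure ratio $\lambda(P^+(\gamma_1))/\lambda(R^+(\gamma))$ is bounded by doubling, so the averaging argument gives \eqref{1135}. When $\gamma = 0$ both $\gamma_1$ and $\gamma_2$ vanish from their defining formulas.

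The main obstacle lies in the precise bookkeeping in \eqref{1135}: the relation between $\delta^k$ and $l$ provided by Lemma~\ref{1517} is only one-sided, and one must carefully track both temporal endpoints of the inclusion $(t+\gamma l^p, t+l^p) \subset (t + \gamma_1 \delta^{kp}, t + \delta^{kp})$ to extract the correct split $\min\{(c_1/C)^p, (c_1\delta/(2K_0C_1C))^p\}$. Everything else reduces to an application of the doubling property of $\mu$.
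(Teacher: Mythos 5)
Your overall strategy coincides with the paper's: compare rectangles and cylinders through Lemma~\ref{1517} and Lemma~\ref{1516}(iii) and absorb the spatial mismatch with the doubling property. For \eqref{1136} the argument is essentially the paper's, except that your choice $L:=2K_0C_1\delta^k$ and $\gamma_2:=\gamma/(2K_0C_1)^p$ silently assumes $2K_0C_1\ge1$: if $2K_0C_1<1$ then $\delta^k>L$, the claimed temporal inclusion fails at the upper endpoint, and your $\gamma_2$ exceeds the one in the statement (it may even exceed $1$). The paper instead uses the ball $B(x,\Lambda\delta^k)$ with $\Lambda:=\max\{2K_0C_1,\,1\}$ and $\gamma_2=\gamma/\Lambda^p$, which is precisely the $\min\{1,\cdot\}$ in the statement; this is an easy repair, but it is the reason that $\min$ is there.

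The genuine gap is in \eqref{1135}. Taking $P:=P(\tau,k,\alpha,t)$ at the level $k$ produced by Lemma~\ref{1517}, the inclusion $(t+\gamma l^p,\,t+l^p)\subset(t+\gamma_1\delta^{kp},\,t+\delta^{kp})$ needs $l^p\le\delta^{kp}$ at the upper endpoint, and this is false in general: the available comparison is only $c_1\delta^k/C\le l\le 2K_0C_1\delta^k$, so $l$ may exceed $\delta^k$ by the factor $2K_0C_1$. Your substitute condition ``$l^p\le\delta^{kp}/\delta^p$'' neither follows from these bounds nor would it give the inclusion into $(\cdot\,,t+\delta^{kp})$. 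The missing step --- and the true origin of the factor $\delta/(2K_0C_1)$ inside $\gamma_1$ --- is to pass from $Q_\alpha^{k,\tau}$ to its dyadic ancestor $Q_\beta^{k-k',\tau}$, where $k'\in\mathbb{Z}_+$ is chosen so that $\delta^{-k'+1}<2K_0C_1\le\delta^{-k'}$ (when $2K_0C_1\ge1$; otherwise one stays at level $k$). The rectangle $P(\tau,k-k',\beta,t)$ still contains $(x,t)$ because $x\in Q_\alpha^{k,\tau}\subset Q_\beta^{k-k',\tau}$, its temporal half-length satisfies $\delta^{(k-k')p}\ge(2K_0C_1)^p\delta^{kp}\ge l^p$, which fixes the upper endpoint, and the lower endpoint then yields the lag
\begin{align*}
\frac{\gamma\,c_1^p\,\delta^{kp}/C^p}{\delta^{(k-k')p}}
\ \ge\ \gamma\,\frac{c_1^p\,\delta^p}{(2K_0C_1C)^p},
\end{align*}
since $\delta^{-k'}<2K_0C_1/\delta$. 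Without this ancestor-cube step the temporal inclusion, and hence the pointwise bound \eqref{1135}, is not established.
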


\begin{proof}
We first prove \eqref{1135}. To this end,
fix an $(x,t)\in X\times\mathbb{R}$.
Let $r\in(0,\infty)$ and take any parabolic cylinder $R=B(x,r)\times
(t-r^{p},t+r^{p})$.
Lemma~\ref{1517} implies that there exists a dyadic cube
$Q_\alpha^{k,\tau}$ such that \eqref{1721} holds true.
From this and Lemma~\ref{1516}(iii), we deduce that
$B(x,r) \subset Q_\alpha^{k,\tau} \subset B(x,Cr)$,
$r \leq \diam\,(Q_\alpha^{k,\tau})\leq 2K_0C_1 \delta^k$,
and
$c_1 \delta^k \leq\diam\,(Q_\alpha^{k,\tau})\leq Cr$,
which, combined with \eqref{upperdoub}, further implies that
\begin{align}\label{1433}
\fint_{R^+(\gamma)}\left|f\right|
&\leq
\frac{\mu(Q_\alpha^{k,\tau})}{\mu(B(x,r))}
\fint_{Q_\alpha^{k,\tau}\times(t+\gamma r^p,t+r^p)}\left|f\right|\\
&\leq
\frac{\mu(B(x,Cr))}{\mu(B(x,r))}
\fint_{Q_\alpha^{k,\tau}\times(t+\gamma r^p,t+r^p)}\left|f\right|
\noz\\
&\lesssim
\fint_{Q_\alpha^{k,\tau}\times
(t+\gamma c_1^p \delta^{kp}/C^p,t+(2K_0C_1)^p\delta^{kp})}\left|f\right|.
\noz
\end{align}
Then we consider the following two cases on $2K_0C_1$.

\emph{Case 1)} $2K_0C_1\ge1$. In this case, we find that
there exists a $k'\in\mathbb{Z}_+$ such that
\begin{align}\label{1010}
\delta^{-k'+1} < 2K_0C_1 \leq \delta^{-k'}.
\end{align}
By Lemma~\ref{1516}(i), we find that
there exists a $\beta\in I_{k-k'}$ such that
$Q_\alpha^{k,\tau} \subset Q_\beta^{k-k',\tau}$.
From this and \eqref{1010},
we deduce that
\begin{align}\label{1434}
&\fint_{Q_\alpha^{k,\tau}\times
(t+\gamma c_1^p \delta^{kp}/C^p,t+(2K_0C_1)^p\delta^{kp})}
\left|f\right|\\
&\quad\lesssim \fint_{Q_\beta^{k-k',\tau}\times
(t+\gamma c_1^p \delta^{kp}/C^p,t+\delta^{(k-k')p})}\left|f\right|
\noz\\
&\quad\lesssim \fint_{Q_\beta^{k-k',\tau}\times
(t+\widetilde{\gamma}_1 \delta^{(k-k')p},t+\delta^{(k-k')p})}\left|f\right|
\lesssim \mathcal{M}^{\widetilde{\gamma}_1+}f(x,t),
\noz
\end{align}
where
$\widetilde{\gamma}_1:= \gamma c_1^p \delta^p /(2K_0C_1C)^p$.

\emph{Case 2)} $2K_0C_1<1$. In this case, we have
\begin{align}\label{1435}
\fint_{Q_\alpha^{k,\tau}\times
(t+\gamma c_1^p \delta^{kp}/C^p,t+(2K_0C_1)^p\delta^{kp})}\left|f\right|
&\lesssim \fint_{Q_\alpha^{k,\tau}\times
(t+\overline{\gamma}_1\delta^{kp},t+\delta^{kp})}\left|f\right|
\lesssim \mathcal{M}^{\overline{\gamma}_1+}f(x,t),
\end{align}
where $\overline{\gamma}_1:= \gamma c_1^p /C^p$.
Taking the supremum over all parabolic cylinders $R(x,t,r)$
with $r\in(0,\infty)$, by
\eqref{1433}, \eqref{1434}, and \eqref{1435},
we then conclude that
\begin{align*}
M^{\gamma+}f(x,t)
&=\sup_{r\in(0,\infty)}
\fint_{R^+(\gamma)}\left|f\right|
\lesssim\max\left\{\mathcal{M}^{\widetilde{\gamma}_1+}f(x,t),\,
\mathcal{M}^{\overline{\gamma}_1+}f(x,t)\right\}\\
&\approx\mathcal{M}^{\gamma_1+}f(x,t),
\end{align*}
where $\gamma_1:=\min\{\widetilde{\gamma}_1,\,
\overline{\gamma}_1\}$.
Observe that, if $\gamma=0$, then also $\gamma_1=0$.
This finishes the proof of \eqref{1135}.

Next, we prove \eqref{1136}.
To this end, fix an $(x,t)\in X\times\mathbb{R}$.
Take an arbitrary parabolic rectangle $P:=P(\tau,k,\alpha,t)=
Q_\alpha^{k,\tau}\times(t-\delta^{kp},t+\delta^{kp})$ with
$\tau\in\{1,...,K\}$,
$k\in\mathbb{Z}$,
and $\alpha\in I_{k,\tau}$ such that $(x,t)\in P$.
From this, Lemma~\ref{1516}(iii),
and Definition~\ref{2057}(iii),
it follows that $x\in Q_\alpha^{k,\tau}\subset
B(z_\alpha^{k,\tau},C_1\delta^k)$
and, for any $y\in B(z_\alpha^{k,\tau},C_1\delta^k)$,
$$
\rho(x,y)\leq K_0[\rho(x,z_\alpha^{k,\tau})+\rho(z_\alpha^{k,\tau},y)]
<2K_0C_1\delta^k,
$$
which further implies that
$B(z_\alpha^{k,\tau},C_1\delta^k)\subset B(x,2K_0C_1\delta^k)$.
By this, Lemma~\ref{1516}(iii), and \eqref{upperdoub}, we conclude that
\begin{align*}
\fint_{P^+(\gamma)}\left|f\right|
&\leq\frac{\mu(B(z_\alpha^{k,\tau},C_1\delta^k))}{\mu(Q_\alpha^{k,\tau})}
\fint_{B(z_\alpha^{k,\tau},C_1\delta^k)\times
(t+\gamma\delta^{kp},t+\delta^{kp})}\left|f\right|\\
&\leq\frac{\mu(B(z_\alpha^{k,\tau},C_1\delta^k))}
{\mu(B(z_\alpha^{k,\tau},c_1\delta^k))}
\fint_{B(z_\alpha^{k,\tau},C_1\delta^k)\times
(t+\gamma\delta^{kp},t+\delta^{kp})}\left|f\right|\\
&\lesssim \fint_{B(x,\Lambda\delta^k)\times
(t+\gamma\delta^{kp},t+\delta^{kp})}\left|f\right|
\lesssim \fint_{B(x,\Lambda\delta^k)\times
(t+\gamma_2\Lambda^p\delta^{kp},t+\Lambda^p\delta^{kp})}\left|f\right|\\
&\lesssim M^{\gamma_2 +}f(x,t),
\end{align*}
where $\Lambda:=\max\{2K_0C_1,\,1\}$
and $\gamma_2 := \gamma/\Lambda^p$.
Taking supremum over all parabolic rectangles $P\ni(x,t)$,
we further conclude that
\begin{align*}
\mathcal{M}^{\gamma+}f(x,t)
=\sup_{P\ni(x,t)}\fint_{P^+(\gamma)}\left|f\right|
\lesssim M^{\gamma_2+}f(x,t) .
\end{align*}
Also observe that, if $\gamma=0$, then $\gamma_2=0$.
This finishes the proof of \eqref{1136}
and hence Proposition~\ref{Mfequiv}.
\end{proof}

\begin{remark}\label{2137}
By Proposition~\ref{Mfequiv},
we find that Theorems~\ref{5.6} and~\ref{5.7}
hold true also for the parabolic maximal function
$M^{\gamma+}$ related to parabolic cylinders but with
$\gamma\in(0,1)$ in both (iii) and (iv) of Theorem~\ref{5.7} replaced by
\begin{align}\label{1048}
\gamma\in\left(0,\min\left\{\min\left\{1,\,\frac{1}{(2K_0C_1)^p}\right\},\,
\min\left\{\left(\frac{c_1}{C}\right)^p,
\left(\frac{c_1 \delta}{2K_0C_1C}\right)^p\right\}\right\}\right),
\end{align}
where $K_0$ is the coefficient of the quasi-triangle inequality
in Definition~\ref{2057}(iii), $c_1$ and $C_1$ are the same as in
Lemma~\ref{1516}(iii),
and $\delta$ and $C$ are the same as in Lemma~\ref{1517}.
\end{remark}

\section{Reverse H\"older Inequalities}
\label{section6}

In this section, we aim to prove the reverse H\"older inequalities
for parabolic Muckenhoupt weights.
Here is the main theorem of this section.

\begin{theorem}\label{RHI}
Let $(X,\rho,\mu)$ be a space of homogeneous type,
$q\in(1,\infty)$, and $\gamma\in[0,1)$.
Assume that $\omega\in A^+_q(\gamma)$.
Then there exist two positive constants $\kappa$
and $C$ such that, for any parabolic rectangle $P$,
\begin{align}\label{rhi}
\left(\fint_{P^-}\omega^{\kappa+1}\right)^{\frac{1}{\kappa+1}}
\leq C\fint_{P^+}\omega
\end{align}
and
\begin{align}\label{rhii}
\left[\fint_{P^+}\omega^{(1-q')(\kappa+1)}\right]^{\frac{1}{\kappa+1}}
\leq C\fint_{P^-}\omega^{1-q'}.
\end{align}
\end{theorem}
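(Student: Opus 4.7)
The plan is to first reduce the two inequalities to a single one, and then derive a distribution function estimate by means of a Calder\'on--Zygmund-type decomposition adapted to parabolic rectangles. By Proposition~\ref{1627}(iii), $\omega\in A_q^+(\gamma)$ is equivalent to $\omega^{1-q'}\in A_{q'}^-(\gamma)$, and $A_{q'}^-(\gamma)$ is just the time-reflected version of $A_{q'}^+(\gamma)$. Hence \eqref{rhii} for $\omega$ will follow by applying \eqref{rhi} to $\omega^{1-q'}$ after reversing time, so it suffices to prove \eqref{rhi}.

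Fix a parabolic rectangle $P$ and set $\omega_0:=\fint_{P^+}\omega$. The next step is to invoke the Calder\'on--Zygmund-type construction of Lemma~\ref{1546} to produce, for each level $\alpha>\omega_0$, a family $\{P_j\}_j$ of pairwise disjoint dyadic parabolic subrectangles of $P$ satisfying
\begin{align*}
\alpha<\fint_{P_j^-}\omega\leq C_0\alpha
\quad\text{and}\quad
\omega\leq\alpha\ \text{almost everywhere on}\ P^-\setminus\bigcup_j P_j^-.
\end{align*}
The $A_q^+(\gamma)$ condition in the form \eqref{1826}, together with the forward-in-time doubling of $\omega$ from Proposition~\ref{1627}(iv) and the time-translation estimate of Corollary~\ref{1451}, then allows each $P_j^-$ to be compared with a suitable subset of $P^+$, which is meant to yield a distribution estimate of the shape
\begin{align*}
\omega\left(\left\{(x,s)\in P^-:\ \omega(x,s)>\alpha\right\}\right)\leq C\left(\frac{\omega_0}{\alpha}\right)^{\eta}\omega(P^+)
\end{align*}
for some $\eta\in(0,1)$ and every $\alpha>\omega_0$.

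With this distribution inequality in hand, I would pick $\kappa\in(0,\eta)$ and apply the layer cake identity
\begin{align*}
\int_{P^-}\omega^{\kappa+1}=(\kappa+1)\int_0^\infty\alpha^{\kappa}\,\omega\left(\{\omega>\alpha\}\cap P^-\right)\,d\alpha,
\end{align*}
split at $\alpha=\omega_0$. The low-level part is bounded by $\omega_0^{\kappa+1}\lambda(P^-)$, and the high-level part is controlled by $C\omega_0^{\eta}\omega(P^+)\int_{\omega_0}^{\infty}\alpha^{\kappa-\eta}\,d\alpha$, which converges precisely because $\kappa<\eta$ and produces a bound of order $\omega_0^{\kappa}\omega(P^+)$. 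Dividing by $\lambda(P^-)=\lambda(P^+)$ and using $\omega(P^+)/\lambda(P^+)=\omega_0$ yields \eqref{rhi}. The main technical obstacle will be the second step: the dyadic selection is made on the lower part $P^-$, while the $A_q^+(\gamma)$ hypothesis compares $P_j^-(\gamma)$ with $P_j^+(\gamma)$, so one has to balance the time lag, the forward doubling constants, and the $\gamma$-translation (via Corollary~\ref{1451}) with the stopping-time constant $C_0$ so that a genuine quantitative gain of a power $\eta>0$ appears in the distribution estimate, rather than merely a uniform bound.
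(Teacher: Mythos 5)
Your overall template (reduce \eqref{rhii} to \eqref{rhi} by Proposition~\ref{1627}(iii), run a Calder\'on--Zygmund stopping-time argument, and finish with the layer-cake formula) is the right family of ideas, and the reduction step is exactly what the paper does. The genuine gap is in your second step: the power-decay distribution estimate
$\omega(\{(x,s)\in P^-:\ \omega(x,s)>\alpha\})\leq C(\omega_0/\alpha)^{\eta}\omega(P^+)$
does not follow from \eqref{1826}, the forward-in-time doubling in Proposition~\ref{1627}(iv), and Corollary~\ref{1451}. Those tools let you transport each stopping rectangle $P_j^-$ forward in time and conclude $\omega(P_j^-)\lesssim\omega(S_j)$ for suitable forward sets $S_j$ of bounded overlap, and summing gives only the uniform bound $\sum_j\omega(P_j^-)\lesssim\omega(P^+)$ --- precisely the outcome you flag as insufficient. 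To upgrade this to a gain of $(\omega_0/\alpha)^{\eta}$ you would need to know that $\omega$ of a subset of $P^+$ of small relative $\lambda$-measure is a small fraction of $\omega(P^+)$; but that is the parabolic $A_\infty$-type condition of Theorem~\ref{rdp}(ii), which in this paper is \emph{equivalent} to the reverse H\"older inequality you are trying to prove. As written, your key estimate is essentially circular, and you offer no mechanism to produce $\eta>0$.

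The paper closes this gap by a different mechanism, which is worth internalizing. Lemma~\ref{1455} first shows that the superlevel set $\{\omega>\varepsilon\omega_{P^-}\}$ occupies a fixed fraction of a rectangle $\widehat{P}$ lying \emph{forward in time} of $P$; feeding this into the Calder\'on--Zygmund cover, Lemma~\ref{1546} yields
$\omega(P_0^-\cap\{\omega>\xi\})\leq C\xi\,\lambda(\widetilde{P}_0\cap\{\omega>\varepsilon\xi\})$,
with the superlevel set on the right measured by $\lambda$ on a forward-extended rectangle $\widetilde{P}_0$ --- not a power decay. The Cavalieri principle then converts this into the self-improving inequality
$\int_{P^-}u^{\kappa+1}\leq 2\lambda(P^-)u_{P^-}^{\kappa+1}+(2C\kappa/\varepsilon)\int_{\widetilde{P}\setminus P^-}u^{\kappa+1}$
(after truncating $u=\min\{\omega,m\}$ to make the absorption legitimate), in which the forward-in-time integral carries a small factor $\kappa$. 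Because the error region again lies forward in time, the inequality must be iterated along a geometric decomposition of $P^+$ into layers of subrectangles at scales $\delta^{k+i}$, and the resulting series converges only when $\kappa$ is chosen small relative to the multiplicity constant $M$ and $\delta^{-p}$; a final application of \eqref{312-1} replaces $\fint_P u$ by $\fint_{P^+}u$. If you want to salvage your outline, you must replace your step 2 by such a forward-in-time iteration with absorption (or an equivalent self-improvement argument); the single-shot distribution estimate you propose is not available from the hypotheses.
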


To prove Theorem~\ref{RHI}, we need the following technical lemma.

\begin{lemma}\label{1455}
Let $(X,\rho,\mu)$ be a space of homogeneous type,
$q\in(1,\infty)$, $\gamma\in[0,1)$,
and $\tau\in\{1,...,K\}$ with $K$ in Lemma~\ref{1517}.
Assume that $\omega\in A^+_q(\gamma)$.
Then there exist two constants $\varepsilon,\eta\in(0,1)$ such that,
for any parabolic rectangle $P=Q_\alpha^{k,\tau}\times
(t-\delta^{kp},t+\delta^{kp})$
with $k\in\mathbb{Z}$, $\alpha\in I_{k,\tau}$, and $t\in\mathbb{R}$,
\begin{align}\label{s1}
\lambda\left(\widehat{P}\cap
\left\{(x,t)\in X\times\mathbb{R}:\
\omega(x,t)>\varepsilon\omega_{P^-}\right\}\right)
>\eta\lambda\left(\widehat{P}\right),
\end{align}
where
\begin{align}\label{1507}
\widehat{P}:=Q_\alpha^{k,\tau}\times
\left(t+\delta^{(k+1)p}/2,t+\delta^{(k+1)p}\right).
\end{align}
\end{lemma}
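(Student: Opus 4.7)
The plan is a proof by contradiction, together with a preliminary reduction on the lag~$\gamma$. Since the conclusion depends on $\omega$ and on the geometry of $\widehat{P}$ but not on $\gamma$ itself, I would first invoke Theorem~\ref{Idp}(ii) and Remark~\ref{5.3}(ii) to replace $\gamma$ by $\gamma_0 := \min\{\gamma,\,\delta^{p}/4\}$ while keeping $\omega\in\widetilde{A}_q^+(\gamma_0)$; if $\gamma=0$ this is automatic. The purpose of this reduction is purely geometric: under $\gamma\le\delta^p/4$, the time interval of $\widehat{P}$, namely $(t+\delta^{(k+1)p}/2,\,t+\delta^{(k+1)p})$, lies inside that of $P^+(\gamma)$, namely $(t+\gamma\delta^{kp},\,t+\delta^{kp})$, and the auxiliary shifted rectangle introduced below will also align correctly with $\widehat{P}$.

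Now suppose, toward contradiction, that for fixed but arbitrarily small constants $\varepsilon,\eta\in(0,1)$ (to be specified at the end), one has $\lambda(E)\ge(1-\eta)\lambda(\widehat{P})$, where $E:=\widehat{P}\cap\{\omega\le\varepsilon\omega_{P^-}\}$ is the complement of the set in \eqref{s1}. I would apply the rectangle analog of \eqref{1826} in Proposition~\ref{1627}(iv) (see Remark~\ref{5.3}(i)) to $R=P$ with test set $S=E\subset\widehat{P}\subset P^+(\gamma)$. Combined with the trivial bound $\omega(E)\le\varepsilon\omega_{P^-}\lambda(E)$, the identity $\omega_{P^-}=\omega(P^-)/\lambda(P^-)$, and the measure equalities $\lambda(P^-(\gamma))=(1-\gamma)\lambda(P^-)$, $\lambda(\widehat{P})=(\delta^p/2)\lambda(P^-)$, the $q$-th powers of the measure ratios telescope to a constant depending only on $q$, $\delta$, $p$, $\gamma$, and $[\omega]_{\widetilde{A}_q^+(\gamma)}$, giving the key estimate
$$
\omega(P^-(\gamma))\le C_1\,\varepsilon\,(1-\eta)^{-q}\,\omega(P^-).
$$

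To upgrade this from $P^-(\gamma)$ to the full $P^-$, I must control the sliver $P^-\setminus P^-(\gamma)=Q_\alpha^{k,\tau}\times(t-\gamma\delta^{kp},\,t)$. For this I plan to introduce the time-shifted parabolic rectangle
$$
P'':=Q_\alpha^{k,\tau}\times\bigl(t+\gamma\delta^{kp}-\delta^{kp},\,t+\gamma\delta^{kp}+\delta^{kp}\bigr),
$$
and verify directly from the endpoints that $(P'')^-(\gamma)=Q_\alpha^{k,\tau}\times(t-(1-\gamma)\delta^{kp},\,t)$ covers the sliver, while $(P'')^+(\gamma)=Q_\alpha^{k,\tau}\times(t+2\gamma\delta^{kp},\,t+(1+\gamma)\delta^{kp})$ contains $\widehat{P}$, the latter containment being exactly what forces the reduction $\gamma\le\delta^p/4$. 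The rectangle analog of \eqref{1826} applied to $P''$ with the same test set $S=E$ then yields, by an identical calculation, $\omega(P^-\setminus P^-(\gamma))\le C_2\,\varepsilon\,(1-\eta)^{-q}\omega(P^-)$.

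Adding the two estimates produces $\omega(P^-)\le(C_1+C_2)\,\varepsilon\,(1-\eta)^{-q}\,\omega(P^-)$. Since $\omega$ is positive a.e.\ and $\lambda(P^-)>0$, we have $\omega(P^-)\in(0,\infty)$, so fixing $\eta:=1/2$ and choosing $\varepsilon:=2^{-q-1}/(C_1+C_2)$, both in $(0,1)$ and independent of $P$, forces the impossible inequality $\omega(P^-)<\omega(P^-)$. This contradiction furnishes the required lower bound in \eqref{s1}. The only real obstacle is the careful bookkeeping of the time-lag constraints in the construction of $P''$: its lower part must overflow the sliver on the past side of $P$ while its upper part must still contain $\widehat{P}$ on the future side, and these competing requirements are precisely what dictate the quantitative choice $\gamma\le\delta^p/4$; once that is in place, the argument is entirely routine.
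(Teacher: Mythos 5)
Your argument is correct, and the logic is sound: the contradiction is properly set up (the negation of \eqref{s1} gives $\lambda(E)\ge(1-\eta)\lambda(\widehat{P})$ for the bad set $E$), the two applications of the rectangle version of \eqref{1826} are legitimate because $E\subset\widehat{P}\subset P^+(\gamma)\cap (P'')^+(\gamma)$ under your reduction $\gamma\le\delta^p/4$, the shifted rectangle $P''$ does have $(P'')^-(\gamma)\supset P^-\setminus P^-(\gamma)$, and the final choice of $\varepsilon,\eta$ is uniform in $P$ since $C_1,C_2$ depend only on $q$, $p$, $\delta$, $\gamma$, and $[\omega]_{A_q^+(\gamma)}$.

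The route is genuinely different from the paper's, though both rest on the same duality between $\omega$ and $\omega^{1-q'}$. The paper normalizes to $\gamma=\delta^p/2$ and bounds $\lambda(\widehat{P}\cap\{\omega\le\varepsilon\omega_{P^-}\})$ directly by Chebyshev with the exponent $1-q'$, obtaining $\varepsilon^{q'-1}\lambda(\widehat{P})\,\omega_{P^-}^{q'-1}\fint_{\widehat{P}}\omega^{1-q'}$; it then slices $P^-$ into $\lceil 2\delta^{-p}\rceil$ time-congruent copies $P_i^-$ of $\widehat{P}$ and invokes the translated $A_q^+$ condition of Remark~\ref{5.3}(iii) for each pair $(P_i^-,\widehat{P})$ to conclude $\omega_{P^-}^{q'-1}\fint_{\widehat{P}}\omega^{1-q'}\lesssim1$. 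You instead argue by contradiction through the forward-in-time doubling inequality \eqref{1826}, using the bad set itself as the test set $S$, and you handle the mismatch between $P^-(\gamma)$ and $P^-$ by a time-translate $P''$ of $P$ rather than by slicing $P^-$. The paper's direct estimate yields the sharper quantitative decay $\lambda(\text{bad set})\lesssim\varepsilon^{q'-1}\lambda(\widehat{P})$ (not needed for the lemma, but it is what makes the Cavalieri iteration in Theorem~\ref{RHI} transparent), while your version hides the dual weight inside \eqref{1826} and trades the slicing for the explicit lag bookkeeping that forces $\gamma\le\delta^p/4$. Both are complete proofs.
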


\begin{proof}
By Theorem~\ref{Idp}, without loss of generality,
we may assume that $\gamma=\delta^p/2$.
By this, the Chebyshev inequality, and Remark~\ref{5.3}(iii), we find that,
for any parabolic rectangle $P=Q_\alpha^{k,\tau}\times
(t-\delta^{kp},t+\delta^{kp})$
with $k\in\mathbb{Z}$, $\alpha\in I_{k,\tau}$, and $t\in\mathbb{R}$,
\begin{align}\label{2126}
&\lambda\left(\widehat{P}\cap
\left\{(x,t)\in X\times\mathbb{R}:\
\omega(x,t)\leq\varepsilon\omega_{P^-}\right\}\right)\\
&\quad\leq\int_{\widehat{P}}
\left[\frac{\omega(x,t)}{\varepsilon\omega_{P^-}}\right]^{1-q'}\,d\mu(x)\,dt
=\varepsilon^{q'-1}\lambda\left(\widehat{P}\right)
\omega_{P^-}^{q'-1}\fint_{\widehat{P}}
\left[\omega(x,t)\right]^{1-q'}\,d\mu(x)\,dt
\noz\\
&\quad\lesssim\varepsilon^{q'-1}\lambda\left(\widehat{P}\right)
\left(\sum_{i=1}^{\lceil2\delta^{-p}\rceil}\omega_{P_i^-}^{q'-1}
\right)\fint_{\widehat{P}}\left[\omega(x,t)\right]^{1-q'}\,d\mu(x)\,dt
\lesssim\varepsilon^{q'-1}\lambda\left(\widehat{P}\right),\noz
\end{align}
where, for any $i\in\{1,...,\lceil2\delta^{-p}\rceil\}$,
\begin{align*}
P^-_i:=Q_\alpha^{k,\tau}\times
\left(t-i\delta^{(k+1)p}/2,t-(i-1)\delta^{(k+1)p}/2\right)
\end{align*}
and the implicit positive constants only depend on $\delta$, $p$,
$\varepsilon$, $q$, and $[\omega]_{A^+_q(\gamma)}$.
Choose an $\varepsilon\in(0,1)$ sufficiently small and then \eqref{2126}
implies that there exists
a constant $\eta\in(0,1)$ such that \eqref{s1} holds true,
which completes the proof of Lemma~\ref{1455}.
\end{proof}

Using Lemma~\ref{1455}, we prove the following conclusion.

\begin{lemma}\label{1546}
Let $(X,\rho,\mu)$ be a space of homogeneous type,
$q\in(1,\infty)$, $\gamma\in[0,1)$,
and $\tau\in\{1,...,K\}$ with $K$ as in Lemma~\ref{1517}.
Assume that $\omega\in A^+_q(\gamma)$,
$P_0:=P_0(\tau,k_0,\alpha_0,t_0)$ is a parabolic rectangle,
and $\widetilde{P}_0:=Q_{\alpha_0}^{k_0,\tau}
\times(t_0-\delta^{k_0p},t_0+\delta^{(k_0+1)p})$,
where $Q_{\alpha_0}^{k_0,\tau}$ is a dyadic cube and
$\delta\in(0,1)$ the same
as in Lemma~\ref{1517}.
Then there exist two constants $C\in(0,\infty)$
and $\varepsilon\in(0,1)$, independent of $P_0$,
such that, for any $\xi\in[\omega_{P_0^-},\infty)$,
\begin{align*}
&\omega\left(P_0^-\cap\left\{(x,t)\in X\times\mathbb{R}:\
\omega(x,t)>\xi\right\}\right)\\
&\quad\leq C\xi\lambda\left(\widetilde{P}_0\cap
\left\{(x,t)\in X\times\mathbb{R}:\
\omega(x,t)>\varepsilon\xi\right\}\right).
\end{align*}
\end{lemma}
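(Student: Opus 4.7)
The plan is to perform a Calder\'on--Zygmund stopping-time decomposition of $P_0^-$ at the level $\xi$ and invoke Lemma~\ref{1455} to transfer mass from each stopping rectangle to a forward-in-time ``shadow'' inside $\widetilde{P}_0$ where $\omega$ is still bounded below by a constant multiple of $\xi$.

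Starting from $P_0^-$, which satisfies $\omega_{P_0^-}\le\xi$, I would refine dyadically: at each level $k$ a current sub-rectangle $Q_\beta^{k,\tau}\times I$ with $|I|=\delta^{kp}$ is replaced by its parabolic children, namely the dyadic children of $Q_\beta^{k,\tau}$ in $\mathfrak{D}^\tau$ (Lemma~\ref{1516}) crossed with sub-intervals of length $\delta^{(k+1)p}$ of $I$. A child $P_j^-=Q_{\alpha_j}^{k_j,\tau}\times(s_j-\delta^{k_jp},s_j)$ is declared a stopping rectangle when $\omega_{P_j^-}>\xi$; otherwise the refinement continues. The stopping condition, together with the bounded parent-to-child $\lambda$-measure ratio (depending only on $C_\mu$, $\delta$, and $p$), yields a pairwise disjoint family with $\xi<\omega_{P_j^-}\le C\xi$, while Lebesgue differentiation on $X\times\mathbb{R}$ (valid via Corollary~\ref{2014}) gives $\{\omega>\xi\}\cap P_0^-\subset\bigcup_j P_j^-$ up to a $\lambda$-null set. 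To each $P_j$ I would attach the shadow $\widehat{P}_j:=Q_{\alpha_j}^{k_j,\tau}\times(s_j+\delta^{(k_j+1)p}/2,s_j+\delta^{(k_j+1)p})$, note that $\widehat{P}_j\subset\widetilde{P}_0$ (because $k_j\ge k_0+1$ and $s_j\le t_0$), and apply Lemma~\ref{1455} to obtain universal $\varepsilon,\eta\in(0,1)$ with $\lambda(\widehat{P}_j\cap\{\omega>\varepsilon\xi\})>\eta\,\lambda(\widehat{P}_j)$, where the hypothesis $\omega_{P_j^-}>\xi$ is used to pass from $\{\omega>\varepsilon\omega_{P_j^-}\}$ to $\{\omega>\varepsilon\xi\}$. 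Combining these with $\lambda(P_j^-)=(2/\delta^p)\,\lambda(\widehat{P}_j)$ yields
\begin{align*}
\omega\left(P_0^-\cap\{\omega>\xi\}\right)\le\sum_j\omega(P_j^-)\lesssim\xi\sum_j\lambda(P_j^-)\approx\xi\sum_j\lambda(\widehat{P}_j)\lesssim\frac{\xi}{\eta}\sum_j\lambda\left(\widehat{P}_j\cap\{\omega>\varepsilon\xi\}\right).
\end{align*}

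The hard part will be to convert this to the desired right-hand side $C\xi\,\lambda(\widetilde{P}_0\cap\{\omega>\varepsilon\xi\})$, which amounts to showing that the family $\{\widehat{P}_j\}$ has bounded pointwise overlap inside $\widetilde{P}_0$. Within a single dyadic level $k_j=k$ the shadows are automatically pairwise disjoint: either their spatial cubes are different dyadic cells of $\mathfrak{D}^\tau$ at level $k$, or they coincide and the stopping-time separation $|s_i-s_j|\ge\delta^{kp}$ of the lower halves strictly exceeds the shadow-width $\delta^{(k+1)p}/2$. Across different levels the shadows can overlap, and this is the crux: the family $\{P_j\}$ satisfies the hypotheses of Lemma~\ref{924} (disjoint lower halves, maximality of stopping, spatial cubes in $\mathfrak{D}^\tau$), so I would invoke its packing estimate along the tree induced by the spatial nesting $Q_{\alpha_j}^{k_j,\tau}\subset Q_{\alpha_i}^{k_i,\tau}$ to bound the pointwise multiplicity of $\{\widehat{P}_j\}$ by a constant depending only on $C_\mu$, $\delta$, and $p$. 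Making this packing bound uniform and clean is the delicate technical point of the argument; once it is in hand, the previous display together with it yields the stated estimate.
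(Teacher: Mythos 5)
Your overall strategy (Calder\'on--Zygmund stopping at height $\xi$, forward ``shadows'' $\widehat{P}_j$ controlled from below via Lemma~\ref{1455}, and a packing argument via Lemma~\ref{924}) is the same as the paper's, and the first half of your argument is essentially sound, modulo minor points: the ratio $\delta^{kp}/\delta^{(k+1)p}=\delta^{-p}$ is not an integer, so your time intervals at consecutive generations do not tile exactly and you must allow bounded overlap or uneven children there. The paper sidesteps a fixed space--time grid altogether by performing the stopping separately on each time slice $t$ (maximal spatial dyadic cubes paired with freely positioned time intervals $U_i^t$) and only afterwards extracting a countable subfamily $\widetilde{\mathcal{S}}$ of bounded overlap; your fixed-grid martingale version is a legitimate alternative for the covering and the two-sided bound $\xi<\omega_{P_j^-}\le C\xi$.

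The genuine gap is at the step you yourself flag as the crux. You propose to deduce \emph{bounded pointwise multiplicity} of the whole shadow family $\{\widehat{P}_j\}_j$ from Lemma~\ref{924}. This cannot work: Lemma~\ref{924} is a Carleson-type packing condition ($\sum_{P\in\mathcal{F}}\lambda(P)\lesssim\lambda(P_0)$ over the rectangles whose upper halves meet a fixed $P_0^+$ and which are smaller), and a Carleson packing condition never implies bounded overlap --- the family of all dyadic intervals containing a fixed point is Carleson relative to the top interval yet has infinite multiplicity at that point. Worse, bounded multiplicity of the \emph{full} shadow family is false in general: fix $(z,\sigma)$; at each level $k'$ at most one shadow contains it, namely the one cast by a stopping rectangle $Q'\times(s-\delta^{k'p},s)$ with $z\in Q'$ and $s\in(\sigma-\delta^{(k'+1)p},\sigma-\delta^{(k'+1)p}/2)$, but for $k''\ge k'+2$ two such rectangles at levels $k'$ and $k''$ have disjoint lower halves (since $\delta^{(k''+1)p}+\delta^{k''p}<\delta^{(k'+1)p}/2$ for $\delta$ as in Lemma~\ref{1517}), so nothing prevents stopping rectangles at infinitely many levels $k,k+2,k+4,\dots$ from all casting a shadow over $(z,\sigma)$. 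The repair is exactly the paper's: greedily extract from the stopping family a subfamily $\mathcal{F}=\bigcup_k\Sigma_k$ (largest scale first, discarding any rectangle whose upper half meets the upper half of one already selected) so that $\sum_{P^-\in\mathcal{F}}\mathbf{1}_{P^+}\le2$, use Lemma~\ref{924} only to show that the total $\lambda$-measure of the discarded rectangles attached to each $P\in\mathcal{F}$ is dominated by $\lambda(P)$, and then run your final chain of inequalities over $\mathcal{F}$ alone, where the overlap bound on the $P^+\supset\widehat{P}$ is available.
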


\begin{proof}
Let
\begin{align*}
\mathfrak{B}:=\left\{U=Q_{\beta}^{l,\tau}
\times(t-\delta^{lp}/2,t+\delta^{lp}/2)
\subset Q_{\alpha_0}^{k_0,\tau}\times(t_0-\delta^{k_0p}
,t_0):\ l\ge k_0,\,\beta\in I_l\right\}
\end{align*}
and, for any $(x,t)\in X\times\mathbb{R}$,
\begin{align*}
\mathcal{M}_{\mathfrak{B}}(\omega)(x,t)
:=\sup_{(x,t)\in U\in\mathfrak{B}}
\fint_{U}\left|\omega\right|.
\end{align*}
From Corollary~\ref{2014} and
the Lebesgue differentiation theorem
(see, for instance, \cite[Theorem 1.8]{H2001}), we deduce that,
for $\lambda$-almost every
$(x,t)\in P_0^-\cap\{(x,t):\ \omega(x,t)>\xi\}$,
\begin{align*}
\mathcal{M}_{\mathfrak{B}}(\omega)(x,t)
=\sup_{(x,t)\in U\in\mathfrak{B}}
\fint_{U}\omega
\ge\omega(x,t)>\xi,
\end{align*}
which further implies that
\begin{align}\label{2045}
P_0^-\cap\left\{(x,t):\ \omega(x,t)>\xi\right\}
\subset\left\{(x,t):\ \mathcal{M}_{\mathfrak{B}}
(\omega)(x,t)>\xi\right\}=:E.
\end{align}

Now, we aim to construct
a Calder\'on--Zygmund type decomposition for $E$.
From the definition of $\mathfrak{B}$,
it is easy to see that, for any $U=Q_{\beta}^{l,\tau}
\times(t-\delta^{lp}/2,t+\delta^{lp}/2)\in\mathfrak{B}$,
one has $t\in(t_0-\delta^{k_0p}
,t_0)$.
Then, for any fixed $t\in(t_0-\delta^{k_0p}
,t_0)$, we define
$E_t:=E\cap(X\times\{t\})$.
Since $\xi\ge\omega_{P_0^-}$,
we infer that there exists a collection $\{\mathcal{Q}_i^t
\}_{i\in \Gamma_t}$ of
maximal dyadic cubes
and $\{U_i^t\}_{i\in \Gamma_t}\subset\mathfrak{B}$,
with $\Gamma_t$ being a set of indices, satisfying that,
for any $i\in\Gamma_t$,
\begin{enumerate}
\item
$\mathcal{Q}_i^t\times\{t\}\subset E_t$;
\item
the projection of $U_i^t$ on $X$ is
$\mathcal{Q}_i^t$
and the projection on $\mathbb R$ contains $\{t\}$;
\item
\begin{align}\label{1646}
\fint_{U_i^t}\omega>\xi;
\end{align}
\item
$\mathcal{Q}_i^t\subsetneqq Q_{\alpha_0}^{k_0,\tau}$,
\end{enumerate}
where the maximality means that, for any $x\in X$ satisfying $(x,t)\in E_t$,
there exists an $i\in\Gamma_t$ such that $x\in\mathcal{Q}_i^t$
and, for any dyadic cube $\mathcal{Q}$ which
properly contains $\mathcal{Q}_i^t$, there exists no $U\in\mathfrak{B}$
satisfying the above conditions with respect to $\mathcal{Q}$.
Indeed, by the definition of $E$, we know that
there exist $\{\mathcal{Q}_i^t\}_{i\in \Gamma_t}$
and $\{U_i^t\}_{i\in \Gamma_t}$
satisfying (i), (ii), and (iii). If (iv) does not hold true,
then $\mathcal{Q}_i^t=Q_{\alpha_0}^{k_0,\tau}$
and $U_i^t=P_0^-$,
which, together with (iii), further implies that
$$\omega_{P_0^-}=\fint_{P_0^-}\omega=\fint_{U_i^t}\omega>\xi.$$
This contradicts $\xi\ge\omega_{P_0^-}$.
Thus, the above statements hold true.

Next, we claim that
\begin{enumerate}
\item[\textup{(a)}]
$E\subset\bigcup_{t\in(t_0-\delta^{k_0p},t_0)}\bigcup_{i\in\Gamma_t}U_i^t$;
\item[\textup{(b)}]
for any $t\in(t_0-\delta^{k_0p},t_0)$ and $i\in\Gamma_t$,
\begin{align}\label{s2}
\fint_{U_i^t}\omega\approx\xi.
\end{align}
\end{enumerate}
Indeed, by the definition of $E_t$,
(i), the maximality of $\{\mathcal{Q}_i^t\}_{i\in\Gamma_t}$,
and (ii), we conclude that
\begin{align*}
E=\bigcup_{t\in(t_0-\delta^{k_0p},t_0)}E_t\times\{t\}
=\bigcup_{t\in(t_0-\delta^{k_0p},t_0)}\bigcup_{i\in\Gamma_t}
\mathcal{Q}_i^t\times\{t\}\subset
\bigcup_{t\in(t_0-\delta^{k_0p},t_0)}\bigcup_{i\in\Gamma_t}U_i^t.
\end{align*}
This finishes the proof of (a). To prove (b),
let $t\in(t_0-\delta^{k_0p},t_0)$ and $i\in\Gamma_t$, and
denote the level of $\mathcal{Q}_i^t$ by $l$.
Since $\mathcal{Q}_i^t$ is maximal and
$\mathcal{Q}_i^t\subsetneq Q_{\alpha_0}^{k_0,\tau}$,
it follows that, for any dyadic cube $Q^{l-1,\tau}_\beta$
satisfying $\mathcal{Q}_i^t\subset
Q^{l-1,\tau}_\beta\subset Q_{\alpha_0}^{k_0,\tau}$,
and for any interval $I$ satisfying that
$U_i^t\subset Q^{l-1,\tau}_\beta\times I$ with $m(I)=\delta^{(l-1)p}$,
\begin{align}\label{835}
\fint_{Q^{l-1,\tau}_\beta\times I}\omega\leq\xi.
\end{align}
If there exists a dyadic cube at level $l-1$
and an interval with length $\delta^{(l-1)p}$ such that \eqref{835} does not
hold true, then $Q^{l-1,\tau}_\beta\in\{\mathcal{Q}_i^t\}_i$,
which, combined with
$\mathcal{Q}_i^t\subset Q^{l-1,\tau}_\beta$,
contradicts the maximality of $\{\mathcal{Q}_i^t\}_i$.
Thus, both \eqref{1646} and \eqref{835} imply that
\begin{align*}
\xi<\fint_{U_i^t}\omega\lesssim
\fint_{Q^{l-1,\tau}_\beta\times I}\omega\lesssim\xi,
\end{align*}
which completes the proof of (b) and hence this claim.

Now, we collect and reorganize $\{U_i^t\}$.
To this end, we let
\begin{align}\label{494}
\mathcal{S}:=\left\{U_i^t:\ t\in(t_0-\delta^{k_0p}
,t_0),\,i\in\Gamma_t\right\}
=:\bigcup_{j\in\mathbb{N}}\mathcal{S}_j=:
\bigcup_{j\in\mathbb{N}}\bigcup_{\beta\in\Lambda_j}\mathcal{S}_{j,\beta},
\end{align}
where, for any $j\in\mathbb{N}$,
we collect all elements of $\mathcal{S}$
with same edge length $\delta^{k_0+j}$
by setting
\begin{align*}
\mathcal{S}_j:=\left\{U=Q\times I\in\mathcal{S}:\
m(I)=\delta^{(k_0+j)p}\right\}
\end{align*}
and, for any $\beta\in\Lambda_j$ with $\Lambda_j$ being a set of indices,
we collect all elements of $\mathcal{S}_j$ with the same
space projection $Q^{j,\tau}_\beta$ by setting
\begin{align*}
\mathcal{S}_{j,\beta}:=\left\{U=Q\times I\in\mathcal{S}_j:\
Q=Q^{j,\tau}_\beta\in\{\mathcal{Q}^t_i\}_{t\in(t_0-\delta^{k_0p}
,t_0),\,i\in\Gamma_t}\right\}.
\end{align*}
By this, it is easy to show that, for any $j\in\mathbb{N}$
and $\beta$, $\beta'\in\Lambda_j$ with $\beta\neq\beta'$,
\begin{align}\label{ji}
\left(\bigcup_{U\in\mathcal{S}_{j,\beta}}U\right)\cap
\left(\bigcup_{U\in\mathcal{S}_{j,\beta'}}U\right)=\emptyset.
\end{align}
Since each $U\in\mathcal{S}_{j,\beta}$
has the same space projection $Q^{j,\tau}_\beta$,
we identify $\mathcal{S}_{j,\beta}$ with a collection
$$
\mathcal{I}_{j,\beta}:=\left\{I:\ Q^{j,\tau}_\beta
\times I\in\mathcal{S}_{j,\beta}\right\}
$$
of intervals having the same length.
Thus, there exists a subset $\widetilde{\mathcal{I}}_{j,\beta}$
of $\mathcal{I}_{j,\beta}$
such that, for any $t\in\mathbb{R}$,
\begin{align*}
\sum_{I\in\widetilde{\mathcal{I}}_{j,\beta}}\mathbf{1}_I(t)\leq2
\quad\text{and}\quad
\bigcup_{I\in\widetilde{\mathcal{I}}_{j,\beta}}I
=\bigcup_{I\in{\mathcal{I}_{j,\beta}}}I,
\end{align*}
and hence there exists a subset
$\widetilde{\mathcal{S}}_{j,\beta}:=\{Q^{j,\tau}_\beta\times I:\
I\in\widetilde{\mathcal{I}}_{j,\beta}\}$
of $\mathcal{S}_{j,\beta}$
such that, for any $(x,t)\in X\times\mathbb{R}$,
\begin{align}\label{1943}
\sum_{U\in\widetilde{\mathcal{S}}_{j,\beta}}\mathbf{1}_U(x,t)\leq2
\quad\text{and}\quad
\bigcup_{U\in\widetilde{\mathcal{S}}_{j,\beta}}U
=\bigcup_{U\in{\mathcal{S}_{j,\beta}}}U,
\end{align}
which, together with \eqref{ji}, further
implies that $\widetilde{\mathcal{S}}_j:=\bigcup_{\beta\in\Lambda_j}
\widetilde{\mathcal{S}}_{j,\beta}$
satisfies \eqref{1943} with $\widetilde{\mathcal{S}}_{j,\beta}$
and ${\mathcal{S}}_{j,\beta}$ replaced, respectively, by
$\widetilde{\mathcal{S}}_{j}$ and ${\mathcal{S}}_{j}$.
This, combined with both (a) and \eqref{494},
implies that
\begin{align}\label{Esubset}
E\subset\bigcup_{t\in(t_0-\delta^{k_0p},t_0)}\bigcup_{i\in\Gamma_t}U_i^t
=\bigcup_{U\in\mathcal{S}}U
=\bigcup_{j\in\mathbb{N}}\bigcup_{U\in\mathcal{S}_j}U
=\bigcup_{j\in\mathbb{N}}\bigcup_{U\in\widetilde{\mathcal{S}}_j}U
=\bigcup_{U\in\widetilde{\mathcal{S}}}U,
\end{align}
where $\widetilde{\mathcal{S}}:=
\bigcup_{j\in\mathbb{N}}\widetilde{\mathcal{S}}_j$.
Moreover,
$\widetilde{\mathcal{S}}$ is countable and
the elements of $\widetilde{\mathcal{S}}$
are interpreted as lower halves of parabolic rectangles,
namely, for any $U\in\widetilde{\mathcal{S}}$,
there exists a parabolic rectangle $P$ such that $U=P^-$.

Next, we define $\Sigma_1$ to be the set of all the elements of
$\widetilde{\mathcal{S}}$ with maximal edge length,
that is, $\Sigma_1=\widetilde{\mathcal{S}}_1$.
For any $k\in\mathbb{N}\setminus\{1\}$, define $\Sigma_k$ to be
the set of all the elements of $\widetilde{\mathcal{S}}_{k}$
satisfying that, for any $P^-\in\Sigma_k$,
\begin{align}\label{2038}
P^+\cap\bigcup_{U^-\in\bigcup_{i=1}^{k-1}\Sigma_i}U^+=\emptyset.
\end{align}
Let $\mathcal{F}:=\bigcup_{k\in\mathbb{N}}\Sigma_k$.
By this, \eqref{2038},
and the first inequality of \eqref{1943} for
$\{\widetilde{\mathcal{S}}_{j}\}_{j\in\mathbb{N}}$, we conclude that
\begin{align}\label{s3}
\sum_{P^-\in \mathcal{F}}\mathbf{1}_{P^+}\leq2.
\end{align}
For any
parabolic rectangle $P$, let $\widehat{P}$ be the same as in \eqref{1507}.
From \eqref{2045}, \eqref{Esubset}, \eqref{s2},
the definition of $\mathcal{F}$, Lemma~\ref{924}, \eqref{s1},
\eqref{1646},
and \eqref{s3} with $\widehat{P}\subset P^+$, we deduce that,
for any $\xi\in[\omega_{P_0^-},\infty)$,
\begin{align*}
&\omega\left(P_0^-\cap\{(x,t):\ \omega(x,t)>\xi\}\right)\\
&\quad\leq\omega(E)\leq\sum_{P^-\in\widetilde{\mathcal{S}}}\omega(P^-)
\approx\xi\sum_{P^-\in\widetilde{\mathcal{S}}}\lambda(P^-)\\
&\quad\approx\xi\sum_{P^-\in \mathcal{F}}\left[\lambda(P^-)+
\sum_{U^-\in\widetilde{\mathcal{S}},\,U^+\cap P^+\neq\emptyset,\,l(U)<l(P)}
\lambda(U^-)\right]\\
&\quad\lesssim\xi\sum_{P^-\in \mathcal{F}}\lambda(P^-)\approx
\xi\sum_{P^-\in \mathcal{F}}\lambda\left(\widehat{P}\right)
\lesssim\xi\sum_{P^-\in \mathcal{F}}\lambda\left(\widehat{P}
\cap\left\{(x,t):\ \omega(x,t)>\varepsilon\omega_{P^-}\right\}\right)\\
&\quad\lesssim\xi\sum_{P^-\in \mathcal{F}}\lambda\left(\widehat{P}
\cap\left\{(x,t):\ \omega(x,t)>\varepsilon\xi\right\}\right)
\lesssim\xi\int_{\bigcup_{P^-\in \mathcal{F}}\widehat{P}
\cap\{(x,t):\ \omega(x,t)>\varepsilon\xi\}}2\\
&\quad\lesssim\xi\lambda\left(\widetilde{P}_0
\cap\left\{(x,t):\ \omega(x,t)>\varepsilon\xi\right\}\right),
\end{align*}
where, in the last inequality, we used the fact that,
for any $P^-\in F$, $\widehat{P}\subset\widetilde{P}_0$,
which completes the proof of Lemma~\ref{1546}.
\end{proof}

Now, using Lemma~\ref{1546}, we prove Theorem~\ref{RHI}.

\begin{proof}[Proof of Theorem~\ref{RHI}]
Fix any constant $m\in(0,\infty)$,
let $u:=\min\{\omega,\,m\}$ be a weight.
Since Proposition~\ref{1627}(i) and $\omega,m\in A^+_q(\gamma)$,
it follows that $u\in A^+_q(\gamma)$.
Now, we first prove that $u$ satisfies
\eqref{rhi} for some $\kappa\in(0,\infty)$.
Let $P:=P(\tau,k,\alpha,t)$, with
$\tau\in\{1,...,K\}$, $k\in\mathbb{Z}$, $\alpha\in I_k$,
and $t\in\mathbb{R}$,
be any fixed parabolic rectangle.
We divide the proof into the following three steps.

\emph{Step 1.} Division process.
Let
$P^-:=Q_\alpha^{k,\tau}\times(t-\delta^{kp},t)$
and
$$\widetilde{P}:=Q_\alpha^{k,\tau}
\times\left(t-\delta^{kp},t+\delta^{(k+1)p}\right).$$
By Lemma~\ref{1516},
we find that there exists a sequence
$\{Q^{k+1,\tau}_{\beta}\}_{\beta\in J_{k+1,\tau}}$
of dyadic cubes such that
\begin{align*}
\bigcup_{\beta\in J_{k+1,\tau}}Q^{k+1,\tau}_{\beta}=Q_\alpha^{k,\tau},
\end{align*}
where $J_{k+1,\tau}\subset I_{k+1,\tau}$
is a set of indices and satisfies
\begin{align*}
\#J_{k+1,\tau}&\leq\left\lceil\frac{\mu(Q_\alpha^{k,\tau})}
{\min_{\beta\in J_{k+1,\tau}}\mu(Q^{k+1,\tau}_{\beta})}\right\rceil
\leq\left\lceil\frac{\mu(B(z_\alpha^{k,\tau},C_1\delta^k))}
{\min_{\beta\in J_{k+1,\tau}}
\mu(B(z^{k+1,\tau}_{\beta},c_1\delta^k))}\right\rceil\\
&\leq\left\lceil\max_{\beta\in J_{k+1,\tau}}
\frac{\mu(B(z^{k+1,\tau}_{\beta},(C_1+c_1)K_0\delta^k))}
{\mu(B(z^{k+1,\tau}_{\beta},c_1\delta^k))}\right\rceil\\
&\leq\left\lceil C_{\mu}
\left[\frac{(C_1+c_1)K_0}{c_1}\right]^n\right\rceil=:M
\end{align*}
with both $c_1$ and $C_1$ in Lemma~\ref{1516}.
Now, for any $\beta\in J_{k+1,\tau}$, we let
\begin{align*}
P^{1-}_{\beta}:=
Q^{k+1,\tau}_{\beta}\times\left(t,t+\delta^{(k+1)p}\right)
\ \text{and}\
\widetilde{P}^{1}_{\beta}:=
Q^{k+1,\tau}_{\beta}
\times\left(t,t+(1+\delta^p)\delta^{(k+1)p}\right).
\end{align*}
For any given $i\in\mathbb{N}\setminus\{1\}$,
by Lemma~\ref{1516},
we find that there exists a sequence
$\{Q^{k+i,\tau}_{\beta}\}_{\beta\in J_{k+i,\tau}}$
of dyadic cubes such that
\begin{align*}
\bigcup_{\beta\in J_{k+i,\tau}}Q^{k+i,\tau}_{\beta}=Q_\alpha^{k,\tau},
\end{align*}
where $J_{k+i,\tau}\subset I_{k+i,\tau}$
is a set of indices and satisfies
$\#J_{k+i,\tau}\leq M^i$.
Next, for any $\beta\in J_{k+i,\tau}$, we let
\begin{align*}
P^{i-}_{\beta}:=Q^{k+i,\tau}_{\beta}
\times\left(t+\delta^{(k+1)p}\frac{1-\delta^{(i-1)p}}{1-\delta^p},
t+\delta^{(k+1)p}\frac{1-\delta^{ip}}{1-\delta^p}\right)
\end{align*}
and
\begin{align*}
\widetilde{P}^{i}_{\beta}:=Q^{k+i,\tau}_{\beta}
\times\left(t+\delta^{(k+1)p}\frac{1-\delta^{(i-1)p}}{1-\delta^p},
t+\delta^{(k+1)p}\frac{1-\delta^{(i+1)p}}{1-\delta^p}\right).
\end{align*}
From this and $\delta\in(0,(96K_0^6)^{-1})$, it is easy to show that
\begin{align}\label{1523}
\bigcup_{i\in\mathbb{N}}\bigcup_{\beta\in J_{k+i,\tau}}
P^{i-}_{\beta}\subset P^+
\end{align}
is a disjoint union
and, for any $i\in\mathbb{N}$,
\begin{align}\label{1051}
\bigcup_{\beta\in J_{k+i,\tau}}
\left(\widetilde{P}^{i}_{\beta}\setminus P^{i-}_{\beta}\right)
=\bigcup_{\beta\in J_{k+(i+1),\tau}}P^{(i+1)-}_{\beta}
\end{align}
is also a disjoint union.

\emph{Step 2.} Iteration process.
Define
$$
E:=\left\{(x,t)\in X\times\mathbb{R}:\ u(x,t)>u_{P^-}\right\}.
$$
By this, the Cavalieri principle,
and Lemma~\ref{1546},
we find that there exist two positive
constants $\varepsilon\in(0,1)$ and $C$,
independent of $P$,
such that, for any $\kappa\in(0,1)$,
\begin{align*}
\int_{P^-\cap E}u^{\kappa+1}
&=\kappa\int_0^\infty s^{\kappa-1}
u\left(\left\{(x,t)\in P^-\cap E:\ u(x,t)>s\right\}\right)
\,ds\\
&=\kappa u(P^-\cap E)\int_0^{u_{P^-}}s^{\kappa-1}\,ds\\
&\quad+\kappa\int_{u_{P^-}}^\infty s^{\kappa-1}
u\left(\left\{(x,t)\in P^-:\ u(x,t)>s\right\}\right)
\,ds\\
&\leq\lambda(P^-\cap E)u_{P^-}^{\kappa+1}
+C\kappa\int_{u_{P^-}}^\infty s^\kappa
\lambda\left(\left\{(x,t)\in\widetilde{P}:\ u(x,t)>\varepsilon s\right\}\right)
\,ds\\
&\leq\lambda(P^-\cap E)u_{P^-}^{\kappa+1}
+\frac{C\kappa}{(\kappa+1)\varepsilon^{\kappa+1}}
\int_{\widetilde{P}\setminus(P^-\cap E)}u^{\kappa+1}\\
&\quad+\frac{C\kappa}{(\kappa+1)\varepsilon^{\kappa+1}}
\int_{P^-\cap E}u^{\kappa+1},
\end{align*}
which implies that, for any $\kappa\in(0,1)$ satisfying
$\frac{C\kappa}{(\kappa+1)\varepsilon^{\kappa+1}}<1$,
\begin{align*}
\int_{P^-\cap E}u^{\kappa+1}&\leq
\frac{(\kappa+1)\varepsilon^{\kappa+1}}
{(\kappa+1)\varepsilon^{\kappa+1}-C\kappa}
\lambda(P^-\cap E)u_{P^-}^{\kappa+1}\\
&\quad+\frac{C\kappa}{(\kappa+1)\varepsilon^{\kappa+1}-C\kappa}
\int_{\widetilde{P}\setminus(P^-\cap E)}u^{\kappa+1}.
\end{align*}
Using this and the fact that, for any $(x,t)\in P^-\setminus E$,
$u(x,t)\leq u_{P^-}$,
we find that we can choose a $\kappa$ sufficiently small such that
\begin{align}\label{1609}
\int_{P^-}u^{\kappa+1}
&=\int_{P^-\cap E}u^{\kappa+1}
+\int_{P^-\setminus E}u^{\kappa+1}\\
&\leq
\frac{(\kappa+1)\varepsilon^{\kappa+1}}
{(\kappa+1)\varepsilon^{\kappa+1}-C\kappa}
\lambda(P^-\cap E)u_{P^-}^{\kappa+1}
+\frac{C\kappa}{(\kappa+1)\varepsilon^{\kappa+1}-C\kappa}
\int_{\widetilde{P}\setminus P^-}u^{\kappa+1}
\noz\\
&\quad+\left[\frac{C\kappa}{(\kappa+1)\varepsilon^{\kappa+1}-C\kappa}
+1\right]\int_{P^-\setminus E}u^{\kappa+1}
\noz\\
&\leq
\frac{(\kappa+1)\varepsilon^{\kappa+1}}
{(\kappa+1)\varepsilon^{\kappa+1}-C\kappa}
\lambda(P^-\cap E)u_{P^-}^{\kappa+1}
+\frac{C\kappa}{(\kappa+1)\varepsilon^{\kappa+1}-C\kappa}
\int_{\widetilde{P}\setminus P^-}u^{\kappa+1}
\noz\\
&\quad+\left[\frac{C\kappa}{(\kappa+1)\varepsilon^{\kappa+1}-C\kappa}
+1\right]\lambda(P^-\setminus E)u_{P^-}^{\kappa+1}
\noz\\
&\leq\left[\frac{C\kappa}{(\kappa+1)\varepsilon^{\kappa+1}-C\kappa}
+1\right]\lambda(P^-)u_{P^-}^{\kappa+1}
+\frac{C\kappa}{(\kappa+1)\varepsilon^{\kappa+1}-C\kappa}
\int_{\widetilde{P}\setminus P^-}u^{\kappa+1}
\noz\\
&\leq2\lambda(P^-)u_{P^-}^{\kappa+1}
+2C\kappa/\varepsilon\int_{\widetilde{P}\setminus P^-}u^{\kappa+1}.\noz
\end{align}
By an argument similar to that used in the estimation of \eqref{1609}
and by \eqref{1051},
we find that, for any $i\in\mathbb{N}$,
\begin{align}\label{t1}
\sum_{\beta\in J_{k+i,\tau}}\int_{P^{i-}_\beta}u^{\kappa+1}
&\leq\sum_{\beta\in J_{k+i,\tau}}\left[2\lambda(P^{i-}_\beta)
u_{P^{i-}_\beta}^{\kappa+1}
+2C\kappa/\varepsilon\int_{\widetilde{P}^{i-}_\beta
\setminus P^{i-}_\beta}u^{\kappa+1}\right]\\
&=\sum_{\beta\in J_{k+i,\tau}}\left[2\lambda(P^{i-}_\beta)
u_{P^{i-}_\beta}^{\kappa+1}\right]
+2C\kappa/\varepsilon\sum_{\beta\in J_{k+(i+1),\tau}}
\int_{P^{(i+1)-}_\beta}u^{\kappa+1}\noz
\end{align}
and
\begin{align}\label{t2}
\int_{\widetilde{P}\setminus P^-}u^{\kappa+1}
=\sum_{\beta\in J_{k+1,\tau}}\int_{P^{1-}_\beta}u^{\kappa+1}.
\end{align}

\emph{Step 3.}
In this step, we aim to prove \eqref{rhi}.
To this end, using \eqref{1609}, \eqref{t2}, and \eqref{t1},
we conclude that, for any $N\in\mathbb{N}$,
\begin{align}\label{952}
\int_{P^-}u^{\kappa+1}&\leq
2\lambda(P^-)u_{P^-}^{\kappa+1}
+2C\kappa/\varepsilon\int_{\widetilde{P}\setminus P^-}u^{\kappa+1}\\
&=2\lambda(P^-)u_{P^-}^{\kappa+1}
+2C\kappa/\varepsilon\sum_{\beta\in J_{k+1,\tau}}
\int_{P^{1-}_\beta}u^{\kappa+1}
\noz\\
&\leq2\sum_{i=0}^N\left[(2C\kappa/\varepsilon)^i
\sum_{\beta\in J_{k+i,\tau}}
\lambda(P^{i-}_\beta)u_{P^{i-}_\beta}^{\kappa+1}\right]
+(2C\kappa/\varepsilon)^N\int_{D_N}u^{\kappa+1}
\noz\\
&=:\mathrm{I}_1+\mathrm{I}_2,\noz
\end{align}
where $J_{k,\tau}:=\{\alpha\}$, $P^{0-}_\alpha:=P^-$,
and
\begin{align*}
D_N:&=\bigcup_{\beta\in J_{k+N,\tau}}
\left(\widetilde{P}^{N}_{\beta}\setminus P^{N-}_{\beta}\right)
=\bigcup_{\beta\in J_{k+(N+1),\tau}}P^{(N+1)-}_{\beta}\\
&=Q_\alpha^{k,\tau}\times
\left(t+\delta^{(k+1)p}\frac{1-\delta^{Np}}{1-\delta^p},
t+\delta^{(k+1)p}\frac{1-\delta^{(N+1)p}}{1-\delta^p}\right).
\end{align*}
For $\mathrm{I}_1$, from \eqref{upperdoub} and
the definition of $M$, we deduce that,
for any $i\in\mathbb{Z}_+$,
\begin{align*}
\sum_{\beta\in J_{k+i,\tau}}
\lambda\left(P^{i-}_\beta\right)u_{P^{i-}_\beta}^{\kappa+1}
&=[\lambda(P^-)]^{-\kappa}\sum_{\beta\in J_{k+i,\tau}}
\left[\frac{\lambda(P^-)}{\lambda(P^{i-}_\beta)}\right]^\kappa
\left(\int_{P^{i-}_\beta}u\right)^{\kappa+1}\\
&\lesssim[\lambda(P^-)]^{-\kappa}\sum_{\beta\in J_{k+i},\tau}
(M\delta^{-p})^{i\kappa}
\left(\int_{P^{i-}_\beta}u\right)^{\kappa+1}\\
&\lesssim[\lambda(P^-)]^{-\kappa}
(M\delta^{-p})^{i\kappa}
\left(\int_{\bigcup_{\beta\in J_{k+i,\tau}}
P^{i-}_\beta}u\right)^{\kappa+1},
\end{align*}
which, together with \eqref{1523}, further implies that
\begin{align}\label{951}
\mathrm{I}_1
&\lesssim[\lambda(P^-)]^{-\kappa}\sum_{i=0}^\infty
\left[(2C\kappa/\varepsilon)^i
(M\delta^{-p})^{i\kappa}
\left(\int_{\bigcup_{\beta\in J_{k+i,\tau}}
P^{i-}_\beta}u\right)^{\kappa+1}\right]\\
&\lesssim[\lambda(P^-)]^{-\kappa}\sum_{i=0}^\infty
\left(\int_{\bigcup_{\beta\in J_{k+i,\tau}}
P^{i-}_\beta}u\right)^{\kappa+1}
\lesssim[\lambda(P^-)]^{-\kappa}\left(\int_Pu\right)^{\kappa+1}
\noz\\
&\approx\lambda(P^-)\left(\fint_Pu\right)^{\kappa+1}
\noz
\end{align}
with $\kappa$ sufficiently small such that
$2C\kappa M^\kappa/(\varepsilon\delta^{\kappa p})\leq1$.
Let
$P_1:=Q_\alpha^{k,\tau}\times(t-\delta^{kp},t-\delta^{kp}/2)$,
$$P_2:=Q_\alpha^{k,\tau}\times(t-\delta^{kp}/2,t),$$
and
$P_3:=Q_\alpha^{k,\tau}\times(t+\delta^{kp}/2,t+\delta^{kp})$.
Then, by \eqref{312-1}, we find that
\begin{align*}
\int_Pu=\int_{P_1}u+\int_{P_2}u+\int_{P^+}u
\lesssim\int_{P_3}u+\int_{P^+}u
\approx\int_{P^+}u,
\end{align*}
which, combined with \eqref{951},
further implies that
\begin{align}\label{1539}
\mathrm{I}_1
\lesssim\lambda(P^-)\left(\fint_{P^+}u\right)^{\kappa+1}.
\end{align}
For $\mathrm{I}_2$, choose a $\kappa$
sufficiently small such that $2C\kappa<\varepsilon$.
Since $\lambda(D_N)=\mu(Q_\alpha^{k,\tau})
\times\delta^{(\kappa+N+1)p}\to0$ as $N\to\infty$,
it follows that $\mathrm{I}_2\to0$ as $N\to\infty$.
This, together with both \eqref{1539} and \eqref{952},
implies that \eqref{rhi}
holds true for $u$.
This, combined with the monotone convergence theorem,
further implies that \eqref{rhi} holds true for $\omega$.
This finishes the proof of Step 3 and hence \eqref{rhi}.

From both Proposition~\ref{1627}(iii) and an argument similar to that used in
the above proof of \eqref{rhi}, we deduce \eqref{rhii},
which completes the proof of Theorem~\ref{RHI}.
\end{proof}

\begin{remark}\label{2118}
Let $q\in(1,\infty)$, $\gamma\in[0,1)$, and
$\omega\in A_q^+(\gamma)$.
By Theorem~\ref{RHI} and an argument similar to that used in the proof of
Proposition~\ref{1633}, we find that
there exists a positive constant $\kappa$
such that, for any given $\gamma_1,\gamma_2\in(0,\infty)$, and
for any $\tau\in\{1,...,K\}$, $k\in\mathbb{Z}$,
$\alpha\in I_{\tau,k}$, and $t\in\mathbb{R}$,
\begin{align*}
\left[\fint_{Q_\alpha^{k,\tau}\times
(t-\gamma_1\delta^{kp},t)}\omega^{\kappa+1}\right]^{\frac{1}{\kappa+1}}
\lesssim\fint_{Q_\alpha^{k,\tau}\times
(t-\gamma_1\delta^{kp},t)+\gamma_2\delta^{kp}}\omega
\end{align*}
with the implicit positive constant depends
on both $\gamma_1$ and $\gamma_2$.
Especially, for any parabolic rectangle $P$
and any $\gamma,\gamma'\in[0,1)$ satisfying that $\gamma<\gamma'$,
\begin{align*}
\left[\fint_{P^-(\gamma)}\omega^{\kappa+1}\right]^{\frac{1}{\kappa+1}}
\lesssim\fint_{P^+(\gamma)}\omega
\ \text{and}\
\left[\fint_{P^-(\gamma')}\omega^{\kappa+1}\right]^{\frac{1}{\kappa+1}}
\lesssim\fint_{P^-(\gamma)}\omega.
\end{align*}
All conclusions of the present remark also hold true for $\omega^{1-q'}$
with the time axis reversed.
\end{remark}

The following theorem shows that
Theorem~\ref{RHI} holds true also with parabolic rectangles
replaced by parabolic cylinders.

\begin{theorem}
\label{RHIball}
Let all the symbols be the same as in Theorem~\ref{RHI}.
Then both \eqref{rhi} and \eqref{rhii} still hold true
with parabolic rectangles
replaced by any parabolic cylinders.
\end{theorem}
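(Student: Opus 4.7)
We prove \eqref{rhi}; the inequality \eqref{rhii} follows by the analogous argument applied to $\omega^{1-q'}\in A^-_{q'}(\gamma)$ via Proposition~\ref{1627}(iii), after reversing the time axis. Fix a parabolic cylinder $R:=R(x,t,l)$.

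The plan is to reduce the assertion to the rectangular reverse H\"older inequality, Theorem~\ref{RHI}, in the strengthened form of Remark~\ref{2118}. By Lemma~\ref{1517}, there exist $\tau\in\{1,\ldots,K\}$ and a dyadic cube $Q:=Q_\alpha^{k,\tau}$ such that $B(x,l)\subset Q$ and $\diam Q\leq Cl$. Combining Lemma~\ref{1516}(iii) with the doubling condition \eqref{upperdoub} gives $\mu(Q)\approx\mu(B(x,l))$, and the construction in Lemma~\ref{1517} ensures that the ratio $l/\delta^k$ is bounded above and below by constants depending only on the geometric data of $(X,\rho,\mu)$. In particular, $\lambda(R^-)\approx\lambda(Q\times(t-l^p,t))$ and $\lambda(R^+)\approx\lambda(Q\times(t,t+l^p))$.

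Applying Remark~\ref{2118} with the parameters $\gamma_1=\gamma_2=(l/\delta^k)^p\in(0,\infty)$ yields a $\kappa\in(0,\infty)$, independent of $R$, for which
\begin{align*}
\left(\fint_{Q\times(t-l^p,t)}\omega^{\kappa+1}\right)^{1/(\kappa+1)}\leq C\fint_{Q\times(t,t+l^p)}\omega.
\end{align*}
Since $R^-\subset Q\times(t-l^p,t)$ with comparable $\lambda$-measure, the left-hand side dominates $(\fint_{R^-}\omega^{\kappa+1})^{1/(\kappa+1)}$ up to a multiplicative constant. It therefore remains to establish the spatial comparison
\begin{align*}
\fint_{Q\times(t,t+l^p)}\omega\lesssim\fint_{R^+}\omega.
\end{align*}

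This last step is the main obstacle: although $R^+\subset Q\times(t,t+l^p)$ and their $\lambda$-measures are comparable, containment gives the opposite inequality, and no spatial doubling of $\omega$ is directly available from the parabolic Muckenhoupt condition. To overcome this, I would cover $Q$ by finitely many balls $\{B(y_j,l)\}_j$ of uniformly bounded multiplicity via Lemma~\ref{winner} and the doubling of $\mu$. For each $j$, the chaining ball property (Lemma~\ref{assum}) supplies a chain of overlapping balls of length controlled by the geometric constants, linking $B(y_j,l)$ to $B(x,l)$. Along this chain, iterated application of the forward-in-time doubling of $\omega$ (Remark~\ref{5.3}(ii), derived from Proposition~\ref{1627}(iv) in its rectangular form) transports $\int_{B(y_j,l)\times(t,t+l^p)}\omega$ to a bounded multiple of $\int_{B(x,l)\times(t+m_jl^p,t+(m_j+1)l^p)}\omega$ for some uniformly bounded integer $m_j$; the residual time shift is finally absorbed at a uniformly bounded cost by Corollary~\ref{1451}. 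Summing over the finite cover and combining with the preceding display yields \eqref{rhi}.
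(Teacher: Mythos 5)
Your reduction to the rectangular reverse H\"older inequality via Lemma~\ref{1517} and Remark~\ref{2118}, and your identification of the remaining difficulty --- passing from the average over $Q\times(t,t+l^p)$ to the average over the spatially smaller set $R^+$ --- are both correct and follow the same route as the paper. The gap is in your final step. With balls of radius $l$, each application of the forward-in-time doubling \eqref{1826} along the chain costs a time shift of at least $(1+\gamma)l^p$, so already after one step the transported set $B(x,l)\times(t+m_jl^p,t+(m_j+1)l^p)$ lies entirely \emph{after} $R^+$, and you are left having to prove $\omega(R^++m_jl^p)\lesssim\omega(R^+)$. This is a \emph{backward}-in-time doubling property, and Corollary~\ref{1451} does not supply it: that corollary only asserts boundedness of the product $\bigl[\fint_{R^-(\gamma)-\theta_1(1+\gamma)l^p}\omega\bigr]\bigl[\fint_{R^+(\gamma)+\theta_2(1+\gamma)l^p}\omega^{1-q'}\bigr]^{q-1}$, i.e.\ it enlarges the time gap in the $A_q^+$ condition; it never compares $\omega(E+a)$ with $\omega(E)$ for $a>0$. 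Indeed, backward-in-time doubling fails outright: $\omega(x,t):=e^t$ on $\mathbb{R}^n\times\mathbb{R}$ belongs to every $A_q^+(\gamma)$, yet $\omega(R^++ml^p)=e^{ml^p}\omega(R^+)$, whose ratio to $\omega(R^+)$ is unbounded as $l\to\infty$.

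The repair is to run your covering-and-chaining argument at a small scale, exactly as in Step 2 of the proof of Theorem~\ref{Idp}: cover $Q$ by balls $B(y_j,\varepsilon l)$ and slice the time interval into pieces of length comparable to $\varepsilon^pl^p$, so that each chain step costs only $(1+\gamma)\varepsilon^pl^p$ in time; Lemma~\ref{assum}(ii) then makes the cumulative shift $N(y_j,x,\varepsilon l)(1+\gamma)\varepsilon^pl^p$ an arbitrarily small fraction of $l^p$ for $\varepsilon$ small. The transported pieces stay inside $R^+$ (after trimming the temporal extent of the source slightly, which the flexibility of $\gamma_1,\gamma_2$ in Remark~\ref{2118} permits), they have bounded overlap, and no backward-in-time step is needed. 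For comparison, the paper bypasses the explicit chain: it enlarges the spatial ball to $B(x,\widetilde{C}l)$ and applies \eqref{1826} once to a single large cylinder with lag $\gamma'=1-2^{-p}$, choosing the test set $S=B(x,l)\times\cdots$ inside its upper half so that the spatial radius collapses from $\widetilde{C}l$ to $l$ in one stroke, and then handles the residual forward time shift by the rescaling argument of Case 2 of the proof of Proposition~\ref{1633}.
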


\begin{proof}
Let $(x,t)\in X\times\mathbb{R}$ and $r\in(0,\infty)$,
and take any parabolic cylinder $R=B(x,r)\times
(t-r^{p},t+r^{p})$. By Lemma~\ref{1517}, we find that
there exists a dyadic cube $Q_\alpha^{k,\tau}$
such that \eqref{1721} holds true,
which, together with Lemma~\ref{1516}(iii), further implies that
$B(x,r)\subset Q_\alpha^{k,\tau} \subset B(x,Cr)$,
$$
r \leq \diam\,\left(Q_\alpha^{k,\tau}\right)\leq 2K_0C_1 \delta^k,
$$
and
$c_1 \delta^k \leq\diam\,(Q_\alpha^{k,\tau})\leq Cr$,
where both $c_1$ and $C_1$ are the same as in Lemma~\ref{1516}
and where both $\delta$ and $C$ the same as in Lemma~\ref{1517}.
From these, Remark~\ref{2118}, and \eqref{upperdoub},
we deduce that there exists a positive constant $\kappa$
such that
\begin{align}\label{1729}
\left(\fint_{R^-}\omega^{\kappa+1}\right)^{\frac{1}{\kappa+1}}
&\leq
\left[\frac{(2K_0C_1)^p\delta^{kp}\mu(Q_\alpha^{k,\tau})}{r^p\mu(B(x,r))}
\fint_{Q_\alpha^{k,\tau}\times
(t-(2K_0C_1)^p\delta^{kp},t)}\omega^{\kappa+1}
\right]^{\frac{1}{\kappa+1}}\\
&\lesssim
\left[\frac{\mu(B(x,Cr))}{\mu(B(x,r))}
\fint_{Q_\alpha^{k,\tau}\times
(t-(2K_0C_1)^p\delta^{kp},t)}\omega^{\kappa+1}\right]^{\frac{1}{\kappa+1}}
\noz\\
&\lesssim \fint_{Q_\alpha^{k,\tau}\times
(t,t+(2K_0C_1)^p\delta^{kp})}\omega\noz\\
&\lesssim\frac{\mu(B(x,Cr))}{\mu(Q_\alpha^{k,\tau})}\fint_{B(x,Cr)\times
(t,t+(2K_0C_1)^p\delta^{kp})}\omega
\noz\\
&\lesssim\frac{\mu(B(x,Cr))}{\mu(B(x,r))}\fint_{B(x,Cr)\times
(t,t+(2K_0C_1)^p\delta^{kp})}\omega
\noz\\
&\lesssim \fint_{B(x,Cr)\times
(t,t+(2K_0C_1Cr/c_1)^p)}\omega
\lesssim\frac{\mu(B(x,\widetilde{C}r))}
{\mu(B(x,Cr))}
\fint_{U^-(\gamma')}\omega
\noz\\
&\lesssim\fint_{U^-(\gamma')}\omega,
\noz
\end{align}
where
$$
U^-(\gamma') := B(x,\widetilde{C}r)\times
\left(t,t+(1-\gamma')\widetilde{C}^p r^{p}\right)
$$
with both
$\widetilde{C}:= \max\{C, 4K_0C_1 C / c_1, 2\} \geq 2$
and $\gamma' := 1-1/2^p$.
By Theorem~\ref{Idp}, we have
$\omega\in A^+_q(\gamma)\subset A^+_q(\gamma')$.
Notice that
\begin{align*}
S := B(x,r)\times
\left(t+(1+\gamma')\widetilde{C}^p r^p ,t+(1+\gamma')
\widetilde{C}^p r^p + r^p\right)
\end{align*}
is contained in
\begin{align*}
U^+(\gamma')
= U^-(\gamma') + (1+\gamma')\widetilde{C}^p r^p
= B(x,\widetilde{C}r)\times
\left(t+(1+\gamma')\widetilde{C}^p r^p ,t+2\widetilde{C}^p r^p\right),
\end{align*}
because $r^p \leq (1-\gamma')\widetilde{C}^p r^p$
by the definition of $\widetilde{C}$.
Thus, we may apply
\eqref{1826}
for both $U^-(\gamma')$ and $S \subset U^+(\gamma')$.
From this and \eqref{upperdoub}, we deduce that
\begin{align*}
\omega(U^-(\gamma')) &\lesssim
\left[\frac{\lambda(U^-(\gamma'))}{\lambda(S)}\right]^q \omega(S)
\lesssim \left[\frac{\mu(B(x,\widetilde{C}r))}{\mu(B(x,r))}\right]^q
\omega(S) \lesssim \omega(S),
\end{align*}
which, combined with \eqref{1729}, implies that
\begin{align*}
\left(\fint_{R^-}\omega^{\kappa+1}\right)^{\frac{1}{\kappa+1}}
\lesssim\fint_{U^-(\gamma')}\omega
\lesssim\frac{\lambda(S)}{\lambda(U^-(\gamma'))}\fint_{S}\omega
\lesssim\fint_{S}\omega.
\end{align*}
By this, an argument similar to that used
in the proof of Case 2) of the proof of Proposition~\ref{1633},
and \eqref{1826},
we further conclude that
\begin{align*}
\left(\fint_{R^-}\omega^{\kappa+1}\right)^{\frac{1}
{\kappa+1}} \lesssim \fint_{R^+}\omega,
\end{align*}
which shows that~\eqref{rhi} holds true for parabolic cylinders.
Similarly, we obtain~\eqref{rhii} for parabolic cylinders,
which completes the proof of Theorem~\ref{RHIball}.
\end{proof}

Using both Theorem~\ref{RHI} and Remark~\ref{2118},
we obtain the following open property
for parabolic Muckenhoupt weight classes.

\begin{theorem}\label{2135}
Let $(X,\rho,\mu)$ be a space of homogeneous type,
$q\in(1,\infty)$, and $\gamma\in[0,1)$.
Assume that $\omega\in A^+_q(\gamma)$.
Then there exists a constant $\varepsilon\in(0,\infty)$
such that $\omega\in A^+_{q-\varepsilon}(\gamma)$.
\end{theorem}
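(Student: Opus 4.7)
My strategy is to adapt the classical elliptic open-property argument --- where one applies the reverse H\"older inequality to the dual weight $\sigma:=\omega^{1-q'}$ and matches exponents --- to the parabolic setting. Let $\kappa>0$ be the exponent supplied by Theorem~\ref{RHI} and set $\varepsilon:=(q-1)\kappa/(\kappa+1)\in(0,q-1)$. This choice is forced by the algebraic identities $(1-q')(\kappa+1)=1-(q-\varepsilon)'$ and $q-\varepsilon-1=(q-1)/(\kappa+1)$, which let me rewrite the $A_{q-\varepsilon}^+(\gamma)$ quantity for a parabolic rectangle $R$ as
\[
\fint_{R^-(\gamma)}\omega\cdot\left[\fint_{R^+(\gamma)}\sigma^{\kappa+1}\right]^{(q-1)/(\kappa+1)}.
\]

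For $\gamma\in(0,1)$, I will pick an auxiliary time lag $\gamma_0\in(0,\gamma)$. By Theorem~\ref{Idp} one has $\omega\in A_q^+(\gamma_0)$, and the time-reversed version of the second reverse H\"older inequality in Remark~\ref{2118} applied to $\sigma$, with inner lag $\gamma_0$ and outer lag $\gamma$, yields
\[
\left[\fint_{R^+(\gamma)}\sigma^{\kappa+1}\right]^{1/(\kappa+1)}\lesssim\fint_{R^+(\gamma_0)}\sigma.
\]
Raising this to the $(q-1)$ power and using the inclusion $R^-(\gamma)\subset R^-(\gamma_0)$ together with the comparison $\fint_{R^-(\gamma)}\omega\leq[(1-\gamma_0)/(1-\gamma)]\fint_{R^-(\gamma_0)}\omega$ reduces the whole estimate to the $A_q^+(\gamma_0)$ condition, which is finite by assumption. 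Uniformity in $R$ then proves $\omega\in A_{q-\varepsilon}^+(\gamma)$ for parabolic rectangles, and Remark~\ref{5.3}(iv) extends the conclusion to parabolic cylinders.

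The boundary case $\gamma=0$ will be the main obstacle: the refined inequality in Remark~\ref{2118} requires strict inequality $\gamma_0<\gamma$ among lags in $[0,1)$, so no nontrivial inner lag is available, and the direct application of \eqref{rhii} gives only a bound involving $\fint_{R^-}\sigma$, whose coupling with $\fint_{R^-}\omega$ is the ``elliptic'' $A_q$ quantity on the past half --- which need not be finite even when $\omega\in A_q^+(0)$. My plan for this case is to apply \eqref{rhii} on a family of small sub-parabolic-cylinders $\{P_i\}$ of $R$, selected via the Vitali covering Lemma~\ref{winner} so that both $P_i^+$ and $P_i^-$ lie inside $R^+$ (which forces edge length at most $l/2^{1/p}$); summation of the resulting local estimates, combined with the forward-in-time doubling of $\omega$ from Proposition~\ref{1627}(iv), upgrades \eqref{rhii} to a reverse H\"older inequality for $\sigma$ on the single set $R^+$, after which the classical closing argument applies.
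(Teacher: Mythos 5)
Your choice of $\varepsilon=\tfrac{\kappa}{1+\kappa}(q-1)$ and the exponent bookkeeping are exactly what the paper does, and your argument for $\gamma\in(0,1)$ is correct: the time-reversed dual form of Remark~\ref{2118} with inner lag $\gamma_0\in(0,\gamma)$, combined with $R^-(\gamma)\subset R^-(\gamma_0)$ and the finiteness of $[\omega]_{A_q^+(\gamma_0)}$ from Theorem~\ref{Idp}, closes the estimate. The only structural difference from the paper is how the time mismatch between the reverse H\"older inequality for $\sigma=\omega^{1-q'}$ and the $A_q^+$ condition is absorbed: you shrink the lag inward (which forces $\gamma>0$), whereas the paper pushes outward, bounding $\bigl[\fint_{P^-(\gamma)}\sigma\bigr]^{q-1}\lesssim\bigl[\fint_{P^-(\gamma)-(1+\gamma)\delta^{kp}}\omega\bigr]^{-1}$ via the $A_q^+$ condition on a backward-translated rectangle and then invoking the shifted characterization of Remark~\ref{5.3}(iii). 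Since that characterization only requires $\theta_1+\theta_2>-2\gamma/(1+\gamma)$ and the shift there is $\theta_1=1$, $\theta_2=0$, the paper's route works verbatim at $\gamma=0$.

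This is where your proposal has a genuine gap: the case $\gamma=0$ is left as a plan, and the plan as stated does not close. First, the obstacle you identify is avoidable without any covering argument — the first displayed inequality of Remark~\ref{2118} holds for \emph{any} $\gamma_1,\gamma_2\in(0,\infty)$, so its dual, time-reversed form gives $\bigl[\fint_{P^+}\sigma^{\kappa+1}\bigr]^{1/(\kappa+1)}\lesssim\fint_{P^+-\gamma_2\delta^{kp}}\sigma$ with, say, $\gamma_2=2$, landing on a set strictly in the past of $P^-$; one then applies the $A_q^+(0)$ condition to the translated rectangle and concludes by Remark~\ref{5.3}(iii) exactly as above. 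Second, the route you do propose — covering $R^+$ by small sub-rectangles $P_i$ with $P_i^\pm\subset R^+$ and summing \eqref{rhii} — is problematic: the sub-rectangles whose upper halves sit at the bottom of $R^+$ must have lower halves protruding below $R^+$, so an infinite iteration into the past is unavoidable (this is essentially the telescoping in the proof of Theorem~\ref{RHI} itself), and the summation step fails as written because Jensen's inequality gives $\bigl[\fint_{P_i^-}\sigma\bigr]^{\kappa+1}\leq\fint_{P_i^-}\sigma^{\kappa+1}$ in the unhelpful direction, so $\sum_i\lambda(P_i^+)\bigl[\fint_{P_i^-}\sigma\bigr]^{\kappa+1}$ is not controlled by $\int_{R^+}\sigma^{\kappa+1}$ or by $\lambda(R^+)\bigl[\fint_{R^+}\sigma\bigr]^{\kappa+1}$ without a further absorption argument. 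You should replace this plan by the translated-rectangle argument, which treats all $\gamma\in[0,1)$ uniformly.
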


\begin{proof}
From both $\omega\in A^+_q(\gamma)$ and Remark~\ref{2118},
it follows that there exist two positive constants $\kappa$
and $C$ such that, for any parabolic rectangle $P$,
\begin{align}\label{954}
\left[\fint_{P^+(\gamma)}
\omega^{\frac{1+\kappa}{1-q}}\right]^{\frac{q-1}{1+\kappa}}
\lesssim\left[\fint_{P^-(\gamma)}
\omega^{\frac{1}{1-q}}\right]^{q-1}.
\end{align}
Let $\varepsilon:=\frac{\kappa}{1+\kappa}(q-1)\in(0,q-1)$.
By this, \eqref{954}, and the definition of $A^+_q(\gamma)$,
we find that, for any parabolic rectangle $P$
with edge length $\delta^k$,
\begin{align*}
\left[\fint_{P^+(\gamma)}
\omega^{\frac{1}{1-(q-\varepsilon)}}\right]^{q-\varepsilon-1}
&=\left[\fint_{P^+(\gamma)}
\omega^{\frac{1+\kappa}{1-q}}\right]^{\frac{q-1}{1+\kappa}}
\lesssim\left[\fint_{P^-(\gamma)}
\omega^{\frac{1}{1-q}}\right]^{q-1}
\\
&\lesssim\left[\fint_{P^-(\gamma)-(1+\gamma)
\delta^{kp}}\omega\right]^{-1},
\end{align*}
which, together with Remark~\ref{5.3}(iii), further implies that
$\omega\in A^+_{q-\varepsilon}(\gamma)$.
This finishes the proof of Theorem~\ref{2135}.
\end{proof}

By some arguments similar to those
used in \cite[pp.\,295-296, Subsection 4.2]{ks2016'},
we have the following conclusion; we omit the details here.

\begin{theorem}\label{rdp}
Let $(X,\rho,\mu)$ be a space of homogeneous type
and $q\in(1,\infty)$.
Assume that $\omega$ is a weight.
Then the following statements are equivalent.
\begin{enumerate}
\item[\textup{(i)}]
There exist two positive constants $\kappa$
and $C$ such that, for any parabolic rectangle $P$,
\eqref{rhi} holds true.

\item[\textup{(ii)}]
There exist two positive constants $\varepsilon$
and $\widetilde{C}$ such that, for any parabolic rectangle $P$
and any measurable set $E\subset P^-$,
\begin{align*}
\omega(E)\leq\widetilde{C}\left[\frac{\lambda(E)}
{\lambda(P^-)}\right]^\varepsilon
\omega(P^+).
\end{align*}
\end{enumerate}
\end{theorem}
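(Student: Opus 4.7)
The plan is to prove the two implications in opposite directions, where one is a short H\"older argument and the other requires a distribution function estimate.

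For the easier direction (i)$\Rightarrow$(ii), I would fix a parabolic rectangle $P$ and any measurable $E\subset P^-$, and apply the H\"older inequality with exponents $\kappa+1$ and $(\kappa+1)/\kappa$ to get
\begin{align*}
\omega(E)=\int_E\omega
\leq\left(\int_E\omega^{\kappa+1}\right)^{1/(\kappa+1)}\lambda(E)^{\kappa/(\kappa+1)}
\leq\left(\int_{P^-}\omega^{\kappa+1}\right)^{1/(\kappa+1)}\lambda(E)^{\kappa/(\kappa+1)}.
\end{align*}
Then, using (i) together with $\lambda(P^-)=\lambda(P^+)$, the first factor is bounded by $C\lambda(P^-)^{-\kappa/(\kappa+1)}\omega(P^+)$, which rearranges to exactly (ii) with $\varepsilon:=\kappa/(\kappa+1)$ and $\widetilde{C}:=C$.

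For the harder direction (ii)$\Rightarrow$(i), first apply (ii) with $E:=P^-$ to obtain the backward-in-time doubling property $\omega(P^-)\leq\widetilde{C}\,\omega(P^+)$. Then I would use the layer-cake formula for the $\omega$-measure,
\begin{align*}
\int_{P^-}\omega^{\kappa+1}=\int_{P^-}\omega^{\kappa}\,d\omega
=\kappa\int_0^\infty s^{\kappa-1}\omega(E_s)\,ds,
\qquad E_s:=\{(x,t)\in P^-:\ \omega(x,t)>s\},
\end{align*}
and estimate $\omega(E_s)$ by two different bounds. The Chebyshev inequality $s\lambda(E_s)\leq\omega(E_s)$ combined with (ii) gives
\begin{align*}
\omega(E_s)\leq\widetilde{C}\left[\frac{\omega(E_s)}{s\lambda(P^-)}\right]^{\varepsilon}\omega(P^+),
\end{align*}
which, after solving for $\omega(E_s)^{1-\varepsilon}$, yields the decay estimate
\begin{align*}
\omega(E_s)\leq\widetilde{C}^{1/(1-\varepsilon)}\left[s\lambda(P^-)\right]^{-\varepsilon/(1-\varepsilon)}\omega(P^+)^{1/(1-\varepsilon)};
\end{align*}
while for small $s$ one just uses the crude bound $\omega(E_s)\leq\widetilde{C}\,\omega(P^+)$.

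The key step is to split the $s$-integral at the threshold $s_0:=\widetilde{C}\,\omega_{P^+}$ where the two bounds coincide, use the crude bound on $(0,s_0)$ and the decay bound on $(s_0,\infty)$, and check that the resulting powers of $\omega_{P^+}$ and $\lambda(P^-)$ combine in both pieces to give $\lambda(P^-)\,\omega_{P^+}^{\kappa+1}$. The obstacle here is convergence of the second integral, which requires $\kappa-1-\varepsilon/(1-\varepsilon)<-1$, i.e.\ $\kappa<\varepsilon/(1-\varepsilon)$; this forces us to \emph{choose} $\kappa$ sufficiently small in terms of the given $\varepsilon$, rather than keeping the $\kappa$ one might naively want. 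With this choice, dividing by $\lambda(P^-)$ and taking $(\kappa+1)$-th roots gives (i), and the implicit constant depends only on $\varepsilon$, $\widetilde{C}$, and $\kappa$.
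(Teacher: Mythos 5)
Your proof is correct and follows essentially the same route as the argument the paper delegates to \cite[Subsection 4.2]{ks2016'}: H\"older's inequality together with $\lambda(P^-)=\lambda(P^+)$ for (i)$\Rightarrow$(ii), and the Cavalieri/Chebyshev distribution-function estimate with $\kappa$ chosen small in terms of $\varepsilon$ for (ii)$\Rightarrow$(i). The only point worth adding is that solving for $\omega(E_s)^{1-\varepsilon}$ requires $\varepsilon<1$, which may be assumed without loss of generality because $\lambda(E)\leq\lambda(P^-)$ allows one to decrease the exponent in (ii).
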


Using Theorems~\ref{5.7} and~\ref{2135},
Proposition~\ref{1627}(ii), and
the Marcinkiewicz interpolation theorem,
we immediately obtain the following conclusion;
we omit the details here.

\begin{theorem}\label{opeboun}
Let $(X,\rho,\mu)$ be a space of homogeneous type,
$q\in(1,\infty)$, $\gamma_1,\gamma_2,\gamma_3\in(0,1)$,
and $\omega$ be a weight.
Then the following statements are equivalent.
\begin{enumerate}
\item[\textup{(i)}]
$\omega\in A^+_q(\gamma_1)$.
\item[\textup{(ii)}]
$\mathcal{M}^{\gamma_2+}$
is bounded from $L^{q}(X\times\mathbb{R},\omega)$ to
$L^{q,\infty}(X\times\mathbb{R},\omega)$.
\item[\textup{(iii)}]
$\mathcal{M}^{\gamma_3+}$
is bounded from $L^{q}(X\times\mathbb{R},\omega)$ to
$L^{q}(X\times\mathbb{R},\omega)$.
\end{enumerate}
\end{theorem}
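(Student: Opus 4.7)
The plan is to establish a cyclic chain (iii) $\Rightarrow$ (ii) $\Rightarrow$ (i) $\Rightarrow$ (iii) using the tools already developed, with the only substantive step being the last one.

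For (iii) $\Rightarrow$ (ii), I would simply invoke the Chebyshev inequality in the weighted measure space $(X\times\mathbb{R},\omega\,d\mu\,dt)$: strong $(q,q)$ boundedness automatically implies weak $(q,q)$ boundedness. The insensitivity of weak-type boundedness to the choice of time lag is already contained in Theorem~\ref{5.7} (equivalence of (iii) and (iv)), so no separate argument is needed to pass from the lag $\gamma_3$ to the lag $\gamma_2$.

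For (ii) $\Rightarrow$ (i), I would apply Theorem~\ref{5.7} in reverse. The weak-type boundedness of $\mathcal{M}^{\gamma_2+}$ is precisely condition (iii) of that theorem for the lag $\gamma_2$, and by the equivalences established there it yields $\omega\in A^+_q(\gamma)$ for every $\gamma\in(0,1)$, in particular for $\gamma=\gamma_1$.

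The main step, and the one requiring real content, is (i) $\Rightarrow$ (iii). I would interpolate between two weak-type bounds that straddle the index $q$. The self-improving Theorem~\ref{2135} promotes the hypothesis $\omega\in A^+_q(\gamma_1)$ to $\omega\in A^+_{q-\varepsilon}(\gamma_1)$ for some $\varepsilon\in(0,q-1)$, while the nestedness in Proposition~\ref{1627}(ii) gives $\omega\in A^+_{q+1}(\gamma_1)$. Feeding both indices into Theorem~\ref{5.7} yields simultaneously the bounds
\begin{align*}
\mathcal{M}^{\gamma_3+}:\ L^{r}(X\times\mathbb{R},\omega)\to L^{r,\infty}(X\times\mathbb{R},\omega)
\end{align*}
for $r=q-\varepsilon$ and for $r=q+1$. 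Since $\mathcal{M}^{\gamma_3+}$ is sublinear and $q-\varepsilon<q<q+1$, the Marcinkiewicz interpolation theorem on the weighted measure space $(X\times\mathbb{R},\omega\,d\mu\,dt)$ upgrades these two weak-type estimates to the strong $(q,q)$ estimate asserted in (iii). The only mild point to verify is that Marcinkiewicz interpolation is applicable in this non-Euclidean doubling setting, which reduces to checking that $\omega\,d\mu\,dt$ is $\sigma$-finite; this follows from the separability of $X$ given by Lemma~\ref{1949}, the doubling of $\mu$, and the local integrability of $\omega$. No new geometric input beyond what is already in place is needed.
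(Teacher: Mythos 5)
Your proposal is correct and follows essentially the same route as the paper, which states that the theorem follows from Theorems~\ref{5.7} and~\ref{2135}, Proposition~\ref{1627}(ii), and the Marcinkiewicz interpolation theorem, with details omitted. Your write-up simply fills in that sketch: Chebyshev plus Theorem~\ref{5.7} for the easy implications, and self-improvement to $A^+_{q-\varepsilon}$ together with nestedness to produce two weak-type exponents straddling $q$ for the interpolation step.
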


\begin{remark}\label{2136}
By Proposition~\ref{Mfequiv},
we find that Theorem~\ref{opeboun} also holds true for
the parabolic maximal function $M^{\gamma+}$
related to parabolic cylinders
but with both $\gamma_2$ and $\gamma_3$ satisfying \eqref{1048}.
\end{remark}

\section{Parabolic $A^+_1(\gamma)$ Class
and a Factorization
of $A^+_q(\gamma)$}
\label{section7}

In this section, we introduce the parabolic Muckenhoupt weight classes
$A_1^+(\gamma)$ and $A_1^-(\gamma)$, and give a factorization
of $A^+_q(\gamma)$ with $q\in(1,\infty)$.
Moreover, we establish a characterization of
$A_1^+(\gamma)$ in terms of the small power of maximal functions
multiplied by bounded functions. To this end,
we first introduce the following concept of
parabolic $A^\pm_1(\gamma)$ classes.

\begin{definition}\label{defA1}
Let $(X,\rho,\mu)$ be a space of homogeneous type
and $\gamma\in[0,1)$.
\begin{enumerate}
\item[\textup{(i)}]
A weight $\omega$ is said to belong
to the \emph{parabolic Muckenhoupt weight class} ${A}^+_1(\gamma)$ if
there exists a positive constant $C$ such that,
for $\lambda$-almost every $(x,t)\in X\times\mathbb{R}$,
\begin{align}\label{16}
\mathcal{M}^{\gamma-}\omega(x,t)\leq C\omega(x,t).
\end{align}
\item[\textup{(ii)}]
A weight $\omega$ is said to belong
to the \emph{parabolic Muckenhoupt weight class} ${A}^-_1(\gamma)$ if
there exists a positive constant $C'$ such that,
for $\lambda$-almost every $(x,t)\in X\times\mathbb{R}$,
\begin{align}\label{17}
\mathcal{M}^{\gamma+}\omega(x,t)\leq C'\omega(x,t).
\end{align}
\end{enumerate}
Moreover,
$[\omega]_{{A}^+_1(\gamma)}$
and $[\omega]_{{A}^-_1(\gamma)}$ are defined to be
the smallest constants, respectively, in \eqref{16} and \eqref{17}.
\end{definition}

\begin{remark}\label{2129}
Let $\gamma,\gamma'\in[0,1)$ with $\gamma\leq\gamma'$.
By Definition~\ref{defA1} and the facts that,
for any $f\in L^1_{\loc}(X\times\mathbb{R})$
and $(x,t)\in X\times\mathbb{R}$,
\begin{align*}
\mathcal{M}^{\gamma'-}f(x,t)\leq\frac{1-\gamma}{1-\gamma'}
\mathcal{M}^{\gamma-}f(x,t)
\ \text{and}\
\mathcal{M}^{\gamma'+}f(x,t)\leq\frac{1-\gamma}{1-\gamma'}
\mathcal{M}^{\gamma+}f(x,t),
\end{align*}
we find that ${A}^+_1(\gamma)\subset{A}^+_1(\gamma')$
and ${A}^-_1(\gamma)\subset{A}^-_1(\gamma')$.
\end{remark}

Now, we consider the relation between $A_1^+(\gamma)$ and $A_q^+(\gamma)$
with $q\in(1,\infty)$.
To this end, we need the following technical lemma.

\begin{lemma}\label{2115}
Let $(X,\rho,\mu)$ be a space of homogeneous type,
$\delta\in(0,1)$ the same as in Lemma~\ref{1517},
$\varepsilon_0:=\delta^{\lfloor\log_{\delta}2^{1/p}\rfloor p}/2$,
$\gamma\in[0,1/{\varepsilon_0})$, and $\omega\in{A}^+_1(\gamma)$.
Then there exists a positive constant $C$ such that,
for any parabolic rectangle $P$,
\begin{align*}
\fint_{P^-(\varepsilon_0\gamma)}\omega
\leq C\essinf_{(y,s)\in P^+(\varepsilon_0\gamma)}\omega(y,s).
\end{align*}
\end{lemma}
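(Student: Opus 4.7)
The plan is to use the defining inequality of $A_1^+(\gamma)$ applied at almost every point $(y,s)\in P^+(\varepsilon_0\gamma)$, and to estimate $\mathcal{M}^{\gamma-}\omega(y,s)$ from below by a suitable multiple of the average $\fint_{P^-(\varepsilon_0\gamma)}\omega$. The heart of the argument is the construction of a single parabolic rectangle $\widetilde{P}$, centered at $(z_\beta^{\widetilde{k},\tau},s)$ and large enough in scale, with the two properties that $(y,s)\in\widetilde{P}$ and $\widetilde{P}^-(\gamma)\supset P^-(\varepsilon_0\gamma)$.

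Concretely, write $P=Q_\alpha^{k,\tau}\times(t-\delta^{kp},t+\delta^{kp})$, set $j:=\lfloor\log_\delta 2^{1/p}\rfloor$ (so that $j<0$ and $\delta^{jp}\ge 2$, whence $\varepsilon_0=\delta^{jp}/2\ge 1$), and choose $\widetilde{k}:=k+j$. By Lemma~\ref{1516}(i), there exists a unique $\beta\in I_{\widetilde{k},\tau}$ with $Q_\alpha^{k,\tau}\subset Q_\beta^{\widetilde{k},\tau}$. Take $\widetilde{P}:=P(\tau,\widetilde{k},\beta,s)$. Since $s\in(t+\varepsilon_0\gamma\delta^{kp},t+\delta^{kp})$, I check the two temporal inclusions: the lower endpoint condition $s-\delta^{\widetilde{k}p}\le t-\delta^{kp}$ follows from $\delta^{\widetilde{k}p}=\delta^{jp}\delta^{kp}\ge 2\delta^{kp}$, and the upper endpoint condition $s-\gamma\delta^{\widetilde{k}p}\ge t-\varepsilon_0\gamma\delta^{kp}$ holds with equality, because $\varepsilon_0\delta^{kp}=\delta^{\widetilde{k}p}/2$ forces $s-\gamma\delta^{\widetilde{k}p}\ge t+\varepsilon_0\gamma\delta^{kp}-\gamma\delta^{\widetilde{k}p}=t-\varepsilon_0\gamma\delta^{kp}$. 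Combined with $Q_\alpha^{k,\tau}\subset Q_\beta^{\widetilde{k},\tau}$, this gives $P^-(\varepsilon_0\gamma)\subset\widetilde{P}^-(\gamma)$. Note also that $(y,s)\in\widetilde{P}$ since $y\in Q_\alpha^{k,\tau}\subset Q_\beta^{\widetilde{k},\tau}$.

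Consequently,
\begin{align*}
\mathcal{M}^{\gamma-}\omega(y,s)
\ge\fint_{\widetilde{P}^-(\gamma)}\omega
\ge\frac{\lambda(P^-(\varepsilon_0\gamma))}{\lambda(\widetilde{P}^-(\gamma))}
\fint_{P^-(\varepsilon_0\gamma)}\omega,
\end{align*}
and the ratio of measures is bounded from below by a positive constant depending only on $C_\mu$, $K_0$, $c_1$, $C_1$, $\delta$, $p$, $\gamma$, and $\varepsilon_0$. Indeed, the temporal factor is $(1-\varepsilon_0\gamma)\delta^{kp}/[(1-\gamma)\delta^{\widetilde{k}p}]\approx 1$ since $\delta^{jp}\in[2,2/\delta^p]$, and the spatial factor $\mu(Q_\alpha^{k,\tau})/\mu(Q_\beta^{\widetilde{k},\tau})$ is bounded below using Lemma~\ref{1516}(iii) together with $z_\alpha^{k,\tau}\in Q_\beta^{\widetilde{k},\tau}\subset B(z_\alpha^{k,\tau},2K_0C_1\delta^{\widetilde{k}})$ and the doubling estimate \eqref{upperdoub} applied between the radii $c_1\delta^k$ and $2K_0C_1\delta^{\widetilde{k}}$, whose quotient $2K_0C_1\delta^j/c_1$ is a fixed constant.

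Finally, applying \eqref{16} at $\lambda$-almost every $(y,s)\in P^+(\varepsilon_0\gamma)$ yields
\begin{align*}
[\omega]_{A_1^+(\gamma)}\,\omega(y,s)\ge\mathcal{M}^{\gamma-}\omega(y,s)
\ge c\fint_{P^-(\varepsilon_0\gamma)}\omega,
\end{align*}
and taking essential infimum over $(y,s)\in P^+(\varepsilon_0\gamma)$ gives the desired inequality with $C:=[\omega]_{A_1^+(\gamma)}/c$. The only subtlety I expect is the temporal bookkeeping that produces the exact equality $\varepsilon_0\delta^{kp}=\delta^{\widetilde{k}p}/2$; this is precisely where the specific choice of $\varepsilon_0$ becomes essential and where the hypothesis $\gamma\in[0,1/\varepsilon_0)$ is needed to keep $\varepsilon_0\gamma<1$ so that $P^\pm(\varepsilon_0\gamma)$ are genuine upper and lower halves.
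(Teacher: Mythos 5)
Your proposal is correct and follows essentially the same route as the paper's proof: pass to the coarser dyadic scale $\widetilde{k}=k+\lfloor\log_\delta 2^{1/p}\rfloor$, center the larger rectangle at time $s$, verify that the choice of $\varepsilon_0$ makes $P^-(\varepsilon_0\gamma)\subset\widetilde{P}^-(\gamma)$, compare measures via doubling, and invoke the $A_1^+(\gamma)$ condition pointwise. The temporal and spatial bookkeeping you carry out matches the paper's \eqref{1534}, \eqref{7.3x}, and the subsequent doubling estimate.
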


\begin{proof}
Assume that $P:=P(\tau,k,\alpha,t)$, with $\tau\in\{1,...,K\}$,
$k\in\mathbb{Z}$, $\alpha\in I_{k,\tau}$,
and $t\in\mathbb{R}$, is any given parabolic rectangle.
Let $$(y,s)\in P^+(\varepsilon_0\gamma)=Q_\alpha^{k,\tau}\times
(t+\varepsilon_0\gamma\delta^{kp},t+\delta^{kp}).$$
Let $k':=k+\lfloor\log_{\delta}2^{1/p}\rfloor<k$.
It is easy to show that $\delta^{k'p}\ge2\delta^{kp}$,
which further implies that
\begin{align}\label{1534}
s-\delta^{k'p}<t+\delta^{kp}-\delta^{k'p}<t-\delta^{kp}
\end{align}
and
\begin{align}\label{7.3x}
s-\gamma\delta^{k'p}
>t+\varepsilon_0\gamma\delta^{kp}-\gamma\delta^{k'p}
=t-\varepsilon_0\gamma\delta^{kp}.
\end{align}
Notice that, by Lemma~\ref{1516}, we know that there exists
a $\beta\in I_{k',\tau}$ such that
$y\in Q_\alpha^{k,\tau}\subset Q_\beta^{k',\tau}$.
This, combined with both \eqref{1534} and \eqref{7.3x}, implies that
\begin{align*}
P^-(\varepsilon_0\gamma)=Q_\alpha^{k,\tau}\times
(t-\delta^{kp},t-\varepsilon_0\gamma\delta^{kp})
\subset Q_\beta^{k',\tau}\times
(s-\delta^{k'p},s-\gamma\delta^{k'p}).
\end{align*}
From this, \eqref{upperdoub}, Lemma~\ref{1516}(iii),
Definition~\ref{M^gamma},
and \eqref{16},
we further deduce that,
for $\lambda$-almost every
$(y,s)\in P^+(\varepsilon_0\gamma)$,
\begin{align*}
\fint_{P^-(\varepsilon_0\gamma)}\omega
&\lesssim\frac{\mu(Q_\beta^{k',\tau})}{\mu(Q_\alpha^{k,\tau})}
\fint_{Q_\beta^{k',\tau}\times
(s-\delta^{k'p},s-\gamma\delta^{k'p})}\omega
\\
&\lesssim\frac{\mu(B(z_\beta^{k',\tau},C_1\delta^{k'}))}
{\mu(B(z_\alpha^{k,\tau},c_1\delta^{k}))}
\fint_{Q_\beta^{k',\tau}\times
(s-\delta^{k'p},s-\gamma\delta^{k'p})}\omega\\
&\lesssim\mathcal{M}^{\gamma-}\omega(y,s)\lesssim\omega(y,s),
\end{align*}
where the implicit positive constants are independent of $y$, $s$,
and $P(\tau,k,\alpha,t)$.
This, together with the arbitrariness of $P$,
then finishes the proof of Lemma~\ref{2115}.
\end{proof}

Using Lemma~\ref{2115}, we obtain the following proposition.

\begin{proposition}\label{73}
Let $(X,\rho,\mu)$ be a space of homogeneous type,
$\varepsilon_0:=\delta^{\lfloor\log_{\delta}2^{1/p}\rfloor p}/2$,
$\gamma\in[0,1/{\varepsilon_0})$,
and $\omega\in{A}^+_1(\gamma)$.
\begin{enumerate}
\item[\textup{(i)}]
If $\gamma=0$, then, for any $q\in(1,\infty)$ and $\gamma'\in[0,1)$,
${A}^+_1(0)\subset A_q^+(\gamma')$.
\item[\textup{(ii)}]
If $\gamma\neq0$, then, for any $q\in(1,\infty)$ and $\gamma'\in(0,1)$,
${A}^+_1(\gamma)\subset A_q^+(\gamma')$.
\end{enumerate}
\end{proposition}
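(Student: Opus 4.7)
The plan is to use Lemma~\ref{2115} to reduce the $A_1^+$ condition to a pointwise comparison between $\fint_{P^-(\varepsilon_0\gamma)}\omega$ and $\essinf_{P^+(\varepsilon_0\gamma)}\omega$, then exploit the fact that for $q\in(1,\infty)$ the exponent $1-q'$ is negative to translate an essential infimum bound on $\omega$ into an average bound on $\omega^{1-q'}$, and finally absorb the time lag by invoking Theorem~\ref{Idp}.

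First, I would fix a parabolic rectangle $P$ and set $\gamma_0:=\varepsilon_0\gamma$. Lemma~\ref{2115} gives a positive constant $C_1$, independent of $P$, such that
\begin{align*}
\fint_{P^-(\gamma_0)}\omega\leq C_1\essinf_{(y,s)\in P^+(\gamma_0)}\omega(y,s).
\end{align*}
Since $q\in(1,\infty)$, we have $1-q'=-1/(q-1)<0$, so at $\lambda$-a.e. $(y,s)\in P^+(\gamma_0)$,
\begin{align*}
\left[\omega(y,s)\right]^{1-q'}\leq\left[\essinf_{P^+(\gamma_0)}\omega\right]^{1-q'}\leq C_1^{q'-1}\left[\fint_{P^-(\gamma_0)}\omega\right]^{1-q'}.
\end{align*}
Averaging over $P^+(\gamma_0)$ and then raising to the power $q-1$, I obtain
\begin{align*}
\left[\fint_{P^+(\gamma_0)}\omega^{1-q'}\right]^{q-1}\leq C_1^{(q'-1)(q-1)}\left[\fint_{P^-(\gamma_0)}\omega\right]^{(1-q')(q-1)}.
\end{align*}
The key identity $(1-q')(q-1)=-1$ (from $q'=q/(q-1)$) then yields
\begin{align*}
\left[\fint_{P^-(\gamma_0)}\omega\right]\left[\fint_{P^+(\gamma_0)}\omega^{1-q'}\right]^{q-1}\leq C_1^{(q'-1)(q-1)},
\end{align*}
and since $P$ was arbitrary, $\omega\in\widetilde{A}^+_q(\gamma_0)$, which by Remark~\ref{5.3}(iv) is the same as $A^+_q(\gamma_0)$.

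To conclude, I handle the two cases. For (i), $\gamma=0$ forces $\gamma_0=0$, so $\omega\in A^+_q(0)$; an application of Theorem~\ref{Idp}(i) gives $A^+_q(0)\subset A^+_q(\gamma')$ for every $\gamma'\in[0,1)$. For (ii), $\gamma\in(0,1/\varepsilon_0)$ gives $\gamma_0\in(0,1)$, so $\omega\in A^+_q(\gamma_0)$; then Theorem~\ref{Idp}(i)--(ii) (equivalently Remark~\ref{1511}(i)) shows $A^+_q(\gamma_0)=A^+_q(\gamma')$ for every $\gamma'\in(0,1)$.

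The computation is essentially routine once Lemma~\ref{2115} is in hand, so the real content of the proposition lies in that lemma; the only minor point to watch is keeping the direction of the inequality correct when raising $\omega$ to the negative power $1-q'$, and confirming that the resulting time lag $\varepsilon_0\gamma$ lies in the range where Theorem~\ref{Idp} applies, which is guaranteed by the hypothesis $\gamma\in[0,1/\varepsilon_0)$.
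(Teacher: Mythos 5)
Your proposal is correct and follows essentially the same route as the paper: both rest on Lemma~\ref{2115} to bound $\fint_{P^-(\varepsilon_0\gamma)}\omega$ by $\essinf_{P^+(\varepsilon_0\gamma)}\omega$, use the negativity of $1-q'$ together with $(1-q')(q-1)=-1$ to control the second factor in the $A_q^+$ product, and then invoke Theorem~\ref{Idp} (with Remark~\ref{1511}(i)) to pass from the time lag $\varepsilon_0\gamma$ to an arbitrary $\gamma'$. The paper compresses the middle computation into a single display, but the content is identical.
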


\begin{proof}
Assume that $P:=P(\tau,k,\alpha,t)$, with $\tau\in\{1,...,K\}$,
$k\in\mathbb{Z}$, $\alpha\in I_{k,\tau}$,
and $t\in\mathbb{R}$, is any given parabolic rectangle,
and $\omega_0:=\essinf_{(y,s)\in P^+(\varepsilon_0\gamma)}\omega(y,s)$.
By Lemma~\ref{2115},
we find that, for any $q\in(1,\infty)$,
\begin{align*}
\left[\fint_{P^-(\varepsilon_0\gamma)}\omega\right]
\left[\fint_{P^+(\varepsilon_0\gamma)}\omega^{1-q'}\right]^{q-1}
\lesssim\omega_0
\left[\fint_{P^+(\varepsilon_0\gamma)}\omega_0^{1-q'}\right]^{q-1}
\approx1,
\end{align*}
which further implies that $\omega\in A_q^+(\varepsilon_0\gamma)$.
From this, Theorem~\ref{Idp}(i), and Remark~\ref{1511}(i), we deduce that,
if $\gamma=0$, then,
for any $\gamma'\in[0,1)$,
${A}^+_1(0)\subset A_q^+(0)\subset A_q^+(\gamma')$
and, if $\gamma\in(0,1)$, then,
for any $\gamma'\in(0,1)$,
${A}^+_1(\gamma)
\subset A_q^+(\varepsilon_0\gamma)=A_q^+(\gamma')$.
This finishes the proof of Proposition~\ref{73}.
\end{proof}

Now, we establish a factorization of $A_q^+(\gamma)$
with both $q\in(1,\infty)$ and $\gamma\in[0,1)$.

\begin{theorem}\label{74}
Let $(X,\rho,\mu)$ be a space of homogeneous type, $q\in(1,\infty)$,
and $\varepsilon_0:=\delta^{\lfloor\log_{\delta}2^{1/p}\rfloor p}/2$.
Then
\begin{enumerate}
\item[\textup{(i)}]
for any $\gamma\in[0,1/\varepsilon_0)$,
$u\in A_1^+(\gamma)$, $v\in A_1^-(\gamma)$, and $\gamma'\in(0,1$),
$uv^{1-q}\in A_q^+(\gamma')$.
Especially, if $\gamma=0$, then $uv^{1-q}\in A_q^+(0)$;
\item[\textup{(ii)}]
for any $\gamma,\gamma'\in(0,1$)
and $\omega\in A_q^+(\gamma')$,
there exist two weights $u\in A_1^+(\gamma)$ and $v\in A_1^-(\gamma)$
such that
\begin{align}\label{1542}
\omega=uv^{1-q}.
\end{align}
Especially, if $\gamma'=0$,
then \eqref{1542} holds true for any $\gamma\in[0,1)$.
\end{enumerate}
\end{theorem}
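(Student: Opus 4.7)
The plan is to prove (i) by a direct pointwise/averaged computation based on Lemma~\ref{2115}, and to prove (ii) by running a Rubio de Francia iteration scheme adapted to the parabolic setting.

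For (i), because of Theorem~\ref{Idp} combined with Remark~\ref{5.3}(ii), it is enough to check the $A_q^+(\varepsilon_0\gamma)$ condition for $\omega := uv^{1-q}$ on an arbitrary parabolic rectangle $P$. Using the algebraic identity $(1-q)(1-q')=1$ one has $\omega^{1-q'} = u^{1-q'}v$. Since $1-q<0$ and $1-q'<0$, the trivial estimates
\[
\fint_{P^-(\varepsilon_0\gamma)}uv^{1-q}\le\left[\fint_{P^-(\varepsilon_0\gamma)}u\right]\left[\essinf_{P^-(\varepsilon_0\gamma)}v\right]^{1-q}
\]
and
\[
\fint_{P^+(\varepsilon_0\gamma)}u^{1-q'}v\le\left[\essinf_{P^+(\varepsilon_0\gamma)}u\right]^{1-q'}\fint_{P^+(\varepsilon_0\gamma)}v
\]
reduce matters to replacing the two averages by essinfs on the opposite halves. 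Lemma~\ref{2115} applied to $u\in A_1^+(\gamma)$ gives $\fint_{P^-(\varepsilon_0\gamma)}u\lesssim\essinf_{P^+(\varepsilon_0\gamma)}u$, and its time-reversed analogue applied to $v\in A_1^-(\gamma)$ gives $\fint_{P^+(\varepsilon_0\gamma)}v\lesssim\essinf_{P^-(\varepsilon_0\gamma)}v$. When the resulting two bounds are multiplied, the $v$-factors cancel identically and the $u$-factors collapse via $(1-q')(q-1)=-1$, yielding $[uv^{1-q}]_{A_q^+(\varepsilon_0\gamma)}\lesssim 1$. The case $\gamma=0$ is the same computation with $\varepsilon_0\gamma=0$, producing membership in $A_q^+(0)$ directly.

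For (ii), given $\omega\in A_q^+(\gamma')$ I would use Theorem~\ref{opeboun}, together with Proposition~\ref{1627}(iii) on the dual side, to obtain finite constants $c_1,c_2$ for which $\mathcal{M}^{\gamma+}:L^q(X\times\mathbb{R},\omega)\to L^q(X\times\mathbb{R},\omega)$ and $\mathcal{M}^{\gamma-}:L^{q'}(X\times\mathbb{R},\omega^{1-q'})\to L^{q'}(X\times\mathbb{R},\omega^{1-q'})$ with the respective norms (note that the chosen $\gamma \in (0,1)$ is admissible here by Theorem~\ref{Idp}). The parabolic Rubio de Francia operators
\[
R^- h := \sum_{k=0}^\infty\frac{(\mathcal{M}^{\gamma-})^k h}{(2c_1)^k},
\qquad
R^+ g := \sum_{k=0}^\infty\frac{(\mathcal{M}^{\gamma+})^k g}{(2c_2)^k}
\]
then satisfy the standard three properties: domination $h\le R^-h$; bounded action on the weighted spaces with factor $2$; and, crucially, $\mathcal{M}^{\gamma-}(R^-h)\le 2c_1\,R^-h$ together with $\mathcal{M}^{\gamma+}(R^+g)\le 2c_2\,R^+g$. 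These last inequalities say precisely that $R^-h\in A_1^+(\gamma)$ and $R^+g\in A_1^-(\gamma)$. A Coifman--Jones style balancing argument, applied to a suitably chosen starting pair that interlaces $R^+$ and $R^-$ through appropriate powers of $\omega$, then produces $u\in A_1^+(\gamma)$ and $v\in A_1^-(\gamma)$ with $\omega = uv^{1-q}$. The special case $\gamma'=0$ causes no trouble since $A_q^+(0)\subset A_q^+(\gamma)$ for any $\gamma\in(0,1)$ by Theorem~\ref{Idp}, and if we additionally want $\gamma = 0$ we use the boundedness of $\mathcal{M}^{0\pm}$ on the corresponding weighted spaces, which follows from Theorem~\ref{opeboun} and Remark~\ref{2136}.

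The main obstacle lies in (ii): the two iterations $R^\pm$ each individually output a one-sided $A_1$ weight, but they must be harnessed so that the product $uv^{1-q}$ reconstructs the prescribed $\omega$ on the nose rather than just up to a multiplicative constant or up to a comparability. Unlike the $A_q^+$ side, where Theorem~\ref{Idp} absorbs the dependence on $\gamma'$, the classes $A_1^\pm(\gamma)$ depend genuinely on $\gamma$ (see Remark~\ref{2129}), so the iteration must be run with the maximal operator whose lag exactly matches the prescribed $\gamma$; Theorem~\ref{opeboun} is precisely what legitimizes this choice. Tracking the exponents in the Coifman--Jones combination so that the resulting $u$ and $v$ are nonnegative weights whose product is exactly $\omega$, and verifying convergence of the geometric series in the relevant weighted Lebesgue spaces, is where the bulk of the technical work will lie.
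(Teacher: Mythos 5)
Your proof of (i) is essentially the paper's: the same two applications of Lemma~\ref{2115} (one for $u$, one time-reversed for $v\in A_1^-(\gamma)$), the same trivial $\essinf$ estimates, the same cancellation of exponents via $(1-q')(q-1)=-1$, and then Theorem~\ref{Idp} to pass from $A_q^+(\varepsilon_0\gamma)$ to $A_q^+(\gamma')$. That part is complete and correct.

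In (ii), however, there is a genuine gap exactly where you flag ``the bulk of the technical work.'' Running two \emph{separate} Rubio de Francia iterations $R^-$ and $R^+$ produces two unrelated functions, one in $A_1^+(\gamma)$ and one in $A_1^-(\gamma)$, and no balancing argument applied afterwards will make their combination equal $\omega$ on the nose: the exact factorization has to be built into the iteration itself. The paper does this by iterating the single sublinear operator
\begin{align*}
Tf:=\left[\omega^{-1/q}\mathcal{M}^{\gamma-}\left(f^{q-1}\omega^{1/q}\right)\right]^{1/(q-1)}
+\omega^{1/q}\mathcal{M}^{\gamma+}\left(f\omega^{-1/q}\right),
\end{align*}
which is bounded on the \emph{unweighted} $L^q(X\times\mathbb{R})$ precisely because $\mathcal{M}^{\gamma+}$ is bounded on $L^q(X\times\mathbb{R},\omega)$ and $\mathcal{M}^{\gamma-}$ on $L^{q'}(X\times\mathbb{R},\omega^{1-q'})$ (Theorem~\ref{opeboun} and Proposition~\ref{1627}(iii), as you note). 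Setting $\phi:=\sum_{i}(2\|T\|)^{-i}T^ig$ for a fixed unit vector $g\in L^q(X\times\mathbb{R})$, one defines $u:=\omega^{1/q}\phi^{q-1}$ and $v:=\omega^{-1/q}\phi$, so that $uv^{1-q}=\omega$ holds identically by construction, and the single almost-fixed-point inequality $T\phi\le 2\|T\|\phi$ delivers \emph{both} conditions $\mathcal{M}^{\gamma-}u\lesssim u$ and $\mathcal{M}^{\gamma+}v\lesssim v$ simultaneously, because both maximal operators sit inside the one operator $T$. Your proposal needs this combined operator (or an equivalent device linking the two iterates); without it, the claim that the outputs of $R^+$ and $R^-$ can be harnessed to reconstruct $\omega$ exactly is an assertion, not a proof. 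A second point you do not address: sublinearity of $T$, which is what lets one pass from termwise bounds to $T\phi\le 2\|T\|\phi$, relies on $1/(q-1)\le 1$, i.e.\ $q\ge2$; the range $q\in(1,2)$ is then handled by dualizing via Proposition~\ref{1627}(iii) and factorizing $\omega^{1-q'}\in A_{q'}^-(\gamma)$ instead.
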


\begin{proof}
We first prove (i).
Assume that
$u\in A_1^+(\gamma)$ and $v\in A_1^-(\gamma)$.
By Lemma~\ref{2115}, we find that,
for any parabolic rectangle $P$,
\begin{align}\label{qq}
\esssup_{(x,t)\in P^+(\varepsilon_0\gamma)}
\left[u(x,t)\right]^{-1}
=\left[\essinf_{(x,t)\in P^+(\varepsilon_0\gamma)}u(x,t)\right]^{-1}
\lesssim
\left[\fint_{P^-(\varepsilon_0\gamma)}u\right]^{-1}
\end{align}
and
\begin{align}\label{ww}
\esssup_{(y,s)\in P^-(\varepsilon_0\gamma)}
\left[v(y,s)\right]^{-1}
=\left[\essinf_{(y,s)\in P^-(\varepsilon_0\gamma)}v(y,s)\right]^{-1}
\lesssim
\left[\fint_{P^+(\varepsilon_0\gamma)}v\right]^{-1}.
\end{align}
Combining \eqref{qq} and \eqref{ww}, we conclude that,
for any $q\in(1,\infty)$ and any parabolic rectangle $P$,
\begin{align*}
&\left[\fint_{P^-(\varepsilon_0\gamma)}uv^{1-q}\right]
\left[\fint_{P^+(\varepsilon_0\gamma)}(uv^{1-q})^{1-q'}\right]^{q-1}\\
&\quad\leq\left[\fint_{P^-(\varepsilon_0\gamma)}u\right]
\left\{\esssup_{(y,s)\in P^-(\varepsilon_0\gamma)}
\left[v(y,s)\right]^{-1}\right\}^{q-1}
\left[\fint_{P^+(\varepsilon_0\gamma)}v\right]^{q-1}\\
&\qquad\times
\esssup_{(x,t)\in P^+(\varepsilon_0\gamma)}\left[u(x,t)\right]^{-1}\\
&\quad\lesssim\left[\fint_{P^-(\varepsilon_0\gamma)}u\right]
\left[\fint_{P^+(\varepsilon_0\gamma)}v\right]^{1-q}
\left[\fint_{P^+(\varepsilon_0\gamma)}v\right]^{q-1}
\left[\fint_{P^-(\varepsilon_0\gamma)}u\right]^{-1}\approx1,
\end{align*}
which further implies that $uv^{1-q}\in A_q^+(\varepsilon_0\gamma)$.
This, combined with Theorem~\ref{Idp}, then finishes the proof of (i).

Next, we prove (ii).
Let $q\in[2,\infty)$,
$\gamma'\in[0,1)$, and $\omega\in A_q^+(\gamma')$.
By Theorem~\ref{Idp},
for any $\gamma\in(0,1)$ when $\gamma'\in(0,1)$,
and for any $\gamma\in[0,1)$ when $\gamma'=0$,
we find that $\omega\in A_q^+(\gamma)$.
Now, we define the sublinear operator $T$ by setting,
for any $f\in L^q(X\times\mathbb{R})$,
\begin{align*}
Tf:=\left[\omega^{-1/q}\mathcal{M}^{\gamma-}
\left(f^{q-1}\omega^{1/q}\right)\right]^{1/(q-1)}+
\omega^{1/q}\mathcal{M}^{\gamma+}\left(f\omega^{-1/q}\right).
\end{align*}
From $\omega\in A_q^+(\gamma)$,
Proposition~\ref{1627}(iii), and Theorem~\ref{opeboun},
it follows that $\mathcal{M}^{\gamma+}$
is bounded from $L^{q}(X\times\mathbb{R},\omega)$ to
$L^{q}(X\times\mathbb{R},\omega)$
and $\mathcal{M}^{\gamma-}$
is bounded from $L^{q'}(X\times\mathbb{R},\omega^{1-q'})$ to
$L^{q'}(X\times\mathbb{R},\omega^{1-q'})$.
This, together with the definition of $T$,
implies that, for any $f\in L^q(X\times\mathbb{R})$,
\begin{align}\label{1730}
\left\|Tf\right\|_{L^q(X\times\mathbb{R})}
&\leq\left\|\left[\omega^{-1/q}\mathcal{M}^{\gamma-}
\left(f^{q-1}\omega^{1/q}\right)\right]^{1/(q-1)}
\right\|_{L^q(X\times\mathbb{R})}\\
&\quad+\left\|\omega^{1/q}\mathcal{M}^{\gamma+}
\left(f\omega^{-1/q}\right)\right\|_{L^q(X\times\mathbb{R})}
\noz\\
&=\left\|\mathcal{M}^{\gamma-}
\left(f^{q-1}\omega^{1/q}\right)\right\|_{L^{q'}
(X\times\mathbb{R},\omega^{1-q'})}^{q'/q}
+\left\|\mathcal{M}^{\gamma+}
\left(f\omega^{-1/q}\right)\right\|_{L^q(X\times\mathbb{R},\omega)}
\noz\\
&\lesssim\left\|f^{q-1}\omega^{1/q}\right\|_{L^{q'}
(X\times\mathbb{R},\omega^{1-q'})}^{q'/q}
+\left\|\left(f\omega^{-1/q}\right)\right\|_{L^q(X\times\mathbb{R},\omega)}\noz\\
&\approx\left\|f\right\|_{L^q(X\times\mathbb{R})},\noz
\end{align}
where the implicit positive constants depend on $[\omega]_{A_q^+(\gamma)}$.
Choose a function $g\in L^q(X\times\mathbb{R})$ satisfying
$\|g\|_{L^q(X\times\mathbb{R})}=1$.
From this and \eqref{1730}, we deduce that
\begin{align*}
&\left\|\sum_{i=1}^\infty\left[2\|T\|_{L^q(X\times\mathbb{R})\to
L^q(X\times\mathbb{R})}\right]^{-i}T^ig\right\|_{L^q(X\times\mathbb{R})}\\
&\quad\leq\sum_{i=1}^\infty\left[2\|T\|_{L^q(X\times\mathbb{R})\to
L^q(X\times\mathbb{R})}\right]^{-i}
\left\|T^ig\right\|_{L^q(X\times\mathbb{R})}\leq1,
\end{align*}
which further implies that the series
\begin{align}\label{1733}
\sum_{i=1}^\infty\left[2\|T\|_{L^q(X\times\mathbb{R})\to
L^q(X\times\mathbb{R})}\right]^{-i}T^ig
\end{align}
converges in $L^q(X\times\mathbb{R})$
and hence $\lambda$-almost everywhere,
where, for any $i\in\mathbb{N}$, $T^i$ denotes the $i$-th
iterate of $T$.
We denote the series of \eqref{1733} pointwisely by $\phi$.
Define $u:=\omega^{1/q}\phi^{q-1}$
and $v:=\omega^{-1/q}\phi$.
By this, it is easy to see that $\omega=uv^{1-q}$.

Next, we prove that $u\in A_1^+(\gamma)$ and $v\in A_1^-(\gamma)$.
Since $q\ge2$, it follows that $T$ is a sublinear operator.
From this, the definition of $\phi$,
and $Tg\ge0$,
we deduce that
\begin{align}\label{1654}
T\phi
&\leq2\|T\|_{L^q(X\times\mathbb{R})\to
L^q(X\times\mathbb{R})}\sum_{i=1}^\infty
\left[2\|T\|_{L^q(X\times\mathbb{R})\to
L^q(X\times\mathbb{R})}\right]^{-(i+1)}T^{i+1}g\\
&=2\|T\|_{L^q(X\times\mathbb{R})\to
L^q(X\times\mathbb{R})}\left[\phi-\frac{Tg}
{2\|T\|_{L^q(X\times\mathbb{R})\to
L^q(X\times\mathbb{R})}}\right]
\noz\\
&\leq2\|T\|_{L^q(X\times\mathbb{R})\to
L^q(X\times\mathbb{R})}\phi\noz
\end{align}
$\lambda$-almost everywhere on $X\times\mathbb{R}$.
Notice that $\phi=(\omega^{-1/q}u)^{1/(q-1)}=\omega^{1/q}v$.
This, combined with both \eqref{1654}
and the definition of $T$, implies that
\begin{align*}
&2\|T\|_{L^q(X\times\mathbb{R})\to
L^q(X\times\mathbb{R})}(\omega^{-1/q}u)^{1/(q-1)}\\
&\quad=2\|T\|_{L^q(X\times\mathbb{R})\to
L^q(X\times\mathbb{R})}\phi
\ge T\phi\\
&\quad=T((\omega^{-1/q}u)^{1/(q-1)})
\ge\left(\omega^{-1/q}\mathcal{M}^{\gamma-}u\right)^{1/(q-1)}
\end{align*}
$\lambda$-almost everywhere on $X\times\mathbb{R}$
and, similarly,
\begin{align*}
2\|T\|_{L^q(X\times\mathbb{R})\to
L^q(X\times\mathbb{R})}\omega^{1/q}v
\ge\omega^{1/q}\mathcal{M}^{\gamma+}v
\end{align*}
$\lambda$-almost everywhere on $X\times\mathbb{R}$,
which further implies that
$$\mathcal{M}^{\gamma-}u\leq\left[2\|T\|_{L^q(X\times\mathbb{R})\to
L^q(X\times\mathbb{R})}\right]^{q-1}u\
\mathrm{and}\ \mathcal{M}^{\gamma+}v\leq2\|T\|_{L^q(X\times\mathbb{R})\to
L^q(X\times\mathbb{R})}v$$
$\lambda$-almost everywhere on $X\times\mathbb{R}$.
By this, we conclude that $u\in A_1^+(\gamma)$ and $v\in A_1^-(\gamma)$,
which completes the proof of (ii) with $q\in[2,\infty)$.

From Proposition~\ref{1627}(iii) and an argument similar to that used in
the proof of (ii) in the case that $q\in[2,\infty)$
with $\omega$ and $q$ replaced, respectively, by
$\omega^{1-q'}$ and $q'$,
we deduce that, for any $q\in(1,2)$, (ii) holds true,
which completes the proof of (ii) and hence Theorem~\ref{74}.
\end{proof}

Next, we establish a characterization of $A_1^+(\gamma)$. Recall that
a measure $\nu$ on $X\times\mathbb{R}$
is Borel regular if open subsets of $X\times\mathbb{R}$ are
$\nu$-measurable and every set $A\subset X\times\mathbb{R}$
is contained in a Borel set $E$ such that
$\nu(A)=\nu(E)$. For any
$f\in L^1_\loc(X\times\mathbb{R})$, the
\emph{Hardy--Littlewood maximal function}
$\CM f$ of $f$ is defined by setting,
for any $(x,t)\in X\times\mathbb{R}$,
\begin{align*}
\CM f(x,t)
:=\sup_{R\ni(x,t)}
\frac{1}{\lambda(R)}\int_{R}\left|f\right|,
\end{align*}
where the supremum is taken over
all the parabolic cylinders $R$ of $X\times\mathbb{R}$ that contain $(x,t)$.

\begin{theorem}\label{76}
Let $(X,\rho,\mu)$ be a space of homogeneous type, $q\in(1,\infty)$,
and $\varepsilon_0:=\delta^{\lfloor\log_{\delta}2^{1/p}\rfloor p}/2$.
Assume that $$\left\{\mathfrak{D}^\tau:=
\left\{Q_\alpha^{k,\tau}:\ k\in\mathbb{Z},\,
\alpha\in I_{k,\tau}\right\}:\ \tau\in\{1,...,K\}\right\}$$
is the adjacent system of dyadic cubes
in Lemma~\ref{1517}.
\begin{enumerate}
\item[\textup{(i)}]
Let $\nu$ be a locally finite nonnegative measure
defined on a $\sigma$-algebra which contains
$$
\left\{B(x,l)\times(t-l^p,t):\
(x,t)\in X\times\mathbb{R},\,l\in(0,\infty)\right\}
$$
such that, for $\lambda$-almost every $(x,t)\in X\times\mathbb{R}$,
$\mathcal{M}^-\nu(x,t)<\infty$.
Then, for any $\varepsilon\in[0,1)$ and $\gamma\in[0,1)$,
$$\omega:=(\mathcal{M}^-\nu)^\varepsilon\in A_1^+(\gamma).$$
\item[\textup{(ii)}]
Let $\gamma\in[0,1/\varepsilon_0)$
and $\omega\in A_1^+(\gamma)$.
Then there exists a Borel regular measure $\nu$ in (i),
$\varepsilon\in[0,1)$,
and a function $\phi$ with $\phi,\phi^{-1}\in L^\infty(X\times\mathbb{R})$
such that, for any $\gamma'\in({\gamma},1)$,
$$
\omega=\phi\times\left(\mathcal{M}^{\gamma'-}\nu\right)^\varepsilon.
$$
\end{enumerate}
\end{theorem}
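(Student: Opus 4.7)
The plan is to interpret this statement as a parabolic analog of the classical Coifman--Rochberg factorization of $A_1$ weights as small fractional powers of Hardy--Littlewood maximal functions, adapted here to accommodate the time-lag asymmetry of $\mathcal{M}^{\gamma-}$ and the dyadic-cube structure on $X$.

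For part (i), I would fix a parabolic rectangle $P=P(\tau,k,\alpha,t_1)$ together with a point $(x_\ast,t_\ast)\in P$ at which $(\mathcal{M}^-\nu)^\varepsilon$ admits its Lebesgue-point property, and aim at the pointwise inequality
$$
\fint_{P^-(\gamma)}(\mathcal{M}^-\nu)^\varepsilon\lesssim(\mathcal{M}^-\nu)^\varepsilon(x_\ast,t_\ast).
$$
The strategy is the standard Coifman--Rochberg split $\nu=\nu_1+\nu_2$, where $\nu_1$ is the restriction of $\nu$ to an enlargement $\widetilde{P}$ of $P$ chosen so that, on the one hand, $\widetilde{P}\subset U^-$ for a single parabolic rectangle $U\ni(x_\ast,t_\ast)$ with $\lambda(U^-)\approx\lambda(\widetilde{P})$, and on the other hand, every parabolic rectangle $V\ni(y,s)$ with $(y,s)\in P^-(\gamma)$ whose $V^-$ fails to lie inside $\widetilde{P}$ admits a competitor $U'\ni(x_\ast,t_\ast)$ giving $\fint_{V^-}\nu\lesssim\fint_{(U')^-}\nu\leq\mathcal{M}^-\nu(x_\ast,t_\ast)$. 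These two properties then produce the tail bound $\mathcal{M}^-\nu_2(y,s)\lesssim\mathcal{M}^-\nu(x_\ast,t_\ast)$ for $\lambda$-a.e.\ $(y,s)\in P^-(\gamma)$, while the local piece is handled by a Kolmogorov-type inequality based on the weak-type $(1,1)$ boundedness of $\mathcal{M}^-$ on the unweighted space (itself obtained by repeating the covering argument in the proof of Theorem~\ref{weakineq}), which yields
$$
\fint_{P^-(\gamma)}(\mathcal{M}^-\nu_1)^\varepsilon\lesssim\frac{1}{1-\varepsilon}\left[\frac{\nu_1(\widetilde{P})}{\lambda(P^-(\gamma))}\right]^\varepsilon\lesssim\left(\mathcal{M}^-\nu(x_\ast,t_\ast)\right)^\varepsilon,
$$
where the last step uses $\widetilde{P}\subset U^-$ to realize $\nu_1(\widetilde{P})/\lambda(\widetilde{P})$ as a mean of $\nu$ over $U^-$.

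For part (ii), I would begin with $\omega\in A_1^+(\gamma)$ and invoke Proposition~\ref{73} to conclude that $\omega\in A_q^+(\gamma'')$ for every $q\in(1,\infty)$ and every $\gamma''\in(0,1)$. Remark~\ref{2118} then supplies a constant $\kappa>0$ such that, for any parabolic rectangle $P$ and any $\gamma''<\gamma'$ in $[0,1)$,
$$
\left(\fint_{P^-(\gamma')}\omega^{\kappa+1}\right)^{1/(\kappa+1)}\lesssim\fint_{P^-(\gamma'')}\omega.
$$
I then set $\nu:=\omega^{\kappa+1}\,d\lambda$ and $\varepsilon:=1/(\kappa+1)$, and aim to verify the two-sided comparison $c\,\omega(x,t)\leq(\mathcal{M}^{\gamma'-}\nu)^\varepsilon(x,t)\leq C\omega(x,t)$ for $\lambda$-a.e.\ $(x,t)$. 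The upper bound is obtained by applying the displayed reverse H\"older inequality with any $\gamma''\in(\gamma,\gamma')$ and then invoking the defining $A_1^+(\gamma)$ property $\fint_{P^-(\gamma)}\omega\leq C\omega(x,t)$ (valid for $\lambda$-a.e.\ $(x,t)\in P$), together with the trivial monotonicity $\fint_{P^-(\gamma'')}\omega\lesssim\fint_{P^-(\gamma)}\omega$. The lower bound reduces to Lebesgue differentiation: for $\lambda$-a.e.\ $(x,t)$ I select a shrinking sequence of parabolic rectangles $P$ whose spatial projections are dyadic cubes $Q_\alpha^{k,\tau}\ni x$ collapsing to $x$ and whose past parts $P^-(\gamma')$ are time-centered at $t$, so that $(x,t)\in P\cap P^-(\gamma')$ and $\fint_{P^-(\gamma')}\omega^{\kappa+1}\to\omega(x,t)^{\kappa+1}$ as $k\to\infty$. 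Setting $\phi:=\omega/(\mathcal{M}^{\gamma'-}\nu)^\varepsilon$ then produces the required factorization with $\phi,\phi^{-1}\in L^\infty(X\times\mathbb{R})$. The Borel regularity and local finiteness of $\nu$ follow from Corollary~\ref{2014} together with the local integrability of $\omega^{\kappa+1}$ (itself a consequence of Theorem~\ref{RHI}), while the finiteness $\mathcal{M}^-\nu<\infty$ $\lambda$-a.e.\ is deduced by combining Theorem~\ref{RHI} with the forward-in-time doubling in Proposition~\ref{1627}(iv).

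The main obstacle I anticipate is the tail estimate in part (i): in the elliptic, ball-based Coifman--Rochberg argument one simply exploits concentric doubling of balls, whereas here the parabolic rectangles are strongly asymmetric in time and the dyadic cubes $Q_\alpha^{k,\tau}$ do not nest concentrically, so the enlargement $\widetilde{P}$ must be assembled by a controlled finite iteration. Concretely, I would construct $\widetilde{P}$ as a union of a bounded number of temporal translates of $P$ running backward from $(x_\ast,t_\ast)$, using Proposition~\ref{1633} to keep the mass ratios under control, together with a spatial enlargement furnished by the chaining ball property (Lemma~\ref{assum}) so as to absorb any rectangle $V\ni(y,s)$ whose $V^-$ escapes $P$ laterally. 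Balancing these two enlargements so that $\widetilde{P}$ can still be embedded in $U^-$ for a single $U\ni(x_\ast,t_\ast)$ of comparable measure is the delicate combinatorial step.
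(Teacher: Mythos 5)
Your overall architecture matches the paper's: for (i) the Coifman--Rochberg split $\nu=\nu_1+\nu_2$ with a Kolmogorov inequality (via the unweighted weak-type $(1,1)$ bound for the backward maximal operator) for the local piece and a pointwise tail bound for $\nu_2$; for (ii) the chain Proposition~\ref{73} $\Rightarrow$ Remark~\ref{2118}, the choice $\nu:=\omega^{1+\kappa}$, $\varepsilon:=1/(1+\kappa)$, the upper bound from reverse H\"older plus the $A_1^+(\gamma)$ condition, and the lower bound from Lebesgue differentiation. Part (ii) of your proposal is essentially identical to the paper's proof, and your remark about arranging $(x,t)\in P^-(\gamma')$ along the shrinking sequence is if anything more careful than the paper's wording.

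The one place you diverge is the tail estimate in (i), and there your plan is both harder than necessary and partly unsound. The paper runs the whole argument for the \emph{cylinder-based} operator $M^-$ rather than the rectangle-based $\mathcal{M}^-$: it takes $\widetilde{R}^-:=R^-(x,t,2K_0r)$, observes that for $(y,s)\in R^-$ and $l\le r$ one has $R^-(y,s,l)\subset\widetilde{R}^-$ (so the $\nu_2$-average vanishes), while for $l>r$ the quasi-triangle inequality gives $R^-(y,s,l)\subset R^-(x,t,2K_0l)$ and concentric doubling yields $M^-\nu_2(y,s)\lesssim M^-\nu(x,t)$; the conclusion for $\mathcal{M}^-$ then follows from the pointwise equivalence in Proposition~\ref{Mfequiv}, and $A_1^+(0)\subset A_1^+(\gamma)$ is Remark~\ref{2129}. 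Your anticipated ``delicate combinatorial step'' --- assembling $\widetilde{P}$ from temporal translates and chaining balls of dyadic rectangles --- is thus a red herring that the cylinder formulation eliminates. Moreover, your proposed tool for controlling the mass ratios in that iteration, Proposition~\ref{1633}, does not apply: it is a statement about weights in $A_q^+(\gamma)$, whereas $\nu$ here is an arbitrary locally finite measure with no Muckenhoupt-type forward-in-time doubling, so you cannot transport the mass of $\nu$ along the time axis that way. If you keep the rectangle formulation you would instead have to absorb the escaping rectangles purely by the geometric inclusion and measure-doubling argument above; switching to cylinders first, as the paper does, is the cleaner route.
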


\begin{proof}
We first prove (i).
If $\varepsilon=0$, it is easy to show that $\omega=1\in A_1^+(\gamma)$
and hence (i) holds true.
Now, we assume that $\varepsilon\in(0,1)$ and
$R:=R(x,t,r)$, with both $(x,t)\in X\times\mathbb{R}$
and $r\in(0,\infty)$, is any given parabolic cylinder.
Let
\begin{align*}
\widetilde{R}^-:=R^-(x,t,2K_0r)
\quad\text{and}\quad
\nu=\nu\mid_{\widetilde{R}^-}+\nu\mid_{{(\widetilde{R}^-)}^\complement}
=:\nu_1+\nu_2.
\end{align*}
On the one hand, since, for any $f\in L^1_{\loc}(X\times\mathbb{R})$,
${M}^-f\lesssim\mathcal{M}f$ pointwisely,
and the Hardy--Littlewood maximal operator $\mathcal{M}$
is bounded from $L^1(X\times\mathbb{R})$ to $L^{1,\infty}(X\times\mathbb{R})$
(see, for instance, \cite[Theorem 3.5]{Cowe77}),
it follows that the sublinear
operator ${M}^-$ is bounded
from $L^1(X\times\mathbb{R})$ to $L^{1,\infty}(X\times\mathbb{R})$.
By this, $\varepsilon\in(0,1)$, and the Kolmogorov inequality
(see, for instance, \cite[Lemma 5.16]{jd}), we find that
\begin{align*}
\left[\int_{R^-}\left({M}^-\nu_1\right)^\varepsilon\right]^\frac{1}{\varepsilon}
\lesssim\left[\lambda(R^-)\right]^{\frac{1}{\varepsilon}-1}
\|\nu_1\|_{L^1(X\times\mathbb{R})}.
\end{align*}
From this, \eqref{upperdoub},
and the definition of ${M}^{-}$,
we deduce that
\begin{align}\label{nu1}
\fint_{R^-}\left({M}^-\nu_1\right)^\varepsilon
\lesssim\left[\lambda(R^-)\right]^{-\varepsilon}
\left[\nu_1\left(\widetilde{R}^-\right)\right]^\varepsilon
\lesssim\left[\lambda\left(\widetilde{R}^-\right)\right]^{-\varepsilon}
\left[\nu_1\left(\widetilde{R}^-\right)\right]^\varepsilon
\lesssim\left({M}^-\nu\right)^\varepsilon.
\end{align}
On the other hand, let $(y,s)\in R^-$ and $l\in(0,\infty)$.
By Definition~\ref{2057}(iii), we find that, for any
$z\in B(y,l)$, $\rho(z,x)\leq K_0[\rho(z,y)+\rho(y,x)]<K_0(l+r)$
and hence
\begin{align}\label{2141}
B(y,l)\subset B(x,K_0(l+r)).
\end{align}
If $l\leq r$, then both $(y,s)\in R^-$ and \eqref{2141} imply that
$$
R^-(y,s,l)=B(y,l)\times(s-l^p,s)\subset
B(x,2K_0r)\times(t-2r^p,t)\subset\widetilde{R}^-.
$$
By this, we further conclude that $R^-(y,s,l)
\cap(\widetilde{R}^-)^\complement=\emptyset$
and hence
\begin{align}\label{2150}
\fint_{R^-(y,s,l)}\nu_2=0.
\end{align}
If $l>r$, from \eqref{2141}, we deduce that
$$R^-(y,s,l)=B(y,l)\times(s-l^p,s)\subset
B(x,2K_0l)\times(t-r^p-l^p,t)\subset R^-(x,t,2K_0l).$$
This, together with \eqref{2150},
Definition~\ref{2057}(iii),
and \eqref{upperdoub}, implies that, for any $(y,s)\in P^-$,
\begin{align*}
{M}^-\nu_2(y,s)
&=\sup_{l\in(0,\infty)}
\fint_{R^-(y,s,l)}\nu_2
=\sup_{l\in(r,\infty)}
\fint_{R^-(y,s,l)}\nu_2\\
&\leq\sup_{l\in(r,\infty)}
\frac{\lambda(R^-(x,t,2K_0l))}{\lambda(R^-(y,s,l))}
\fint_{R^-(x,t,2K_0l)}\nu_2\\
&\leq\sup_{l\in(0,\infty)}
\frac{\mu(B(x,2K_0l))(2K_0l)^p}{\mu(B(y,l))l^p}
\fint_{R^-(x,t,2K_0l)}\nu_2\\
&\leq\sup_{l\in(0,\infty)}
\frac{\mu(B(y,3K_0^2l))(2K_0l)^p}{\mu(B(y,l))l^p}
\fint_{R^-(x,t,2K_0l)}\nu_2
\lesssim{M}^-\nu(x,t),
\end{align*}
which, combined with both \eqref{nu1}
and Proposition~\ref{Mfequiv}, further implies that,
for any $(x,t)\in X\times\mathbb{R}$,
\begin{align*}
\mathcal{M}^-\left(\left[\mathcal{M}^-\nu\right]^\varepsilon\right)(x,t)
&\approx{M}^-\left(\left[{M}^-\nu\right]^\varepsilon\right)(x,t)
\approx\sup_{l\in(0,\infty)}
\fint_{R^-(x,t,l)}
\left({M}^-\nu\right)^\varepsilon\\
&\lesssim\sup_{l\in(0,\infty)}
\left[\fint_{R^-(x,t,l)}
\left({M}^-\nu_1\right)^\varepsilon
+\fint_{R^-(x,t,l)}
\left({M}^-\nu_2\right)^\varepsilon\right]\\
&\lesssim\left[{M}^-\nu(x,t)\right]^\varepsilon
\approx\left[\mathcal{M}^-\nu(x,t)\right]^\varepsilon.
\end{align*}
Thus, for any $\varepsilon\in(0,1)$,
$(\mathcal{M}^-\nu)^\varepsilon\in A_1^+(0)$. By this and
Remark~\ref{2129}, we further conclude that, for any $\gamma\in[0,1)$,
$(\mathcal{M}^-\nu)^\varepsilon\in A_1^+(\gamma)$,
which completes the proof of (i).

Next, we prove (ii).
By both $\omega\in A_1^+(\gamma)$ and Proposition~\ref{73},
we find that, for any $q\in(1,\infty)$
and $\gamma'\in(0,1)$, $\omega\in A_q^+(\gamma')$.
From this, Remark~\ref{2118},
and the definitions of both $\mathcal{M}^{\gamma-}$ and $A_1^+(\gamma)$,
we deduce that there exists a positive
constant $\kappa$ such that, for
$\lambda$-almost every $(x,t)\in X\times\mathbb{R}$,
any parabolic rectangle $P(\tau,k,\alpha,t)\ni(x,t)$,
and any $\gamma'\in({\gamma},1)$,
\begin{align}\label{953}
\left[\fint_{P^-(\gamma')}\omega^{1+\kappa}\right]^{\frac{1}{1+\kappa}}
&\lesssim\fint_{P^-(\gamma)}\omega
\lesssim\mathcal{M}^{\gamma-}\omega(x,t)
\lesssim\omega(x,t).
\end{align}
Define $\nu:=\omega^{1+\kappa}$.
Since $\nu$ is Borel regular,
then, by the Lebesgue differentiation
theorem (see, for instance, \cite[Theorem 1.8]{H2001})
and \eqref{953}, we conclude that,
for $\lambda$-almost every $(x,t)\in X\times\mathbb{R}$,
\begin{align*}
\omega(x,t)
&=\left[\lim_{\substack{k\to\infty\\P(\tau,k,\alpha,t)\ni(x,t)}}
\fint_{P^-(\gamma')}\nu\right]^{\frac{1}{1+\kappa}}
\leq\left(\mathcal{M}^{\gamma'-}\nu\right)^{\frac{1}{1+\kappa}}\\
&=\sup_{P\ni (x,t)}
\left[\fint_{P^-(\gamma')}\omega^{1+\kappa}\right]^{\frac{1}{1+\kappa}}
\lesssim\omega(x,t),
\end{align*}
which further implies that
\begin{align*}
\frac{\omega}{(\mathcal{M}^{\gamma'-}\nu)^{\frac{1}{1+\kappa}}}\approx1
\end{align*}
$\lambda$-almost everywhere.
This finishes the proof of (ii) and hence Theorem~\ref{76}.
\end{proof}

\section{A Characterization of Parabolic
${\mathrm{BMOs}}$\\ on Spaces of Homogeneous Type}
\label{section8}

The main target of this section is to
establish a relation between the parabolic
Muckenhoupt weight class and the parabolic space BMO,
and further establish
a characterization
of the parabolic space $\BMO$ in terms of parabolic maximal functions
in the sense of Coifman and Rochberg.
To this end, we first introduce the following
concept of the parabolic BMO.
Recall that, for any $r\in\mathbb{R}$, $r_+:=\max\{0,\,r\}$.

\begin{definition}\label{2101}
Let $(X,\rho,\mu)$ be a space of homogeneous type.
\begin{enumerate}
\item[\textup{(i)}]
The \emph{parabolic space} $\PBMO^+(X\times\mathbb{R})$
is defined to be the set of all the
functions $u\in L^1_{\loc}(X\times\mathbb{R})$ satisfying that
there exist constants $\gamma\in(0,1)$ and $a_R\in(0,\infty)$,
where $a_R$ may depend on the parabolic cylinder $R$, such that
\begin{align}\label{PBMO+}
\sup_{\{R:=R(x,t,l):\ (x,t)\in X\times\mathbb{R},\,
t\in(0,\infty)\}}\left[\fint_{R^+(\gamma)}(u-a_R)_++
\fint_{R^-(\gamma)}(a_R-u)_+\right]<\infty.
\end{align}
\item[\textup{(ii)}]
The \emph{parabolic space} $\PBMO^-(X\times\mathbb{R})$
is defined to be the set of all the
functions $u\in L^1_{\loc}(X\times\mathbb{R})$ satisfying that
there exist constants $\gamma\in(0,1)$ and $b_R\in(0,\infty)$,
where $b_R$ may depend on the parabolic cylinder $R$, such that
\begin{align}\label{PBMO-}
\sup_{\{R:=R(x,t,l):\ (x,t)\in X\times\mathbb{R},\,
t\in(0,\infty)\}}\left[\fint_{R^-(\gamma)}(u-b_R)_++
\fint_{R^+(\gamma)}(b_R-u)_+\right]<\infty.
\end{align}
\end{enumerate}
\end{definition}

The following proposition directly follows from
Definition~\ref{2101}; we omit the details here.

\begin{proposition}\label{83}
Let $(X,\rho,\mu)$ be a space of homogeneous type.
\begin{enumerate}
\item[\textup{(i)}]
Let $u,v\in\PBMO^+(X\times\mathbb{R})$.
Then, for any $\alpha,\beta\in(0,\infty)$,
$\alpha u+\beta v\in \PBMO^+(X\times\mathbb{R})$.
\item[\textup{(ii)}]
$u\in\PBMO^+(X\times\mathbb{R})$
if and only if $-u\in \PBMO^-(X\times\mathbb{R})$.
\item[\textup{(iii)}]
$L^\infty(X\times\mathbb{R})\subset \PBMO^+(X\times\mathbb{R})$.
\end{enumerate}
\end{proposition}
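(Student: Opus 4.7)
The plan is to verify the three items by directly unpacking Definition~\ref{2101} and choosing the free parameters $\gamma$ and $a_R$ (or $b_R$) judiciously. No deep geometric property of $(X,\rho,\mu)$ beyond the mere existence of parabolic cylinders is used, which is why the authors can safely omit the details.

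For (i), I will start from constants $\gamma_u,\gamma_v\in(0,1)$ and families $\{a_R^u\}$, $\{a_R^v\}$ realizing \eqref{PBMO+} for $u$ and $v$, respectively, and then take $\gamma:=\max\{\gamma_u,\gamma_v\}$ together with $a_R:=\alpha a_R^u+\beta a_R^v$ for every parabolic cylinder $R$. Two elementary observations do the job. First, since $\gamma\geq\gamma_u$, the inclusions $R^\pm(\gamma)\subset R^\pm(\gamma_u)$ hold with volume ratio $\lambda(R^\pm(\gamma_u))/\lambda(R^\pm(\gamma))=(1-\gamma_u)/(1-\gamma)$, so averages over $R^\pm(\gamma)$ are dominated, up to this constant, by averages over $R^\pm(\gamma_u)$. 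Second, the elementary inequality $(\alpha s+\beta t)_+\leq\alpha s_++\beta t_+$ for $\alpha,\beta\in(0,\infty)$ splits the truncated integrand. Applying the analogous reasoning to $v$ and adding, the finiteness of \eqref{PBMO+} for $\alpha u+\beta v$ reduces to that for $u$ and for $v$ separately.

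For (ii), the equivalence is purely formal: setting $b_R:=-a_R$ yields $(-u-b_R)_+=(a_R-u)_+$ and $(b_R-(-u))_+=(u-a_R)_+$, so the two quantities appearing in \eqref{PBMO+} for $u$ coincide term by term with those in \eqref{PBMO-} for $-u$ (after swapping the roles of $R^+(\gamma)$ and $R^-(\gamma)$), and both directions of the equivalence follow at once.

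For (iii), given $u\in L^\infty(X\times\mathbb{R})$, I will fix any $\gamma\in(0,1)$ and set $a_R:=\|u\|_{L^\infty(X\times\mathbb{R})}$ uniformly in $R$. Then $(u-a_R)_+=0$ $\lambda$-almost everywhere and $(a_R-u)_+\leq 2\|u\|_{L^\infty(X\times\mathbb{R})}$, so the supremum in \eqref{PBMO+} is bounded by $2\|u\|_{L^\infty(X\times\mathbb{R})}$. There is no substantive obstacle in any of the three arguments; the only point requiring minor care is the volume comparison in (i), which is immediate from the fact that $R^\pm(\gamma)$ is the product of the same spatial ball as $R^\pm(\gamma_u)$ with a shorter time interval.
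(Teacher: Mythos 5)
Your proof is correct and is precisely the direct, definition-unpacking verification that the paper has in mind when it states that Proposition~\ref{83} ``directly follows from Definition~\ref{2101}'' and omits the details; the choices $\gamma=\max\{\gamma_u,\gamma_v\}$ with $a_R=\alpha a_R^u+\beta a_R^v$, $b_R=-a_R$, and $a_R=\|u\|_{L^\infty(X\times\mathbb{R})}$ are exactly the standard ones. The only caveat is that Definition~\ref{2101} literally requires $a_R,b_R\in(0,\infty)$, so your choice $b_R:=-a_R$ in (ii) (and $a_R=0$ in (iii) when $u\equiv0$) implicitly uses the intended reading $a_R\in\mathbb{R}$, which is surely a typo in the definition rather than a gap in your argument.
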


To prove a characterization of
$\PBMO^+(X\times\mathbb{R})$,
we need the following John--Nirenberg-type lemma,
which is obtained by combining \cite[Lemma 3.4]{a1988}
with a chaining argument
(see \cite[Lemma 3.1]{s2016} for the argument in the Euclidean space case);
we omit the details here.

\begin{lemma}\label{J-N}
Let $(X,\rho,\mu)$ be a space of homogeneous type,
$u\in\PBMO^+(X\times\mathbb{R})$, and $\gamma\in(0,1)$.
Then there exist two constants $A,B\in(0,\infty)$,
depending only on $K_0$, $C_{\mu}$, $\gamma$, and $u$,
such that, for any parabolic cylinder $R:=R(x,t,l)$
and any $\xi\in(0,\infty)$,
\begin{align*}
\lambda\left(R^+(\gamma)\cap
\left\{(x,t):\ \left(u-a_R\right)_+>\xi\right\}\right)
\leq Ae^{-B\xi}\lambda(R)
\end{align*}
and
\begin{align*}
\lambda\left({R}^-(\gamma)\cap
\left\{(x,t):\ \left(a_R-u\right)_+>\xi\right\}\right)
\leq Ae^{-B\xi}\lambda(R).
\end{align*}
\end{lemma}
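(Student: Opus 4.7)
The plan is to adapt the Euclidean-space chaining argument of Saari \cite[Lemma 3.1]{s2016} to the setting of a space of homogeneous type, using Aimar's abstract John--Nirenberg lemma \cite[Lemma 3.4]{a1988} as the starting tool that converts oscillation control on balls into exponential distribution estimates. The main conceptual difficulty is that the $\PBMO^+$ condition only provides one-sided asymmetric control: $(u-a_R)_+$ is small on the upper part $R^+(\gamma)$ and $(a_R-u)_+$ is small on the lower part $R^-(\gamma)$, but no symmetric mean oscillation of $u-a_R$ is directly available. The chaining repairs this by propagating the constant $a_R$ forward in time through a finite sequence of overlapping parabolic cylinders.

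First, I would fix a parabolic cylinder $R:=R(x,t,l)$ and the time lag $\gamma\in(0,1)$ from the hypothesis, and perform a Calder\'on--Zygmund-type stopping argument at level $\xi$ inside $R^+(\gamma)$ (with the time direction reversed for the second inequality, inside $R^-(\gamma)$). This produces a pairwise disjoint family $\{R_j\}$ of sub-cylinders such that the super-level set $\{(u-a_R)_+>\xi\}$ is covered, up to a null set, by the upper parts $R_j^+(\gamma)$, and such that each $R_j$ carries a stopping-time value forcing $a_{R_j}\gtrsim a_R+c\xi$ for a structural constant $c>0$. The forward-in-time doubling property of parabolic Muckenhoupt type weights from Proposition~\ref{1633} and the equivalence of parabolic maximal operators on cylinders and rectangles from Proposition~\ref{Mfequiv} are used to make the stopping compatible with both geometries.

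Second, the crucial step is the chaining that compares $a_{R_j}$ with $a_R$. Since $R_j$ lies inside $R$ but with a possibly much smaller time scale, I would build a finite chain of intermediate parabolic cylinders $R=S_0,S_1,\dots,S_{m_j}=R_j$ of geometrically decreasing side length such that consecutive upper and lower parts $S_i^+(\gamma)$, $S_{i+1}^-(\gamma)$ overlap substantially, using the adjacent dyadic system of Lemmas~\ref{1516}--\ref{1517} together with the monotone geodesic property to control the quasi-triangle distortion constant $K_0$. Applying the $\PBMO^+$ inequality on each consecutive pair and Aimar's lemma on the overlap balls lets me absorb the difference $|a_{S_i}-a_{S_{i+1}}|$ into a universal constant times the $\PBMO^+$ constant of $u$, so $|a_{R_j}-a_R|$ is bounded by $C\log(\lambda(R)/\lambda(R_j))$, which is dominated by $c\xi$ once $\xi$ is large enough.

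Third, iterating the stopping-time procedure $k$ times yields a geometric decay $\lambda(\{(u-a_R)_+>k\xi_0\}\cap R^+(\gamma))\leq \theta^k\lambda(R)$ for fixed $\xi_0$ large and $\theta\in(0,1)$, which translates into the desired bound $\lambda(\cdot)\leq Ae^{-B\xi}\lambda(R)$ after choosing $B=\xi_0^{-1}\log(1/\theta)$ and absorbing the small-$\xi$ regime into $A$. The second inequality of the lemma is obtained by applying the same argument to $-u\in\PBMO^-(X\times\mathbb{R})$ with time reversed, per Proposition~\ref{83}(ii). The main obstacle I anticipate is verifying that the chain length grows only logarithmically in the scale ratio, uniformly across all parabolic cylinders in a space of homogeneous type; this requires combining the chaining ball property of Lemma~\ref{assum} with the forward-in-time doubling estimates so that the chain cost does not swamp the gain in $a_{R_j}-a_R$.
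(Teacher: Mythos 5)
Your high-level strategy --- Aimar's abstract John--Nirenberg lemma \cite[Lemma 3.4]{a1988} combined with a chaining argument in the spirit of \cite[Lemma 3.1]{s2016} --- is precisely the route the paper indicates (it omits the details for exactly this reason). The problem is in how you propose to close the Calder\'on--Zygmund iteration. You assert that the chain from $R$ down to a stopping cylinder $R_j$ has length comparable to $\log(\lambda(R)/\lambda(R_j))$ and that therefore $|a_{R_j}-a_R|\lesssim\log(\lambda(R)/\lambda(R_j))\leq c\xi$ once $\xi$ is large. This step fails: for a fixed level $\xi$ the stopping cylinders can be arbitrarily small relative to $R$, so the scale ratio $\lambda(R)/\lambda(R_j)$ --- and with it the total chain cost, which is the number of steps times the $\PBMO^+$ bound of $u$ --- is in no way controlled by $\xi$. (Each chaining step costs a bounded multiple of the $\PBMO^+$ bound only when consecutive cylinders have comparable measures, so a chain traversing all intermediate scales necessarily accumulates a cost proportional to its length.) There is also a directional slip: the induction needs the \emph{upper} bound $a_{R_j}\leq a_R+C\xi$, so that $(u-a_R)_+>(k+1)\xi_0$ on $R_j$ forces $(u-a_{R_j})_+>\xi_0$; the lower bound $a_{R_j}\gtrsim a_R+c\xi$ you extract from the stopping time is not what the iteration uses.

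The correct mechanism is different. The comparison $a_{R_j}\leq a_R+C\xi+C$ must come from the stopping inequality itself: the average of $(u-a_R)_+$ over the dyadic parent of $R_j$ is at most a constant multiple of $\xi$, hence $u\leq a_R+C\xi$ on a fixed positive fraction of a set meeting $R_j^-(\gamma)$, and inserting this into $\fint_{R_j^-(\gamma)}(a_{R_j}-u)_+\lesssim1$ bounds $a_{R_j}$ from above with no dependence on the scale of $R_j$. The chaining is needed only to bridge the \emph{fixed} temporal gap of length about $2\gamma l^p$ between the upper and the lower parts at a \emph{single} scale (and to pass between the dyadic geometry of Lemmas~\ref{1516}--\ref{1517} and the cylinder geometry); the number of steps then depends on $\gamma$, $p$, and $K_0$, but not on $\lambda(R)/\lambda(R_j)$. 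This is how \cite[Lemma 3.4]{a1988}, which already performs the Calder\'on--Zygmund iteration in the homogeneous-type setting, and the chaining of \cite[Lemma 3.1]{s2016} are meant to be combined; as written, your version of the chaining does not close the induction. A minor additional point: Proposition~\ref{1633} concerns parabolic Muckenhoupt weights and has no role here, since the relation between $\PBMO^+(X\times\mathbb{R})$ and $A_q^+(\gamma)$ is a consequence of this lemma rather than an ingredient of its proof.
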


Using Lemma~\ref{J-N} and Proposition~\ref{83}
and following the proof of \cite[Lemma 7.4]{ks2016},
we obtain the following conclusion;
we omit the details here.

\begin{theorem}\label{85}
Let $(X,\rho,\mu)$ be a space of homogeneous type
and $\gamma\in(0,1)$.
Then, for any $q\in(1,\infty)$,
\begin{align*}
\PBMO^+(X\times\mathbb{R})=\left\{-\xi \log\omega:\
\omega\in A_q^+(\gamma),\ \xi\in(0,\infty)\right\}
\end{align*}
and
\begin{align*}
\PBMO^-(X\times\mathbb{R})=\left\{-\xi \log\omega:\
\omega\in A_q^-(\gamma),\ \xi\in(0,\infty)\right\}.
\end{align*}
\end{theorem}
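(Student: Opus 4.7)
I will prove only the $\PBMO^+(X\times\mathbb{R})$ identity in detail; the $\PBMO^-(X\times\mathbb{R})$ one follows via the time-reversal symmetry $(x,t)\mapsto(x,-t)$, which exchanges $R^+(\gamma)$ with $R^-(\gamma)$, $A_q^+$ with $A_q^-$, and (by Proposition~\ref{83}(ii)) $\PBMO^+$ with $\PBMO^-$. The argument follows the Euclidean scheme of \cite[Lemma~7.4]{ks2016} almost verbatim: the factorization Theorem~\ref{74} handles the easy inclusion, while Lemma~\ref{J-N} supplies the exponential-decay ingredient for the harder one.

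\emph{The inclusion $\supset$.} Given $\omega\in A_q^+(\gamma)$ and $\xi\in(0,\infty)$, apply Theorem~\ref{74}(ii) to obtain $u\in A_1^+(\gamma)$ and $v\in A_1^-(\gamma)$ with $\omega=uv^{1-q}$, so that
\begin{align*}
-\xi\log\omega=(-\xi)\log u+\xi(q-1)\log v.
\end{align*}
By Proposition~\ref{83}(i), it suffices to prove $-\log u\in\PBMO^+(X\times\mathbb R)$ when $u\in A_1^+(\gamma)$ and $\log v\in\PBMO^+(X\times\mathbb R)$ when $v\in A_1^-(\gamma)$. Both are established with the same computation: Lemma~\ref{2115} (lifted from parabolic rectangles to parabolic cylinders via Proposition~\ref{Mfequiv}) yields, for some $\gamma'\in(0,1)$ and every parabolic cylinder $R$, the estimate $\fint_{R^-(\gamma')}u\leq C\essinf_{R^+(\gamma')}u$. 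Setting $b:=\fint_{R^-(\gamma')}u$ and $a_R:=-\log b$, the pointwise lower bound $u\geq b/C$ on $R^+(\gamma')$ gives $(-\log u-a_R)_+\leq\log C$ on $R^+(\gamma')$, while the elementary inequality $\log^+ t\leq t$ produces
\begin{align*}
\fint_{R^-(\gamma')}(a_R+\log u)_+=\fint_{R^-(\gamma')}\log^+\!(u/b)\leq\fint_{R^-(\gamma')}u/b=1.
\end{align*}
Hence $-\log u\in\PBMO^+(X\times\mathbb R)$; the $A_1^-$ case is exactly dual.

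\emph{The inclusion $\subset$.} Given $u\in\PBMO^+(X\times\mathbb R)$ with associated constants $a_R$, set $\omega:=e^{-u/\xi}$ for a parameter $\xi\in(0,\infty)$ to be fixed. Using $(q-1)(q'-1)=1$, the factors $e^{\pm a_R/\xi}$ cancel exactly in the $A_q^+(\gamma)$ product, leaving
\begin{align*}
\left[\fint_{R^-(\gamma)}\omega\right]\left[\fint_{R^+(\gamma)}\omega^{1-q'}\right]^{q-1}=\left[\fint_{R^-(\gamma)}e^{(a_R-u)/\xi}\right]\left[\fint_{R^+(\gamma)}e^{(q'-1)(u-a_R)/\xi}\right]^{q-1}.
\end{align*}
Dominating each integrand by the exponential of its positive part (since $e^x\leq e^{x_+}$), the layer cake formula combined with Lemma~\ref{J-N} then yields
\begin{align*}
\fint_{R^-(\gamma)}e^{(a_R-u)_+/\xi}\leq 1+\frac{1}{\lambda(R^-(\gamma))}\int_0^\infty e^t\lambda\bigl(R^-(\gamma)\cap\{(a_R-u)_+>t\xi\}\bigr)\,dt\ls 1+\int_0^\infty e^{t(1-B\xi)}\,dt,
\end{align*}
which is finite as soon as $B\xi>1$; an identical computation for the other factor requires $B\xi>q'-1$. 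Choosing any $\xi>\max\{1,\,q'-1\}/B$ therefore produces $\omega\in A_q^+(\gamma)$ with $u=-\xi\log\omega$, as required.

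\emph{The main obstacle} is the reconciliation of the various time-lag constraints: Lemma~\ref{2115}, Theorem~\ref{74}, and Lemma~\ref{J-N} each impose slightly different admissible ranges of $\gamma$, and the factorization estimates are stated on parabolic rectangles while the $\PBMO^+$ and $A_q^+$ conditions are most naturally used on parabolic cylinders. This is absorbed by Theorem~\ref{Idp}, Remark~\ref{5.3}(iv), and Proposition~\ref{Mfequiv}, which together show that the parabolic Muckenhoupt classes and the associated maximal estimates are invariant, up to constants, under the re-normalization of $\gamma$ within $(0,1)$ and under the passage between parabolic rectangles and parabolic cylinders.
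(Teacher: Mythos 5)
Your proof is correct, and for the harder inclusion $\PBMO^+(X\times\mathbb{R})\subset\{-\xi\log\omega\}$ it coincides with the argument the paper points to in \cite[Lemma 7.4]{ks2016}: the exact cancellation of the $e^{\pm a_R/\xi}$ factors via $(q-1)(q'-1)=1$, followed by the layer-cake computation fed by the exponential decay of Lemma~\ref{J-N}, with $\xi>\max\{1,\,q'-1\}/B$. Where you genuinely diverge is the reverse inclusion: you route it through the factorization $\omega=uv^{1-q}$ of Theorem~\ref{74}(ii) and the $A_1^{\pm}$ essential-infimum estimates of Lemma~\ref{2115}. The intended argument is more elementary and self-contained: taking $a_R:=(q-1)\log\fint_{R^+(\gamma)}\omega^{1-q'}$ and using $t_+\le\beta e^{t/\beta}$ one gets $\fint_{R^+(\gamma)}(-\log\omega-a_R)_+\le (q-1)e^{-a_R/(q-1)}\fint_{R^+(\gamma)}\omega^{1-q'}=q-1$ and $\fint_{R^-(\gamma)}(a_R+\log\omega)_+\le e^{a_R}\fint_{R^-(\gamma)}\omega\le[\omega]_{A^+_q(\gamma)}$, so $-\log\omega\in\PBMO^+(X\times\mathbb{R})$ directly and the claim follows by positive scaling. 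Your detour is non-circular (Theorem~\ref{74} is established independently of Theorem~\ref{85}) and your verification that $-\log u$ and $\log v$ lie in $\PBMO^+(X\times\mathbb{R})$ is carried out correctly, but it purchases the easy half of the equivalence with the strong-type maximal estimates underlying the factorization theorem. Two cosmetic points: the transfer of Lemma~\ref{2115} from parabolic rectangles to parabolic cylinders is routine but is not literally Proposition~\ref{Mfequiv}, which compares maximal operators rather than the averages you need; and your constants $a_R=-\log\fint_{R^-(\gamma')}u$ need not be positive, which clashes with the literal ``$a_R\in(0,\infty)$'' of Definition~\ref{2101} --- evidently a typo for $a_R\in\mathbb{R}$, and a difficulty any proof of this theorem would share.
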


\begin{remark}
\begin{enumerate}
\item
Combining Theorem~\ref{85} and Remark~\ref{1511}(i),
we find that the parabolic spaces $\PBMO^{\pm}(X\times\mathbb{R})$
are independent of the choice of $\gamma\in(0,1)$,
namely, if \eqref{PBMO+} [resp., \eqref{PBMO-}]
holds true for some $\gamma\in(0,1)$,
then \eqref{PBMO+} [resp., \eqref{PBMO-}]
also holds true for all $\gamma\in(0,1)$.
\item
From both Lemma~\ref{1516}(iii) and \eqref{1721},
it follows that, as subsets of $X\times\mathbb{R}$,
a parabolic cylinder and a parabolic rectangle
are comparable in measure and in diameter.
Thus, Lemma~\ref{J-N} holds true for parabolic rectangles.
Using this
and Remark~\ref{5.3}(iii),
we obtain Theorem~\ref{85}
in which both $\PBMO^+(X\times\mathbb{R})$
and $A_q^+(\gamma)$ are defined by parabolic rectangles
instead of parabolic cylinders.
This, together with Remark~\ref{5.3}(iv),
further implies that,
if we replace the parabolic cylinders
by parabolic rectangles in Definition~\ref{2101},
then this new definition still recovers the same class of functions.
\end{enumerate}
\end{remark}

By an argument similar to that used in the proof of
\cite[Theorem 7.5]{ks2016}, Theorems~\ref{85},
\ref{74}, and~\ref{76},
and Proposition~\ref{83},
we obtain the following
characterization
of $\PBMO^+(X\times\mathbb{R})$ in terms of parabolic maximal functions
in the sense of Coifman and Rochberg;
we omit the details here.

\begin{theorem}\label{86}
Let $(X,\rho,\mu)$ be a space of homogeneous type.
\begin{enumerate}
\item[\textup{(i)}]
Let $f\in \PBMO^+(X\times\mathbb{R})$.
Then there exists a $\gamma\in(0,1)$,
two constants $\alpha,\beta\in(0,\infty)$,
a function $b\in L^\infty(X\times\mathbb{R})$,
and two nonnegative Borel regular measures $\nu_1$ and $\nu_2$
such that
\begin{align*}
f=-\alpha\log\mathcal{M}^{\gamma-}\nu_1
+\beta\log\mathcal{M}^{\gamma+}\nu_2+b.
\end{align*}
\item[\textup{(ii)}]
Let $\alpha,\beta\in(0,\infty)$, $b\in L^\infty(X\times\mathbb{R})$,
and $\nu_1$ and $\nu_2$ be two measures in Theorem~\ref{76}(i).
Then
\begin{align*}
-\alpha\log\mathcal{M}^{-}\nu_1
+\beta\log\mathcal{M}^{+}\nu_2+b\in\PBMO^+(X\times\mathbb{R}).
\end{align*}
\end{enumerate}
\end{theorem}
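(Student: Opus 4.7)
The plan is to combine Theorem~\ref{85} (which identifies $\PBMO^+(X\times\mathbb{R})$ with negative multiples of logarithms of parabolic Muckenhoupt weights) with the Jones-type factorization in Theorem~\ref{74} and the small-power-of-maximal-function description of $A_1^+(\gamma)$ and $A_1^-(\gamma)$ given in Theorem~\ref{76}. The passage back and forth uses Proposition~\ref{83} to absorb bounded functions into the PBMO class.

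For part (i), let $f\in\PBMO^+(X\times\mathbb{R})$. I would first apply Theorem~\ref{85} with some fixed $q\in(1,\infty)$ and $\gamma\in(0,1)$ to produce a weight $\omega\in A_q^+(\gamma)$ and a constant $\xi\in(0,\infty)$ with $f=-\xi\log\omega$. Next, Theorem~\ref{74}(ii) gives a factorization $\omega=uv^{1-q}$ with $u\in A_1^+(\gamma_0)$ and $v\in A_1^-(\gamma_0)$ for some $\gamma_0\in(0,1/\varepsilon_0)$ that I am free to choose. Applying Theorem~\ref{76}(ii) to $u$ and (its time-reversed analogue to) $v$, I obtain $\varepsilon_1,\varepsilon_2\in[0,1)$, bounded functions $\phi_1,\phi_2$ with bounded reciprocals, Borel regular measures $\nu_1,\nu_2$, and some $\gamma'\in(\gamma_0,1)$ such that
\begin{align*}
u=\phi_1\left(\mathcal{M}^{\gamma'-}\nu_1\right)^{\varepsilon_1}
\quad\text{and}\quad
v=\phi_2\left(\mathcal{M}^{\gamma'+}\nu_2\right)^{\varepsilon_2}.
\end{align*}
Taking logarithms and multiplying by $-\xi$ yields
\begin{align*}
f=-\xi\varepsilon_1\log\mathcal{M}^{\gamma'-}\nu_1
+\xi(q-1)\varepsilon_2\log\mathcal{M}^{\gamma'+}\nu_2+b,
\end{align*}
with $b:=-\xi\log\phi_1+\xi(q-1)\log\phi_2\in L^\infty(X\times\mathbb{R})$. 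Setting $\alpha:=\xi\varepsilon_1$, $\beta:=\xi(q-1)\varepsilon_2$, and $\gamma:=\gamma'$ gives the desired decomposition, provided $\varepsilon_1,\varepsilon_2>0$; in the degenerate case $\varepsilon_i=0$ the corresponding term is already a bounded function and is absorbed into $b$.

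For part (ii), I reverse the chain. Given $\alpha,\beta\in(0,\infty)$, $b\in L^\infty(X\times\mathbb{R})$, and the measures $\nu_1,\nu_2$, I choose two auxiliary parameters $\xi\in(\alpha,\infty)$ and $q\in(1+\beta/\xi,\infty)$, so that both exponents $\alpha/\xi$ and $\beta/[\xi(q-1)]$ lie in $[0,1)$. Theorem~\ref{76}(i) then yields
\begin{align*}
u:=\left(\mathcal{M}^-\nu_1\right)^{\alpha/\xi}\in A_1^+(\gamma_0)
\quad\text{and}\quad
v:=\left(\mathcal{M}^+\nu_2\right)^{\beta/[\xi(q-1)]}\in A_1^-(\gamma_0)
\end{align*}
for any $\gamma_0\in[0,1)$, and Theorem~\ref{74}(i) produces a $\gamma'\in(0,1)$ with $\omega:=uv^{1-q}\in A_q^+(\gamma')$. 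Theorem~\ref{85} shows $-\xi\log\omega\in\PBMO^+(X\times\mathbb{R})$, and by construction
\begin{align*}
-\xi\log\omega=-\alpha\log\mathcal{M}^-\nu_1+\beta\log\mathcal{M}^+\nu_2.
\end{align*}
Adding $b$ and invoking Proposition~\ref{83}(i) and (iii) (since $L^\infty\subset\PBMO^+$) finishes part (ii).

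The main obstacle is bookkeeping the compatibility of parameters in part (i): the $\gamma$ produced by Theorem~\ref{85}, the admissible range $[0,1/\varepsilon_0)$ needed to apply the factorization of Theorem~\ref{74}(ii), and the range $(\gamma_0,1)$ in which Theorem~\ref{76}(ii) represents the $A_1^+$- and $A_1^-$-weights. The key observation that makes this work is Remark~\ref{1511}(i): for any $q\in(1,\infty)$, the class $A_q^+(\gamma)$ is independent of $\gamma\in(0,1)$, so I can first produce $\omega$ in whichever $A_q^+(\gamma)$ I like and then reinterpret it as belonging to the $A_q^+$ class that appears in the hypothesis of the factorization lemma. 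A secondary care point is the possibility $\varepsilon_i=0$ in Theorem~\ref{76}(ii), which corresponds to $u$ or $v$ being bounded above and below; those contributions are simply moved into the bounded remainder term $b$.
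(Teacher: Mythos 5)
Your proof is correct and takes exactly the route the paper intends: the paper omits the details but cites precisely Theorems~\ref{85}, \ref{74}, and \ref{76} and Proposition~\ref{83}, and your parameter bookkeeping (choosing $\gamma_0\in(0,1/\varepsilon_0)$ for the factorization so that Theorem~\ref{76}(ii) applies, and picking $\xi>\alpha$ and $q>1+\beta/\xi$ in part (ii) so the exponents lie in $[0,1)$) supplies the omitted steps correctly. Your treatment of the degenerate case $\varepsilon_i=0$ (absorbing the bounded factor into $b$, or equivalently taking $\nu_i=\lambda$ so the maximal function is constant) is also fine.
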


\medskip

\noindent\textbf{Data Availability}\quad Data sharing not applicable to this article as
no datasets were generated or analysed during
the current study.

\medskip

\noindent\textbf{Declarations}

\medskip

\noindent\textbf{Conflict of Interest}\quad The authors declare that there is no
conflict of interest.

\bigskip

\noindent Juha Kinnunen and
Kim Myyryl\"{a}inen

\medskip

\noindent Department of Mathematics, Aalto University,
P.O. Box 11100, FI-00076 Aalto, Finland

\smallskip

\noindent{\it E-mails:} \texttt{juha.k.kinnunen@aalto.fi} (J. Kinnunen)

\noindent\phantom{{\it E-mails:} }\texttt{kim.myyrylainen@aalto.fi} (K. Myyryl\"{a}inen)

\bigskip

\noindent Dachun Yang (Corresponding author) and Chenfeng Zhu

\medskip

\noindent Laboratory of Mathematics
and Complex Systems (Ministry of Education of China),
School of Mathematical Sciences,
Beijing Normal University,
Beijing 100875, The People's Republic of China

\smallskip

\noindent{\it E-mails:} \texttt{dcyang@bnu.edu.cn} (D. Yang)

\noindent\phantom{{\it E-mails:} }\texttt{cfzhu@mail.bnu.edu.cn} (C. Zhu)


\begin{thebibliography}{99}

\bibitem{a1988}
H. Aimar, Elliptic and parabolic BMO and Harnack's
inequality, Trans. Amer. Math. Soc. 306 (1988), 265--276.

\vspace{-0.3cm}

\bibitem{abi2005}
H. Aimar, A. Bernardis and B. Iaffei,
Comparison of Hardy--Littlewood and
dyadic maximal functions on spaces of homogeneous type,
J. Math. Anal. Appl. 312 (2005), 105--120.

\vspace{-0.3cm}

\bibitem{act2014}
H. Aimar, M. Carena and M. Toschi,
Muckenhoupt weights with singularities on
closed lower dimensional sets in spaces of homogeneous type,
J. Math. Anal. Appl. 416 (2014), 112--125.

\vspace{-0.3cm}

\bibitem{ac1998}
H. Aimar and R. Crescimbeni,
On one-sided $\BMO$ and Lipschitz functions,
Ann. Scuola Norm. Sup. Pisa Cl. Sci. (4) 27 (1998), 437--456 (1999).

\vspace{-0.3cm}

\bibitem{a1997}
H. Aimar, L. Forzani and F. J. Mart\'in-Reyes,
On weighted inequalities for singular integrals,
Proc. Amer. Math. Soc. 125 (1997), 2057--2064.

\vspace{-0.3cm}

\bibitem{AlMi2015}
R. Alvarado and M. Mitrea,
Hardy Spaces on Ahlfors-Regular
Quasi Metric Spaces.
A Sharp Theory, Lecture Notes in Math. 2142,
Springer, Cham, 2015.

\vspace{-0.3cm}

\bibitem{ar2013}
P. Auscher and E. Routin,
Local $Tb$ theorems and Hardy inequalities,
J. Geom. Anal. 23 (2013), 303--374.

\vspace{-0.3cm}

\bibitem{aht2017}
T. C. Anderson, T. Hyt\"onen and O. Tapiola,
Weak $A_\infty$ weights and weak reverse H\"older property
in a space of homogeneous type,
J. Geom. Anal. 27 (2017), 95--119.

\vspace{-0.3cm}

\bibitem{b2011}
L. Berkovits, Parabolic Muckenhoupt weights in the Euclidean
space, J. Math. Anal. Appl. 379 (2011), 524--537.

\vspace{-0.3cm}

\bibitem{b1999}
S. M. Buckley,
Is the maximal function of a Lipschitz function continuous?
Ann. Acad. Sci. Fenn. Math. 24 (1999), 519--528.

\vspace{-0.3cm}

\bibitem{bb2021}
T. A. Bui and T. Q. Bui,
Maximal regularity of parabolic equations associated to
generalized Hardy operators in weighted mixed-norm spaces,
J. Differential Equations 303 (2021), 547--574.

\vspace{-0.3cm}

\bibitem{bbd2021}
T. A. Bui, T. Q. Bui and X. T. Duong,
Quantitative weighted estimates for some
singular integrals related to critical functions,
J. Geom. Anal. 31 (2021), 10215--10245.

\vspace{-0.3cm}

\bibitem{bbd20211}
 T. Q. Bui, T. A. Bui and X. T. Duong,
Global regularity estimates for non-divergence elliptic
equations on weighted variable Lebesgue spaces,
Commun. Contemp. Math. 23 (2021), 26 pp.

\vspace{-0.3cm}

\bibitem{bbd2020}
H.-Q. Bui, T. A. Bui and X. T. Duong,
Weighted Besov and Triebel--Lizorkin spaces
associated with operators and applications,
Forum Math. Sigma 8 (2020), 95 pp.

\vspace{-0.3cm}

\bibitem{Bbd2020}
T. A. Bui and X. T. Duong,
Sharp weighted norm inequalities for
singular integrals with non-smooth kernels,
Math. Z. 295 (2020), 1733--1750.

\vspace{-0.3cm}

\bibitem{bd2020}
T. A. Bui and X. T. Duong,
Sharp weighted estimates for square functions
associated to operators on spaces of
homogeneous type, J. Geom. Anal. 30 (2020), 874--900.

\vspace{-0.3cm}

\bibitem{bll2021}
T. A. Bui, J. Li and F. K. Ly,
Weighted embeddings for function spaces
associated with Hermite expansions,
J. Approx. Theory 264 (2021), 25 pp.

\vspace{-0.3cm}

\bibitem{c1976}
A. P. Calder\'on,
Inequalities for the maximal function relative to a metric,
Studia Math. 57 (1976), 297--306.

\vspace{-0.3cm}

\bibitem{c1990}
M. Christ, A $T(b)$ theorem
with remarks on analytic capacity
and the Cauchy integral,
Colloq. Math. 60/61 (1990), 601--628.

\vspace{-0.3cm}

\bibitem{cf1974}
R. R. Coifman and C. Fefferman,
Weighted norm inequalities for maximal functions
and singular integrals,
Studia Math. 51 (1974), 241--250.

\vspace{-0.3cm}

\bibitem{CW1}
R. R. Coifman and G. Weiss,
Analyse Harmonique Non-commutative sur Certains Espaces Homog\`enes.
(French) \'Etude de Certaines Int\'egrales Singuli\`eres,
Lecture Notes in Math. 242,
Springer-Verlag, Berlin--New York, 1971.

\vspace{-0.3cm}

\bibitem{Cowe77}
R. R. Coifman and G. Weiss,
Extensions of Hardy spaces
and their use in analysis,
Bull. Amer. Math. Soc. 83 (1977), 569--645.

\vspace{-0.3cm}

\bibitem{cu1995}
D. Cruz-Uribe, C. J. Neugebauer and V. Olesen,
The one-sided minimal operator and the one-sided
reverse H\"older inequality,
Studia Math. 116 (1995), 255--270.


\vspace{-0.3cm}

\bibitem{jd}
J. Duoandikoetxea, Fourier Analysis, Graduate Studies in Mathematics 29,
American Mathematical Society Providence, Rhode Island, 2000.

\vspace{-0.3cm}

\bibitem{dlsvwy2021}
X. T. Duong, J. Li, E. T. Sawyer,
M. N. Vempati, B. D. Wick and Do. Yang,
A two weight inequality for Calder\'on--Zygmund operators
on spaces of homogeneous type with applications,
J. Funct. Anal. 281 (2021), 65 pp.

\vspace{-0.3cm}

\bibitem{dgklwy2021}
X. T. Duong, R. Gong, M.-J. S. Kuffner,
J. Li, B. D. Wick and Do. Yang,
Two weight commutators on spaces of homogeneous type and applications,
J. Geom. Anal. 31 (2021), 980--1038.

\vspace{-0.3cm}

\bibitem{fg1985}
E. B. Fabes and N. Garofalo,
Parabolic B.M.O. and Harnack's inequality,
Proc. Amer. Math. Soc.
95 (1985), 63--69.

\vspace{-0.3cm}

\bibitem{fks1982}
E. B. Fabes, C. E. Kenig and R. P. Serapioni,
The local regularity of solutions of degenerate elliptic equations,
Comm. Partial Differential Equations 7 (1982), 77--116.

\vspace{-0.3cm}

\bibitem{f2011}
L. Forzani, F. J. Mart\'in-Reyes
and S. Ombrosi,
Weighted inequalities for the two-dimen-sional
one-sided Hardy--Littlewood maximal function,
Trans. Amer. Math. Soc. 363 (2011), 1699--1719.

\vspace{-0.3cm}

\bibitem{fmy2020}
X. Fu, T. Ma and Da. Yang,
Real-variable characterizations of Musielak--Orlicz
Hardy spaces on spaces of homogeneous type,
Ann. Acad. Sci. Fenn. Math. 45 (2020), 343--410.

\vspace{-0.3cm}

\bibitem{cf1985}
J. Garc\'ia-Cuerva and J. L. Rubio de Francia,
Weighted Norm Inequalities and Related
Topics,
North-Holland Mathematics Studies 116, North-Holland,
Amsterdam, 1985.

\vspace{-0.3cm}

\bibitem{ggkk1998}
I. Genebashvili, A. Gogatishvili, V. Kokilashvili and M. Krbec,
Weight Theory for Integral Transforms on Spaces of Homogeneous Type,
Pitman Monographs and Surveys in Pure and Applied Mathematics 92, Longman, Harlow, 1998.

\vspace{-0.3cm}

\bibitem{gv2006}
U. Gianazza and V. Vespri,
A Harnack inequality for solutions of doubly nonlinear parabolic
equations,
J. Appl. Funct. Anal. 1 (2006), 271--284.

\vspace{-0.3cm}

\bibitem{jlpv2020}
R. Gong, J. Li, E. Pozzi and M. N. Vempati,
Commutators on weighted Morrey spaces on spaces of homogeneous type,
Anal. Geom. Metr. Spaces 8 (2020), 305--334.

\vspace{-0.3cm}

\bibitem{H2001}
J. Heinonen,
Lectures on Analysis on Metric Spaces, Springer-Verlag, New York, 2001.

\vspace{-0.3cm}

\bibitem{hmw1973}
R. Hunt, B. Muckenhoupt and R. Wheeden,
Weighted norm inequalities for the
conjugate function and Hilbert transform,
Trans. Amer. Math. Soc. 176 (1973), 227--251.

\vspace{-0.3cm}

\bibitem{h2012}
T. Hyt\"onen,
The sharp weighted bound for general
Calder\'on--Zygmund operators,
Ann. of Math. (2) 175 (2012), 1473--1506.

\vspace{-0.3cm}

\bibitem{hk2012}
T. Hyt\"onen and A. Kairema,
Systems of dyadic cubes in a doubling metric space,
Colloq. Math. 126 (2012), 1--33.

\vspace{-0.3cm}

\bibitem{hpr2012}
T. Hyt\"onen, C. P\'erez and E. Rela,
Sharp reverse H\"older property for
$A_\infty$ weights on spaces of homogeneous type,
J. Funct. Anal. 263 (2012), 3883--3899.

\vspace{-0.3cm}

\bibitem{Iv2014}
P.-A. Ivert, N. Marola and M. Masson,
Energy estimates for variational minimizers of a parabolic doubly nonlinear equation on metric measure spaces,
Ann. Acad. Sci. Fenn. Math. 39 (2014), 711--719.

\vspace{-0.3cm}

\bibitem{jn1961}
F. John and L. Nirenberg,
On functions of bounded mean oscillation,
Commun. Pure Appl. Math. 14 (1961), 415--426.

\vspace{-0.3cm}

\bibitem{kk2007}
J. Kinnunen and T. Kuusi,
Local behaviour of solutions to doubly nonlinear parabolic equations,
Math. Ann. 337 (2007), 705--728.

\vspace{-0.3cm}

\bibitem{kmy2022}
J. Kinnunen, K. Myyryl\"{a}inen and D. Yang,
John--Nirenberg inequalities for parabolic BMO, Submitted or
arXiv: 2202.06321.

\vspace{-0.3cm}

\bibitem{ks2016'}
J. Kinnunen and O. Saari, On weights satisfying parabolic
Muckenhoupt conditions, Nonlinear Anal. 131 (2016),
289--299.

\vspace{-0.3cm}

\bibitem{ks2016}
J. Kinnunen and O. Saari,
Parabolic weighted norm inequalities and partial differential equations,
Anal. PDE 9 (2016), 1711--1736.

\vspace{-0.3cm}

\bibitem{k2012}
T. Kuusi, J. Siljander and J. M. Urbano,
Local H\"older continuity for doubly nonlinear parabolic equations,
Indiana Univ. Math. J. 61 (2012), 399--430.

\vspace{-0.3cm}

\bibitem{ll2022}
M. Lacey and J. Li,
Compactness of commutator of Riesz transforms in the two weight setting,
J. Math. Anal. Appl. 508 (2022), 11 pp.

\vspace{-0.3cm}

\bibitem{lo2010}
A. K. Lerner and S. Ombrosi,
A boundedness criterion for general maximal operators,
Publ. Mat. 54 (2010), 53--71.

\vspace{-0.3cm}

\bibitem{lw2020}
J. Li and B. D. Wick,
The two-weight inequality for the Poisson operator in the Bessel setting,
J. Math. Anal. Appl. 489 (2020), 15 pp.

\vspace{-0.3cm}

\bibitem{lt1978}
A. Lichnewsky and R. Temam,
Pseudosolutions of the time-dependent minimal surface problem,
J. Differential Equations 30 (1978), 340--364.

\vspace{-0.3cm}

\bibitem{lny2011}
H. Lin, E. Nakai and Da. Yang,
Boundedness of Lusin-area and $g_\lambda^*$ functions
on localized $\BMO$ spaces over
doubling metric measure spaces,
Bull. Sci. Math. 135 (2011), 59--88.

\vspace{-0.3cm}

\bibitem{ms1981}
R. A. Mac\'ias and C. Segovia,
A well behaved quasi-distance for spaces of homogeneous type, Trabajos de
Matematica, Instituto Argentino de Matematica, Buenos Aires, 1981, 1--17.

\vspace{-0.3cm}

\bibitem{MaSe79i}
R. A. Mac\'ias and C. Segovia,
Lipschitz functions
on spaces of homogeneous type,
Adv. Math. 33 (1979), 257--270.

\vspace{-0.3cm}

\bibitem{mm2013}
N. Marola and M. Masson,
On the Harnack inequality for parabolic minimizers in metric measure spaces,
Tohoku Math. J. (2) 65 (2013), 569--589.

\vspace{-0.3cm}

\bibitem{m1993}
F. J. Mart\'in-Reyes,
New proofs of weighted inequalities
for the one-sided Hardy--Littlewood maximal functions,
Proc. Amer. Math. Soc. 117 (1993), 691--698.

\vspace{-0.3cm}

\bibitem{mt1993}
F. J. Mart\'in-Reyes
and A. de la Torre,
Two weight norm inequalities for fractional
one-sided maximal operators,
Proc. Amer. Math. Soc. 117 (1993), 483--489.

\vspace{-0.3cm}

\bibitem{mt1994}
F. J. Mart\'in-Reyes
and A. de la Torre,
One-sided $\BMO$ spaces,
J. London Math. Soc. (2) 49 (1994), 529--542.

\vspace{-0.3cm}

\bibitem{mot1990}
F. J. Mart\'in-Reyes,
P. Ortega Salvador and A. de la Torre,
Weighted inequalities for one-sided
maximal functions,
Trans. Amer. Math. Soc. 319 (1990), 517--534.

\vspace{-0.3cm}

\bibitem{mpt1993}
F. J. Mart\'in-Reyes, L. Pick
and A. de la Torre,
$A_\infty^+$ condition,
Canad. J. Math. 45 (1993), 1231--1244.

\vspace{-0.3cm}

\bibitem{m1961}
J. Moser,
On Harnack's theorem for elliptic differential equations,
Comm. Pure Appl. Math. 14 (1961), 577--591.

\vspace{-0.3cm}

\bibitem{m1964}
J. Moser,
A Harnack's inequality for parabolic
differential equations,
Comm. Pure Appl. Math. 17 (1964), 101--134.

\vspace{-0.3cm}

\bibitem{m1967}
J. Moser,
Correction to: \lq\lq A Harnack's inequality for parabolic
differential equations\rq\rq,
Comm. Pure Appl. Math. 20 (1967), 231--236.

\vspace{-0.3cm}

\bibitem{m1971}
J. Moser,
On a pointwise estimate for parabolic differential equations,
Comm. Pure Appl. Math. 24 (1971), 727--740.

\vspace{-0.3cm}

\bibitem{m1972}
B. Muckenhoupt,
Weighted norm inequalities for the Hardy maximal function,
Trans. Amer. Math. Soc. 165 (1972), 207--226.

\vspace{-0.3cm}

\bibitem{o2005}
S. Ombrosi,
Weak weighted inequalities for a dyadic
one-sided maximal function in $\mathbb{R}^n$,
Proc. Amer. Math. Soc. 133 (2005), 1769--1775.

\vspace{-0.3cm}

\bibitem{r2013}
E. Routin,
Distribution of points and Hardy type
inequalities in spaces of homogeneous type,
J. Fourier Anal. Appl. 19 (2013), 877--909.

\vspace{-0.3cm}

\bibitem{r2010}
H. L. Royden,
Real Analysis,
Fourth edition. Macmillan Publishing Company, New York, 2010.

\vspace{-0.3cm}

\bibitem{s2016}
O. Saari, Parabolic BMO and global integrability of
supersolutions to doubly nonlinear parabolic equations, Rev.
Mat. Iberoam. 32 (2016), 1001--1018.

\vspace{-0.3cm}

\bibitem{s2018}
O. Saari, Parabolic BMO and the forward-in-time maximal
operator, Ann. Mat. Pura Appl. (4) 197 (2018), 1477--1497.

\vspace{-0.3cm}

\bibitem{s1986}
E. Sawyer,
Weighted inequalities for the one-sided Hardy--Littlewood maximal functions,
Trans. Amer. Math. Soc. 297 (1986), 53--61.

\vspace{-0.3cm}

\bibitem{st1989}
J.-O. Str\"omberg and A. Torchinsky,
Weighted Hardy Spaces,
Lecture Notes in Math. 1381,
Springer-Verlag, Berlin,
1989.

\vspace{-0.3cm}

\bibitem{t2007}
R. Tessera,
Volume of spheres in doubling metric measured spaces
and in groups of polynomial growth,
Bull. Soc. Math. France 135 (2007), 47--64.

\vspace{-0.3cm}

\bibitem{t1968}
N. S. Trudinger,
Pointwise estimates and quasilinear parabolic equations,
Comm. Pure Appl. Math. 21 (1968), 205--226.

\vspace{-0.3cm}

\bibitem{v1992}
V. Vespri,
On the local behaviour of solutions of
a certain class of doubly nonlinear parabolic equations,
Manuscripta Math. 75 (1992), 65--80.

\vspace{-0.3cm}

\bibitem{w1987}
W. Wieser,
Parabolic $Q$-minima and minimal solutions to variational flow,
Manuscripta Math. 59 (1987), 63--107.

\vspace{-0.3cm}

\bibitem{YHYY1}
X. Yan, Z. He, D. Yang and W. Yuan, Hardy spaces associated
with ball quasi-Banach function spaces on spaces of homogeneous
type: Characterizations of maximal functions, decompositions,
and dual spaces, Math. Nachr. (2022),
http://doi.org/10.1002/mana.202100432.

\vspace{-0.3cm}

\bibitem{YHYY2}
X. Yan, Z. He, D. Yang and W. Yuan, Hardy spaces associated with
ball quasi-Banach function spaces on spaces of homogeneous type:
Littlewood--Paley characterizations with applications to boundedness
of Calder\'on--Zygmund operators, Acta Math. Sin. (Engl. Ser.) 38 (2022),
1133-1184.

\end{thebibliography}
\end{document}